\documentclass[12pt,oneside]{amsart}

\usepackage{setspace}

\usepackage{amssymb}   

\usepackage{amsmath}

\usepackage{mathrsfs}

\usepackage{amsthm}
\usepackage{newlfont}
\usepackage{amscd}
\usepackage{mathtools}
\usepackage{bm}
\usepackage{tikz-cd}
\usepackage{graphicx}

\usepackage{hyperref}
\usepackage{framed}

\usepackage{lmodern}
\usepackage[T1]{fontenc}

\usepackage{enumitem}

\usepackage[all]{xy}

\usepackage{textcomp}

\usepackage{amsbsy}

\usepackage{tikz}
\usetikzlibrary{positioning, arrows.meta}

\theoremstyle{plain}
\newtheorem{thm}{Theorem}[section]

\newtheorem{thmdefn}[thm]{Theorem/Definition}
\newtheorem{propdefn}[thm]{Proposition/Definition}
\newtheorem{lem}[thm]{Lemma}
\newtheorem{prop}[thm]{Proposition}
\newtheorem{cor}[thm]{Corollary}
\newtheorem{conj}[thm]{Conjecture}

\newtheorem{defn}[thm]{Definition}

\newtheorem{eg}[thm]{Example}

\newtheorem{construction}[thm]{Construction}

\newtheorem{rem}[thm]{Remark}

\newtheorem{criterion}[thm]{Criterion} % Ensure a theorem style is set in your preamble

\numberwithin{equation}{section}

\usepackage{relsize}
\usepackage[bbgreekl]{mathbbol}
\usepackage{amsfonts}

\DeclareSymbolFontAlphabet{\mathbb}{AMSb}
\DeclareSymbolFontAlphabet{\mathbbl}{bbold}
\newcommand{\Prism}{{\mathlarger{\mathbbl{\Delta}}}}
\DeclareMathOperator{\Gal}{Gal}

\DeclareMathOperator{\Spec}{Spec}
\DeclareMathOperator{\Spa}{Spa}
\DeclareMathOperator{\Spf}{Spf}
\DeclareMathOperator{\Spd}{Spd}

\DeclareMathOperator{\Fil}{Fil}
\DeclareMathOperator{\Hom}{Hom}

\DeclareMathOperator{\End}{End}
\DeclareMathOperator{\spe}{sp}
\DeclareMathOperator{\Isom}{\operatorname{Isom}}
\DeclareMathOperator{\Lat}{\operatorname{Lat}}

\DeclareMathOperator{\AdmPair}{\operatorname{AdmPair}}

\DeclareMathOperator{\colim}{\mathop{\mathrm{colim}}\limits}

\newcommand{\Frob}{\mathrm{Frob}}
\newcommand{\rank}{\mathrm{rank}}
\newcommand{\Vect}{\mathrm{Vect}}
\newcommand{\Loc}{\mathrm{Loc}}
\newcommand{\perf}{\mathrm{perf}}

\newcommand{\an}{\mathrm{an}}

\newcommand{\cris}{\mathrm{cris}}
\newcommand{\CRIS}{\mathrm{CRIS}}
\newcommand{\st}{\mathrm{st}}
\newcommand{\pst}{\mathrm{pst}}
\newcommand{\dR}{\mathrm{dR}}
\newcommand{\et}{\mathrm{\acute{e}t}}
\newcommand{\proet}{\mathrm{pro\acute{e}t}}
\newcommand{\qproet}{\mathrm{qpro\acute{e}t}}
\newcommand{\Perfd}{\mathrm{Perfd}}

\newcommand{\N}{\mathbb{N}}
\newcommand{\F}{\mathbb{F}}
\newcommand{\Z}{\mathbb{Z}}
\newcommand{\Q}{\mathbb{Q}}
\newcommand{\C}{\mathbb{C}}

\newcommand{\Perf}{\mathrm{Perf}}

\renewcommand{\log}{\mathrm{log}}

\newcommand{\calE}{\mathcal{E}}
\newcommand{\calF}{\mathcal{F}}
\newcommand{\calG}{\mathcal{G}}
\newcommand{\calH}{\mathcal{H}}
\newcommand{\calI}{\mathcal{I}}

\newcommand{\calO}{\mathcal{O}}
\newcommand{\calP}{\mathcal{P}}

\newcommand{\calR}{\mathcal{R}}
\newcommand{\calS}{\mathcal{S}}
\newcommand{\calT}{\mathcal{T}}
\newcommand{\calU}{\mathcal{U}}
\newcommand{\calV}{\mathcal{V}}

\newcommand{\calX}{\mathcal{X}}
\newcommand{\calY}{\mathcal{Y}}

\newcommand{\bA}{\mathbb{A}}
\newcommand{\bB}{\mathbb{B}}

\newcommand{\bD}{\mathbb{D}}

\newcommand{\bL}{\mathbb{L}}
\newcommand{\bM}{\mathbb{M}}

\newcommand{\bP}{\mathbb{P}}
\newcommand{\bQ}{\mathbb{Q}}
\newcommand{\bR}{\mathbb{R}}

\newcommand{\bT}{\mathbb{T}}

\newcommand{\bV}{\mathbb{V}}

\newcommand{\Zp}{\mathbb{Z}_p}
\newcommand{\Qp}{\mathbb{Q}_p}

\newcommand{\NP}{\mathcal{N}\!\mathcal{P}}
\newcommand{\Sht}{\mathrm{Sht}}
\newcommand{\Bun}{\mathrm{Bun}}
\newcommand{\Shv}{\mathrm{Shv}}

\newcommand{\vv}{\mathrm{v}}
\newcommand{\Isoc}{\mathrm{Isoc}}

\newcommand{\Rep}{\mathrm{Rep}}
\newcommand{\rmint}{\mathrm{int}}
\newcommand{\bd}{\mathrm{bd}}
\newcommand{\Conv}{\operatorname{Conv}}
\newcommand{\op}{\mathrm{op}}

\newcommand{\BdRp}{\mathbf{B}_{\mathrm{dR}}^+}

\newcommand{\bDhat}{\hat{\mathbb{D}}}
\newcommand{\admmodif}{\mathrm{Modif}^{\mathrm{adm}}}

\newcommand{\RpMT}{\mathrm{R}p\mathrm{MT}}
\newcommand{\CNP}{\mathrm{C}\NP}

\newcommand{\bbL}{\mathbb L}
\newcommand{\bbT}{\mathbb T}

\providecommand{\OL}{\mathcal O_L}

\providecommand{\Cov}{\operatorname{Cov}}
\providecommand{\GCov}{\operatorname{GCov}}
\providecommand{\UCov}{\operatorname{UCov}}
\providecommand{\oc}{\mathrm{oc}}

\providecommand{\frakX}{\mathfrak X}
\providecommand{\frakY}{\mathfrak Y}
\providecommand{\frakU}{\mathfrak U}

\begin{document}

\pagenumbering{arabic}

\title[$p$-adic Hodge Theory of de Rham Local Systems, I]{$p$-adic Hodge Theory of de Rham Local Systems, I: Newton Polygon and Monodromy}

\author{Heng Du} 
\address{Yau Mathematical Sciences Center, Tsinghua University, Beijing 100084, China}
\email{hengdu@mail.tsinghua.edu.cn}

\begin{abstract}
We prove that the relative $p$-adic monodromy theorem holds over a dense open subset. Moreover, we establish the equivalence of the following two statements: the local constancy of the Newton polygon function associated with a de Rham local system around rank-$1$ points, and the relative $p$-adic monodromy theorem near rank-$1$ points. We demonstrate how to extend the relative $p$-adic monodromy conjecture from the neighborhood of rank-$1$ points to the entire interiors of Newton partitions. Finally, we verify a conjecture of Howe--Klevdal concerning the potential good reduction of admissible pairs arising from variant of $p$-adic Hodge structures.
\end{abstract}

\maketitle

\tableofcontents

\section{Introduction}
A guiding principle in algebraic geometry, originating with Grothendieck, is that the analytic or arithmetic singularities of objects of geometric origin should be encoded by a finite amount of data. Specifically, a local system or differential equation arising from geometry should not exhibit arbitrary wild behavior near a codimension-one boundary; rather, after a finite base change, its singularity should be entirely encoded by finite nilpotent data. In the arithmetic $\ell$-adic setting, this manifests as the local monodromy theorem, cf.\ \cite[Exp.~I, Prop.~1.1]{SGA7I}.

The quest for a $p$-adic analog of this principle culminated in the $p$-adic local monodromy theorem, proved independently by Andr\'e \cite{AndreHasseArf}, Mebkhout \cite{MebkhoutAnaloguepadic}, and Kedlaya \cite{Kedlayapadiclocalmonodromy}. Subsequently, by demonstrating that the ``de Rham'' condition for a $p$-adic representation translates to the regularity of its associated differential equation over Robba rings, Berger used the $p$-adic local monodromy theorem to prove Fontaine's conjecture that de Rham representations are potentially log-crystalline, cf.\ \cite{Bergerpadicmonodromy}.

With the development of relative $p$-adic Hodge theory (cf.\ \cite{scholze-p-adic-hodge,kedlaya-liu-relative-padichodge}), as well as \cite{liu-zhu-rigidity,du-liu-moon-shimizu-completed-prismatic-F-crystal-loc-system,GuoReinecke-Ccris,du-liu-moon-shimizu-purity-F-crystal, GuoYangpointwise}, it is now possible to study these phenomena in $p$-adic geometric families. In this paper, we explore how Grothendieck's philosophy, or a relative analog of Berger's theorem, governs the behavior of $p$-adic local systems of geometric origin on rigid-analytic varieties over $p$-adic fields. 

The main result of this paper is the following.

\begin{thm}[Corollary~\ref{cor: RpMT over dense}]\label{thm: RpMT over dense open}
Let $X$ be a connected smooth rigid-analytic space over a $p$-adic field $K$, and let $\mathbb{L}$ be a de Rham $\mathbb{Q}_p$-local system on $X$. There exists a dense open subset $U \subset X$ such that the relative $p$-adic monodromy theorem holds over $U$. That is, there exists an \'etale covering $V \to U$ of smooth rigid-analytic spaces over $K$, such that $\mathbb{L}|_{V}$ is log-crystalline at all classical points of $V$.
\end{thm}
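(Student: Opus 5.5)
Since $\mathbb{L}$ is de Rham, the plan is to reduce the statement to a semicontinuity argument for the Newton polygon function together with a pointwise version of Berger's theorem. The abstract announces that local constancy of the Newton polygon near rank-$1$ points is equivalent to the relative $p$-adic monodromy theorem near those points. So it suffices to find a dense open $U\subset X$ on which the Newton polygon function attached to $\mathbb{L}$ is locally constant.

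\textbf{Step 1: a dense open of constant Newton polygon.} At each point $x$ (rank-$1$ or classical), $\mathbb{L}$ gives a $\varphi$-module or vector bundle on the Fargues--Fontaine curve via the associated $\mathbf{B}_{\mathrm{dR}}^+$-modification. Its Harder--Narasimhan polygon defines a function $\NP\colon |X|\to\{\text{polygons}\}$. The first claim is that $\NP$ is upper semicontinuous, in the sense that polygon-wise lying above a given polygon is a closed condition. This should follow from Kedlaya--Liu semicontinuity for families of $\varphi$-modules over relative Robba rings, applied to the de Rham modification of $\mathbb{L}\otimes\mathcal{O}$. The possible polygons have bounded slopes, with endpoints fixed by the rank and the degree determined by Hodge--Tate weights, and their breakpoints lie in a discrete set. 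Hence only finitely many polygons occur. The loci where the polygon is minimal or generic give a stratification by constructible, Zariski-locally-closed-type subsets, and the union of the interiors of the strata is dense open. In particular there is a dense open $U$, a union of the interiors of the Newton partitions, on which $\NP$ is locally constant.

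\textbf{Step 2: monodromy near points of constant polygon.} On a connected piece of $U$ the polygon is constant. We then apply the relative slope filtration theorem (Kedlaya--Liu) to obtain a slope filtration of the $\varphi$-module over the whole family, with isoclinic graded pieces. After passing to an \'etale cover we aim to trivialize each isoclinic piece and produce a relative Dieudonn\'e-type descent. The de Rham condition then says the associated differential module over the relative Robba ring is regular with a unipotent-after-finite-base-change connection. This is a relative analogue of Berger's argument: use the $p$-adic local monodromy theorem fiberwise, and the finiteness of monodromy from Step 1, to find a single finite \'etale cover killing the nonunipotent part uniformly.

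The main obstacle is uniformity. Pointwise potential semistability (Berger) only gives a field extension depending on the point, and we need an \'etale cover $V\to U$ working simultaneously at all classical points. I would first try to get uniformity from the isoclinic pieces. On each piece, the local constancy of $\NP$ makes the associated $\mathbb{Q}_p$-local system a twist of one with $\mathbb{Z}_p$-lattice whose reduction mod $p^n$ is a finite \'etale object. Choosing $V$ to trivialize this reduction for suitable $n$ forces inertia to act unipotently at every classical point, by a Grothendieck-type argument, yielding log-crystallinity on $V$. This is exactly the ``near rank-$1$ points'' equivalence, extended to interiors of Newton partitions, combined with density from Step 1.
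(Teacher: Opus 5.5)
\paragraph{Verdict.} Your Step~1 is essentially the paper's Proposition~\ref{prop: CNP is dense open}: generic local constancy of $\NP$ from Kedlaya--Liu, with local finiteness and local closedness of the strata giving a dense union of interiors. One small correction: with the paper's convention $\NP(\calE)=-\NP(\underline D)$, Kedlaya--Liu give $\NP(x)\ge\NP(\beta)$ near a rank-$1$ point $\beta$, so your inequality is reversed. Citing the abstract's equivalence does not replace the real content, which is $\CNP\Longrightarrow\RpMT$. Your Step~2, where you try to supply this, has a genuine gap.

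\paragraph{The proposed mechanism for the isoclinic pieces fails.} You propose to trivialize a $\Z_p$-lattice mod $p^n$ and then use a Grothendieck-type argument. Trivializing mod $p^n$ only puts the image of $G_{k(s)}$ into the torsion-free pro-$p$ group $1+p^nM_r(\Z_p)$. Grothendieck's quasi-unipotence argument needs $\ell\neq p$, so that wild inertia has finite image, and it breaks down here. For example, $\Z_p(1)$ over $K(\zeta_{p^n})$ is trivial mod $p^n$, yet inertia acts through an infinite, non-unipotent image. Moreover, unipotence of inertia is not the $p$-adic criterion for log-crystallinity at all: $\Q_p(1)$ is crystalline. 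The Berger-style route you mention (regular relative differential modules, fiberwise local monodromy) is not carried out. The ``finiteness of monodromy from Step~1'' does not exist, since Step~1 only gives finitely many polygons. So uniformity is never actually obtained.

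\paragraph{The missing idea.} Write each HN piece (Theorem~\ref{thm: global-slope-filtration}) as $\calG_i\cong\bV_i\otimes_{\bDhat_\lambda}\calO(\lambda)$ via Theorem~\ref{thm: Dlambda locsys and ss vb}. This is where the de Rham hypothesis enters: the restriction of $\calE_0$ to the Cartier divisor is $D_{\dR}(\bL)\otimes\widehat{\calO}$, which comes from an \'etale vector bundle. Hence $\calG_i$ restricted there is a subquotient of such a bundle. The Morita equivalence (Lemma~\ref{lem: morita}) and Proposition~\ref{prop: Condition for L to be HT wt 0} then show that the underlying $\Q_p$-local system of $\bV_i$ is Hodge--Tate of weight $0$.

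Sen's theorem in Shimizu's relative form (Theorem~\ref{thm: RpMT for HT0}) then gives a single finite \'etale cover on which this local system is unramified at all classical points. This is the source of uniformity: Hodge--Tate weight $0$ forces finite inertia image, which a torsion-free level then kills. That is the correct replacement for your Grothendieck argument.

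You still have to assemble the pieces. Pointwise, each $\calG_i$ is then log-crystalline (indeed with $N=0$) and $B_{\dR}^+$-flat. The Fargues--Fontaine extension lemma (Lemma~\ref{lem: Hyodolem}, an $H^1_{\st}=H^1_g$ statement using the strict slope inequality of the HN filtration) together with Theorem~\ref{thm: key from FF}(2) shows that $\calE_0$, and hence $\bL$, is log-crystalline at each classical point. The monodromy operator $N$ is created exactly in these extensions, and your proposal contains no step that produces it.
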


The key new ingredient for proving Theorem~\ref{thm: RpMT over dense open} is the following.

\begin{thm}[$\CNP \Longrightarrow \RpMT$, Theorem~\ref{thm: global case}]\label{thm: CNP to RpMT}
Let $X$ be a connected smooth rigid-analytic space over a $p$-adic field $K$, and let $\mathbb{L}$ be a de Rham $\mathbb{Q}_p$-local system on $X$. Assume that the Newton polygon function $\NP(\mathbb{L})$ associated with $\mathbb{L}$ is constant \emph{($\CNP$)}; then the relative $p$-adic monodromy theorem \emph{($\RpMT$)} holds for $\mathbb{L}$. More precisely, there is a \emph{connected} \'etale covering $Y \to X$ of smooth rigid-analytic varieties over $K$, such that $\mathbb{L}|_{Y}$ is log-crystalline at all classical points of $Y$.
\end{thm}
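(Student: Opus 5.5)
The strategy is to turn constancy of the Newton polygon into the existence of a \emph{slope filtration} on the relative isocrystal attached to $\mathbb{L}$, and then reduce the monodromy problem to the graded pieces. Those pieces are isoclinic, so after twisting they become étale, and monodromy for them is controlled by a finite quotient of the fundamental group.

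\textbf{Step 1: Set-up.} Since $\mathbb{L}$ is de Rham, the relative $p$-adic Riemann--Hilbert / $\mathbf{B}_{\mathrm{dR}}^+$-theory gives, over $X^\diamond$, a modification of vector bundles on the relative Fargues--Fontaine curve:
\[
\mathbb{L}\otimes\mathcal{O}_{X_{FF}} \dashrightarrow \mathcal{E}(\mathbb{L}),
\]
with bounded type determined by the Hodge--Tate weights. The function $\NP(\mathbb{L})$ at a point $x$ is the Harder--Narasimhan polygon of $\mathcal{E}(\mathbb{L})_x$.

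\textbf{Step 2: Slope filtration.} Assuming $\CNP$, I apply the Kedlaya--Liu / Fargues--Scholze result that a family of vector bundles on the relative curve with constant HN polygon has a global HN filtration by sub-bundles. After passing to a pro-étale cover, each graded piece $\mathrm{gr}^\lambda$ is pro-étale locally isomorphic to $\mathcal{O}(\lambda)^{m_\lambda}$. Twisting, $\mathrm{gr}^\lambda\otimes\mathcal{O}(-\lambda)$ becomes a $\underline{D_\lambda}$-local system, where $D_\lambda$ is the division algebra of invariant $\lambda$. The HN filtration is Galois-stable, so its graded pieces correspond to representations $\rho_\lambda\colon \pi_1^{\et}(X,\bar x)\to \mathrm{GL}_{m_\lambda}(D_\lambda)$.

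\textbf{Step 3: Finite monodromy of the graded pieces.} Next I show that each $\rho_\lambda$, restricted to the geometric fundamental group, has finite image, or at least becomes finite after a connected finite étale cover. The idea is that each classical point $x$ is de Rham, so $\mathcal{E}(\mathbb{L})_x$ comes from a filtered $(\varphi,N,G_K)$-module. For a potentially log-crystalline representation, the graded pieces of the slope filtration carry an action of the inertia-type Weil--Deligne group through a finite quotient. Uniformity in $x$ should come from rigidity of $p$-adic Lie group images. The image of $\pi_1$ in $\mathrm{GL}_{m}(D_\lambda)$ is a compact $p$-adic Lie group, and the de Rham condition together with constancy of slopes forces its Sen/Lie algebra to vanish. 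For that I would use the isoclinic graded pieces together with the relative Sen operator of the slope-$\lambda$ part after twisting, which should be zero, using Shimizu / Liu--Zhu rigidity. The image is therefore finite, and I take $Y\to X$ to be the corresponding connected finite étale cover, possibly composed with a base field extension.

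\textbf{Step 4: Recovering $\mathbb{L}$ at each point.} Over $Y$, at each classical point $y$, every graded piece of $\mathcal{E}(\mathbb{L})_y$ is Galois-equivariantly a twist of a trivial $\mathcal{O}(\lambda)$-bundle, and the extension classes are then nilpotent data. Berger's dictionary at the point, together with the Fargues--Fontaine description of crystalline versus semistable representations as those whose associated modification has unramified $\varphi$-structure with trivial inertia action up to $N$, then gives that $\mathbb{L}|_y$ is log-crystalline.

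\textbf{Main obstacle.} The hardest step is Step 3: passing from pointwise potential semistability to uniform finiteness of inertia across the family, so that a single \emph{connected} étale $Y$ works. My first attempt there is the Lie-algebra argument: show that the geometric Sen operator of each isoclinic graded piece vanishes, and then invoke rigidity to conclude finiteness.
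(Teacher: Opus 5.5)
Your Step~3 is a genuine gap: the claim that each graded piece has finite geometric monodromy is false, even under the hypotheses of the theorem. Take $X$ to be the closed unit disc with smooth model $\Spf\calO_K\langle T\rangle$. Lift an infinite Artin--Schreier--Witt $\Z_p$-tower of $\mathbb{A}^1_k$ to finite \'etale covers of this model, and let $\mathbb{L}$ be the rank-one $\Z_p$-local system obtained by composing the resulting $\pi_1\to\Z_p$ with $a\mapsto(1+p)^a$. It extends over the integral model, so it is crystalline with Hodge--Tate weight $0$. Hence $\bM_0(\mathbb{L})=\bM_1(\mathbb{L})$, $\calE_0=\calE_1$, $\NP(\mathbb{L})\equiv\{0\}$, and the HN filtration is trivial. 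The only graded piece is $\mathbb{L}$ itself, and its geometric monodromy is $\Z_p$. The Lie-algebra step is where the reasoning fails. By Sen's theorem, vanishing of the Sen operator controls the image of the inertia group at each point, not the image of the global fundamental group; and Liu--Zhu rigidity propagates the de Rham property, not finiteness. Relatedly, in Step~2 you cannot treat a $\bDhat_\lambda$-local system as a representation of $\pi_1^{\et}$ with compact image, since a $\Q_p$-local system on a rigid space need not admit a global $\Z_p$-lattice.

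What you actually need, as your last paragraph correctly says, is only that inertia become trivial at every classical point, uniformly. The paper obtains this as follows.

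The vanishing of the Sen operator does not come from constancy of slopes. It comes from $\calE_0$ being built from $\bM_0(\mathbb{L})$. The restriction of $\calE_0$ to the Cartier divisor of the untilt is $D_{\dR}(\mathbb{L})\otimes\widehat{\calO}$, so each HN constituent $\calG_i$ restricts to a subquotient of an \'etale vector bundle. The Morita isomorphism of Lemma~\ref{lem: morita} then shows that the underlying $\Q_p$-local system $\bV$ of $\calG_i$ is Hodge--Tate of weight $0$.

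Next, one passes to the geometric covering representing the lattice sheaf of $\bV$ (Theorem~\ref{thm: generalized de Jong lattice sheaf}); this covering is in general not finite. One then applies Shimizu's theorem (Theorem~\ref{thm: RpMT for HT0}). After a finite \'etale cover, the monodromy lands in a torsion-free congruence subgroup of $\mathrm{GL}_n(\Z_p)$. The inertia image at each classical point is finite by Sen, hence trivial.

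This produces an \'etale covering, not a finite \'etale one. Your claimed finite \'etale $Y$ is stronger than what the paper proves; the paper gets finiteness only locally (Theorem~\ref{thm: constant case}).

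Finally, the ``nilpotent extension data'' in Step~4 must be supplied by the Fargues--Fontaine Hyodo-type lemma (Lemma~\ref{lem: Hyodolem}, Theorem~\ref{thm: key from FF}(2)). HN constituents that are log-crystalline with $B_{\dR}^+$-flat stalks force $\calE_0|_{X_s}$, hence $\mathbb{L}|_s$, to be log-crystalline.
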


Here, the Newton polygon function $\NP(\mathbb{L})$ is defined via the Newton polygon function of the admissible modification associated with the de Rham local system $\mathbb{L}$. We will review this construction later in the introduction; it is essentially motivated by Pappas and Rapoport \cite[\S2.6]{Pappas-Rapoport-padicsht}, who used this translation to study integral models of Shimura varieties. Furthermore, we prove that there is always a dense open subset of $\lvert X \rvert$ on which the Newton polygon function is locally constant (cf.\ Proposition~\ref{prop: CNP is dense open}). We will show that Theorem~\ref{thm: RpMT over dense open} holds over this dense open subset. We emphasize that Theorem~\ref{thm: CNP to RpMT} is genuinely global: even if $\RpMT$ is known to hold in a neighborhood of every point, this does not by itself produce an \'etale covering of the entire space satisfying the properties asserted in the theorem.

This shows that the relative $p$-adic monodromy theorem completely follows from a purely topological property: the local constancy of the Newton polygon function associated with $\mathbb{L}$. While this implication cannot always be reversed globally, we will demonstrate that the converse holds locally. To make this precise, we view $X$ as an adic space and focus on rank-$1$ points. The second main result of this paper establishes that the local relative $p$-adic monodromy theorem around these points is governed entirely by this topological condition.

\begin{thm}[$\RpMT_1 \iff \CNP_1$, Theorem~\ref{thm: one direct} and \ref{thm: constant case}]\label{thm:main_rpmt1}
Let $X$ be a smooth rigid-analytic space over a $p$-adic field $K$, and let $\mathbb{L}$ be a de Rham $\mathbb{Q}_p$-local system on $X$. The local relative $p$-adic monodromy theorem around rank-$1$ points \emph{($\RpMT_1$)} holds for $\mathbb{L}$ if and only if the Newton polygon function $\NP(\mathbb{L})$ is locally constant around rank-$1$ points.
\end{thm}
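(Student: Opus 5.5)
Both directions rest on the admissible modification attached to $\mathbb{L}$. By Scholze's relative de Rham comparison, a de Rham $\mathbb{Q}_p$-local system gives, on the pro-\'etale site of $X$, the $\mathbf{B}_{\mathrm{dR}}^+$-lattice $\mathbb{M}=(\mathcal{O}\mathbb{B}_{\mathrm{dR}}^+\otimes \mathcal{O}_X)^{\nabla}$-type modification of the trivial bundle $\mathbb{L}\otimes\mathcal{O}$ on the relative Fargues--Fontaine curve. We write $\mathcal{E}(\mathbb{L})$ for the resulting vector bundle; $\NP(\mathbb{L})(x)$ is the Harder--Narasimhan polygon of $\mathcal{E}(\mathbb{L})$ at the geometric point over $x$. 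Over a classical point $x$, Fontaine's theory identifies this polygon with the Newton polygon of the $\varphi$-module $D_{\mathrm{pst}}(\mathbb{L}_x)$, with Weil--Deligne data. Log-crystallinity over an unramified-after-base-change field is then equivalent to the isocrystal being the ``honest'' one, up to the finite inertia action. The key point is that constancy of the polygon is what lets $\mathcal{E}(\mathbb{L})$ descend to an isocrystal-like object in families.

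For $\RpMT_1\Rightarrow\CNP_1$ (Theorem~\ref{thm: one direct}), fix a rank-$1$ point $x$ and an \'etale neighborhood $V\to X$ of $x$ on which $\mathbb{L}|_V$ is log-crystalline at all classical points. Shrinking $V$, I will use the known purity and rigidity results for crystalline local systems (Guo--Reinecke, Du--Liu--Moon--Shimizu, Guo--Yang) to upgrade pointwise log-crystallinity to a log $F$-isocrystal $\mathcal{E}$ with Frobenius on a semistable formal model $\mathfrak{V}$ of $V$. The Newton polygon of $\mathcal{E}$ at a point is computed from its specialization to the special fibre. Near a rank-$1$ point we may arrange the tube over a small open of the special fibre, and by Grothendieck specialization the Newton polygon is upper semicontinuous there. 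In addition, the monodromy operator $N$ is nilpotent and horizontal, which forces the polygon to be constant after shrinking, since the generic point of the component carries the minimum. Transporting back along the comparison shows that $\NP(\mathbb{L})$ is locally constant on the image of $V$, which contains a neighborhood of $x$.

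For $\CNP_1\Rightarrow\RpMT_1$ (Theorem~\ref{thm: constant case}), I will reduce to the global statement of Theorem~\ref{thm: CNP to RpMT}. If $\NP(\mathbb{L})$ is constant on an open neighbourhood $W$ of the rank-$1$ point $x$, choose a connected smooth affinoid $W'\subset W$ containing $x$ and apply Theorem~\ref{thm: CNP to RpMT} to $\mathbb{L}|_{W'}$. This yields a connected \'etale covering $Y\to W'$ on which $\mathbb{L}$ is log-crystalline at all classical points, which is exactly $\RpMT$ near $x$. One subtlety must be checked: neighborhoods of rank-$1$ points in $|X|$ contain rational affinoid neighbourhoods, so connected smooth choices exist. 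This holds because rank-$1$ points have a cofinal system of affinoid neighborhoods that can be taken connected.

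The main obstacle is the first direction. There one must extract constancy of the polygon, not merely semicontinuity, from local log-crystallinity, and compare the relative Fargues--Fontaine polygon with the isocrystal polygon uniformly in the family. I will first try to handle this by showing that the log $F$-isocrystal has constant slopes on an overconvergent neighbourhood, using the slope filtration theorem of Kedlaya over a small disc around the specialization of $x$. The subtle point is that the polygon has to be constant on a genuine neighborhood of $x$, which contains points specializing to several strata.
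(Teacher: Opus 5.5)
\textbf{Direction $\RpMT_1\Rightarrow\CNP_1$: the source of constancy is wrong.} You get constancy from the special fibre, but the Newton polygon of $\calE_{\cris}$ there is in general \emph{not} locally constant, and nilpotence or horizontality of $N$ does not change this. Take the Legendre example of the introduction with $p=7$. There $\bL$ is crystalline on the good-reduction locus, so $N=0$. Yet the polygon of $\calE_{\cris}$ on $\mathbb{A}^1_{\F_7}\setminus\{0,1\}$ jumps at the closed points $\bar\lambda\in\{2,4,6\}$. Every open of the special fibre containing $\bar\lambda=2$ contains the ordinary generic point, and its tube contains points carrying both polygons. Semicontinuity together with ``the generic point carries the minimum'' never yields constancy.

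The missing idea is that constancy on the generic fibre comes from the topology of tubes, not from the special fibre. The paper combines three facts:
\begin{enumerate}
\item $\NP(\bL)(y)=-\NP(\calE_{\cris})(\mathrm{sp}(y_{\max}))$ for every $y$ (Theorem~\ref{thm: NP function under specialization}).
\item The level sets of $\NP(\calE_{\cris})$ are merely locally closed (Grothendieck--Katz).
\item The tube of a locally closed $Z=V(\bar f_1,\dots,\bar f_r)\cap D(\bar g)$ contains, around each of its rank-$1$ points, the open set $\{\lvert g\rvert=1,\ \lvert f_i\rvert^{N_i}\le\lvert\varpi\rvert\}$. All points of this set, and their maximal generalizations, specialize into $Z$ (Theorem~\ref{thm: tubes are open near rk 1}, Lemma~\ref{lem: anticontinuous}).
\end{enumerate}
So, contrary to your closing worry, a suitable neighbourhood of $x$ meets only the stratum of $\mathrm{sp}(x)$. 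Around a supersingular classical point it lies inside the residue disc $\lvert\lambda-2\rvert<1$. No slope filtration over a disc is needed.

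You also do not say where the semistable model comes from. Guo--Yang's criterion requires one, and the paper obtains it from Hartl--Temkin local alteration (Theorem~\ref{thm: localalteration}), after a finite extension of $K$ and \'etale localization. It then descends the neighbourhood using that \'etale maps are open and that $\NP$ is invariant under pullback.

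\textbf{Direction $\CNP_1\Rightarrow\RpMT_1$: the black-box reduction proves the wrong statement.} Theorem~\ref{thm: CNP to RpMT} gives only an \'etale (geometric) covering of $W'$, which is in general not finite. Conjecture~\ref{conj: RpMT1} demands a \emph{finite} \'etale covering of an open neighbourhood of $x$, and that finiteness is exactly what separates $\RpMT_1$ from $\RpMT$. The paper instead proves the local Theorem~\ref{thm: constant case}:
\begin{enumerate}
\item On a rational open where $\NP(\calE_0)$ is constant, take the global HN filtration of $\calE_0$.
\item Write each graded piece as $\bV_i\otimes_{\bDhat_{\lambda}}\calO(\lambda)$, and show via the Morita lemma that the underlying $\Q_p$-local system is Hodge--Tate of weight $0$.
\item Shrink to a rational open on which this local system is an isogeny $\Z_p$-local system (Theorem~\ref{thm: equiv of all kinds of locsys}(3b)).
\item Shimizu's Theorem~\ref{thm: RpMT for HT0} then yields a \emph{finite} \'etale cover, and Theorem~\ref{thm: key from FF} gives pointwise log-crystallinity.
\end{enumerate}
Your reduction goes through if you invoke this local version, or its proof, instead of the global theorem.
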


Here, by the local relative $p$-adic monodromy theorem around rank-$1$ points, we mean the following conjecture.

\begin{conj}[$\RpMT_1$]\label{conj: RpMT1}
Let $X$ be a smooth rigid-analytic space over a $p$-adic field $K$, and let $\mathbb{L}$ be a de Rham $\mathbb{Q}_p$-local system on $X$. Every rank-$1$ point $x \in X$ admits an open neighborhood $V \subset X$ and a \emph{finite} \'etale covering $V' \to V$ such that the restriction $\mathbb{L}|_{V'}$ is log-crystalline at all classical points of $V'$.
\end{conj}

\begin{rem}
\begin{enumerate}
	\item In \cite{Shimizu-monodromy}, Shimizu establishes a local relative $p$-adic monodromy theorem around classical points. As we discuss later in Remark~\ref{rem: shimizu thm}, this result provides an alternative proof for the local constancy of the Newton polygon function around classical points. In forthcoming work, we will directly prove $\CNP_1$ for de Rham local systems.
	\item In \cite[\S3.2]{kedlaya2025monodromyfamily}, Kedlaya proposes a strategy toward $\RpMT_1$ using the theory of $p$-adic differential equations in families. In this paper, we do not take $\RpMT_1$ as an input unless explicitly stated. In particular, Theorem~\ref{thm: RpMT over dense open} is proved without assuming $\RpMT_1$. 
	\item However, the local results mentioned above, combined with Theorem~\ref{thm: CNP to RpMT}, yield global implications. For example, Shimizu's result implies that the dense open subset $U$ in Theorem~\ref{thm: RpMT over dense open} contains all classical points (cf.\ Corollary~\ref{cor: RpMT over dense}). Furthermore, we show that $\RpMT_1$ implies a substantially stronger statement; see Theorem~\ref{CNP to RpMTw}. We further conjecture that $\RpMT_1$ should fully imply $\RpMT$; see Remark~\ref{rem: on RpMT1 to RpMT}.
\end{enumerate}
\end{rem}

\begin{rem}\label{rem: intro rem on main conjecture}
By the pointwise criterion of log-crystallinity in \cite{GuoYangpointwise} and Hartl--Temkin's $p$-adic local alteration (cf.\ Theorem~\ref{thm: localalteration}), the above conjecture is equivalent to the statement that there is a semistable $p$-adic formal scheme $\mathfrak{V}'$ over $\calO_L$, for some finite extension $L$ of $K$, such that $V'$ as in the statement of Conjecture~\ref{conj: RpMT1} is the generic fiber of $\mathfrak{V}'$, and $\mathbb{L}|_{V'}$ is log-crystalline with respect to the semistable integral model $\mathfrak{V}'$ in the sense of Faltings, cf. Definition~\ref{defn: log crystalline loc sys}.
\end{rem}

There are two remarks we wish to make regarding Theorem~\ref{thm:main_rpmt1}.

\begin{rem}\label{rem: intro rem on main thm }
\begin{enumerate}
	\item The claim $\CNP_1 \Longrightarrow\RpMT_1$ in Theorem~\ref{thm:main_rpmt1} is a direct consequence of Theorem~\ref{thm: CNP to RpMT}. We will see that over a base where the Newton polygon function $\NP(\mathbb{L})$ associated with a de Rham local system $\mathbb{L}$ is constant, both the local and global relative $p$-adic monodromy theorems hold. See Theorems~\ref{thm: constant case} and \ref{thm: global case} for details.
	\item The converse, however, fails if one replaces rank-$1$ points by arbitrary points: the Newton polygon function associated with a de Rham local system is not always locally constant around higher-rank points; see Remark~\ref{rem: nonCNP}.
\end{enumerate}
\end{rem}

As we will also demonstrate, $\RpMT_1$, or equivalently by Theorem~\ref{thm:main_rpmt1}, the local constancy of the Newton polygon function associated with a de Rham local system around rank-$1$ points (which we denote $\CNP_1$ for short), actually has global implications.

\begin{thm}[{$\RpMT_1\Longrightarrow\RpMT^w$, cf.\ Corollary~\ref{cor: RpMTw}}]\label{CNP to RpMTw}
Let $X$ be a connected smooth rigid-analytic space over a $p$-adic field $K$, and let $\bL$ be a de Rham $\mathbb{Q}_p$-local system on $X$. \emph{Assuming $\RpMT_1$}, there is a smooth rigid-analytic space $Y$, and an \'etale morphism $\pi\colon Y \to X$ of smooth rigid-analytic varieties over $K$, such that $\mathbb{L}|_{Y}$ is log-crystalline at all classical points of $Y$, and the image of $\pi$ contains all rank-$1$ points of $X$.
\end{thm}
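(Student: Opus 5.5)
The plan is to glue the local coverings supplied by $\RpMT_1$ into a single étale map, using quasi-compactness of suitable pieces of $X$ together with the fact that rank-$1$ points are not detected by finitely many neighborhoods in general.

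For each rank-$1$ point $x\in X$, Conjecture~\ref{conj: RpMT1} (assumed) gives an open neighborhood $V_x\ni x$ and a finite étale covering $V'_x\to V_x$ such that $\mathbb{L}|_{V'_x}$ is log-crystalline at all classical points of $V'_x$. After shrinking, we may take each $V_x$ to be an affinoid open containing $x$; shrinking is harmless because the restriction of a finite étale cover to an open subset is still finite étale, and log-crystallinity at classical points is a pointwise condition. We then set
\[
Y:=\coprod_{x} V'_x,
\]
with $\pi\colon Y\to X$ the composite $V'_x\to V_x\hookrightarrow X$ on each component. Then $\pi$ is étale, because it is a composite of a finite étale map and an open immersion. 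Also $Y$ is smooth over $K$, since $X$ is. Its image is $\bigcup_x V_x$, which contains every rank-$1$ point. Finally, $\mathbb{L}|_Y$ is log-crystalline at each classical point of $Y$, because every such point lies in some $V'_x$; the condition is checked pointwise via \cite{GuoYangpointwise}.

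The one point needing care is whether a disjoint union of uncountably many affinoids is an acceptable rigid-analytic space. It is, as an adic space locally of finite type over $K$, though it is not quasi-compact or quasi-separated in the bounded sense. If one insists on a more economical $Y$, I would reduce the index set as follows. By Hartl--Temkin / compactness, each quasi-compact open $W\subset X$ has a maximal Hausdorff quotient $[W]$, which is compact, and the rank-$1$ points of $W$ map bijectively onto it. Each $V_x$ contains an open neighborhood of the image of $x$ in $[W]$, pulled back. Finitely many $V_x$ therefore cover the rank-$1$ points of $W$. Exhausting $X$ by countably many such $W$ then gives countably many components.

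I expect this compactness reduction to be the main (and only) obstacle. The issue is that a neighborhood $V_x$ of a rank-$1$ point need not contain the full fiber of $W\to[W]$ over $x$. One must therefore check that the $V_x$ can be chosen to be open neighborhoods of the corresponding point of $[W]$, e.g.\ by taking $V_x$ to be rational domains defined by strict inequalities $|f_i|<1$ around a rank-$1$ point, which are preimages of opens in $[W]$. Since the statement only requires the image to contain the rank-$1$ points, not all points, it is consistent that higher-rank points may be missed (cf.\ Remark~\ref{rem: intro rem on main thm }(2)). This is exactly why the conclusion is weaker than $\RpMT$.
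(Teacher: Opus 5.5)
Your argument is correct, and it takes a genuinely different and much more direct route than the paper. Taking $Y=\coprod_x V'_x$ over all rank-$1$ points $x$ proves the statement exactly as written. Each $V'_x\to V_x\hookrightarrow X$ is \'etale, $Y$ is smooth, and the image contains every rank-$1$ point. Log-crystallinity at classical points is pointwise by definition, so you do not need \cite{GuoYangpointwise} here.

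The paper instead first converts $\RpMT_1$ into the topological statement $\CNP_1$ (Theorem~\ref{thm: one direct}). That step uses Hartl--Temkin local alteration, the pointwise criterion, and the specialization-map description of $\NP(\bL)$. It then invokes Proposition~\ref{prop: properties of |S|P}: since $X$ is connected, there are only finitely many nonempty Newton parts $\lvert X\rvert^{\calP}$, and each interior $\lvert X\rvert^{\calP,\circ}$ contains all rank-$1$ points of $\lvert X\rvert^{\calP}$. On each interior $\NP(\bL)$ is constant, so Theorem~\ref{thm: global case} gives a geometric \'etale covering of $\lvert X\rvert^{\calP,\circ}$. The map $\pi$ is the finite disjoint union of these (Corollary~\ref{cor: RpMTw}).

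The two routes buy different things.
\begin{itemize}
\item Your route shows that $\RpMT_1\Rightarrow\RpMT^w$, as literally stated, is essentially formal.
\item The paper's route yields more structure. Over the single open $U=\bigcup_{\calP}\lvert X\rvert^{\calP,\circ}$, which contains all rank-$1$ points and on which $\NP(\bL)$ is locally constant, $\pi$ is a geometric covering in the sense of Achinger--Lara--Youcis. Your $\pi$ is only an \'etale family of local pieces, with no covering-space structure over its image.
\item The paper's structure is what gives content to the description ``non-admissible \'etale covering''. It is also what makes the conjectured purity upgrade $\RpMT^w\Rightarrow\RpMT$ (Remark~\ref{rem: on RpMT1 to RpMT}) plausible. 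Moreover, the paper's argument only needs the purely topological hypothesis $\CNP_1$.
\end{itemize}

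You should drop your optional countability reduction. It is unnecessary, and as sketched it is wrong. The step ``each $V_x$ contains the preimage of an open neighbourhood of the image of $x$ in $[W]$'' fails, and you cannot shrink the $V_x$ supplied by the conjecture to make it true. For instance, the open set $\{\lvert T\rvert\geq 1\}$ in the closed unit disc contains the Gauss point. However, every Berkovich neighbourhood of the Gauss point contains points with $\lvert T\rvert<1$. Consequently the rank-$1$ locus with the subspace topology is not compact, and ``finitely many $V_x$ cover the rank-$1$ points of $W$'' does not follow. Note also that a disjoint union of affinoids is quasi-separated; only quasi-compactness fails, and the statement does not require it.
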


In other words, for any de Rham local system over a connected rigid-analytic variety, there is a \emph{non-admissible} \'etale covering such that the de Rham local system becomes pointwise log-crystalline. We refer to this result as $\RpMT^w$. To compare with this, we recall the relative $p$-adic monodromy conjecture.

\begin{conj}[{$\RpMT$, cf.\ \cite[Rem.~1.4]{liu-zhu-rigidity}}]\label{conj: RpMT}
Let $X$ be a connected smooth rigid-analytic space over a $p$-adic field $K$, and let $\bL$ be a de Rham $\mathbb{Q}_p$-local system on $X$. There is an \'etale \emph{covering} $\pi\colon Y \to X$ of smooth rigid-analytic varieties over $K$ such that $\mathbb{L}|_{Y}$ is log-crystalline at all classical points.
\end{conj}

\begin{rem}\label{rem: on RpMT1 to RpMT}
\begin{enumerate}
	\item Note that although we deduce a global theorem from local behavior around rank-$1$ points, we do not require $X$ to be quasi-compact. 	
	\item The main results of this paper establish the following logical relations:
	\[
	\RpMT_1 \iff \CNP_1 \iff \RpMT^w \Longleftarrow \RpMT.
	\] 
	Furthermore, because higher-rank points in adic spaces behave analogously to higher-codimension boundaries, it is natural to expect purity-type results for de Rham and log-crystalline local systems over smooth adic spaces. We therefore conjecture that $\RpMT^w \implies \RpMT$, which would render all four statements actually equivalent.
	\item In \cite[Rem.~1.4]{liu-zhu-rigidity}, Liu and Zhu conjecture that if one further assumes $\mathbb{L}$ is a de Rham $\Z_p$-local system, the covering space $Y$ can be chosen to be \emph{finite} \'etale over $X$. At present, we are able to establish this finiteness property only locally (cf.\ Theorem~\ref{thm: constant case}).
\end{enumerate}	
\end{rem}

\begin{rem}
Conjecture~\ref{conj: RpMT} is widely open. The only known case is when $\bL$ is Hodge--Tate of weight $0$, which is due to Shimizu (cf.\ \cite[Thm.~1.6]{Shimizu-HT}); Theorem~\ref{thm: CNP to RpMT} should be viewed as a generalization. 
\end{rem}

A different but closely related application is a proof of a conjecture of Howe and Klevdal.

\begin{thm}[{\cite[Conj.~4.3.4]{HoweKlevdalIII}}]\label{thm: intro HK conjecture}
Let $X$ be a smooth rigid-analytic variety over a $p$-adic field $K$. The functor
\[
\omega_{\et}^{\mathcal L}\colon
\AdmPair^{\mathrm{basic},\mathrm{pgr}}(X)
\xrightarrow{\sim}
\mathrm{VHS}(X)
\]
is an equivalence.
\end{thm}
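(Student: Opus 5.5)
We need to prove that $\omega_{\et}^{\mathcal L}$ is fully faithful and essentially surjective. Full faithfulness should be formal. An admissible pair over $X$ consists of a $\Qp$-local system (or $G$-torsor) together with a $\BdRp$-lattice/Hodge filtration modification whose underlying isocrystal datum is basic. Morphisms are compatible morphisms of local systems. So the functor $\omega_{\et}^{\mathcal L}$, which remembers the local system with its de Rham structure, is faithful. Because the isocrystal datum is basic, it is recovered from the local system via the modification $\calE \dashrightarrow \calE_b$ on the relative Fargues--Fontaine curve, where $\calE_b$ has constant slope. Fullness then follows from the fully faithfulness of $\mathbb L \mapsto$ (associated admissible modification), as reviewed in the introduction following Pappas--Rapoport. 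I would check this first, possibly on a pro-étale cover where everything trivializes.

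Essential surjectivity is the real content. Start with a variation of $p$-adic Hodge structure $\bL$ on $X$, i.e.\ a de Rham local system whose admissible modification is, by the defining condition of $\mathrm{VHS}(X)$ in Howe--Klevdal, basic at every point. I would aim to produce:
\begin{enumerate}
\item an isocrystal $E$ over $\breve K$ (or over a finite extension) of basic slope;
\item an identification of the modification of $\bL\otimes\calO$ with $\calE(E)$, after potential good reduction.
\end{enumerate}
The key observation is that basic at every point means the Newton polygon function $\NP(\bL)$ is constant. It is a straight line, determined by rank and degree, and these are locally constant on connected $X$ (Hodge polygon endpoints are constant). Hence $\CNP$ holds, and Theorem~\ref{thm: CNP to RpMT} applies: there is a connected étale covering $Y\to X$ such that $\bL|_Y$ is log-crystalline at all classical points. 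This gives potential good reduction pointwise, which is what ``pgr'' demands. Granting the fine version in Theorem~\ref{thm: constant case}, locally one even gets a finite étale cover with a semistable model.

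The main obstacle is descending the isocrystal. At each classical point $y$, $D_{\cris}$ (or $D_{\mathrm{st}}$ with $N$) gives a basic isocrystal. I need to show the following:
\begin{enumerate}
\item $N=0$. Monodromy must vanish because a nonzero $N$ would force non-basic slopes in the weakly admissible filtered module attached to a basic Newton polygon. Here I would argue via the Newton polygon of the $(\varphi,N)$-module, where $N$ lowers slope by $1$, contradicting isoclinic slope.
\item These isocrystals glue to a constant one.
\end{enumerate}
For the gluing I would use the relative Fargues--Fontaine curve over $Y$: the modified bundle is fiberwise semistable of constant slope. By Kedlaya--Liu/Fargues--Scholze it is then pro-étale locally $\calE_b\otimes$ a $\underline{J_b(\Qp)}$-torsor, with $J_b$ the automorphism group of the basic isocrystal. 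Log-crystallinity at classical points (with $N=0$) should force this torsor to be trivialized by the crystalline structure, giving the admissible pair on $Y$. I would then descend along $Y\to X$ using full faithfulness. The delicate point is verifying that the torsor's monodromy is compatible with descent across the covering; I expect this to require the global connectedness in Theorem~\ref{thm: CNP to RpMT}.
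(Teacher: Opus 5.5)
\textbf{There is a genuine gap in essential surjectivity,} at the step you describe as ``should force this torsor to be trivialized.''

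\emph{What you get right.} In the isoclinic case $\calE_0(\mathbb V)\simeq\mathbb W\otimes_{\bDhat_\mu}\calO(\mu)$ for a $\bDhat_\mu^{\op}$-local system $\mathbb W$ (Theorem~\ref{thm: Dlambda locsys and ss vb}). Over the frame torsor $P_D(\mathbb W)=\underline{\Isom}_D(D^m,\mathbb W)$, a torsor under $\mathrm{GL}_m(D)$ with $D=D_\mu$ (your $J_b(\Qp)$), the pair becomes neutral tautologically.

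\emph{Why your next step fails.} This torsor is in general nontrivial. Already over $\Spa K$, an unramified character of infinite order gives a frame torsor that no finite \'etale $Y$ trivializes. So nothing will ``trivialize'' it on $Y$. What Definition~\ref{defn: pgr admissible pair} actually requires is that $P_D(\mathbb W)$ be \emph{potentially unramified} in the sense of Definition~\ref{defn:unramified-proet}. That means: \'etale locally, its finite levels $P_D(\mathbb W)/\underline N$ are \emph{strongly \'etale}, i.e.\ they have unramified residue field extensions at every point of the adic space. ``pgr'' is not pointwise potential good reduction at classical points. Theorem~\ref{thm: CNP to RpMT} only gives information at classical points. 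By the purity theorem (Theorem~\ref{thm:strong-etale-purity}), strong \'etaleness over an affinoid with regular model $\Spf(A)$ is governed precisely by the inertia groups $I_{\widehat F_{\mathfrak q}}$ at the non-classical points attached to height-one primes $\mathfrak q\ni\varpi_L$. These complete discretely valued fields have imperfect residue fields, and your argument never reaches them.

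\emph{Smaller points.} Your worry about descending along $Y\to X$ is misplaced: the pair already lives on $X$, and pgr is a property of it. Fullness is also not purely formal. It uses that the $\BdRp$-lattice of a pgr basic pair is the one coming from $D_{\dR}$, which the paper takes from Howe--Klevdal.

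\textbf{The missing idea} is in Theorem~\ref{thm: HT0 implies unramified proet}, and it bypasses the monodromy theorem entirely.
\begin{enumerate}
\item By Lemma~\ref{lem: morita} and $\bM_0(\mathbb V)\otimes_{\BdRp,\theta}\widehat{\calO}\simeq D_{\dR}(\mathbb V)\otimes\widehat{\calO}$, the underlying $\Qp$-local system $\mathbb U$ of $\mathbb W$ satisfies $\mathbb U\otimes_{\Qp}\widehat{\calO}\simeq D_{\dR}(\mathbb V)^{\oplus h}\otimes\widehat{\calO}$. So $\mathbb U$ is Hodge--Tate of weight $0$ at the sheaf level, and this survives pullback to each $\widehat F_{\mathfrak q}$ of a local semistable model.
\item Ohkubo's theorem \cite[Cor.~3.2]{OhkuboLie} then shows that $I_{\widehat F_{\mathfrak q}}$ acts through a finite group.
\item Work over $Y_K=P_D(\mathbb W)/\underline K$ for the torsion-free compact open $K=(1+p^3M_{mh^2}(\Zp))\cap\mathrm{GL}_m(D)$. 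Torsion-freeness forces that finite action to be trivial.
\item Purity converts trivial inertia at all such $\mathfrak q$ into strong \'etaleness of every finite level $P_D(\mathbb W)/\underline N\to Y_K$.
\end{enumerate}
To complete your own route instead, you would need three things. First, log-crystallinity on a semistable model at the sheaf level, not merely at classical points. Second, a result that crystalline with isoclinic Frobenius forces the $D_\mu$-part to be unramified over the imperfect-residue-field fields $\widehat F_{\mathfrak q}$. Third, still the purity theorem. That is strictly more than the Hodge--Tate argument. Your $N=0$ step is correct but not needed.
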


We recall the definitions of the two categories in \S\ref{sec: proof of HK conjecture}. The functor in the theorem is constructed in \cite[Prop.~4.3.3]{HoweKlevdalIII}, and its full faithfulness follows from \cite[Thm.~3.1.2]{HoweKlevdalIII}. It therefore remains to prove essential surjectivity.

Roughly speaking, the proof of the above conjecture shows that basic admissible pair attached to a variation of $p$-adic Hodge structure on $X$ becomes neutral after pullback to a potentially unramified pro-\'etale covering of $X$, see Definition~\ref{defn:unramified-proet}. That is, after such pullback, the admissible pair is associated with a constant isocrystal. This provides the structure required to define period maps associated with variations of $p$-adic Hodge structure envisioned by Howe--Klevdal. A new ingredient in the proof is a purity theorem for strongly \'etale morphisms, established in Appendix~\ref{app:valuation}.

\medskip\noindent
\textbf{Shtukas associated with de Rham local systems.} Let $X$ be a smooth rigid-analytic space over a $p$-adic field $K$, and let $\bL$ be a de Rham local system on $X$. An important arithmetic setting in which these structures naturally emerge is the study of Shimura varieties and their local analogs. For instance, over the rigid analytification of a Shimura variety of Hodge type (base changed to a $p$-adic field), the relative $p$-adic Tate module $\bL$ of the universal abelian scheme defines a $\mathbb{Z}_p$-local system. This $\bL$ is of geometric origin, and it is in fact de Rham by \cite[Cor.~4.9]{liu-zhu-rigidity}. Moreover, the $p$-adic Hodge-theoretic properties of this universal local system are expected to govern the geometry of the Shimura variety, including its reduction modulo $p$. Equivalently, they can be used to demonstrate the existence of, or characterize, integral models. These expectations lie at the heart of the original motivation for relative $p$-adic Hodge theory; for an incomplete list, see \cite{Faltings-CryscohandGalrep}, \cite{Kisin-Sh}, \cite{Pappas-Rapoport-padicsht}, and \cite{ImaiKYtannakian}.

The geometric approach utilized by Pappas and Rapoport originates from Scholze's transformative vision in his Berkeley lectures \cite{SWBerkeleyNotes} and ICM lecture \cite{ScholzeICM}. Scholze defined \emph{local Shimura varieties} as purely $p$-adic analogs of classical Shimura varieties, over which the universal family is governed by a \emph{$p$-adic local shtuka}. Building on this foundation, Pappas and Rapoport demonstrated that these universal shtukas provide the machinery to capture the arithmetic data necessary for the study of canonical integral models.

One of Pappas and Rapoport's observations is that the relevant universal shtuka over the (rigid-analytified) Shimura variety can be constructed from the universal de Rham local system. Moreover, their construction works for a general smooth base. This differs from previous perfectoid methods, which require passing to the infinite-level Shimura variety and then studying the Hodge--Tate period maps (cf.\ Scholze \cite{Scholze15torsion}, Caraiani--Scholze \cite{CaraianiScholzecpt}).

\medskip\noindent
\textbf{The Newton polygon function at a point.} To explain the definition of the Newton polygon function and the construction of Pappas and Rapoport, let us first consider the situation where the base rigid space is a point $x=\Spa(K,\calO_K)$ defined by a $p$-adic field $K$. In this case, a de Rham $\Z_p$-local system over $x$ is equivalent to a stable $\Z_p$-lattice $T$ in a de Rham representation of $G_K$. The admissible modification on the Fargues--Fontaine curve associated with a de Rham local system at $x$ was essentially developed by Fargues--Fontaine in \cite{FarguesFontaineAsterisque} by geometrizing the earlier work of Berger on $B$-pairs into vector bundles over the Fargues--Fontaine curve.

Recall that the absolute Fargues--Fontaine curve $X_C$ is defined for any complete algebraically closed non-archimedean field $C$ over $\F_p$. Fargues and Fontaine showed that isomorphism classes of vector bundles over $X_C$ are in bijection with multisets of rational numbers given by the Harder--Narasimhan slopes of the vector bundle, cf.\ \cite{FarguesFontaineAsterisque}. To discuss the Newton polygon function associated with a de Rham representation of $G_K$, let $C$ be the tilt of a complete algebraic closure $\C_p$ of $K$. There is a distinguished closed point $\infty$ on $X_C$ such that the complete stalk $\calO_{X_C,\infty}^\wedge$ of the structure sheaf $\calO_{X_C}$ at $\infty$ is isomorphic to the de Rham period ring $B_{\dR}^+$ defined by Fontaine, and $X_C\setminus \{\infty\}$ is affine with ring global sections $B_e$ that is a PID and naturally a subring of $B_{\dR}$. The Beauville--Laszlo theorem holds for the Fargues--Fontaine curve $X_C$, meaning that a vector bundle on $X_C$ is equivalent to a pair of vector bundles over $B_e$ and $B_{\dR}^+$ together with gluing data over $B_{\dR}$. Given these facts, from a de Rham representation $V$ of $G_K$, one can construct a vector bundle $\calE_0(V)$ corresponding to the triple $(V\otimes B_e, D_{\dR}(V)\otimes_K B_{\dR}^+, \alpha_{\dR})$, where the gluing is provided by the de Rham comparison isomorphism $\alpha_{\dR} \colon D_{\dR}(V)\otimes_K B_{\dR} \xrightarrow{\cong} V\otimes_{\Q_p} B_{\dR}$. The Newton polygon function $\NP(V)$ is defined to take values in multisets of rational numbers, and the value $\NP(V)(x)$ at $x$ is defined as the multiset of slopes of $\calE_0(V)$. 

The $p$-adic Hodge-theoretic meaning of the multiset of slopes of $\calE_0(V)$ was also explained in \cite{FarguesFontaineAsterisque}, clarifying its natural relationship to the $p$-adic monodromy theorem. By Berger's theorem, the de Rham representation $V$ is potentially log-crystalline; let $D_{\pst}(V)$ be its potentially log-crystalline period module, which is by definition an admissible filtered $(\varphi, N, G_K)$-module. Fargues--Fontaine showed that the multiset of slopes defined by the Frobenius structure of the underlying $\varphi$-module of $D_{\pst}(V)$ is equal to $\NP(V)(x)$ up to a negative sign. The key input is Fargues' theorem regarding the classification of admissible modifications of vector bundles on $X_C$ along $\infty$; it also suggests that one does not just work with $\calE_0(V)$; $V$ and $D_{\pst}(V)$ define a $G_K$-equivariant admissible modification of vector bundles $\calE_0(V) \dashrightarrow \calE_1(V)$ along $\infty$.

\medskip\noindent
\textbf{The Newton polygon function associated with arithmetic local systems.} Using the relative $p$-adic Hodge theory of de Rham local systems developed in \cite{scholze-p-adic-hodge} and \cite{kedlaya-liu-relative-padichodge}, and building upon the work of Pappas and Rapoport \cite{Pappas-Rapoport-padicsht} (who essentially defined $\calE_1(\bL) \dashrightarrow \calE_0(\bL)$ for a de Rham local system $\bL$), we generalize the construction of $\NP(V)$ to $\NP(\mathbb{L})$ as a function from the underlying topological space of $X$ to multisets of rational numbers in \S\ref{sec: Shtukas}. 

To understand the $p$-adic Hodge-theoretic meaning of $\NP(\mathbb{L})$, one must apply recent developments in relative $p$-adic Hodge theory for log-crystalline local systems developed in \cite{du-liu-moon-shimizu-purity-F-crystal}, which generalize the trivial log structure setting from \cite{du-liu-moon-shimizu-completed-prismatic-F-crystal-loc-system,GuoReinecke-Ccris}. The key is that log-crystalline local systems over a semistable $p$-adic formal scheme $\calS$ over $\Spf \calO_K$ can be classified using log prismatic $F$-crystals over $\calS$. Restricting to the perfect site defines an admissible modification of vector bundles over the diamantine relative Fargues--Fontaine curve $X_S^\diamondsuit$ along $S^\diamondsuit \to \Spd K$, where $S$ is the rigid generic fiber of $\calS$. (The definitions of $X_S^\diamondsuit$ and admissible modifications on it are detailed in \S\ref{sec: diamantine FF curves} and \S\ref{sec: Shtukas}). In particular, given a log-crystalline local system $\bL$ on $S$ defined with respect to $\calS$, we will construct an admissible modification $\calE^{\cris}_0(\bL) \dashrightarrow \calE^{\cris}_1(\bL)$ of vector bundles over $X_S^\diamondsuit$ along $S \to \Spd K$. By generalizing Fargues' theorem to families, we can show the following.

\begin{prop}[Proposition~\ref{prop: Tsht are compatible}]
$\calE^{\cris}_0(\bL) \dashrightarrow \calE^{\cris}_1(\bL)$ agrees with $\calE_1(\bL) \dashrightarrow \calE_0(\bL)$, where the second admissible modification is given by viewing $\bL$ as a de Rham local system and applying the construction of Pappas--Rapoport.
\end{prop}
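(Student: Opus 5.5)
The plan is to compare the two admissible modifications one side at a time, using the Beauville--Laszlo description of vector bundles on the relative curve $X_S^\diamondsuit$ near the Cartier divisor $S^\diamondsuit \to \Spd K$. Both modifications share the same ``étale side'': away from the divisor, each is identified with $\bL\otimes\calO$, i.e.\ the vector bundle attached to the pro-étale $\Q_p$-local system. The Pappas--Rapoport bundle $\calE_1(\bL)$ is obtained from $\bL\otimes\calO$ by modifying along the divisor, using the de Rham lattice $\mathcal{D}_{\dR}(\bL)\otimes\BdRp$ inside $\bL\otimes\mathbf{B}_{\dR}$. On the crystalline side, $\calE^{\cris}_0(\bL)$ is defined by evaluating the log prismatic $F$-crystal on perfect prisms and forgetting the Frobenius-twist near the divisor. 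It is therefore the bundle whose restriction away from $\infty$ is, by the étale comparison for log prismatic $F$-crystals in \cite{du-liu-moon-shimizu-purity-F-crystal}, again $\bL\otimes\calO$.

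The first key step is to check that these two identifications of the punctured bundles agree. Both come from the same comparison isomorphism $\mathfrak{M}[1/I]^\wedge_p \simeq \bL\otimes A_{\inf}[1/I]^\wedge_p$ on perfectoid test objects, restricted to the locus where $\varphi^{-1}(I)$, respectively $I$, is invertible. Since vector bundles on $X_S^\diamondsuit$ satisfy v-descent, it suffices to verify everything on affinoid perfectoid $\Spa(R,R^+)$ over $S$, functorially in $R$. On such a test object the relative curve is the Fargues--Fontaine curve built from $A_{\inf}(R^+)$, and the Beauville--Laszlo gluing is with respect to $\BdRp(R)$.

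The second step is to identify the lattices at the divisor. I must show that the $\BdRp(R)$-lattice carried by the crystalline side coincides with $\mathcal{D}_{\dR}(\bL)\otimes_{\calO_X}\BdRp(R)$ inside $\bL\otimes\mathbf{B}_{\dR}(R)$, and that the remaining lattice coincides with $\bL\otimes\BdRp(R)$. The second lattice is the completion of the prismatic module at $\varphi(I)$ (or $I$, depending on normalization), and there it is Frobenius-linearly isomorphic to the étale lattice. The first lattice is the completion at the other point. After the crystalline comparison $\mathfrak{M}\otimes \BdRp \cong D_{\cris}\otimes\BdRp$, the latter is the de Rham lattice, using the compatibility $\mathcal{D}_{\dR}(\bL)\cong D_{\cris}(\bL)\otimes\calO_S$ with filtrations. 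This compatibility is the relative analogue of Fargues' pointwise theorem: log-crystalline implies de Rham, with matching filtered modules, as established in Faltings and in \cite{du-liu-moon-shimizu-purity-F-crystal}. One also has to track the swap of indices $0\leftrightarrow 1$ and the direction of the modification: the Frobenius relates the pair $(\calE_0^{\cris},\calE_1^{\cris})$ to the pair $(\calE_1,\calE_0)$.

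I expect the main obstacle to be making the lattice identification at the divisor precise in the log setting, uniformly in families. The difficulty is that the crystalline period sheaf comparison involves $\mathcal{O}\mathbf{B}_{\cris}$ and the log structure, while the Pappas--Rapoport side involves $\mathcal{O}\mathbf{B}_{\dR}^+$ and Scholze's $D_{\dR}$. I would first reduce to the case of a small semistable chart with a perfectoid toric cover, where both comparisons are explicit. I would then check compatibility on classical points using Fargues' theorem \cite{FarguesFontaineAsterisque}. Finally, I would spread the result out, using that a morphism of vector bundles on $X_S^\diamondsuit$ that is an isomorphism on all geometric rank-one fibers is an isomorphism, together with v-descent to glue.
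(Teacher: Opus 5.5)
Your reduction is the same as the paper's. By Theorem~\ref{thm: Fargues' theorem}, each side is determined by a pair $(\bL,\Xi^+_{\dR})$, and the étale part is $\bL$ on both sides by construction of the étale realization. So everything comes down to identifying the $\BdRp$-lattice at the leg inside $\bL\otimes\bB_{\dR}$. The paper settles this in families, with no pointwise step, via Proposition/Definition~\ref{propdefn: lattice realization} (from \cite[Prop.~3.44(1), Cor.~3.31]{du-liu-moon-shimizu-purity-F-crystal}). There, the completion $\calE_\Prism(B,J)[1/p]\otimes_{B[1/p],\varphi}\bB^+_{\dR}(U)$ of the prismatic module is identified with $\calE_\Q(\bB^+_{\dR})(U)$, and $\calE_\Q(\bB^+_{\dR})\cong\bM_0(\bL)$.

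Your formulation of this step also does not make sense in the relative setting. There is no map $\calO_S\to\bB^+_{\dR}$, so $\mathcal{D}_{\dR}(\bL)\otimes\BdRp$ and $D_{\cris}\otimes\BdRp$ are undefined. The de Rham lattice is $\bM_0(\bL)=(D_{\dR}(\bL)\otimes\mathcal{O}\mathbb{B}^+_{\dR})^{\nabla=0}$. The crystalline lattice is the value of the isocrystal on $\bA_{\cris}$, base-changed to $\bB^+_{\dR}$. They are matched by comparing horizontal sections (crystal structure versus $\nabla$), and the Hodge filtration plays no role for $\bM_0$.

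The route you actually propose for this crux has a genuine gap, in four places.
\begin{enumerate}
\item Your spreading-out criterion needs a morphism between the two modified bundles. Such a morphism exists only if one lattice is already known to lie in the other.
\item The criterion needs an isomorphism on \emph{all} geometric rank-one fibres, but you only check classical points, where the Fargues--Fontaine theory over $p$-adic fields applies. Non-classical rank-one points such as Gauss points are not covered. To pass from classical points you would need a separate argument, for example: the locus where two lattices in $\bL\otimes\bB_{\dR}$ coincide is closed in $|S|$, classical points are dense, and perfectoid test objects are reduced.
\item Even at a classical point, Fargues' theorem only gives the dictionary between Breuil--Kisin--Fargues modules and pairs $(T,\Xi)$; it does not identify $\Xi$ with $D_{\dR}(V)\otimes B^+_{\dR}$. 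That needs the pointwise comparison of the prismatic module with $D_{\mathrm{st}}$, for the log structure pulled back from $\calS$. It also needs compatibility of the relative isocrystal with restriction to the point. This is no easier than the relative statement you are trying to avoid.
\item In your first step, the two identifications of the punctured bundles do not come from ``the same'' comparison isomorphism. On the de Rham side, the embedding of the lattice into $\bL\otimes\bB_{\dR}$ is Scholze's de Rham comparison through $\mathcal{O}\mathbb{B}_{\dR}$. What you need there is a compatibility between comparison isomorphisms, which must come from the relative input, not a tautology.
\end{enumerate}
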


A log-crystalline local system $\bL$ defines an $F$-isocrystal $\calE_\cris(\bL)$ on the log-crystalline site of the reduced special fiber $\calS_{\mathrm{red}}$ of $\calS$, given by the crystalline realization functor of the log prismatic $F$-crystal. Note that for any point $y$ in $\calS_{\mathrm{red}}$, the restriction of $\calE_\cris(\bL)$ at $y$ defines an isocrystal $\calE_\cris(\bL)_y$ over $\kappa(y)$, and $\calE_\cris(\bL)$ defines a Newton polygon function $\NP(\calE_\cris(\bL))$ by taking the multiset of Newton slopes of these isocrystals. On the other hand, since $\calS$ is an admissible formal model of $S$ by definition, it defines a specialization map $\spe_\calS\colon \lvert S \rvert \to \lvert \calS_{\mathrm{red}} \rvert$. We will consider a modified version of it given by
\[
\spe_{\max,\calS}\colon \lvert S \rvert \longrightarrow \lvert \calS_{\mathrm{red}} \rvert
\]
\[
\quad\qquad\qquad s \mapsto \spe_\calS(s_{\max})
\]
where $s_{\max}$ is the (unique) maximal (rank-$1$) generalization of $s$. An important observation of ours is the following.

\begin{thm}\label{thm: intro NP under sp}
The Newton polygon function $\NP(\bL)$ at $s$ in $\lvert S \rvert$ agrees with $\NP(\calE_\cris(\bL))$ at $\spe_{\max,\calS}(s)$ up to a minus sign.
\end{thm}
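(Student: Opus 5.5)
First I reduce to rank-$1$ points. By construction (\S\ref{sec: Shtukas}), $\NP(\bL)$ is defined pointwise by pulling back the admissible modification $\calE_1(\bL)\dashrightarrow\calE_0(\bL)$ along a geometric point $\Spa(C,C^+)\to S$ and taking Harder--Narasimhan slopes of the resulting bundle on the Fargues--Fontaine curve. The slopes of a vector bundle on $X_{C,C^+}$ do not see $C^+$: the curve and its bundles depend only on $C$. Hence $\NP(\bL)(s)=\NP(\bL)(s_{\max})$, and $\spe_{\max,\calS}(s)=\spe_\calS(s_{\max})$ by definition. So it suffices to treat a rank-$1$ point $s$, given by a geometric point $\Spa(C,\calO_C)\to S$.

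\textbf{Step 1: crystalline description.} By Proposition~\ref{prop: Tsht are compatible}, after restricting to $s$ the modification $\calE_1(\bL)\dashrightarrow\calE_0(\bL)$ is identified with $\calE^{\cris}_0(\bL)\dashrightarrow\calE^{\cris}_1(\bL)$. The latter is obtained by evaluating the log prismatic $F$-crystal of $\bL$ on the perfect prism attached to the point, namely $(A_{\inf}(\calO_C),\ker\theta)$ with its induced log structure from $\calS$. So $\NP(\bL)(s)$ is given by the slopes of the vector bundle on $X_{C^\flat}$ coming from the Frobenius module $\mathfrak M:=\calM(A_{\inf}(\calO_C))$, taken away from the leg at $\infty$. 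Up to the sign convention this is $\calE_0(\bL)$ in the notation above.

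\textbf{Step 2: compare with the isocrystal at the specialization.} Let $y=\spe_\calS(s)$, and let $k$ be the residue field of $\calO_C$, an algebraically closed field containing $\kappa(y)$. The map $\calO_C\to k$ induces a map of log prisms $A_{\inf}(\calO_C)\to W(k)$. By the crystal property, $\mathfrak M\otimes W(k)[1/p]$ is the Dieudonné module of $\calE_\cris(\bL)_y\otimes_{\kappa(y)}k$, using the crystalline realization functor of \cite{du-liu-moon-shimizu-purity-F-crystal}. It then remains to show that the Fargues--Fontaine bundle attached to $\mathfrak M[1/p]$ over the locus away from $\infty$ (a $\varphi$-module over $B^+_{\cris}$, or over $B$ restricted to radii avoiding $\infty$) is isomorphic to $\calE(D)$, where $D$ is the isocrystal at $y$. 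Its slopes are then the Newton slopes of $D$ with the sign flip dictated by the convention $\calO(1)\leftrightarrow$ slope $-1$ Frobenius.

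\textbf{Main obstacle.} The key difficulty is precisely this comparison. It is the relative and log analogue of Fargues' theorem that a Breuil--Kisin--Fargues module becomes, after inverting $p$ over $B^+_{\cris}$ and away from the leg, isomorphic to its reduction $D\otimes B^+_{\cris}$. I would prove it by the standard Frobenius-lifting argument. First choose a (non-canonical) section $W(k)\to A_{\inf}$. Then use that $\varphi$ is an isomorphism on $\mathfrak M[1/\varphi(\xi)]$, and construct the unique $\varphi$-equivariant isomorphism $D\otimes B^+_{\cris}\cong\mathfrak M\otimes B^+_{\cris}$ lifting the identity mod $W(\mathfrak m^\flat)$ by iterating $\varphi^{-n}$; convergence holds because $B^+_{\cris}$-Frobenius contracts $W(\mathfrak m)$.

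In the log (semistable) case the point of $\calS$ may lie on a singular stratum. There one must additionally handle the log structure: either pass to the Kisin-type $\mathfrak S$ prism with $u\mapsto[\varpi^\flat]$, so that the log crystal is evaluated compatibly, or note that the monodromy operator $N$ does not affect the Frobenius slopes. Either way the Newton polygon of the $(\varphi,N)$-module equals that of the underlying $\varphi$-module, which gives the claim.
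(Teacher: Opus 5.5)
Your proposal is correct and follows essentially the same route as the paper: reduce to rank-$1$ points using that $\Bun(X_{(L,L^+)})$ does not depend on $L^+$, pull the prismatic $F$-crystal back along $\Spf(\calO_C)\to\calS$ to a Breuil--Kisin--Fargues module over $A_{\inf}(\calO_C)$ whose base change to $W(k)[1/p]$ is the isocrystal at $\spe_{\calS}(s)$, and then use Fargues' comparison near the crystalline point; the paper simply cites that comparison from \cite[Thm.~13.2.1, Thm.~13.4.1, Rem.~13.4.2]{SWBerkeleyNotes}, whereas you reprove it by Frobenius iteration over $B^+_{\cris}$. Your separate treatment of the log case is unnecessary: the shtuka realization and the definition of $\NP(\calE_\cris)$ only involve perfect prisms, on which the log structure is uniquely determined (Remark~\ref{rem: perfect site log vs non log}), so neither a Kisin-type prism nor the monodromy operator ever enters.
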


Theorem~\ref{thm: intro NP under sp} and Remark~\ref{rem: intro rem on main conjecture} provide the key inputs for proving that $\RpMT_1$ implies $\CNP_1$.

\begin{rem}
In Lemma~\ref{lem: anticontinuous}, we show that the map $\spe_{\max,\calS}$ is anticontinuous. That is, locally, the preimage of a closed (resp.\ open) subset of the special fiber is open (resp.\ closed). This explains why the closure relations of the Newton stratification on flag varieties, as noted in \cite[Rem.~1.17]{CaraianiScholzecpt}, are reversed compared to those of the usual stratification on the special fiber. Lemma~\ref{lem: anticontinuous} also generalizes the corresponding statement for Berkovich spaces, formulated using the specialization map $\spe_{\calS}$; cf.\ \cite[Exercise~3.4.1.5]{TemkinIntroBerkovich}.
\end{rem}

\medskip\noindent
\textbf{Newton partition and towards $\RpMT$.} The function $\NP(\bL)$ defines a Newton partition of $X$ with respect to $\bL$. Assuming $\RpMT_1$, we will show that the parts of this partition have large interiors in the following sense.

\begin{propdefn}[cf.\ Definition~\ref{defn: newton partitions} and Proposition~\ref{prop: properties of |S|P}]
Let $X$ be a connected smooth rigid-analytic space over $K$, and let $\bL$ be a de Rham $\mathbb{Q}_p$-local system on $X$. The function $\NP(\bL)$ defines a \emph{finite} Newton partition of the underlying topological space of $X$:
\[
\lvert X \rvert = \coprod_{\calP} \lvert X \rvert^{\calP},
\]
where the disjoint union is indexed by multisets of rational numbers $\calP$, and a point $x \in \lvert X \rvert$ belongs to $\lvert X \rvert^{\calP}$ if and only if $\NP(\bL)(x)=\calP$. The union of the interiors of these parts contains a dense open subset of $\lvert X \rvert$. Moreover, assuming $\RpMT_1$, the interior of each non-empty part $\lvert X \rvert^{\calP}$ contains all rank-$1$ points of $\lvert X \rvert^{\calP}$.
\end{propdefn}

\begin{rem}
We remark that for any analytic adic space $S$ that admits a $\vv$-covering $S_\infty \to S^\diamondsuit$ by a perfectoid space $S_\infty$, any vector bundle $\calE$ over the relative Fargues--Fontaine curve $X_{S_\infty}$ will define a Newton ``partition'' of the underlying topological space of $S$. However, the parts can have empty interiors. A classic example is the Hodge--Tate period map for the modular curve $\pi_{\mathrm{HT}}\colon \mathcal{X} \to \bP_{\Q_p}^{1,\an}$. The Newton polygon function of the universal family defines a decomposition $\mathcal{X} = \mathcal{X}^{\mathrm{ord}}\coprod \mathcal{X}^{\mathrm{ss}}$, and both should have large interiors by our result. However, the image of $\mathcal{X}^{\mathrm{ord}}$ collapses into $\bP^{1}(\Q_p)$ under $\pi_{\mathrm{HT}}$. See the discussions after \cite[Thm.~III.1.2]{Scholze15torsion}.
\end{rem}

When $X$ admits a smooth or semistable formal model over $\mathcal{O}_K$, Theorem~\ref{thm: intro NP under sp} demonstrates that the Newton partition of $X$ is governed entirely by the reduction of $\mathbb{L}$. However, the Newton partition is purely defined on the generic fiber, so even in the absence of a good formal model, the Newton partition remains a robust geometric invariant. The Newton partition serves as a geometric invariant that reflects the potential reduction behavior of $\mathbb{L}$, as elaborated in Remarks~\ref{rem: intro rem on main conjecture} and \ref{rem: intro rem on main thm }, Theorem~\ref{thm: intro NP under sp}, and the following example. 

\medskip\noindent
\textbf{An example.} We find the following example particularly illustrative of a general Newton partition of $X$ defined by a de Rham local system $\mathbb{L}$.

In this example, let $p\neq 2$, and consider the Legendre family of elliptic curves over the base $S = \mathbb{A}^1_{\mathbb{Q}_p} \setminus \{0, 1\}$. Let $X$ be the rigid analytification $S^{\mathrm{an}}$ of $S$, which is the analytic affine line minus two points. Over $S$, we have the universal Legendre elliptic curve given by the Weierstrass equation:
\[
    E_{\lambda}: \quad y^2 = x(x-1)(x-\lambda).
\]
The rational Tate module of the elliptic curve $E_{\lambda}$, denoted $\mathbb{L}=V_p(E_{\lambda})$, defines a de Rham local system on $X$. For any rank-$1$ point $x \in X$, we let $\mathcal{H}(x)$ denote its completed residue field, and let $V_x = \mathbb{L}|_{\overline{x}}$ be the restriction of $V_p(E_{\lambda})$ at a geometric point lying over $x$, viewed as a continuous representation of $\mathrm{Gal}(\overline{\mathcal{H}(x)}/\mathcal{H}(x))$.

When $x$ is a classical point, by $p$-adic Hodge theory, $V_x$ is a potentially semi-stable representation. Let $D_{\mathrm{pst}}(V_x)$ be its associated potentially semi-stable filtered $(\varphi, N)$-module. The Frobenius slopes of $D_{\mathrm{pst}}(V_x)$ coincide with the slopes of the Newton polygon $\mathrm{NP}(\mathbb{L})(x)$ (up to a minus sign), and they are dictated by the stable reduction type of $E_{\lambda,x}$, the fiber of $E_\lambda$ at $x$ (see \cite[Proposition VII.5.5]{Silverman1986book}).

\smallskip\noindent
\textbf{The good reduction locus $X^\circ$.} The Legendre family naturally arises from an integral model over $\mathbb{Z}_p$, defined by the affine scheme
\[
    Y = \mathrm{Spec}\, \mathbb{Z}_p\left[\lambda, \frac{1}{\lambda(\lambda-1)}\right].
\]
The scheme $Y$ is smooth over $\mathbb{Z}_p$ with special fiber $Y_{\mathbb{F}_p} = \mathbb{A}^1_{\mathbb{F}_p} \setminus \{0, 1\}$. Taking the $p$-adic completion of $Y$ yields the $p$-adic formal scheme 
\[
    \widehat{Y} = \Spf\mathbb{Z}_p\langle \lambda, U \rangle / \big(U\lambda(\lambda-1) - 1\big).
\]
The good reduction locus $X^\circ \subset X$ is defined as the Raynaud generic fiber of $\widehat{Y}$. Analytically, $X^\circ$ is the $\mathbb{Q}_p$-affinoid domain defined by $\{ x \in X \mid \lvert\lambda\rvert_x = 1 \text{ and } \lvert\lambda -1\rvert_x = 1 \}$. It is helpful to view $X^\circ$ as the complement in $\mathbb{P}^{1,\mathrm{an}}_{\mathbb{Q}_p}$ of three open unit discs: the discs around $0, 1$, and $\infty$, given respectively by $\{ x \in X \mid \lvert\lambda\rvert_x < 1\}$, $\{ x \in X \mid \lvert\lambda - 1\rvert_x < 1\}$, and $\{x \in X \mid \lvert\lambda\rvert_x > 1\}$. Inside $X^\circ$, the slopes of $D_{\mathrm{pst}}(V_x)$ are $(1/2,1/2)$ if and only if the Legendre curve $E_{\lambda,x}$ has supersingular good reduction; otherwise, the slopes are $(0,1)$ and the Legendre curve $E_{\lambda,x}$ has ordinary good reduction. $X^\circ$ is also the (potentially) good reduction locus in $X$ in the sense of \cite[Thm.~5.17]{ImaiMiedaPotgoodredloci}.

\smallskip\noindent
\textbf{Outside the good reduction locus $X^\circ$.} Consider the locus in $X$ given by points with $|\lambda|_x < 1$, $|\lambda-1|_x < 1$, or $|\lambda|_x > 1$. For classical points in this region, the $p$-adic absolute value of the $j$-invariant of $E_{\lambda,x}$ is strictly greater than $1$. Thus, $E_{\lambda,x}$ has potentially multiplicative reduction (it achieves split multiplicative reduction over a finite extension of $\mathcal{H}(x)$). Consequently, the slopes of $D_{\mathrm{pst}}(V_x)$ are identically $(0, 1)$.

\smallskip\noindent
\textbf{An explicit illustration for $p=7$.} To illustrate this with a concrete example, specialize to $p=7$. Notice that over $\overline{\mathbb{F}}_7$, the reduction of $E_{\lambda,x}$ is supersingular if and only if its parameter satisfies $\bar{\lambda} \in \{2, 4, 6\}$. 

Therefore, the locus of rank-$1$ points in $X$ where the slopes of $D_{\mathrm{pst}}(V_x)$ are strictly $(1/2, 1/2)$ is exactly the disjoint union of three open unit discs strictly contained inside $X^\circ$:
\[
    \{x \in X \mid |\lambda-2|_x < 1\} \sqcup \{x \in X \mid |\lambda-4|_x < 1\} \sqcup \{x \in X \mid |\lambda-6|_x < 1\}.
\]
On the entire complement of these three discs in $X$ (which includes both the ordinary good reduction locus in $X^\circ$ and the potentially multiplicative tubes outside $X^\circ$), the slopes of $D_{\mathrm{pst}}(V_x)$ are identically $(0, 1)$. See Figure~\ref{fig:region-decomposition}.

\begin{figure}[htbp]
\centering
\begin{tikzpicture}[x=1cm,y=1cm]

% outer rectangle
\draw[line width=1pt] (0,0) rectangle (12,7);

% red diagonal shading on the whole rectangle
\begin{scope}
  \clip (0,0) rectangle (12,7);
  \foreach \t in {-7.5,-7.25,...,12.5}{
    \draw[red, line width=0.35pt] (\t,0) -- ++(7.5,7.5);
  }
\end{scope}

% yellow vertical shading only inside the ellipse
\begin{scope}
  \clip (6.15,3.55) ellipse (5.55 and 2.95);
  \foreach \x in {0.6,0.72,...,11.7}{
    \draw[yellow!70!orange, line width=0.4pt] (\x,0.3) -- (\x,6.8);
  }
\end{scope}

% remove yellow from circles 0 and 1
\fill[white] (2.20,2.25) circle (0.65);
\fill[white] (3.25,4.10) circle (0.85);

% redraw red shading inside circles 0 and 1
\begin{scope}
  \clip (2.20,2.25) circle (0.65);
  \foreach \t in {-7.5,-7.25,...,12.5}{
    \draw[red, line width=0.35pt] (\t,0) -- ++(7.5,7.5);
  }
\end{scope}

\begin{scope}
  \clip (3.25,4.10) circle (0.85);
  \foreach \t in {-7.5,-7.25,...,12.5}{
    \draw[red, line width=0.35pt] (\t,0) -- ++(7.5,7.5);
  }
\end{scope}

% blue reverse diagonal shading in circles 2,4,6
\begin{scope}
  \clip (6.90,4.10) circle (0.90);
  \foreach \t in {-2.0,-1.75,...,18.0}{
    \draw[blue, line width=0.5pt] (\t,7.5) -- ++(-7.5,-7.5);
  }
\end{scope}

\begin{scope}
  \clip (8.95,4.10) circle (0.92);
  \foreach \t in {-2.0,-1.75,...,18.0}{
    \draw[blue, line width=0.35pt] (\t,7.5) -- ++(-7.5,-7.5);
  }
\end{scope}

\begin{scope}
  \clip (7.95,2.00) circle (0.88);
  \foreach \t in {-2.0,-1.75,...,18.0}{
    \draw[blue, line width=0.35pt] (\t,7.5) -- ++(-7.5,-7.5);
  }
\end{scope}

% ellipse boundary
\draw[yellow!70!orange, line width=1.6pt] (6.15,3.55) ellipse (5.55 and 2.95);

% circle boundaries
\draw[yellow!70!orange, line width=1pt] (2.20,2.25) circle (0.65); % 0
\draw[yellow!70!orange, line width=1pt] (3.25,4.10) circle (0.85); % 1
\draw[yellow!70!orange, line width=1pt] (4.70,3.25) circle (0.58); % 3
\draw[yellow!70!orange, line width=1pt] (5.10,1.95) circle (0.68); % 5
\draw[yellow!70!orange, line width=1pt] (6.90,4.10) circle (0.90); % 2
\draw[yellow!70!orange, line width=1pt] (8.95,4.10) circle (0.92); % 4
\draw[yellow!70!orange, line width=1pt] (7.95,2.00) circle (0.88); % 6

% small red circles inside 0 and 1
\draw[black, line width=0.5pt] (1.90,2.24) circle (0.05);
\draw[black, line width=0.5pt] (2.90,4.25) circle (0.05);

% labels with white background
\node[fill=white, fill opacity=0.9, text opacity=1, inner sep=1pt] at (2.20,2.25) {\Huge 0};
\node[fill=white, fill opacity=0.9, text opacity=1, inner sep=1pt] at (3.25,4.10) {\Huge 1};
\node[fill=white, fill opacity=0.9, text opacity=1, inner sep=1pt] at (4.70,3.25) {\Huge 3};
\node[fill=white, fill opacity=0.9, text opacity=1, inner sep=1pt] at (5.10,1.95) {\Huge 5};
\node[fill=white, fill opacity=0.9, text opacity=1, inner sep=1pt] at (6.90,4.10) {\Huge 2};
\node[fill=white, fill opacity=0.9, text opacity=1, inner sep=1pt] at (8.95,4.10) {\Huge 4};
\node[fill=white, fill opacity=0.9, text opacity=1, inner sep=1pt] at (7.95,2.00) {\Huge 6};

\end{tikzpicture}
\caption{A picture of the Newton partition of $(\bA_{\Q_7}^1)^{\an}\setminus \{0,1\}$ defined by the universal elliptic curve in Legendre family. 
The base is \(X=(\bA_{\Q_7}^1)^{\an}\setminus \{0,1\}\), depicted here as a rectangle with the two points \(0\) and \(1\) removed. 
The blue region is \(X^{(-\frac{1}{2},-\frac{1}{2})}\), and the red region is \(X^{(-1,0)}\). 
The yellow region stands for the good reduction locus \(X^\circ\).}
\label{fig:region-decomposition}
\end{figure}
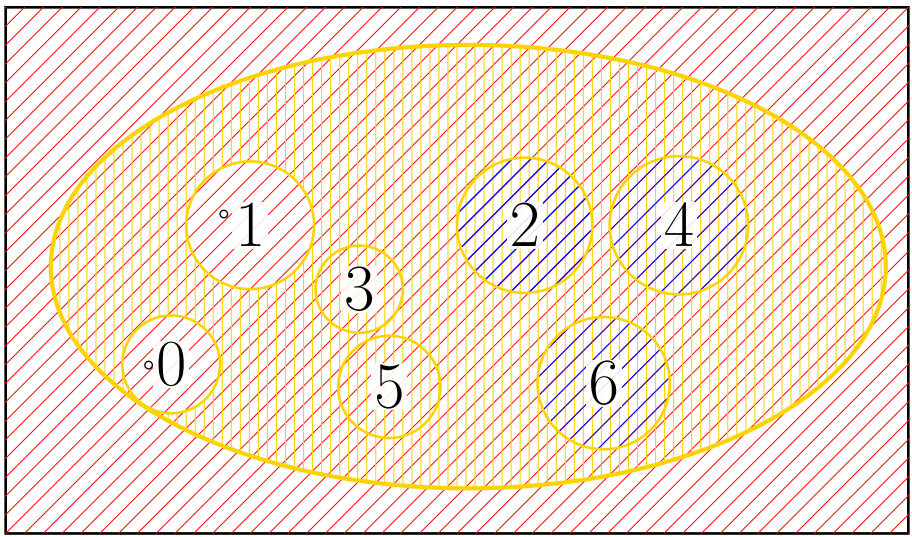

\begin{rem}\label{rem: nonCNP}
We note that $\NP(\bL)(x)=\NP(\bL)(x_{\max})$; that is, the Newton polygon function is invariant under taking maximal generalizations. In particular, the example above also demonstrates that $\NP(\bL)$ is, in general, not locally constant around higher-rank points. For instance, $\NP(\bL)$ is not locally constant at those points that generalize to the Gauss point defined by the circle of radius $1$ centered at $2$.
\end{rem}

\medskip\noindent
\textbf{The idea of proving Theorem~\ref{thm: CNP to RpMT}.} Finally, let us briefly outline the strategy for proving the relative $p$-adic monodromy theorem under the assumption that the Newton polygon function $\NP(\bL)$ is constant.

We note that there are several established approaches to proving that a de Rham representation is potentially log-crystalline. These include methods utilizing $p$-adic differential equations \cite{Bergerpadicmonodromy}, Banach--Colmez spaces \cite{colmez-vector-space-de-Rham, Fontaine2004DeRhamSemistable}, improved methods using the absolute Fargues--Fontaine curve \cite{FarguesFontaineAsterisque}, and recent developments using analytic de Rham stacks \cite{analyticrhamstacks}.

Our strategy is heavily inspired by the approach of Fargues and Fontaine, which we review in detail in \S\ref{sec: proof of RpMT}. To illuminate the core idea, we recall that their method can be elegantly framed within the abstract ``criterion for equality of subcategories'' introduced by Colmez in the preface of \cite[\S2.2.3]{FarguesFontaineAsterisque}.

\begin{criterion}\label{crit: equality_subcategories}
Let $\mathscr{T}$ be a category in which the notions of dimension and exact sequence make sense, and suppose it is equipped with a symmetric relation called ``being neighbors''. Suppose we are given subcategories $\mathscr{T}' \supset \mathscr{T}''$ of $\mathscr{T}$ satisfying the following conditions:
\begin{enumerate}
    \item[(0)] The objects of $\mathscr{T}'$ and $\mathscr{T}''$ are of finite dimension.
    \item[(1)] If $T \in \mathscr{T}$ is of dimension $1$, then $T \in \mathscr{T}'$ implies $T \in \mathscr{T}''$.
    \item[(2)] If $T_1$ and $T_2$ are neighbors in $\mathscr{T}$, one has:
    \begin{enumerate}
        \item[a)] if $T_1$ and $T_2$ are of finite dimension, then $T_1 \in \mathscr{T}'' \Leftrightarrow T_2 \in \mathscr{T}''$,
        \item[b)] $T_1$ irreducible in $\mathscr{T}' \Rightarrow T_2 \in \mathscr{T}'$.
    \end{enumerate}
    \item[(3)] If $0 \to T_1 \to T \to T_2 \to 0$ is an exact sequence in $\mathscr{T}'$, and if $T_1, T_2 \in \mathscr{T}''$, then $T \in \mathscr{T}''$.
    \item[(4)] If $T \in \mathscr{T}'$, there exists a chain $T = T_0, T_1, \dots, T_r$ of elements in $\mathscr{T}$ such that $T_{i+1}$ is a neighbor of $T_i$ for all $i$, and $T_r \in \mathscr{T}''$.
\end{enumerate}
Then one has $\mathscr{T}'' = \mathscr{T}'$.
\end{criterion}

This criterion can be proved by induction on the dimension, provided all structures are well-defined. In \emph{loc.\ cit.}, Colmez reviews how this abstract framework can be utilized to prove both ``weakly admissible implies admissible'' and ``de Rham implies potentially log-crystalline,'' relying on the ``lemme fondamental'' established in \cite{colmez-fontaine} and \cite{Fontaine2004DeRhamSemistable}. Colmez also explains how this approach matches the strategy used to prove ``de Rham implies potentially log-crystalline'' in \cite{colmez-vector-space-de-Rham} and \cite{Fontaine2004DeRhamSemistable}.

The proof of ``de Rham implies potentially log-crystalline'' by Fargues and Fontaine offers a slightly more elegant formulation regarding their choices of the subcategories $\mathscr{T}'$ and $\mathscr{T}''$, as well as their interpretation of dimension and the induction step. To adapt their ideas to the framework of Criterion~\ref{crit: equality_subcategories}, one takes $\mathscr{T}'$ (resp.\ $\mathscr{T}''$) to be the category of de Rham (resp.\ potentially log-crystalline) $B_{\dR}^+$-flat Galois representations on $B$-pairs. (We will review the precise definition of $B_{\dR}^+$-flat Galois representations on $B$-pairs in \S\ref{sec: proof of RpMT}.) In this context, dimension and the neighborhood relation are defined via the Harder--Narasimhan theory of the absolute Fargues--Fontaine curve. In particular, objects of ``dimension $1$'' correspond to those Galois representations on $B$-pairs whose underlying vector bundles are semistable over the curve, and Condition $(2)$ in Criterion~\ref{crit: equality_subcategories} is not needed. To proceed with the proof, the key step is verifying Conditions $(1)$ and $(3)$ of Criterion~\ref{crit: equality_subcategories}. Condition $(3)$ corresponds to an ``$H^1_\st = H^1_g$'' or ``Hyodo's result'' type of statement, which in this setting eventually reduces to \cite[Prop.~10.11]{colmez-vector-space-de-Rham} (see also \cite[\S10.6.5]{FarguesFontaineAsterisque}). Condition $(1)$ asserts that all $B_{\dR}^+$-flat Galois representations on semistable $B$-pairs are potentially log-crystalline, a fact established in \cite[\S10.6.4]{FarguesFontaineAsterisque}.

In this paper, we adopt this same strategy to prove the relative $p$-adic monodromy theorem under the assumption that the Newton polygon function $\NP(\bL)$ is constant. By \cite[Thm.~7.4.9]{kedlaya-liu-relative-padichodge} (see also \cite[Thm.~II.2.19]{FarguesScholze} for the adic space setting), the constancy of the Newton polygon function guarantees the existence of a global Harder--Narasimhan filtration. We generalize all the essential ingredients of the Fargues--Fontaine proof to the relative setting, ensuring compatibility with their original construction over classical points. Specifically, the requisite ``Hyodo's result'' is already available, as it has been established pointwise.

To verify Condition $(1)$ of Criterion~\ref{crit: equality_subcategories} in our relative setting, we develop the theory of $\bB_{\dR}^+$-flat vector bundles on the relative diamantine Fargues--Fontaine curve, alongside the theory of $(d,h)$-local systems on rigid-analytic spaces. This extends the corresponding definitions found in \cite{BergerConstructionphigamma} (over a point) and \cite{kedlaya-liu-relative-padichodge} (over perfectoid spaces). Ultimately, we establish a Morita-type equivalence to reduce Condition $(1)$ to a relative version of Sen's theorem.

\medskip \noindent
\textbf{Outline.} In \S\ref{sec: arith loc sys}, we review the fundamentals of relative $p$-adic Hodge theory for arithmetic local systems on smooth rigid-analytic varieties. In \S\ref{sec: diamantine FF curves}, we recall the theory of vector bundles over the relative Fargues--Fontaine curve and the diamantine Fargues--Fontaine curve. In \S\ref{sec: Shtukas}, we review the Pappas--Rapoport construction of shtukas associated with de Rham local systems, define analogous shtukas for log-crystalline local systems, and establish their compatibility. We then define the Newton polygon function associated with de Rham local systems. We study the properties of the Newton polygon function associated with log-crystalline local systems under the specialization map, and show that it is locally constant around all rank-$1$ points. In \S\ref{sec: slopes}, we review slope filtrations, develop the theory of $(d,h)$-local systems, and relate them to semistable vector bundles over diamantine Fargues--Fontaine curves. In \S\ref{sec: proof of RpMT}, we recall the Fargues--Fontaine proof of ``$\dR \Longrightarrow \text{potentially log-crystalline}$'' and generalize their strategy to prove Theorems~\ref{thm: RpMT over dense open}, \ref{thm: CNP to RpMT}, and \ref{thm:main_rpmt1}. In \S\ref{sec: proof of HK conjecture}, as a further application of the theory of $(d,h)$-local systems, we prove Theorem~\ref{thm: intro HK conjecture}, thereby verifying a conjecture of Howe and Klevdal.

Finally, Appendix~\ref{sec: specialization} reviews specialization maps and tubes, together with the anticontinuity of $\spe_{\max,\calS}$. Appendix~\ref{app:local-systems} collects basic facts about the geometric coverings appearing in the statements of our main results. Appendix~\ref{app:valuation} establishes a purity theorem for strongly \'etale morphisms, which is used in the proof of Theorem~\ref{thm: intro HK conjecture}.

\medskip
\noindent
\textbf{Notation and conventions.}
Throughout this paper, $K$ will denote a complete discrete valuation field containing $\Q_p$ with a perfect residue field, and $S$ will denote a smooth rigid-analytic variety over $K$. For any field extension $L$ of $K$, $S_L$ will be the base change of $S$ to $L$. We will view $S$ as an adic space. Let $\bT^d\coloneqq \Spa(R,R^{\circ})$ with $R=\Q_p\langle T_{1}^\pm,\ldots, T_{d}^\pm\rangle$.

Let $G_K$ be the absolute Galois group of $K$. Let $\C_p$ be the $p$-adic completion of a fixed algebraic closure of $K$, and let $\C_p^\flat$ denote its tilt. Let $K_\mathrm{cycl}$ be the $p$-adic completion of $\cup_{n\geq 1}K(\zeta_{p^n})$. Let $\breve{K}$ be the $p$-adic completion of the maximal unramified extension of $K$, so that $G_{\breve{K}}$ identifies with $I_K$, the inertia subgroup of $G_K$.

Let $B_{\mathrm{st}}=B_{\cris}[u_{\underline{\varpi}}]$, where we fix $\underline{\varpi}=(\varpi_i)\in \C_p^\flat$ defined by a compatible system of $p^n$-th roots of $p$, and let 
$$
u_{\underline{\varpi}}=\log[\underline{\varpi}]\coloneqq\sum_{i\geq 1}\frac{-(1-[\underline{\varpi}]/\varpi)^i}{i}
$$
which is a well-defined element in $\Fil^1 B_{\mathrm{dR}}$. We equip $B_{\mathrm{st}}$ with the unique $B_{\cris}$-derivation $N$ determined by $N(u_{\underline{\varpi}})=1$. 

Let \(\Perf\) denote the category of perfectoid spaces in characteristic p, and \(\Perfd\) the category of all perfectoid spaces.

For a perfect field $k$, we let $\Isoc_k$ denote the category of isocrystals over $k$, i.e., pairs $(D,\varphi_D)$ where $D$ is a finite-dimensional vector space over the fraction field $W(k)[1/p]$ of the Witt vectors of $k$, and $\varphi_D$ is a $\sigma$-linear automorphism of $D$. 

We denote by $C^0$ the space of continuous functions.

Let $h>0$ be a real number, and let $\Conv([0,h])$ be the set of real-valued continuous \emph{convex} functions on $[0,h]$.
For $f,g\in\Conv([0,h])$, we define a partial order on $\Conv([0,h])$ as follows:
\[
f \preceq g
\quad\text{ if and only if } \quad
f(0)=g(0) \text{, }\ f(h)=g(h),\ \text{and}\ f(t)\le g(t)\ \text{for all }t\in[0,h].
\]
We also define 
\[
\Conv \coloneqq \bigsqcup_{0<h<\infty}\Conv([0,h])
\]
to be the set of real-valued continuous \emph{convex} functions defined on $[0,h]$ for some finite $h$.

In any exact category, we say a sub-object of a torsion-free object is \emph{saturated} if its corresponding quotient is torsion-free.

For any (possibly non-commutative) ring $R$, let $\mathrm{FPMod}_{R}$ denote the category of finite projective \emph{left} $R$-modules. The category of finite projective \emph{right} $R$-modules is naturally equivalent to $\mathrm{FPMod}_{R^{\op}}$, where $R^{\op}$ is the opposite ring of $R$. In practice, we write $\mathrm{FPMod}_{R^{\op}}$ for the category of finite projective right $R$-modules.

For any $r \in \bR$, we let $\lfloor r \rfloor \coloneqq \max\{n\in\Z \mid n \leq r\}$.

When we write $\lambda=d/h \in \Q$, we always assume it is in simplest form; that is, $h$ is the minimal strictly positive integer such that $h\lambda \in \Z$, and $d$ is the integer $h\lambda$. For example, $0=0/1$ under these conventions.

\section{Arithmetic local systems on smooth rigid analytic varieties}\label{sec: arith loc sys}

Let $K$ denote a $p$-adic field, and let $X$ be a smooth rigid analytic variety over $K$. Let $X_{\proet}$ be the pro-\'{e}tale site of Scholze \cite[\S3]{scholze-p-adic-hodge}, let $\widehat{\mathcal{O}}_{X_{\proet}}$ be its structure sheaf, and let $\mathcal{O}_{X_{\et}}$ be the structure sheaf on the \'etale site $X_{\et}$. Let $\nu\colon X_{\proet} \rightarrow X_{\et}$ be the natural map of sites.

A \emph{$\Q_p$-local system} on $X$ means an \'etale $\Q_p$-sheaf $\mathbb L$ on the \'etale site $X_{\et}$ in the sense of \cite[Defn.~8.4.3]{kedlaya-liu-relative-padichodge}. Note that if $\mathbb{L}$ admits a $\Z_p$-lattice, we call it an isogeny $\Z_p$-local system. For any \emph{classical point} $x \in X$, $\overline{x}$ denotes a geometric point lying over $x$, defined by an algebraic closure of $k(x)$ or its $p$-adic completion, depending on the context. For a $\Q_p$-local system $\mathbb L$ on $X$, the fiber $\mathbb L_{\overline{x}}$ over $\overline{x}$ is a continuous $\Q_p$-representation of $G_{k(x)}\coloneqq\Gal(\overline{k(x)}/k(x))$, where $k(x)$ is the residue field of $x$.

The $p$-adic Hodge-theoretic properties of geometric families of Galois representations, namely $\Q_p$-local systems or isogeny $\Z_p$-local systems over $X$, have been extensively studied following the work of Scholze \cite{scholze-p-adic-hodge}, Kedlaya--Liu \cite{kedlaya-liu-relative-padichodge}, Tan--Tong~\cite{Tan-Tong}, Liu--Zhu \cite{liu-zhu-rigidity}, and Shimizu \cite{Shimizu-HT}, as well as through recent approaches based on prismatic methods \cite{bhatt-scholze-prismaticFcrystal, du-liu-moon-shimizu-completed-prismatic-F-crystal-loc-system, GuoReinecke-Ccris, du-liu-moon-shimizu-purity-F-crystal}. This section recalls several foundational results that will be utilized later in the paper.

We will freely use the period sheaves on the pro-\'etale site $X_{\proet}$, such as $\mathcal O\mathbb B^+_{\dR}$ and $\mathcal O\mathbb B_{\dR}$ defined in \cite[Def.~6.1, 6.8]{scholze-p-adic-hodge}, $\bA_\cris$, $\bB_\cris^+$, and $\bB_\cris$ defined in \cite[\S2A]{Tan-Tong}, and $\mathcal{O}\mathbb{B}_{\text{HT}} = \operatorname{gr}^{\bullet}\mathcal{O}\mathbb{B}_{\text{dR}}$ and $\mathcal O\mathbb C\coloneqq \mathrm{gr}^0 \mathcal O\mathbb B_{\dR}$ studied in \cite[\S2.1]{liu-zhu-rigidity} and \cite[\S3.1]{Shimizu-HT}. Recall that there exists a canonical isomorphism $\operatorname{gr}^{i}\mathcal{O}\mathbb{B}_{\text{dR}} \cong \mathcal{O}\mathbb{C}(i)$ for each $i$, so that $\mathcal{O}\mathbb{B}_{\text{HT}} = \oplus_{i \in \mathbb{Z}} \mathcal{O}\mathbb{C}(i)$.

\subsection{de Rham and Hodge--Tate local systems}\label{sec: dR and HT loc sys}

In this subsection, we recall the notions of de Rham and Hodge--Tate local systems. The main references for this part are \cite{scholze-p-adic-hodge}, \cite{liu-zhu-rigidity}, and \cite{Shimizu-HT}.

Recall the de Rham period sheaves $\mathbb{B}_{\text{dR}}^{+}$ and $\mathbb{B}_{\text{dR}}$ defined in \cite{scholze-p-adic-hodge}. We also recall the pro-\'{e}tale sheaves $\mathcal{O}\mathbb{B}_{\text{dR}}^{+}$ and $\mathcal{O}\mathbb{B}_{\text{dR}}$. The sheaves $\mathbb{B}_{\text{dR}}^{+}$ and $\mathcal{O}\mathbb{B}_{\text{dR}}^{+}$ are equipped with structural morphisms $\theta\colon \mathcal{O}\mathbb{B}_{\dR}^{+} \rightarrow \widehat{\mathcal{O}}_{X_{\proet}}$ and $\theta\colon\mathbb{B}_{\text{dR}}^{+} \rightarrow \widehat{\mathcal{O}}_{X_{\proet}}$, which endow them with the filtrations $\operatorname{Fil}^{i} = (\ker(\theta))^{i}$ for $i \ge 0.$ We define $\mathcal{O}\mathbb{B}_{\text{dR}}$ to be the completion of $\mathcal{O}\mathbb{B}_{\text{dR}}^{+}[ \ker(\theta)^{-1}]$ with respect to the filtration
\[
\operatorname{Fil}^{i}\mathcal{O}\mathbb{B}_{\text{dR}} = \sum_{j \in \mathbb{Z}} \operatorname{Fil}^{i+j}(\mathcal{O}\mathbb{B}_{\text{dR}}^{+})\xi^{-j}, \quad i \in \mathbb{Z},
\]
where $\xi \in \mathcal{O}\mathbb{B}_{\text{dR}}^{+}$ is a local generator of $\ker(\theta)$, cf.\ \cite[Defn.~2.2.10 and Rem.~2.2.11]{diao-lan-liu-zhu-logRH}. 
The sheaf $\mathcal{O}\mathbb{B}_{\text{dR}}$ is naturally endowed with a $\mathbb{B}_{\text{dR}}$-linear connection
\[
\nabla = d \otimes \operatorname{id} \colon \mathcal{O}\mathbb{B}_{\text{dR}} \rightarrow \mathcal{O}\mathbb{B}_{\text{dR}} \otimes_{\mathcal{O}_{X_{\text{\'{e}t}}}} \Omega_{X/K}^{1}
\]
which satisfies Griffiths transversality.

For a $\mathbb{Q}_{p}$-local system $\bL$ on $X_{\et}$, let $\hat{\bL}$ be the corresponding local system on $X_{\proet}$. 

\begin{defn} \label{defn: de Rham local system}
A $\mathbb{Q}_{p}$-local system $\bL$ is called de Rham if $\hat{\bL}\otimes_{\underline{\bQ}_p} \mathbb{B}_{\text{dR}}^+$ is associated to a filtered vector bundle with integrable connection $(\mathcal{E}, \operatorname{Fil}^{\bullet}, \nabla)$ in the sense that there is an isomorphism
\[
\hat{\bL}\otimes_{\underline{\bQ}_p} \mathcal{O}\mathbb{B}_{\text{dR}} \cong \mathcal{E} \otimes_{\mathcal{O}_{X_{\text{\'{e}t}}}} \mathcal{O}\mathbb{B}_{\text{dR}},
\]
compatible with filtrations and connections. 
\end{defn}

\begin{rem}\label{rem: M0 and M_1 for de Rham as proet vb}
Following \cite[\S7]{scholze-p-adic-hodge}, the condition in the above definition is equivalent to the statement that there is an isomorphism of $\mathbb{B}_{\text{dR}}^{+}$-modules:
\[
\hat{\bL}\otimes_{\underline{\bQ}_p} \mathbb{B}_{\text{dR}}^{+} \cong \operatorname{Fil}^{0}(\mathcal{E} \otimes_{\mathcal{O}_{X_{\text{\'{e}t}}}} \mathcal{O}\mathbb{B}_{\text{dR}})^{\nabla=0}.
\]
Let $\bM_1(\bL) \coloneqq \hat{\bL}\otimes_{\underline{\bQ}_p} \mathbb{B}_{\text{dR}}^{+}$ be this $\mathbb{B}_{\text{dR}}^{+}$-local system in the sense of \cite[Defn.~7.1]{scholze-p-adic-hodge}. If the filtered vector bundle with integrable connection $(\calE,\nabla,\Fil^\bullet)$ associated with $\bL$ exists, it is unique (cf.\ \cite[Thm.~7.6]{scholze-p-adic-hodge}), and we will sometimes denote it by $(D_{\dR}(\bL),\nabla,\Fil^\bullet)$ following \cite{liu-zhu-rigidity}. There is an auxiliary $\mathbb{B}_{\text{dR}}^{+}$-local system $\bM_0(\bL)$ that is useful in the study of de Rham local systems and filtered vector bundles with integrable connections. We define it as follows: let $\bM_0(\bL) \coloneqq (D_{\dR}(\bL) \otimes_{\mathcal{O}_{X_{\text{\'{e}t}}}} \mathcal{O}\mathbb{B}_{\text{dR}}^+)^{\nabla=0}$, which agrees with $\bM_1(\bL)$ when $\Fil^\bullet$ is the trivial filtration. Note that 
\[
\bM_0(\bL) \otimes_{\mathbb{B}_{\text{dR}}^{+}, \theta} \widehat{\mathcal{O}}_{X_{\proet}} \cong D_{\dR}(\bL) \otimes_{\mathcal{O}_{X_{\et}}} \widehat{\mathcal{O}}_{X_{\proet}}, 
\]
following \cite[Prop.~7.9]{scholze-p-adic-hodge}.
\end{rem}

We now turn to Hodge--Tate theory, following \cite{Shimizu-HT} and \cite{Gerththesis}. Recall that the sheaf $\mathcal{O}\mathbb{B}_{\text{HT}}$ is equipped with a Higgs field
\[
\Theta = \operatorname{gr}^{\bullet}\nabla \colon \mathcal{O}\mathbb{B}_{\text{HT}} \rightarrow \mathcal{O}\mathbb{B}_{\text{HT}} \otimes_{\mathcal{O}_{X_{\text{\'{e}t}}}} \Omega_{X/K}^{1},
\]
which restricts to a Higgs field
\[
\Theta \colon \mathcal{O}\mathbb{C} \rightarrow \mathcal{O}\mathbb{C} \otimes_{\mathcal{O}_{X_{\text{\'{e}t}}}} \Omega_{X/K}^{1}(-1).
\]

For any $\mathbb{Q}_{p}$-local system $\bL$ on $X$, one defines a coherent $\mathcal{O}_{X_{\et}}$-module
\[
D_{\text{HT}}(\bL) = \nu_{*}(\hat{\bL}\otimes_{\underline{\bQ}_p} \mathcal{O}\mathbb{B}_{\text{HT}}).
\]

$D_{\text{HT}}(\bL)$ is $\Z$-graded with gradings $
\operatorname{gr}^{i}D_{\text{HT}}(\bL) = \nu_{*}(\hat{\bL}\otimes_{\underline{\bQ}_p} \mathcal{O}\mathbb{C}(i))$ and carries a Higgs field
\[
\theta_{\mathbb{L}} \colon D_{\text{HT}}(\bL) \rightarrow D_{\text{HT}}(\bL) \otimes_{\mathcal{O}_{X_{\text{\'{e}t}}}} \Omega_{X/K}^{1}
\]
which is homogeneous of degree $-1$ with respect to the grading above. 

\begin{defn}[{\cite[Rem.~2.5]{liu-zhu-rigidity}}]
A $\mathbb{Q}_{p}$-local system $\bL$ is called Hodge--Tate if $D_{\text{HT}}(\bL)$ is a vector bundle of rank equal to $\rank_{\mathbb{Q}_{p}} \bL$. 
\end{defn}

On the other hand, associated to $\hat{\bL}\otimes_{\underline{\bQ}_p} \mathcal{O}\mathbb{C}$, Liu and Zhu defined a (nilpotent) Higgs bundle $(\calH(\bL),\vartheta_{\bL}, \rho)$ over $X_{K_{\mathrm{cycl}}}$ equipped with a semilinear $\Gal(K_\mathrm{cycl}/K)$-action under the $p$-adic Simpson correspondence, cf.\ \cite[Thm.~2.1(i)]{liu-zhu-rigidity}. Moreover, Shimizu constructed the arithmetic Sen operator $\phi_{\bL} \in \End_{\calO_{X_{K_{\mathrm{cycl}}}}}(\calH(\bL))$ in \cite[Lem.-Defn.~3.4]{Shimizu-HT}, which recovers the classical Sen theory over $p$-adic fields.

We will need the following equivalent description of Hodge--Tate local systems, which also facilitates the definition of Hodge--Tate weights.

\begin{thm}[{\cite[Thm.~5.5]{Shimizu-HT}}]\label{thm: phiL and HT local sys}
Let $\bL$ be a $\mathbb{Q}_{p}$-local system on a smooth rigid space $X/K$. Then $\bL$ is Hodge--Tate if and only if $\phi_{\bL}$ is semisimple with integer eigenvalues as an endomorphism of $\calH(\bL)$. 
\end{thm}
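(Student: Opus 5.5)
The plan is to compare the two sides of the equivalence through the decomposition of $\hat{\bL}\otimes\mathcal{O}\mathbb{B}_{\mathrm{HT}}$ into its graded pieces $\mathcal{O}\mathbb{C}(i)$. This reduces the Hodge--Tate condition to a statement about the $\mathcal{O}\mathbb{C}(i)$-twists of $\hat{\bL}$, which the $p$-adic Simpson correspondence of Liu--Zhu turns into a statement about $(\calH(\bL),\vartheta_{\bL},\rho)$ over $X_{K_{\mathrm{cycl}}}$. The question is local on $X_{\et}$, since both conditions are. So I would first reduce to $X$ affinoid, admitting an étale map to $\bT^d$. There we can use the explicit toric tower $X_\infty\to X_{K_{\mathrm{cycl}}}\to X$ with Galois group $\Gamma=\Z_p(1)^d\rtimes\Gal(K_{\mathrm{cycl}}/K)$, together with decompletion (Kedlaya--Liu / Liu--Zhu). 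In this chart $\calH(\bL)$ is a finite projective module $H$ over $A_{K_{\mathrm{cycl}}}=\calO(X_{K_{\mathrm{cycl}}})$ with a semilinear $\Gal(K_{\mathrm{cycl}}/K)$-action. The operator $\phi_{\bL}$ is the derivative of this action, i.e.\ the Sen operator on the decompleted module.

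Next I would compute $\operatorname{gr}^iD_{\mathrm{HT}}(\bL)=\nu_*(\hat{\bL}\otimes\mathcal{O}\mathbb{C}(i))$ via the Simpson correspondence. Since $\nu_*$ factors through the pro-étale cover $X_{K_{\mathrm{cycl}}}\to X$, we get
\[
\operatorname{gr}^iD_{\mathrm{HT}}(\bL)\otimes_{A}A_{K_{\mathrm{cycl}}}\hookrightarrow H(i),
\]
with image the locally analytic vectors killed by $\phi_{\bL}+i$. Equivalently, the image is the $\Gal(K_{\mathrm{cycl}}/K)$-invariants of $H(i)$, extended back by $A_{K_{\mathrm{cycl}}}$. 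The precise claim to establish is
\[
\Big(\bigoplus_i \operatorname{gr}^iD_{\mathrm{HT}}(\bL)\Big)\otimes_A A_{K_{\mathrm{cycl}}}\;\cong\;\bigoplus_{i\in\Z}\ker(\phi_{\bL}+i\mid H).
\]
This is the relative analogue of Sen's classical description $D_{\mathrm{HT}}=\bigoplus (V\otimes\C_p(i))^{G_K}$. I would prove it by Tate--Sen descent for $\Gal(K_{\mathrm{cycl}}/K)$ acting on $A_{K_{\mathrm{cycl}}}$: normalized traces and vanishing of higher cohomology on the non-analytic part, as in Shimizu's Lemma-Definition of $\phi_{\bL}$.

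Granting this, both directions are linear algebra. If $\phi_{\bL}$ is semisimple with integer eigenvalues, then $H$ is the direct sum of the kernels of $\phi_{\bL}+i$, so $D_{\mathrm{HT}}(\bL)\otimes A_{K_{\mathrm{cycl}}}$ is projective of rank $\rank\bL$. By faithfully flat descent along $A\to A_{K_{\mathrm{cycl}}}$ (or the finite-level version), $D_{\mathrm{HT}}(\bL)$ is a vector bundle of the right rank. Conversely, suppose $D_{\mathrm{HT}}(\bL)$ has full rank. Then the injection $\bigoplus_i\ker(\phi_{\bL}+i)\hookrightarrow H$ is a map between finite projective modules of equal rank. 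To see it is an isomorphism, I would check it fibrewise at classical points, where classical Sen theory gives that the generalized eigenspace decomposition is a direct sum. This forces $\phi_{\bL}$ to be semisimple with integer eigenvalues.

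The main obstacle is the full-rank-implies-isomorphism step in the converse direction. An injection of projective modules of equal rank over a non-field base need only be an isomorphism up to torsion cokernel. I would handle this as follows. The characteristic polynomial of $\phi_{\bL}$ has coefficients in $A$ (the Sen operator descends). Full rank forces these coefficients to equal the integer-coefficient polynomial $\prod(T+i)^{m_i}$ at every classical point, hence identically. Then the eigenspaces of $\phi_{\bL}$ for distinct integers are complementary direct summands, by the polynomial identity $\gcd=1$ over $\Q$. Finally, the rank count shows each summand is the full kernel of $\phi_{\bL}+i$, which gives semisimplicity.
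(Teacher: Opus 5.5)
The paper gives no argument for this statement. It is quoted from \cite[Thm.~5.5]{Shimizu-HT}, so your proposal amounts to reproving the cited result rather than following an in-paper proof. Your architecture is the natural one, and the real content sits in the identification $\mathrm{gr}^iD_{\mathrm{HT}}(\bL)\otimes_A A_{K_{\mathrm{cycl}}}\cong\ker(\phi_{\bL}+i\mid H)$. The Tate--Sen step you describe (normalized traces) shows that the invariants of $H(i)$ under $\Gamma_K=\Gal(K_{\mathrm{cycl}}/K)$ lie in a finite-level decompleted module $H_n$. To get that these invariants \emph{span} the kernel, you also need two further steps. First, the kernel of the Sen operator of $H(i)$ on the decompleted module is fixed by an open subgroup of $\Gamma_K$, so it descends along the finite \'etale Galois cover $A\to A\otimes_K K(\zeta_{p^m})$. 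Second, you base change flatly to $A_{K_{\mathrm{cycl}}}$. Granting this, your forward direction is fine.

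The converse has two soft spots.

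\emph{The pointwise characteristic polynomial.} Nothing justifies that full rank of $D_{\mathrm{HT}}(\bL)$ forces the characteristic polynomial of $\phi_{\bL}$ to be $\prod(T+i)^{m_i}$ \emph{at every classical point}. A priori neither $\mathrm{gr}^iD_{\mathrm{HT}}(\bL)$ nor $\ker(\phi_{\bL}+i)$ commutes with specialization: a kernel is saturated but need not be a direct summand. So you do not know that the fibres are Hodge--Tate with multiplicities $m_i$. The same problem is why your first fibrewise idea does not work.

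\emph{The final rank count.} This is exactly the equal-rank-injection step you had just flagged as insufficient, and you do not say why it succeeds this time.

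Both are fixed by working at generic points instead. $A_{K_{\mathrm{cycl}}}$ is smooth over $K_{\mathrm{cycl}}$, hence a finite product of regular domains. At each generic point, the injection $\bigoplus_i\ker(\phi_{\bL}+i)\hookrightarrow H$ becomes an injection of vector spaces of the same dimension $r$, hence an isomorphism. So $Q(\phi_{\bL})$ vanishes generically for $Q=\prod_{m_i>0}(T+i)$. Since $H$ is projective, hence torsion-free, $Q(\phi_{\bL})=0$ on $H$ itself. The B\'ezout idempotents attached to the pairwise coprime factors of $Q$ in $\Q[T]$ then split $H=\bigoplus_i\ker(\phi_{\bL}+i)$.

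If you prefer to keep your own route, the missing sentence is this. On the generalized eigenspace $G_i$, the cokernel of $\ker(\phi_{\bL}+i)\hookrightarrow G_i$ is isomorphic to $(\phi_{\bL}+i)(G_i)\subseteq G_i$. That module is torsion-free of rank $0$, hence zero.
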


\begin{rem}\label{rem: local constancy of HT wts}
For a Hodge--Tate local system $\bL$, the multiset of eigenvalues of $\phi_{\bL}$ from Theorem~\ref{thm: phiL and HT local sys} are called the Hodge--Tate weights of $\bL$. In \cite{Shimizu-HT}, the \emph{generalized} Hodge--Tate weights of an arbitrary local system $\bL$ were studied and proven to be locally constant, cf.\ \cite[Thm.~1.1]{Shimizu-HT}.
\end{rem}

We will be particularly interested in Hodge--Tate local systems with a single weight of $0$.

\begin{prop}\label{prop: Condition for L to be HT wt 0}
For a $\Q_p$-local system $\bL$ over $X$, if $\hat{\bL} \otimes_{\underline{\Q}_{p}} \widehat{\mathcal{O}}_{X_{\proet}}$ is a subquotient of a locally free $\widehat{\mathcal{O}}_{X_{\proet}}$-module $\widehat{\calF}$, with $\widehat{\calF} \cong \calE \otimes_{\mathcal{O}_{X_{\et}}} \widehat{\mathcal{O}}_{X_{\proet}}$ for a vector bundle $\calE$ over $X_{\et}$, then $\bL$ is Hodge--Tate with a single weight $0$.
\end{prop}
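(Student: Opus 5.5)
The plan is to reduce to the Liu--Zhu/Shimizu Higgs bundle description and use Theorem~\ref{thm: phiL and HT local sys}. Everything is local on $X$, so I may assume $X$ is affinoid with a toric chart $X\to\bT^d_K$ and pass to the pro-\'etale toric tower $\widetilde{X}_\infty$ over $X_{K_{\mathrm{cycl}}}$. On such a tower, the $p$-adic Simpson correspondence of \cite[Thm.~2.1]{liu-zhu-rigidity} (in Shimizu's arithmetic refinement) attaches to any $\widehat{\mathcal{O}}_{X_{\proet}}$-locally free module with a suitably ``decompletable'' structure a nilpotent Higgs bundle over $X_{K_{\mathrm{cycl}}}$ with a semilinear $\Gal(K_\mathrm{cycl}/K)$-action and an arithmetic Sen operator $\phi$. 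This construction is functorial and exact for such modules, and it depends only on $\widehat{\mathcal{O}}$-modules with geometric and arithmetic Galois actions. In particular, $\calH(\bL)$ and $\phi_\bL$ only depend on $\hat{\bL}\otimes\widehat{\mathcal{O}}_{X_{\proet}}$ viewed as a generalized representation.

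First, compute the Sen data of $\widehat{\calF}=\calE\otimes_{\calO_{X_{\et}}}\widehat{\mathcal{O}}_{X_{\proet}}$. Its invariants descend to $\calE\otimes\calO_{X_{K_{\mathrm{cycl}}}}$, with zero Higgs field and $\Gal(K_\mathrm{cycl}/K)$ acting only through the coefficients. Hence the arithmetic Sen operator is $0$. Next, use exactness and functoriality of the Sen/Simpson functor on the category of $\widehat{\mathcal{O}}$-modules coming from generalized representations. This should hold on the finite-level decompleted modules, where Sen theory over $K_\mathrm{cycl}$-towers behaves as classically: sub- and quotient objects of a decompleted module are decompleted. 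It follows that the subquotient $\hat{\bL}\otimes\widehat{\mathcal{O}}$ has Higgs bundle $\calH(\bL)$ that is a subquotient of $\calE\otimes\calO_{X_{K_{\mathrm{cycl}}}}$, compatible with $\phi$. Thus $\phi_\bL$ is a subquotient of the zero endomorphism, so $\phi_\bL=0$. This operator is semisimple with the single integer eigenvalue $0$, and Theorem~\ref{thm: phiL and HT local sys} then shows that $\bL$ is Hodge--Tate with all weights $0$.

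The main obstacle is justifying that a sub-$\widehat{\mathcal{O}}$-module of $\widehat{\calF}$ stable under the Galois actions (but a priori only a pro-\'etale subsheaf) is again locally free and decompletes compatibly with $\widehat{\calF}$. For the subquotient $\hat{\bL}\otimes\widehat{\mathcal{O}}$ itself, local freeness is automatic. What I need is compatibility of the maps with decompletion, and I would get it from the fact that the decompletion functor is fully faithful, namely Sen's theorem on $H^0$ of Tate twists and the Liu--Zhu uniqueness. If that route is awkward, I would instead argue directly with $D_{\mathrm{HT}}$. Since $\nu_*(\widehat{\calF}(i))=\calE\otimes\nu_*\widehat{\mathcal{O}}(i)=0$ for $i\neq0$ and equals $\calE$ for $i=0$, left exactness gives $\mathrm{gr}^iD_{\mathrm{HT}}=0$ for subobjects when $i\ne0$. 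I would then handle quotients by dualizing, since $\widehat{\calF}^\vee$ has the same form. Finally, I would conclude the rank condition via Shimizu's local constancy of generalized Hodge--Tate weights, which all vanish here.
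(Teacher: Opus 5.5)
Your main line breaks at the step where you transport the vanishing of the Sen operator from $\widehat{\calF}$ to $\bL$ by ``exactness and functoriality of the Sen/Simpson functor.''

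\begin{itemize}
\item The Liu--Zhu/Shimizu construction $\bL\mapsto(\calH(\bL),\phi_{\bL})$ is defined on local systems. The hypothesis only gives $\widehat{\calO}_{X_{\proet}}$-submodules $\calF_2\subset\calF_1\subset\widehat{\calF}$ with $\calF_1/\calF_2\cong\bL\otimes\widehat{\calO}_{X_{\proet}}$. You therefore need a Sen operator on $\calF_1$ and $\calF_2$, compatible with the inclusion and the projection.
\item Nothing you cite supplies one. The $\calF_i$ need not be locally free. Even for sub-bundles, exactness of the functor sending $\calV$ to the pushforward of $\calV\otimes\mathcal{O}\mathbb{C}$ to $X_{K_{\mathrm{cycl}}}$ needs a vanishing of higher direct images for the kernel, which you have not established.
\item Full faithfulness of decompletion does not help. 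What you need is that decompletable objects form an exact subcategory stable under subquotients, and that is exactly the point at issue.
\end{itemize}

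The fallback via $D_{\mathrm{HT}}$ does not close the gap either.
\begin{itemize}
\item Left exactness of $\nu_*$ kills $\mathrm{gr}^iD_{\mathrm{HT}}$ for $i\neq0$ only for subobjects. (The sheaf that enters $D_{\mathrm{HT}}$ is $\mathcal{O}\mathbb{C}(i)$, not $\widehat{\calO}(i)$; it is flat over $\widehat{\calO}$, so this part survives.)
\item Dualizing a genuine subquotient gives again a subquotient of $\widehat{\calF}^\vee$, not a subobject.
\item Decisively, an injection only bounds $\mathrm{gr}^0D_{\mathrm{HT}}(\bL)$ from above, so you never reach the rank condition.
\item Vanishing of the generalized Hodge--Tate weights only makes $\phi_{\bL}$ nilpotent. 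Theorem~\ref{thm: phiL and HT local sys} needs semisimplicity. The representation $g\mapsto\begin{pmatrix}1&\log\chi(g)\\0&1\end{pmatrix}$ has generalized weights $\{0,0\}$ but is not Hodge--Tate.
\end{itemize}

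The missing idea is the reduction to classical points that the paper uses.
\begin{itemize}
\item $\bL$ is Hodge--Tate of single weight $0$ exactly when $\phi_{\bL}=0$.
\item $\phi_{\bL}$ is an endomorphism of a vector bundle whose fibers over classical points recover classical Sen operators. So it suffices to treat each classical point $x$ \cite[Exm.~5.8]{Shimizu-HT}.
\item At $x$, the hypothesis makes $\bL_{\overline{x}}\otimes\C_p$ a subquotient of $\C_p^r$, with $G_{k(x)}$ acting only on the coefficients. This uses that the subquotient relation specializes to $x$, which is automatic when the intermediate sheaves are locally direct summands, as in the application in Theorem~\ref{thm: constant case}.
\item Classical Sen theory is exact and natural on finite-dimensional $\C_p$-representations. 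So the Sen operator of $\bL_{\overline{x}}\otimes\C_p$ is a subquotient of $0$, hence $0$.
\item Sen's theorem then gives $\bL_{\overline{x}}\otimes\C_p\cong\C_p^n$, i.e.\ $\bL_{\overline{x}}$ is Hodge--Tate of weight $0$.
\end{itemize}
This replaces every relative decompletion question by linear algebra over $\C_p$.

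If you want to stay relative, you would need a Sen operator defined on all pro-\'etale vector bundles on $X$ and natural for all $\widehat{\calO}$-linear maps. Naturality alone then forces vanishing on sub- and quotient bundles of $\widehat{\calF}$. That is a much heavier input, and it still requires the intermediate sheaves to be vector bundles.
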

\begin{proof}
By \cite[Exm.~5.8]{Shimizu-HT}, it suffices to argue at each classical point $x \in X$. The assumption implies that as a $G_{k(x)}$-representation, $\bL_{\overline{x}}\otimes \C_p$ is a subquotient of $\C_p^r$ for some $r\in \N$. This implies $\bL_{\overline{x}}$ is a Hodge--Tate Galois representation of weight $0$. 
\end{proof}

\begin{rem}\label{rem: HT wt 0 local system}
The converse of the above proposition also holds: $\bL$ is Hodge--Tate with a single weight $0$ if and only if $\hat{\bL} \otimes_{\underline{\Q}_{p}} \widehat{\mathcal{O}}_{X_{\proet}} \cong \calE \otimes_{\mathcal{O}_{X_{\et}}} \widehat{\mathcal{O}}_{X_{\proet}}$ for a vector bundle $\calE$ over $X_{\et}$, cf.\ \cite[Prop.~2.7.6]{Gerththesis}. This provides a general framework to define Hodge--Tate local systems of weight $0$. In this case, for a Hodge--Tate local system $\bL$ of weight $0$ defined over a $p$-adic field $K$, the associated $\widehat{\mathcal{O}}_{X_{\proet}}$-vector bundle $\hat{\bL} \otimes_{\underline{\Q}_{p}} \widehat{\mathcal{O}}_{X_{\proet}}$ over $X$ has arithmetic and geometric Sen operators both equal to $0$. Actually, the above discussion implies that the geometric Sen operators is equal to $0$ for $\hat{\bL} \otimes_{\underline{\Q}_{p}} \widehat{\mathcal{O}}_{X_{\proet}}$ if and only if the arithmetic Sen operator is $0$, where the geometric Sen operator is considered via \cite[Thm.~4.8]{HeuerpadicSimpson} and \cite[Thm.~1.0.3]{camargo2025geometricsen}, based on \cite{Pan2022Pi}. 
\end{rem}

\begin{thm}\label{thm: RpMT for HT0}
Let $\bL$ be an isogeny $\Z_p$-local system defined over a smooth rigid space $X$ over a $p$-adic field. If $\bL$ is Hodge--Tate with single weight $0$, then there is a finite \'etale covering $Y$ of $X$ such that $\bL|_Y$ is unramified at all classical points.
\end{thm}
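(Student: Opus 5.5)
Choose a $\Z_p$-lattice $\bT\subset\bL$ and use the fact that a $p$-adically open subgroup of $\mathrm{GL}_n(\Z_p)$ acts trivially modulo $p^m$ for large $m$. The key input is Shimizu's theorem: a Hodge--Tate local system of weight $0$ satisfies the relative $p$-adic monodromy theorem, in the sense that it becomes unramified on a finite étale cover \cite[Thm.~1.6]{Shimizu-HT}. The proof amounts to recalling that argument in our language. The plan is to reduce, via Remark~\ref{rem: HT wt 0 local system}, to a statement about the Sen operator. Since $\bL$ is Hodge--Tate with the single weight $0$, the remark gives
\[
\hat{\bL}\otimes_{\underline{\Q}_p}\widehat{\mathcal{O}}_{X_{\proet}}\cong\calE\otimes_{\mathcal{O}_{X_{\et}}}\widehat{\mathcal{O}}_{X_{\proet}}
\]
for some vector bundle $\calE$ on $X_{\et}$. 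Moreover, both the arithmetic and the geometric Sen operators vanish.

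\textbf{Step 1 (geometric part).} Fix $m\ge 2$ (take $m\ge 3$ if $p=2$). Let $Y\to X$ be the finite étale covering trivializing $\bT/p^m\bT$. Replace $X$ by $Y$, treating each connected component separately, and assume $\bT/p^m\bT$ is constant. Then the monodromy of $\bT$ at any geometric point lands in $1+p^m M_n(\Z_p)$. This group is uniform pro-$p$, so $\log$ is defined on it.

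\textbf{Step 2 (pointwise Sen theory).} Let $x$ be a classical point. Then $V_x=\bL_{\bar x}$ is a $G_{k(x)}$-representation that is $\C_p$-admissible, because its Sen operator vanishes. By Sen's theorem, the inertia $I_{k(x)}$ then acts through a finite quotient. Write $\rho$ for the representation. Its image $\rho(I_{k(x)})$ is a finite subgroup of $1+p^m M_n(\Z_p)$, and this group is torsion-free since $m\ge 2$ (resp.\ $m\ge 3$). Hence $\rho(I_{k(x)})$ is trivial, so $\bL|_Y$ is unramified at $x$.

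The main obstacle is Step 2: one must identify the restriction of $\calE$, with its vanishing Sen operator, at a classical point with the classical $\C_p$-admissibility of $V_x$. This is compatibility of Shimizu's arithmetic Sen operator with classical Sen theory at classical points \cite[Lem.-Defn.~3.4]{Shimizu-HT}. It is also exactly the pointwise reduction used in the proof of Proposition~\ref{prop: Condition for L to be HT wt 0}, namely \cite[Exm.~5.8]{Shimizu-HT}: $V_x\otimes\C_p\cong\C_p^r$. With this in hand, Sen's theorem gives finiteness of the inertia image, and the torsion-freeness argument concludes. Note that no relative monodromy argument beyond the mod-$p^m$ trivialization is needed, and the covering is finite étale, as asserted.
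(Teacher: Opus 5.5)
Your argument is correct and is essentially the argument behind the paper's proof, which simply cites Shimizu's Thm.~5.15. You trivialize $\bT/p^m\bT$ on a finite \'etale cover, read off $\C_p$-admissibility of $V_x$ at classical points from the weight-$0$ condition, and then combine Sen's finiteness of the inertia image with torsion-freeness of $1+p^mM_n(\Z_p)$. The only point the paper adds is that $K$, and hence $k(x)$, need not be finite over $\Q_p$, so Sen's theorem should be applied over $\breve{k(x)}$, i.e.\ to $I_{k(x)}=G_{\breve{k(x)}}$; this is precisely the form in which your Step 2 uses it.
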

\begin{proof}
This is \cite[Thm.~5.15]{Shimizu-HT}. Note that in \emph{loc.\ cit.}, the base field is assumed to be finite over $\Q_p$. This restriction is unnecessary because the key input is \cite[\S3.2]{Sen1980Invent}, where the residue field is assumed to be algebraically closed. For any $p$-adic field $K$ with a perfect residue field, one simply applies Sen's theory to $\breve{K}$.
\end{proof}

\begin{defn}[Horizontally de Rham]\label{defn: horizontally de Rham}
A de Rham $\Q_p$-local system $\mathbb L$ is called \emph{horizontally de Rham} if the filtration on $D_{\dR}(\mathbb L)$ is horizontal with respect to $\nabla$. Note that this agrees with the local definition used in \cite[Defn.~4.23]{Shimizu-monodromy} by Lemma~8.7 in \textit{loc. cit.} 
\end{defn}

\begin{defn}\label{defn: phi-N-G-filtered}
Let $K$ be a $p$-adic field over $\Q_p$, and let $L$ be a finite Galois extension of $K$ with Galois group $G_{L/K}$.
\begin{enumerate}
\item A \emph{$(\varphi,N)$-module over $L_0$} is a finite-dimensional $L_0$-vector space 
$D$ endowed with an injective $\varphi$-semilinear endomorphism $\varphi\colon D\to D$, and an $L_0$-linear endomorphism $N \colon D \to D$ such that $N\varphi=p \varphi N$.
Morphisms are $L_0$-linear maps commuting with $\varphi$ and $N$. When $N=0$, we simply call it a \emph{$\varphi$-module over $L_0$}, which agrees with the notion of an isocrystal over $k_L$.
\item A \emph{filtered $(\varphi,N,G_{L/K})$-module} is a $(\varphi,N)$-module $(D,\varphi,N)$ equipped with a separated and exhaustive decreasing filtration $\Fil^\bullet D_L$ on $D_L \coloneqq D\otimes_{L_0} L$, together with a semilinear action $\rho_D$ of $\Gal(L/K)$ on $D$ (via $\Gal(L/K)\to \Gal(L_0/K_0)$) that commutes with $\varphi$ and $N$, and stabilizes the filtration $\Fil^\bullet D_L$ on $D_L$. We denote the corresponding category by $(\varphi,N,G_{L/K})\text{-}\mathrm{ModFil}_{L/L_0}$. When $L=K$, we simply write it as $(\varphi,N)\text{-}\mathrm{ModFil}_{K/K_0}$, calling it the category of filtered $(\varphi,N)$-modules.
\item We define the category of \emph{filtered $(\varphi,N,G_K)$-modules} by setting
\[
(\varphi,N,G_K)\text{-}\mathrm{ModFil}
=
\bigcup_{L/K}(\varphi,N,G_{L/K})\text{-}\mathrm{ModFil}_{L/L_0},
\]
where $L/K$ ranges over all finite Galois extensions of $K$ inside $\overline K$.
\end{enumerate}
For $(3)$, we utilize the fact that if $K \subset L \subset F$, with $F$ and $L$ being finite Galois over $K$, then the scalar extension functor
\[
(\varphi,N,G_{L/K})\text{-}\mathrm{ModFil}_{L/L_0}
\longrightarrow
(\varphi,N,G_{F/K})\text{-}\mathrm{ModFil}_{F/F_0}
\]
is fully faithful. We can also drop the filtration structure in the above definitions, referring to the resulting categories as $(\varphi,N,G_K)$-modules (and $(\varphi,G_K)$-modules in the case where $N=0$). If $R$ is a topologically finite type $K$-algebra such that there is an \'etale map $\Spa(R,R^{\circ})\to \bT^d_K$. One can similarly define the category of \emph{filtered $(\varphi,N,G_{L/K},R)$-modules} by requiring the filtration to be a filtration of $G_{L/K}$-stable saturated submodules on $D\otimes_{L_0}R_L$, cf.\ \cite[Defn.~5.2]{Shimizu-monodromy}.
\end{defn}

\subsection{Log-crystalline local systems}\label{sec: log-crystalline local systems}

In this subsection, let $(\calS, M_{\calS})$ be a bounded $p$-adic log formal scheme over $\Spf \calO_K$ such that $M_{\calS}$ is integral. More precisely, we will focus exclusively on the following two cases:

\begin{enumerate}
    \item $(\calS, M_{\calS})$ is a semistable formal scheme over $\Spf \calO_K$, and $M_{\calS}$ is the canonical log structure defined by $\calO_{\calS} \cap (\calO_{\calS}[1/p])^{\times} \to \calO_{\calS}$;
    \item $(\calS, M_{\calS})$ is a smooth $p$-adic formal scheme over $\Spf \calO_K$, and $M_{\calS}$ is the trivial log structure. In this case, we will simply write $\calS$ instead of $(\calS, M_{\calS})$.
\end{enumerate}
Let $S$ denote the adic generic fiber of $(\calS, M_{\calS})$. Note that in both cases, $S$ is a smooth rigid analytic variety over $K$ equipped with a trivial induced log structure. Let $(\calS_1,M_{\calS_1})$ be the modulo $p$ fiber with the induced log structure.

This subsection reviews basic facts about log-crystalline local systems on $S$ defined with respect to $\calS$ (which reduce to crystalline local systems when $M_{\calS}$ is trivial). We also review the prismatic classification of log-crystalline local systems, cf.\ \cite{du-liu-moon-shimizu-completed-prismatic-F-crystal-loc-system, GuoReinecke-Ccris}. 

\begin{rem}
For the definition of log-crystalline local systems, we primarily follow Faltings \cite{Faltings-CryscohandGalrep}; see also \cite[Defn.~2.31]{GuoReinecke-Ccris} and \cite[Defn.~3.39]{du-liu-moon-shimizu-purity-F-crystal}. This approach requires fixing a (logarithmic) formal model from the beginning. However, once a smooth rigid analytic variety $S$ over $K$ admits a semistable formal model, the definition of a (log-)crystalline local system becomes independent of the choice of formal model, cf.\ \cite[Cor.~5.5]{du-liu-moon-shimizu-purity-F-crystal}. Furthermore, when $S$ is affinoid, an affine semistable formal model for $S$ is unique if it exists. 
\end{rem}

\medskip
\noindent
\textbf{Log-crystalline local systems.} 
We have the definitions of $\Perfd/S_{\proet}$ and $S_{\proet}$ as in \cite[\S3]{du-liu-moon-shimizu-purity-F-crystal}. Recall that the crystalline period sheaves $\bA_\cris$, $\bB_\cris^+$, and $\bB_\cris$ on $S_{\proet}$ are defined in \cite[\S2A]{Tan-Tong}. We also recall the construction of the period sheaves defined by a finite locally free $F$-isocrystal $\calE_\Q$ on $(\calS_1,M_{\calS_1})_{\CRIS}$, as detailed in \cite{du-liu-moon-shimizu-purity-F-crystal}.

For any $U \in \Perfd/S_{\proet}$ associated with an affinoid perfectoid space $\Spa(A,A^+)$, we define a $\bB^+_\cris$-module $\bB_\cris^+(\calE_\Q)$ on $S_{\proet}$ by
\[
\bB_\cris^+(\calE_\Q)(U) \coloneqq \calE_\Q(\bA_{\cris}(A^+)) \otimes_{\bB_{\cris}^+(A^+)} \bB_\cris^+(U).
\]
Here, $\calE_\Q(\bA_{\cris}(A^+))$ is the evaluation of $\calE_\Q$ on the tuple
\[
(\Spec (A^+/p), \Spf(\bA_{\cris}(A^+)), M_{\Spf(\bA_{\cris}(A^+))}),
\]
which is naturally viewed as an (ind-)object in the log-crystalline site of $(\calS_1,M_{\calS_1})$. The sheaf $\bB_\cris(\calE_\Q)$ is defined analogously. Adopting the terminology of \cite{du-liu-moon-shimizu-purity-F-crystal} for finite locally free isocrystals, both $\bB_\cris^+(\calE_\Q)$ and $\bB_\cris(\calE_\Q)$ are finite locally free modules over $\bB^+_\cris$ and $\bB_\cris$, respectively. Finally, the assumption that $\calE_\Q$ is an $F$-isocrystal ensures that $\bB_\cris(\calE_\Q)$ admits a Frobenius map $F$.

\begin{construction}\label{const: F-isocrystal at x}
There is a variant of the above construction using the $\vv$-site. For $\calS$ as above, by \cite[\S17]{SWBerkeleyNotes}, we can attach a $\vv$-sheaf $\calS^\diamondsuit$ together with a morphism $\calS^\diamondsuit \to \Spd \calO_K$. Here, $\calS^\diamondsuit$ is the functor associating to each $T \in \Perf$ a pair $(T^\sharp, f)$, where $T^\sharp$ is an untilt of $T$ and $f\colon T^\sharp \to \calS$ is a morphism of adic spaces over $\Spa \Z_p$. The reduced special fiber $\calS_0$ of $\calS$ induces a subsheaf $\calS_0^\diamondsuit$ of $\calS^\diamondsuit$. By definition, $\calS_0^\diamondsuit=(\calS_0^{-p^\infty})^{\diamondsuit}$, where $\calS_0^{-p^\infty}$ is the perfection of $\calS_0$. We refer to \cite[App.~A]{zhuaffinegrassmanniansandsatake} for the theory of perfect schemes. 

Recall that we have the (perfect) prismatic sites $(\calS_0)^{(\perf)}_\Prism$ and $\calS^{(\perf)}_\Prism$. By a finite locally free $F$-isocrystal over $(\calS_0)_\Prism$, we mean a pair $(\calE, \varphi_{\calE})$, where $\calE$ is a functor associating to each $(A,(p))$ (note that since $\calS_0$ is in characteristic $p$, $I=(p)$ for any $(A,I)\in (\calS_0)_\Prism$) an $A$-module $\calE_A$ such that $\calE_A[1/p]$ is a finite locally free $A[1/p]$-module, and $\varphi_{\calE}\colon \varphi^\ast\calE_A[1/p] \xrightarrow{\cong} \calE_A[1/p]$. There is a natural equivalence between the category of $F$-isocrystals over $(\calS_0)_\Prism$ and the category of $F$-isocrystals over $(\calS_0)_{\CRIS}$ (cf.\ \cite{GuoReinecke-Ccris}). A similar equivalence holds when $\calS$ admits a log structure. 

In this context, we are primarily interested in $F$-isocrystals over $(\calS_0)^{\perf}_\Prism$. One sees immediately from the definition that $(\calS_0)^{\perf}_\Prism \cong (\calS_0^{-p^\infty})^{\perf}_\Prism$. Moreover, the assignment $(A,(p)) \mapsto A/p$ induces an isomorphism $(\calS_0^{-p^\infty})^{\perf}_\Prism \xrightarrow{\cong} (\calS_0^{-p^\infty})^{\perf}$, where $(\calS_0^{-p^\infty})^{\perf}$ denotes the (big) perfect site equipped with the \emph{flat} topology. 
For a finite locally free $F$-isocrystal $(\calE, \varphi_{\calE})$ over $(\calS_0)^{\perf}_\Prism$ and any point $x \in |\calS_0|$, let $\kappa(x)$ be the residue field at $x$ with $x\colon \Spec(\kappa(x)) \to \calS_0$ denoting the corresponding map. The restriction of $(\calE, \varphi_{\calE})$ to $\kappa(x)^{\perf}_\Prism$ along $x$ naturally defines an $F$-isocrystal $(\calE_x, \varphi_{\calE_x})$ over $\kappa(x)^{-p^\infty}$. 
\end{construction}

\begin{defn}[{cf.~\cite[Defn.~2.31]{GuoReinecke-Ccris} and \cite[Defn.~3.39]{du-liu-moon-shimizu-purity-F-crystal}}]\label{defn: log crystalline loc sys}
For a $\Q_p$-local system $\bL$ on $S_{\proet}$, we say that $\bL$ is $(\calS,M_{\calS})$-crystalline (or log-crystalline with respect to $(\calS,M_{\calS})$) if there is an $F$-isocrystal $\calE_\Q$ on $(\calS_1,M_{\calS_1})_{\CRIS}$ associated to $\bL$ in the sense that there exists a Frobenius-equivariant isomorphism of $\bB_\cris$-modules on $S_{\proet}$
\[
\alpha_{\cris}\colon \bB_{\cris}(\calE_\Q)\cong \bL\otimes_{\Q_p}\bB_\cris.
\]
For a $\Z_p$-local system $\bL$ on $S_{\proet}$, we say $\bL$ is crystalline if $\bL \otimes_{\Z_p} \Q_p$ is crystalline.
\end{defn}

\begin{rem}
While the above definition is stated strictly for $\Z_p$-local systems in \cite{GuoReinecke-Ccris, du-liu-moon-shimizu-purity-F-crystal}, it extends naturally to $\Q_p$-local systems. By definition, the property of being log-crystalline can be checked \'etale locally on $S$. 
\end{rem}

One can show that for a log-crystalline $\Q_p$-local system $\bL$ on $S_{\proet}$, the restriction $\bL|_{\overline{s}}$ is a log-crystalline Galois representation at all classical points $s \in S$. The converse also holds by the following result.

\begin{thm}[{\cite[Thm.~1.1]{GuoYangpointwise}}]\label{thm: pointwise}
Let $S$ be a smooth rigid analytic space over $K$. If $S$ admits a semistable formal model, then a local system $\bL$ is log-crystalline in the sense of Definition~\ref{defn: log crystalline loc sys} if and only if $\bL|_{\overline{s}}$ is a log-crystalline Galois representation at all classical points $s \in S$.
\end{thm}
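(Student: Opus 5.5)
The statement is Theorem~\ref{thm: pointwise}, the pointwise criterion of Guo--Yang, cited from \cite{GuoYangpointwise}. I will treat the two directions separately.

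\textbf{The easy direction.} This is ``log-crystalline implies pointwise log-crystalline.'' Given $\alpha_{\cris}\colon \bB_{\cris}(\calE_\Q)\cong \bL\otimes\bB_\cris$ on $S_{\proet}$, I would pull it back along a classical point $s\colon \Spa(k(s))\to S$. The point $s$ specializes to a closed point of $\calS_1$, and $\calE_\Q$ restricts there to a filtered $(\varphi,N)$-module over $k(s)_0$. For the filtration, I would use the de Rham comparison coming from the crystalline-to-de Rham map. Evaluating $\alpha_{\cris}$ on the pro-\'etale cover $\Spa(\C_p)\to s$ then gives a Frobenius- and Galois-equivariant comparison. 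In the semistable case one uses the Kato log-structure and $B_{\st}$, via Hyodo--Kato. This shows $D_{\st}(\bL_{\bar s})$ has full rank.

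\textbf{The hard direction.} Assume $\bL|_{\bar s}$ is log-crystalline at every classical point.

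1. I would first reduce, since the property is \'etale local, to the case $\calS=\Spf R$ small affine, with a framing \'etale over a semistable torus model. I would also reduce to a $\Z_p$-lattice $T$.

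2. Using the prismatic classification \cite{du-liu-moon-shimizu-purity-F-crystal}, log-crystallinity of $T$ is equivalent to $T$ coming from a log prismatic $F$-crystal. Equivalently, the associated Breuil--Kisin type module over the Breuil--Kisin prism $\mathfrak S_R$ must descend, i.e.\ the \'etale $\varphi$-module $M$ over $\mathfrak S_R[1/E]^\wedge_p$ must admit a finite height lattice stable under the relevant descent data. So the plan is to construct a finite projective $\mathfrak S_R$-lattice $\mathfrak M\subset M$ with $E$-height bounded by the Hodge--Tate weights, and then to check descent data on the Breuil prism and the log version.

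3. Pointwise input: at each classical point $s$, Kisin's theory gives a lattice $\mathfrak M_s$ over $\mathfrak S_{k(s)}$. I would define $\mathfrak M=M\cap\prod_s \mathfrak M_s$, intersected inside the specializations. The task is then to show $\mathfrak M$ is finite projective of full rank, with $\varphi$ of bounded height.

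4. \textbf{Main obstacle: uniformity.} Here I would use that $\bL$ is de Rham; its Hodge--Tate weights are locally constant by Remark~\ref{rem: local constancy of HT wts}. These weights give a uniform height bound for the $\mathfrak M_s$. Classical points are dense in the Shilov/rank-$1$ sense. Together with a reflexive-hull plus purity argument for vector bundles on the two-dimensional regular ring $\mathfrak S_R$, Zariski--Nagata style, this should give finite projectivity over $\mathfrak S_R$. For the descent data I would first check the Galois-equivariance and crystalline monodromy condition (the $\log$-connection is topologically quasi-nilpotent) pointwise, via Kisin's description, and then propagate by density. Closedness of equalities of module maps allows this.

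5. Finally, from the prismatic $F$-crystal I would recover $\calE_\Q$ through its crystalline realization, and obtain $\alpha_{\cris}$ by the comparison of \cite{du-liu-moon-shimizu-purity-F-crystal}. The hardest point is step 4. I would attack it first by a density argument on the Gauss-type rank-$1$ points of special-fiber components, approximating them by classical points with uniform height constants.
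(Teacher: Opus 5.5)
The paper gives no proof of this statement; it is quoted from \cite[Thm.~1.1]{GuoYangpointwise}, so the question is whether your sketch stands on its own. Your easy direction is fine as a sketch; it is the standard specialization argument the paper also alludes to just before the statement. The hard direction does not stand, and its scaffolding is already off in two places.

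First, you aim for a finite projective $\mathfrak S_R$-lattice and appeal to ``reflexive over a two-dimensional regular ring is free.'' For $R$ of relative dimension $d$, however, $\mathfrak S_R$ is regular of dimension $d+2$. A reflexive hull is only guaranteed to be locally free in codimension $\le 2$. That covers neither $\Spec(\mathfrak S_R)$ nor, once $d\ge 2$, the analytic locus $\Spec(\mathfrak S_R)\smallsetminus V(p,E)$. This is exactly why Theorem~\ref{thm: prismatic classification} is phrased with analytic prismatic $F$-crystals (Definition~\ref{defn: analytic F crystal}). A projective Breuil--Kisin module over $\mathfrak S_R$ is more than log-crystallinity provides, so you are aiming at the wrong object.

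Second, the intersection $M\cap\prod_s\mathfrak M_s$ in step~3 has no ambient home. Fix the Frobenius lift on $\mathfrak S_R$, and let $k(s)/K$ be possibly ramified. Then $R\to\calO_{k(s)}$ does not in general lift to a $\delta$-map $\mathfrak S_R\to\mathfrak S_{k(s)}$: such a map would have to send $u$ and the coordinates to elements $f$ with $\varphi(f)=f^p$ lifting $\pi$ and the coordinates of $s$. The only functorial specializations pass through perfect prisms such as $\bA_{\inf}(\calO_{\C_p})$. There every point carries a Breuil--Kisin--Fargues module regardless of crystallinity. The crystalline condition is a descent condition to non-perfect prisms, and that cannot be specialized pointwise in the way step~3 needs.

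Most importantly, step~4 is where the theorem lives, and the mechanism you propose does not supply it. A uniform bound on the $E$-height of $\varphi$ on the $\mathfrak M_s$ says nothing about the \emph{position} of $\mathfrak M_s$ relative to the specialization of a fixed global basis of $M$. That is an overconvergence-type uniformity in families, and without it your $\mathfrak M$ can be far from full rank.

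``Density of classical points plus closedness of equalities'' cannot replace this uniformity. The pointwise lattices do not form a family a priori, so membership in them is not a closed condition on $|S|$. Moreover, classical points only specialize to closed points of $\calS_{\mathrm{red}}$, never to the generic points of its components.

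A cleaner way to isolate what must be proved is the purity theorem of \cite{du-liu-moon-shimizu-purity-F-crystal}, invoked in Remark~\ref{rem: shimizu thm}. By it, log-crystallinity over $\calS$ can be tested at the rank-$1$ points $x_{\mathfrak q}$ attached to height-one primes $\mathfrak q\ni\varpi$ (notation of Appendix~\ref{app:valuation}). These are the $G_{\widehat F_{\mathfrak q}}$-representations over complete discretely valued fields with imperfect residue field, where Kisin-type arguments over a two-dimensional base are available. What then remains is the implication from crystallinity at all classical points to crystallinity at each $x_{\mathfrak q}$. That implication is the actual content of the cited theorem, and your sketch gives no argument for it beyond an appeal to approximation.
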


\medskip
\noindent
\textbf{Prismatic classification of log-crystalline \texorpdfstring{$\Z_p$}{Zp}-local systems.} 

We now review the prismatic classification of log-crystalline local systems, drawing from \cite{du-liu-moon-shimizu-completed-prismatic-F-crystal-loc-system}, \cite{GuoReinecke-Ccris}, and \cite{du-liu-moon-shimizu-purity-F-crystal}. The key definition is as follows.

\begin{defn}
Let $(\calS,M_{\calS})_\Prism$ denote the (strict) absolute log prismatic site. The objects $(A,I,M_{\Spf A})$ of $(\calS,M_{\calS})_\Prism$ are log prisms $(A,I,M_{\Spf A})$ equipped with a strict morphism $(\Spf(A/I), M_{\Spf A/I}) \to (\calS,M_{\calS})$ of log $p$-adic formal schemes, where $M_{\Spf A/I}$ is the log structure induced by $M_{\Spf A}$. We will omit the log structures from the notation when $M_{\calS}$ is trivial. 

We endow $(\calS,M_{\calS})_\Prism$ with the \emph{strict flat} topology. The $\delta$-structure on $A$ induces a Frobenius $\varphi_A$ on $A$. Let $\calO_\Prism$ (resp.\ $\calI_\Prism$) be the structure sheaf (resp.\ the ideal sheaf of the Hodge--Tate divisor) on $(\calS,M_{\calS})_\Prism$, defined by associating to $(\Spf A,I,M_{\Spf A})$ the ring $A$ (resp.\ the ideal $I$). 

Let $(\calS,M_{\calS})_\Prism^{\perf}$ (resp.\ $(\calS,M_{\calS})_\Prism^{\perf,\circ}$) denote the full subcategory consisting of perfect prisms (resp.\ perfect prisms that are $p$-torsion free).
\end{defn}

We refer the reader to \cite{BhattScholzePrism} for the general theory of prisms, and to \cite[\S2.1]{du-liu-moon-shimizu-purity-F-crystal} for the precise details regarding the strict log-prismatic site.

\begin{rem}\label{rem: perfect site log vs non log}
By \cite[Prop.~2.18]{min-wang-HT-crys-log-prism}, when $(\calS,M_{\calS})$ satisfies the conditions established at the beginning of this subsection, the forgetful functor induces an equivalence $(\calS,M_{\calS})_\Prism^{\perf}\xrightarrow{\cong} \calS_\Prism^{\perf}$. In other words, for every perfect prism $(A,I)$ inside $\calS_\Prism$, there is a unique log structure $M_{\Spf A}$ on $\Spf A$ such that $(A,I,M_{\Spf A}) \in (\calS,M_{\calS})_\Prism$.
\end{rem}

\begin{defn}[{\cite[Defn.~3.3]{du-liu-moon-shimizu-purity-F-crystal}}]\label{defn: analytic F crystal} 
For any $(A,I,M_{\Spf A}) \in (\calS,M_{\calS})_\Prism$, we define the following categories:
\begin{enumerate}
    \item Let $\Vect^{\mathrm{an}}(A,I)$ be the category of vector bundles over $\Spec(A)^{\mathrm{an}}\coloneqq \Spec(A)\smallsetminus V(p,I)$. We refer to $\Spec(A)^{\mathrm{an}}$ as the \emph{analytic locus} of $\Spec(A)$.
    \item Let $\Vect^{\varphi}(A,I)$ be the category of pairs $(\mathfrak{M}_A,\varphi_{\mathfrak{M}_A})$ where $\mathfrak{M}_A$ is a finite locally free $A$-module, and $\varphi_{\mathfrak{M}_A}$ is an isomorphism of $A[I^{-1}]$-modules
\[
    \varphi_{\mathfrak{M}_A}\colon \varphi_A^\ast(\mathfrak{M}_A)[I^{-1}] \simeq \mathfrak{M}_A[I^{-1}].
\]

    \item Let $\Vect^{\mathrm{an},\varphi}(A,I)$ be the category of pairs $(\mathcal{E}_A, \varphi_{\mathcal{E}_A})$ where $\mathcal{E}_A$ is a vector bundle over $\Spec(A)\smallsetminus V(p,I)$ and $\varphi_{\mathcal{E}_A}$ is an isomorphism of vector bundles
\[
    \varphi_{\mathcal{E}_A}\colon \varphi_A^\ast(\mathcal{E}_A)[I^{-1}] \simeq \mathcal{E}_A[I^{-1}].
\]
Such a pair $(\mathcal{E}_A,\varphi_{\mathcal{E}_A})$ is called \emph{effective} if $\varphi_{\mathcal{E}_A}$ is induced by a genuine morphism $\varphi_A^\ast(\mathcal{E}_A) \to \mathcal{E}_A$ of vector bundles over $\Spec(A)\smallsetminus V(p,I)$. 
    \item We define the category $\Vect^{\mathrm{an},\varphi}((\calS,M_{\calS})_\Prism)$ of \emph{analytic prismatic $F$-crystals} over $(\calS,M_{\calS})$ by
\[
\Vect^{\mathrm{an},\varphi}((\calS,M_{\calS})_\Prism)\coloneqq\lim_{(\Spf A,I,M_{\Spf A})\in (\calS,M_{\calS})_\Prism} \Vect^{\mathrm{an},\varphi}(A,I).
\]
An analytic prismatic $F$-crystal is typically denoted by $(\calE_\Prism,\varphi_{\calE_\Prism})$ or simply $\calE_\Prism$, and we write $(\calE_{\Prism,A},\varphi_{\calE_{\Prism,A}})$ for the associated object in $\Vect^{\mathrm{an},\varphi}(A,I)$.
    \item Let $\Vect^{\mathrm{an},\varphi}((\calS,M_{\calS})_\Prism)_{\Q_p}$ denote the $p$-isogeny category of $\Vect^{\mathrm{an},\varphi}((\calS,M_{\calS})_\Prism)$.
\end{enumerate}
\end{defn}

The following is the principal result of \cite{du-liu-moon-shimizu-purity-F-crystal}, providing the prismatic classification of log-crystalline local systems.

\begin{thm}[{\cite[Thm.~1.5]{du-liu-moon-shimizu-purity-F-crystal}}]\label{thm: prismatic classification}
The \'etale realization functor induces an equivalence
\[
T_{\et} \colon\Vect^{\mathrm{an},\varphi}((\calS,M_{\calS})_\Prism)\xrightarrow{\cong} \Loc_{\Z_p}^{\cris}(\calS,M_{\calS}).
\]
Moreover, if $T$ is the image of an analytic $F$-crystal $\calE_\Prism$ under $T_{\et}$, then $T$ is associated with the crystalline realization $\calE_{\cris}$ of $\calE_\Prism$ in the sense of Definition~\ref{defn: log crystalline loc sys}. 
\end{thm}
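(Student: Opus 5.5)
The statement is the prismatic classification theorem of \cite[Thm.~1.5]{du-liu-moon-shimizu-purity-F-crystal}, so within this paper the proof is a citation. Still, here is how one would establish it in stages.

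\textbf{Step 1: construct $T_{\et}$ and its crystalline compatibility.} For an analytic prismatic $F$-crystal $\calE_\Prism$, evaluate it on perfect prisms $(A,I)$ coming from affinoid perfectoids $\Spa(A[1/I]/I, \cdot)$ over $S$. The base change to $A[1/p]^\wedge_{(p)}$, i.e.\ the \'etale $\varphi$-module over $W(A^\flat)[1/p]$ after inverting $I$, has a Frobenius-fixed $\Z_p$-lattice. By Katz/Fargues--Fontaine-type descent these fixed points form a $\Z_p$-local system, and v-descent (Remark~\ref{rem: perfect site log vs non log} identifies log and non-log perfect sites) glues them into $T=T_{\et}(\calE_\Prism)$ on $S_{\proet}$.

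Compatibility with the crystalline realization then comes from evaluating $\calE_\Prism$ on $\bA_\cris(A^+)$, which is a (log) prism receiving a map from the perfect prism. This yields the Frobenius-equivariant comparison $\bB_\cris(\calE_\cris)\cong T\otimes\bB_\cris$ of Definition~\ref{defn: log crystalline loc sys}, so the essential image lies in the log-crystalline local systems.

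\textbf{Step 2: full faithfulness.} It suffices to work locally on a small affine chart with a Breuil--Kisin type log prism $(\mathfrak S_R,E)$ covering the final object. Morphisms of analytic $F$-crystals are then $\varphi$-equivariant maps over $\Spec(\mathfrak S_R)^{\an}$, compatible with descent data. I would show that these inject into morphisms of the associated \'etale $\varphi$-modules and are recovered from them, using purity for vector bundles on the punctured spectrum (Hartogs extension across $V(p,E)$).

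\textbf{Step 3: essential surjectivity, the main obstacle.} Given a log-crystalline $T$ with associated $F$-isocrystal $\calE_\Q$, one must construct the lattice over each prism. I would first build it over perfect prisms from $T$ and the comparison $\alpha_\cris$, via a Beauville--Laszlo gluing of $T\otimes A_{\inf}$ with the $\bB^+_\cris$-lattice determined by the isocrystal, away from $V(p,I)$. Then I would descend to the Breuil--Kisin prism using a Kisin-style argument on $\mathfrak S_R$: first pointwise at classical points, using Kisin modules for semistable representations, and then in families via the purity results of \cite{du-liu-moon-shimizu-purity-F-crystal}. This reduces the problem to codimension $\ge2$ extension on the analytic locus.

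The log structure enters only through the monodromy along the semistable divisors. The hardest point is controlling it so that the descended object carries the correct log-crystal structure; I would handle it by reducing to the smooth case on the open locus and extending by purity.
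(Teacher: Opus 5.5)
Your first sentence is exactly what the paper does: the theorem is imported from \cite[Thm.~1.5]{du-liu-moon-shimizu-purity-F-crystal} and no argument is given here. The sketch you append, however, would not establish the theorem. It breaks at precisely the steps that carry the content.

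\textbf{Full faithfulness and the perfect-prism step.} Hartogs extension across $V(p,E)$ solves the wrong extension problem. The \'etale realization only remembers the base change along $\calO_\Prism\to\calO_\Prism[\calI_\Prism^{-1}]^\wedge_p$ (your ``$A[1/p]^\wedge_{(p)}$'' is the zero ring). Moreover, $\Spec(\mathfrak{S}_R[1/E]^\wedge_p)$ is not an open piece of $\Spec(\mathfrak{S}_R)\smallsetminus V(p,E)=\Spec(\mathfrak{S}_R[1/p])\cup\Spec(\mathfrak{S}_R[1/E])$. What has to be recovered is the morphism over $\Spec(\mathfrak{S}_R[1/p])$, which contains the Hodge--Tate divisor $V(E)$ away from $p$, and no codimension-two statement reaches that region. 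What does reach it is the Frobenius structure: either $\varphi$-propagation as in Kisin's full faithfulness for modules of finite $E$-height, or reduction to perfect prisms together with the full faithfulness of Breuil--Kisin--Fargues modules (shtukas) into \'etale $\varphi$-modules.

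On the essential surjectivity side, $\bB^+_\cris$ is not a completion along a Cartier divisor, so there is no Beauville--Laszlo gluing with a ``$\bB^+_\cris$-lattice''. The datum at the leg is the $\bB^+_{\dR}$-lattice $\bB^+_\cris(\calE_\Q)\otimes\bB^+_{\dR}\cong\bM_0(\bL)$, fed into Fargues' theorem; this is the comparison behind Proposition~\ref{prop: Tsht are compatible}. More to the point, every de Rham $\bL$ already produces a shtuka of exactly this shape (Construction~\ref{const: rel FF dR}). So the perfect-level step uses nothing beyond de Rhamness, and all of the content lies in descending to the non-perfect (log) Breuil--Kisin prism.

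\textbf{The descent, and the log structure.} The descent cannot be run through classical points. A classical point $R\to\calO_L$ does not in general induce a $\delta$-ring map from $\mathfrak{S}_R$ to a Breuil--Kisin prism of $L$, because compatibility with the Frobenius lift forces the coordinates to go to Teichm\"uller-type elements. Pointwise Kisin lattices also give no mechanism for assembling an $\mathfrak{S}_R$-module.

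The codimension-one input used in purity arguments of this kind sits at the generic points of the special fibre, i.e.\ at height-one primes over $p$. There one needs a Kisin-type theory over complete discrete valuation rings with imperfect residue field. Even then, the module obtained by intersecting lattices is a priori only reflexive, which need not be locally free in dimension $\geq 3$. Showing it is a vector bundle on the analytic locus is therefore a genuine step, not a formal codimension-two extension.

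Finally, the log structure is not confined to the horizontal divisors. $M_\calS=\calO_\calS\cap(\calO_\calS[1/p])^\times$ contains the uniformizer even on the smooth locus, so already for $\calS=\Spf\calO_K$ log-crystalline means semistable; the Tate module of a Tate curve, with $N\neq 0$, is an example. Reducing to the smooth case with trivial log structure on an open locus therefore discards exactly the monodromy you need to keep. The vertical log structure has to be carried throughout, e.g.\ via a log Breuil--Kisin prism.
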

We will briefly recall the definition of crystalline realization in Proposition~\ref{prop: crystalline realization}.

Log-crystalline local systems are necessarily de Rham, cf.\ \cite[Cor.~3.46]{du-liu-moon-shimizu-purity-F-crystal}. In the remainder of this subsection, we review some essential properties of the \'etale realization functor and the crystalline realization functor mentioned in Theorem~\ref{thm: prismatic classification}.

\begin{defn} \label{defn: Laurent-F-crystals}
A \emph{Laurent $F$-crystal} on $(\calS,M_{\calS})_\Prism$ consists of a pair $(\mathscr{E},\varphi_{\mathscr{E}})$ where $\mathscr{E}$ is a crystal of vector bundles on the ringed site $((\calS,M_{\calS})_\Prism, \calO_\Prism[\calI_\Prism^{-1}]^\wedge_p)$ and $\varphi_{\mathscr{E}}$ is an isomorphism $\varphi_{\mathscr{E}}\colon \varphi^\ast {\mathscr{E}}\xrightarrow{\cong}{\mathscr{E}}$.
We let $\Vect((\calS,M_{\calS})_\Prism,\calO_\Prism[\calI_\Prism^{-1}]^\wedge_p)^{\varphi=1}$ denote the category of Laurent $F$-crystals on $(\calS,M_{\calS})_\Prism$.
\end{defn}

\begin{thmdefn}[{\cite[Thm.~3.14]{du-liu-moon-shimizu-purity-F-crystal}}] \label{thm: etale realization of Fcrystals}
Assume that $(\calS,M_{\calS})$ is as described at the beginning of this subsection. There are natural equivalences of categories
\[
\Vect((\calS,M_{\calS})_\Prism,\calO_\Prism[\calI_\Prism^{-1}]^\wedge_p)^{\varphi=1} \cong \Vect((\calS,M_{\calS})_\Prism^\perf,\calO_\Prism[\calI_\Prism^{-1}]^\wedge_p)^{\varphi=1} \cong \Loc_{\Z_p}(S).
\]
The \'etale realization functor is defined as the composite
\[
T_\et\colon \Vect^{\an,\varphi}((\calS,M_{\calS})_\Prism)\rightarrow \Vect((\calS,M_{\calS})_\Prism,\calO_\Prism[\calI_\Prism^{-1}]^\wedge_p)^{\varphi=1}\xrightarrow{\cong} \Loc_{\Z_p}(S),
\]
where the first arrow is given by base change along $\calO_\Prism \to \calO_\Prism[\calI_\Prism^{-1}]^\wedge_p$.
\end{thmdefn}

Note that $\Vect((\calS,M_{\calS})_\Prism^\perf,\calO_\Prism[\calI_\Prism^{-1}]^\wedge_p)^{\varphi=1} \cong \Vect(\calS_\Prism^\perf,\calO_\Prism[\calI_\Prism^{-1}]^\wedge_p)^{\varphi=1}$ by Remark~\ref{rem: perfect site log vs non log}. Consequently, the \'etale realization factors through the restriction to the perfect prismatic site $\calS_\Prism^\perf$. 

\begin{cor}
Let $(\calS, M_{\calS})$ be as defined at the beginning of this subsection. Restriction to the transversal perfect site $\calS^{\perf,\circ}_\Prism$ over $\calS$ defines a fully faithful functor:
\[
\Vect^{\an,\varphi}((\calS,M_{\calS})_\Prism)\rightarrow \Vect^{\an,\varphi}(\calS^{\perf,\circ}_\Prism).
\]
\end{cor}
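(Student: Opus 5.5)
The plan is to factor the restriction functor through the étale realization and use that the étale realization is already fully faithful. Concretely, the equivalences of Theorem/Definition~\ref{thm: etale realization of Fcrystals} show that $T_\et$ on $\Vect^{\an,\varphi}((\calS,M_{\calS})_\Prism)$ is the base change to Laurent $F$-crystals, followed by restriction to the perfect site and the equivalence with $\Loc_{\Z_p}(S)$. Theorem~\ref{thm: prismatic classification} then says that $T_\et$ is an equivalence onto $\Loc^{\cris}_{\Z_p}(\calS,M_\calS)$, a full subcategory of $\Loc_{\Z_p}(S)$. Hence $T_\et$, viewed as a functor to $\Loc_{\Z_p}(S)$, is fully faithful.

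Next I would observe that the same recipe defines an étale realization on the transversal perfect site: base change $\calO_\Prism\to\calO_\Prism[\calI_\Prism^{-1}]^\wedge_p$ on $\calS^{\perf,\circ}_\Prism$. This gives a functor
\[
T_\et^{\perf}\colon \Vect^{\an,\varphi}(\calS^{\perf,\circ}_\Prism)\to \Vect(\calS^{\perf,\circ}_\Prism,\calO_\Prism[\calI_\Prism^{-1}]^\wedge_p)^{\varphi=1}.
\]
For $p$-torsion free perfect prisms $(A,I)$, the ring $A[1/I]^\wedge_p$ coincides with what one gets from all perfect prisms, because the non-transversal perfect prisms are $p$-torsion and contribute nothing to the analytic or Laurent locus, or they are covered by transversal ones. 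I would therefore identify the target with $\Vect(\calS^{\perf}_\Prism,\calO_\Prism[\calI_\Prism^{-1}]^\wedge_p)^{\varphi=1}\cong\Loc_{\Z_p}(S)$. The key point is that perfectoid covers of $S$ give transversal perfect prisms $(\bA_{\inf}(A^+),\ker\theta)$, and these already compute the local system by the standard tilting equivalence. Remark~\ref{rem: perfect site log vs non log} removes the log structure.

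With these two functors in hand, the diagram
\[
\Vect^{\an,\varphi}((\calS,M_{\calS})_\Prism)\to\Vect^{\an,\varphi}(\calS^{\perf,\circ}_\Prism)\xrightarrow{T_\et^{\perf}}\Loc_{\Z_p}(S)
\]
commutes with composite $T_\et$. The composite being fully faithful immediately gives that the restriction functor is faithful and full on morphisms whose images come from the source.

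To obtain genuine fullness, I need $T_\et^{\perf}$ itself to be faithful, so that any morphism in the middle category is determined by its image in $\Loc_{\Z_p}(S)$. Faithfulness holds because, for a transversal perfect prism, $A\to A[1/I]^\wedge_p$ is injective. More precisely, restriction of vector bundles from $\Spec(A)^{\an}$ to $\Spec(A[1/I]^\wedge_p)$ is faithful: $A$ is $p$-torsion free and $I$-torsion free, and sections over $\Spec(A)^{\an}$ inject into the Laurent locus. Then, given a morphism $g$ between restrictions, I pick $f$ upstairs with $T_\et(f)=T_\et^{\perf}(g)$, and faithfulness of $T_\et^{\perf}$ forces $f|=g$.

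The main obstacle is the identification of the transversal-perfect Laurent category with $\Loc_{\Z_p}(S)$, together with the injectivity on the analytic locus. I would handle this by reducing to perfectoid affinoids in $S_{\proet}$, where it is the standard $\bA_{\inf}$ description of $\Z_p$-local systems.
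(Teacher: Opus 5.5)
Your proposal is correct and is the paper's argument. The paper's one-line proof is just that $T_\et$, which is fully faithful by Theorem~\ref{thm: prismatic classification}, factors through restriction to $\calS^{\perf,\circ}_\Prism$. You rightly make explicit the step that proof leaves implicit: fullness needs the second factor to be faithful, and your injectivity of sections over $\Spec(A)^{\an}$ into $A[1/I]^\wedge_p$ for transversal perfect prisms supplies this.

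Drop the aside claiming that non-transversal perfect prisms are $p$-torsion with trivial Laurent locus. Perfect prisms $A=W(A/p)$ are always $p$-torsion free, and e.g.\ $A/I=\calO_C\times k$ has nonzero Laurent locus. The aside is also unnecessary: the comparison with $\Loc_{\Z_p}(S)$ only uses the transversal prisms $(\bA_{\inf}(R^+),\ker\theta)$ coming from perfectoid covers, as you yourself note.
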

\begin{proof}
This follows because the \'etale realization functor factors through this restriction. 
\end{proof}

For the crystalline realization, we require the following description based on \cite[Ex.~4.7, Const.~4.12]{bhatt-scholze-prismaticFcrystal}, \cite[Const.~3.9, Thm.~6.4]{GuoReinecke-Ccris}, and \cite[Prop.~3.36]{du-liu-moon-shimizu-purity-F-crystal}.

Let $i_{\calS}\colon(\calS_1,M_{\calS_1}) \to (\calS,M_{\calS})$ denote the exact closed immersion of the modulo $p$ fiber. Note that every prism in $(\calS_1,M_{\calS_1})_\Prism$ is crystalline (i.e., its ideal is generated by $p$). In fact, this yields a fully faithful functor
\[
D_{\cris,\calS_1}\colon\Vect^{\an,\varphi}((\calS_1,M_{\calS_1})_\Prism) \xrightarrow{}  \Vect_\Q^\varphi((\calS_1,M_{\calS_1})_\CRIS)
\]
where $\Vect_\Q^\varphi((\calS_1,M_{\calS_1})_\CRIS)$ is the category of finite locally free $F$-isocrystals over $(\calS_1,M_{\calS_1})_\CRIS$ in the sense of \cite[Defn.~B.19]{du-liu-moon-shimizu-purity-F-crystal}. Moreover, this fully faithful functor is an equivalence when $M_{\calS}$ is trivial \cite[Thm~6.4(1)]{GuoReinecke-Ccris}; full faithfulness in the semistable case is given by \cite[Prop.~3.35(2)]{du-liu-moon-shimizu-purity-F-crystal}.

We only require the following description of the crystalline realization. 
\begin{prop}[{\cite[Prop.~3.36]{du-liu-moon-shimizu-purity-F-crystal}}]\label{prop: crystalline realization}
The crystalline realization functor $D_{\cris}$ fits into the following $2$-commutative diagram:
\begin{equation}\label{eq: crystalline realization-summary}
\xymatrix{
\Vect^{\an,\varphi}((\calS,M_{\calS})_\Prism)\ar[rd]_-{D_\cris}\ar[r]^-{i_{\calS,\Prism}^{-1}}
& \Vect^{\an,\varphi}((\calS_1,M_{\calS_1})_\Prism)\ar@{^{(}->}[d]^-{D_{\cris,\calS_1}}\\
& \Vect_\Q^\varphi((\calS_1,M_{\calS_1})_\CRIS),
}
\end{equation}
where $i_{\calS,\Prism}^{-1}$ is the pullback functor discussed in \cite[Defn~C.7]{du-liu-moon-shimizu-purity-F-crystal}.
\end{prop}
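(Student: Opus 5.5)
The claim is the $2$-commutativity of the triangle: the crystalline realization $D_\cris$ of an analytic prismatic $F$-crystal $\calE_\Prism$ on $(\calS,M_{\calS})_\Prism$ agrees, naturally in $\calE_\Prism$, with $D_{\cris,\calS_1}(i_{\calS,\Prism}^{-1}\calE_\Prism)$. The plan is to take the definition of $D_\cris$ via evaluation on log-crystalline thickenings, following \cite[Const.~4.12]{bhatt-scholze-prismaticFcrystal} and \cite[Const.~3.9]{GuoReinecke-Ccris}, and compare it termwise with the pullback along $i_{\calS}$.

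\textbf{Step 1: evaluation on thickenings.} For a log PD-thickening $(U,T,M_T)$ in $(\calS_1,M_{\calS_1})_\CRIS$ with $T$ $p$-adic and (étale locally) admitting a $\delta$-structure lifting Frobenius, the triple $(\mathcal{O}_T,(p),M_T)$ is a crystalline log prism over $(\calS_1,M_{\calS_1})$. Indeed, all prisms over $\calS_1$ have $I=(p)$, as recalled before the statement. The value of $D_\cris(\calE_\Prism)$ on $T$ is by construction $\calE_{\Prism,\mathcal{O}_T}[1/p]$, with Frobenius $\varphi_{\calE}$; here the analytic locus $\Spec(\mathcal{O}_T)\smallsetminus V(p)$ is the same as $\Spec(\mathcal{O}_T[1/p])$, so the analytic vector bundle is just a finite projective $\mathcal{O}_T[1/p]$-module.

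\textbf{Step 2: identification with the pullback.} On the same prism, $i_{\calS,\Prism}^{-1}\calE_\Prism$ is by \cite[Defn.~C.7]{du-liu-moon-shimizu-purity-F-crystal} the evaluation of $\calE_\Prism$ at that prism viewed in $(\calS,M_{\calS})_\Prism$ through the composite $(\Spf(\mathcal{O}_T/p),M)\to(\calS_1,M_{\calS_1})\to(\calS,M_{\calS})$. So both functors assign the same object to every such thickening. These identifications are compatible with transition maps and with Frobenius, since both come from the crystal property of $\calE_\Prism$.

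\textbf{Step 3: extension to all objects.} General objects of the crystalline site are covered by ones of the type in Step 1; for example, one can use log PD-envelopes of a Frobenius-lifted log-smooth embedding, or the weakly final object arguments of \cite{GuoReinecke-Ccris}. Finite locally free isocrystals are determined by their values on such a cover together with descent data, so the natural isomorphism extends. Naturality in $\calE_\Prism$ is clear from the construction.

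The main obstacle is Step 3 in the semistable case. There, PD-envelopes in the log setting need not be prisms on the nose, and one must check that the isocrystal obtained is independent of the chosen covering. I would handle this by passing to the perfect (or quasi-syntomic) site using Remark~\ref{rem: perfect site log vs non log}, and then citing the descent statement \cite[Prop.~3.35]{du-liu-moon-shimizu-purity-F-crystal}, which also yields the full faithfulness of $D_{\cris,\calS_1}$.
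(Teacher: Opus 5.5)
The paper does not prove this statement; it quotes it from \cite[Prop.~3.36]{du-liu-moon-shimizu-purity-F-crystal}. The only information the paper records about $D_\cris$ is in the proof of Proposition/Definition~\ref{propdefn: lattice realization}. There, via \cite[Const.~3.29, Cor.~3.31]{du-liu-moon-shimizu-purity-F-crystal}, one evaluates $\calE_\Prism$ on a Breuil--Kisin-type prism and base changes along the \emph{Frobenius} $\mathfrak S\to S$. This is the source of the twist in $\calE_\Prism(B,J)[1/p]\otimes_{B[1/p],\varphi}\bB_\dR^+(U)$.

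Your proposal in effect defines $D_\cris$ by the same recipe as the composite, which would make the statement tautological, and it never produces this Frobenius. As a result, Step~1 is false as stated. An object of $(\calS_1,M_{\calS_1})_\Prism$ needs a structure map $\Spf(\calO_T/p)\to\calS_1$. A PD-thickening only gives $U=\Spec(\calO_T/J)\to\calS_1$, where the PD ideal $J$ contains $p$; the observation that $I=(p)$ says nothing about this map. When $J\neq(p)$ there is in general no such map except the Frobenius-twisted one. Since $x^p=p!\,\gamma_p(x)\in p\calO_T$ for $x\in J$, the $p$-th power map induces $\calO_T/J\to\calO_T/p$. Evaluating on $(\calO_T,(p))$ with this structure map is exactly what creates the Frobenius pullback. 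For example, for $T=\Spf\bA_\cris(A^+)$ the inclusion $\bA_\inf\to\bA_\cris$ is not a morphism of prisms $(\bA_\inf,\ker\theta)\to(\bA_\cris,(p))$, because $\xi\notin p\bA_\cris$; only $\varphi$ is.

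The thickenings with $J=(p)$, where your Step~1 is literally correct, do not suffice for Step~3. Breuil's ring $S$, $\bA_\cris$, log PD-envelopes of embeddings, and PD-envelopes of diagonals (which carry the descent data) all have $J\supsetneq(p)$. In addition, $\calO_T$ has no canonical $\delta$-structure, and morphisms of the crystalline site are not $\delta$-maps. So the compatibility with transition maps claimed in Step~2 is not automatic either.

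The missing step is a comparison through the Frobenius morphism of log prisms. Take a small affine chart $\calS=\Spf R$ with Breuil--Kisin log prism $(\mathfrak S,(E))$, and let $S$ be the $p$-completed PD-envelope of $(E)$. Then $\varphi(E)=E^p+p\,\delta(E)\in pS$, so $\varphi\colon(\mathfrak S,(E))\to(S,(p))$ is a morphism in $(\calS,M_\calS)_\Prism$. Its target lies in $(\calS_1,M_{\calS_1})_\Prism$ precisely through the Frobenius-twisted structure map above. The crystal property then gives $(i^{-1}_{\calS,\Prism}\calE_\Prism)(S)\cong\calE_\Prism(\mathfrak S)\otimes_{\mathfrak S,\varphi}S$. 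Hence the description of $D_\cris$ on the thickening $\Spec(R/p)\hookrightarrow\Spf S$ agrees there with $D_{\cris,\calS_1}\circ i^{-1}_{\calS,\Prism}$. One must still check compatibility with Frobenius, independence of the chart, and that such thickenings determine a finite locally free $F$-isocrystal.

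Your proposed remedy for Step~3 does not replace this. The paper uses \cite[Prop.~3.35(2)]{du-liu-moon-shimizu-purity-F-crystal} for the full faithfulness of $D_{\cris,\calS_1}$, not as a descent statement. Moreover, passing to perfect prisms forgets the log structure (Remark~\ref{rem: perfect site log vs non log}), so by itself it cannot identify an object of $\Vect_\Q^\varphi((\calS_1,M_{\calS_1})_\CRIS)$.
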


\begin{rem}
Consider the exact closed immersion of the reduced special fiber $(\calS_0,M_{\calS_0})$ of $(\calS,M_{\calS})$ into the mod $p$ fiber $(\calS_1,M_{\calS_1})$. The pullback along this immersion induces an equivalence of $F$-isocrystals. That is, we have
\[
\Vect_\Q^\varphi((\calS_1,M_{\calS_1})_\CRIS) \cong \Vect_\Q^\varphi((\calS_0,M_{\calS_0})_\CRIS).
\]
See \cite[Rem.~B.20]{du-liu-moon-shimizu-purity-F-crystal}.
\end{rem}

For our later applications, we will require the following result, which is essentially proven in \cite{du-liu-moon-shimizu-purity-F-crystal}.

\begin{propdefn}\label{propdefn: lattice realization}
For any $\calE_\Prism \in \Vect^{\an,\varphi}((\calS,M_{\calS})_\Prism)$ with crystalline realization $\calE_\Q\coloneqq \calE_\cris$, the assignment
\[
U \mapsto \bB_\cris^+(\calE_\Q)(U)\otimes_{\bB_\cris^+(U)}\bB_{\dR}^+(U)
\]
for $U\in \mathrm{Perfd}/S_{\proet}$ defines a $\bB_{\dR}^+$-local system $\calE_\Q(\bB_{\dR}^+)$ on $S_{\proet}$ (recall $\bB_\cris^+(\calE_\Q)$ is defined at the beginning of this subsection). Let $\bL$ be the \'etale realization of $\calE_\Prism$. Then $\bL$ is a de Rham local system, and there is a canonical isomorphism
\begin{equation}\label{eq: M0 for cris}
	\calE_\Q(\bB_{\dR}^+) \cong \bM_0(\bL)
\end{equation}
of $\bB_{\dR}^+$-local systems on $S_{\proet}$, where $\bM_0(\bL)$ is defined as in Remark~\ref{rem: M0 and M_1 for de Rham as proet vb} by treating $\bL$ as a de Rham local system. 

Moreover, for any $U \in \Perfd/S_{\proet}$ associated with an affinoid perfectoid space $T=\Spa(A,A^+)$, since $\calS$ is normal, $T$ naturally defines a perfect prism $(B,I)$ in $\calS_{\Prism}$ via \cite[Thm.~3.10]{BhattScholzePrism}, with $B=W(A^{+,\flat})$. One has an explicit isomorphism
\[
	\calE_\Q(\bB_{\dR}^+)(U) \cong \calE_\Prism(B, J)[1/p]\otimes_{B[1/p],\varphi}\bB_{\dR}^+(U).
\]
\end{propdefn}

\begin{proof}
The first isomorphism \eqref{eq: M0 for cris} follows by taking the horizontal sections in \cite[Prop.~3.44\,(1)]{du-liu-moon-shimizu-purity-F-crystal} for $\ast = +$. To see this, observe that the left-hand side of Eq.~\eqref{eq: M0 for cris} precisely matches the construction of $\calE_{\mathbf{Q}}(\bB_{\dR}^+)$ from \cite[Const.~3.43 and Prop.~3.45]{du-liu-moon-shimizu-purity-F-crystal} (where $\calE_{\mathbf{Q}}$ corresponds to the crystalline realization $\calE_\Q$ of $\calE_\Prism$ in our notation). 

The remaining explicit formula follows from the characterization of the crystalline realization functor provided by \cite[Cor.~3.31]{du-liu-moon-shimizu-purity-F-crystal}. The Frobenius twist $\varphi$ in the tensor product over $B[1/p]$ arises because the map $\mathfrak{S} \to S$ in \cite[Const.~3.29]{du-liu-moon-shimizu-purity-F-crystal} is induced by the absolute Frobenius $\varphi$, which canonically lifts to the composite $\bA_{\inf} \xrightarrow{\varphi} \bB_{\cris}^+ \to \bB_{\dR}^+$.
\end{proof}

\section{Diamantine Fargues--Fontaine curve and relative shtukas}
\label{sec: diamantine FF curves}

In this section, we review basic facts about the adic (relative) Fargues--Fontaine curve, and then introduce its diamantine generalization and the theory of vector bundles over it. Finally, we review the theory of relative shtukas defined over an analytic adic space, motivated by \cite{Pappas-Rapoport-padicsht}, and relate them to admissible modifications of vector bundles over the diamantine Fargues--Fontaine curve. 

Throughout this section, we work over a base $S$ assumed to be a locally spatial diamond. We reserve the letter $X$ for the Fargues--Fontaine curve.

\subsection{Local systems and vector bundles in different topologies}\label{sec: v loc sys and vb}
In this subsection, for $\tau \in \{\mathrm{an}, \et, \proet, \qproet, \vv\}$, we let $S_\tau$ denote the corresponding site endowed with the $\tau$-topology. We will recall some foundational facts about local systems on $S_\tau$, cf. \cite[\S10.4]{SWBerkeleyNotes}. We fix a morphism $S \to \Spd \Z_p$, and recall the notation for vector bundles in the $\tau$-topology.

First, we recall the definition of the diamond associated with an analytic adic space.

\begin{defn}[{\cite[Lem.~15.1 \& Defn.~15.5]{scholze-etalecohomologyofdiamonds}}]
Let $S$ be an analytic adic space over $\Spa \mathbb{Z}_p$. 
We define a presheaf $S^\diamondsuit$ on $\mathrm{Perf}_{\vv}$ as follows: For any perfectoid space $T\in \mathrm{Perf}$ of characteristic $p$, let $S^\diamondsuit(T)$ be the set of isomorphism classes of triples $(T^\sharp, \iota, T^\sharp \to S)$, where $(T^\sharp, \iota)$ is an untilt over $\Z_p$, and $T^\sharp \to S$ is a morphism of pre-adic spaces. We also write $\Spd S$ for $S^\diamondsuit$, and we write $\Spd(R, R^+) = \Spa(R, R^+)^\diamondsuit$ (or simply $\Spd R$) when $S=\Spa(R, R^+)$ is an affinoid. 
\end{defn}

\begin{rem}\label{rem:equiv of PerfSdiam and PerfdS}
Assume $S$ is an analytic rigid space equipped with a structure morphism to $\Spa \Q_p$. Then $S^\diamondsuit$ admits a natural structure morphism to $\Spd \Q_p$. Let $\Perf_{S^\diamondsuit}$ be the category of pairs $(T,f_T)$ where $T \in \Perf$ and $f_T\colon T \to S^\diamondsuit$, and let $\Perfd_{S}$ be the category of pairs $(T',f_{T'})$ consisting of perfectoid spaces over $S$. For $T \in \Perf$, a morphism $f \in S^\diamondsuit(T)$ naturally yields a pair $(T^\sharp, T^\sharp\to S)$ via the definition of $S^\diamondsuit$. Conversely, given $(T', T' \to S)$, taking the associated diamonds defines $(T'^\flat, T'^\flat \to S^\diamondsuit)$. Using the tilting equivalence, it is straightforward to show that this yields an equivalence of categories $\Perfd_{S} \cong \Perf_{S^\diamondsuit}.$
\end{rem}

We recall the following facts relating $S$ to $S^{\diamondsuit}$.

\begin{thm}[{\cite[Prop.~10.2.3, 10.3.7, Thm.~10.4.2]{SWBerkeleyNotes}}]
Let $S$ be as above. Then:
\begin{enumerate}
	\item $S^\diamondsuit$ is a diamond;
	\item There is a natural homeomorphism of topological spaces $\lvert S\rvert \cong \lvert S^\diamondsuit \rvert$;
	\item The functor $T \mapsto T^\diamondsuit$ induces an equivalence of sites $S_\tau \cong S^\diamondsuit_\tau$ for $\tau \in \{\et, \mathrm{f}\et\}$;
	\item The functor $S \mapsto S^\diamondsuit$ defines a fully faithful functor from the category of smooth rigid analytic spaces over $K$ to the category of diamonds over $\Spd K$.
\end{enumerate}
\end{thm}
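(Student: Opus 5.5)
This statement collects four standard facts from the theory of diamonds, so the plan is to reduce each item to the cited results of Scholze and Scholze--Weinstein. For (1), I will work locally. Since $S$ is analytic over $\Spa\Z_p$, it admits an open cover by affinoids $\Spa(R,R^+)$, and each of these admits a pro-\'etale perfectoid cover $\widetilde{U}=\Spa(\widetilde R,\widetilde R^+)\to \Spa(R,R^+)$. In the rigid case over $\Q_p$, one takes $\widetilde R = R\widehat{\otimes}\Q_p^{\mathrm{cycl}}$ followed by a toric tower; in general one uses \cite[Lem.~15.3]{scholze-etalecohomologyofdiamonds}. Tilting identifies $\widetilde U^{\diamondsuit}$ with the representable sheaf $h_{\widetilde U^\flat}$. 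The map $h_{\widetilde U^\flat}\to \Spa(R,R^+)^\diamondsuit$ is a pro-\'etale surjection. Its self-fibre product is again the diamond of a perfectoid space, namely $\widetilde U\times_{\Spa R}\widetilde U$, which is perfectoid because it is pro-\'etale over a perfectoid space. This exhibits $\Spa(R,R^+)^\diamondsuit$ as a quotient of a perfectoid space by a pro-\'etale equivalence relation, so it is a diamond. Diamonds glue along open covers, and openness of $U^\diamondsuit\subset S^\diamondsuit$ for opens $U\subset S$ is immediate from the definition. Together these give (1).

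For (2), I will define a map $\lvert S^\diamondsuit\rvert\to\lvert S\rvert$ by sending a point $\Spa(C,C^+)\to S^\diamondsuit$, with $C$ perfectoid, to the image of the closed point of the untilt. I will then check surjectivity and injectivity. For surjectivity, any point $x$ with residue pair $(k(x),k(x)^+)$ lifts after passing to a completed algebraic closure, which is a perfectoid field. For injectivity, two geometric points over the same $x$ are dominated by a common one, so they are equivalent in $\lvert S^\diamondsuit\rvert$. Continuity and openness of the map come from comparing rational subsets: a rational subset of $S$ pulls back to a rational subset of $\widetilde U^\flat$ via $f\mapsto f^\sharp$, by the tilting correspondence of rational subsets together with approximation. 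The quotient topology on $\lvert S^\diamondsuit\rvert$ is then identified with that of $\lvert \widetilde U\rvert/R$. This is exactly the content of \cite[Lem.~15.6]{scholze-etalecohomologyofdiamonds}.

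For (3), the functor $T\mapsto T^\diamondsuit$ from $S_\et$ to $S^\diamondsuit_\et$ is well defined, because \'etale maps pull back to \'etale maps of perfectoid covers. Essential surjectivity and full faithfulness reduce, by pro-\'etale descent along $\widetilde U\to S$, to the perfectoid case. In that case the relevant input is the almost purity theorem: for $\mathrm{f}\et$, finite \'etale algebras over $\widetilde R$ and over $\widetilde R^\flat$ correspond. The \'etale case then follows, since \'etale maps are locally compositions of open immersions and finite \'etale maps, and open immersions are handled by (2). The descent step needs finite \'etale and \'etale maps to descend along pro-\'etale covers of diamonds. I expect this to be the main technical obstacle, and I would cite \cite[Lem.~15.6, Prop.~11.?]{scholze-etalecohomologyofdiamonds} and \cite[Thm.~10.4.2]{SWBerkeleyNotes} rather than reprove it.

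For (4), faithfulness follows from (2) together with reducedness of smooth spaces. For fullness, I would use that a smooth (hence seminormal) rigid space $S$ satisfies $\calO_S(U)=\calO^\diamondsuit(U^\diamondsuit)$, where the right-hand side is the ring of functions $U^\diamondsuit\to\bA^{1,\diamondsuit}$. Then a map of diamonds $S^\diamondsuit\to T^\diamondsuit$ induces maps on functions that are compatible with the continuous maps of (2), and hence a morphism $S\to T$ by gluing affinoid pieces. This reduces to \cite[Prop.~10.2.3]{SWBerkeleyNotes}, whose key point is Kedlaya--Liu's result that $\calO(\widetilde U)^{G}=R$ for normal $R$.
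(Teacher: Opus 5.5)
Your outline follows the same route as the paper, which simply cites Scholze--Weinstein for all four items. However, one step fails as you state it. In (4) you say faithfulness follows from (2) together with reducedness, and it does not. Statement (2) only identifies underlying topological spaces. A morphism of reduced rigid spaces over $K$ is not determined by the map it induces on $\lvert S\rvert$, because a point of $\lvert S\rvert$ only remembers a Galois orbit of geometric points.

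For example, let $L/K$ be a nontrivial finite Galois extension and $1\neq\sigma\in\Gal(L/K)$. Then $\mathrm{id}$ and $\Spa(\sigma)$ are distinct $K$-endomorphisms of the smooth space $\Spa(L,\calO_L)$ that induce the same map of one-point spaces, although their diamonds do differ.

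What gives faithfulness is the definition of $f^\diamondsuit$, not (2). If $f^\diamondsuit=g^\diamondsuit$, then $f\circ x=g\circ x$ for every $x\colon\Spa(C,C^+)\to S$ with $C$ complete and algebraically closed; this is what equality means on $\Spa(C^\flat,C^{\flat+})$-points. Use (2) only to localize: choose affinoids $U\subset S$ and $V=\Spa(B,B^\circ)\subset T$ with $f(U),g(U)\subset V$. For each $b\in B$, the function $f^{*}b-g^{*}b$ then vanishes at all such points, so it lies in every maximal ideal and is nilpotent. It is therefore $0$ because $U$ is reduced. Equivalently, faithfulness is the injectivity half of the comparison $\calO_S(U)\cong\calO^\diamondsuit(U^\diamondsuit)$ that you already use for fullness.

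There are two smaller points.

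\begin{itemize}
\item In (2), comparing rational subsets only shows that $\lvert\widetilde U\rvert\to\lvert S\rvert$, and hence $\lvert S^\diamondsuit\rvert\to\lvert S\rvert$, is continuous. Since $\lvert S^\diamondsuit\rvert$ is by definition the quotient of $\lvert\widetilde U\rvert$ by $\lvert\widetilde U\times_S\widetilde U\rvert$, what you actually need is that $\lvert\widetilde U\rvert\to\lvert S\rvert$ is a quotient map. This holds, for instance, if $\widetilde U=\varprojlim U_i$ with each $U_i\to S$ \'etale and surjective. Then $\lvert\widetilde U\rvert=\varprojlim\lvert U_i\rvert$ surjects onto each $\lvert U_i\rvert$, so the image in $\lvert S\rvert$ of the preimage of an open $W\subset\lvert U_i\rvert$ is the image of $W$, which is open because \'etale maps are open.

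\item In (3), the ``reduction by pro-\'etale descent along $\widetilde U\to S$'' is not a formal step on the adic-space side. Effective descent of (finite) \'etale adic spaces along a perfectoid pro-\'etale cover is essentially the statement being proved; only on the diamond side is descent formal. The usual argument takes $\widetilde U$ to be a limit of finite \'etale covers $U_i\to S$. One shows that finite \'etale algebras over the completed colimit, and isomorphisms between their pullbacks to $\widetilde U\times_S\widetilde U$, already come from some finite level. One then applies ordinary finite \'etale descent along $U_i\to S$, and almost purity enters only to pass between $\widetilde U$ and $\widetilde U^\flat$. Since you flag this step and defer to the literature, as the paper does, that is acceptable, but it should not be presented as a reduction.
\end{itemize}
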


The definitions of isogeny $\Z_p$- and $\Q_p$-local systems over $S_\tau$ for a diamond $S$ are discussed in \cite[\S3]{MannWerner-Localsystemsondiamonds}. We will need the following results regarding local systems on $S_\tau$ when $S$ is a smooth rigid analytic variety over $K$. The primary references for this material are Scholze \cite{scholze-etalecohomologyofdiamonds}, \cite[\S3]{MannWerner-Localsystemsondiamonds}, and Kedlaya--Liu \cite{kedlaya-liu-relative-padichodge}\footnote{Note that in \cite{kedlaya-liu-relative-padichodge}, the results hold for (pre)adic spaces, which are locally defined by adic Banach rings $(A,A^+)$ where $A$ contains a topologically nilpotent unit. Since any analytic rigid space is covered by the adic spectrum of rings satisfying these conditions, we are free to apply their results.}.

\begin{thm}\label{thm: equiv of all kinds of locsys} 
\begin{enumerate}
\item The following categories are equivalent:
    \begin{enumerate}
    \item The category of $\Z_p$-local systems over $S_{\ast}$ for $\ast \in \{\et,\proet\}$;
    \item The category of $\Z_p$-local systems over $S^\diamondsuit_{\ast}$ for $\ast \in \{\et,\qproet,\vv\}$.
    \end{enumerate}
\item A similar result holds for $\Q_p$-local systems; the following categories are equivalent:
    \begin{enumerate}
    \item The category of $\Q_p$-local systems over $S_{\ast}$ for $\ast \in \{\et,\proet\}$;
    \item The category of $\Q_p$-local systems over $S^\diamondsuit_{\ast}$ for $\ast \in \{\et,\qproet,\vv\}$.
    \end{enumerate}
\end{enumerate}
Moreover, for any $\Q_p$-local system $\mathbb{L}$ over $S^\diamondsuit_{\vv}$ and any point $s\in \lvert S \rvert$, the following statements hold: 
\begin{enumerate}
\item[(3a)] There exists an \'etale covering $T \to S$ such that $\mathbb{L}|_{T^\diamondsuit}$ is an isogeny $\Z_p$-local system.\footnote{The author thanks Laurent Fargues for pointing out this result.} Moreover, one can choose $T \to S$ to be a geometric covering in the sense of \cite[Defn.~5.2.2]{ALYArcs}. If $S$ is connected, then $T$ may also be chosen to be connected.
\item[(3b)] There exists an open subfunctor $U \subset S^\diamondsuit$, where $\lvert U \rvert$ is an open subset of $\lvert S \rvert$ containing $s$, such that $\mathbb{L}|_{U}$ is an isogeny $\Z_p$-local system.
\end{enumerate}
\end{thm}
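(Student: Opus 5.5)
The plan is to assemble parts (1) and (2) from the comparison results already in the literature, and to spend the real effort on (3a) and (3b).

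For part (1), I will use Scholze's comparison \cite[Prop.~8.2]{scholze-p-adic-hodge} between $\Z_p$-local systems on $S_\et$ and $S_\proet$: one passes to the inverse limit of $\Z/p^n$-local systems, and finite locally constant sheaves agree on the two sites. Next, the equivalence of sites $S_\et\cong S^\diamondsuit_\et$ from the theorem above identifies étale local systems on $S$ with those on $S^\diamondsuit$. Finally, \cite[Prop.~14.7 and \S16]{scholze-etalecohomologyofdiamonds} shows that $\Z/p^n$-local systems satisfy $\vv$-descent and that étale, quasi-pro-étale and $\vv$ local systems coincide on a locally spatial diamond. Taking limits over $n$ then gives the $\Z_p$-statement. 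Part (2) is more delicate, because a $\Q_p$-local system in the $\vv$ or pro-étale topology need not admit a global lattice. I will argue as in \cite[\S3]{MannWerner-Localsystemsondiamonds} and \cite[Thm.~8.4.x]{kedlaya-liu-relative-padichodge}. Both sides are stacks for the analytic topology, so it suffices to prove that every $\Q_p$-local system is, locally on $\lvert S\rvert$, isogenous to a $\Z_p$-local system; that is precisely (3b). Granting (3b), full faithfulness follows by comparing $\Hom$ sheaves (rationalized $\Z_p$-Homs), and essential surjectivity follows by gluing the local lattices.

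For (3b), fix $s$ and a $\vv$-cover $\widetilde T\to S^\diamondsuit$ trivializing $\bL$. The gluing data form a continuous cocycle in $\mathrm{GL}_n(\Q_p)$. By compactness of the fibre over a small quasicompact neighbourhood of $s$, together with the fact that any compact subgroup of $\mathrm{GL}_n(\Q_p)$ stabilizes a lattice, I expect to reduce to the case where the cocycle takes values in a compact open subgroup. This gives a lattice over a neighbourhood $U$ of $s$. The points to check are that quasicompact opens of $\lvert S^\diamondsuit\rvert$ give the subfunctor $U$, and that local constancy of the cocycle yields a lattice on an open set and not merely a constructible one. I would follow Kedlaya--Liu's argument that $\Q_p$-local systems on a spatial diamond come from $\Z_p$-lattices after a pro-étale cover, then descend to an étale neighbourhood by an approximation argument.

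The main obstacle is (3a): producing a genuine étale covering $T\to S$, not merely a pro-étale one, on which a global lattice exists, and moreover choosing it geometric in the sense of \cite{ALYArcs} and connected when $S$ is. My approach is to form the sheaf $\Lat(\bL)$ of $\Z_p$-lattices in $\bL$. It is étale-locally isomorphic to the constant sheaf $\mathrm{GL}_n(\Q_p)/\mathrm{GL}_n(\Z_p)$, which is discrete, so its total space is an étale space $T\to S$. Because this space is locally on $S$ a disjoint union of copies of opens, it satisfies the valuative lifting condition, hence is a geometric covering by \cite[Defn.~5.2.2]{ALYArcs}. The restriction $\bL|_T$ then carries a tautological lattice. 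When $S$ is connected, I would take a connected component of $T$; geometric coverings are stable under passing to components by the results of \cite{ALYArcs}, so this component still surjects onto $S$ and still carries the lattice. The remaining subtle verification is that $\Lat(\bL)$ is étale-locally constant, rather than only $\vv$-locally constant. For this I plan to use (3b) to produce local lattices, and the equivalence in part (2) to move the resulting sheaf back to $S_\et$.
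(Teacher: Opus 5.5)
Parts (1) and (2) are fine. They agree in substance with the paper: local lattices plus descent. The paper descends along the \'etale cover of (3a) rather than the analytic cover of (3b), which makes no difference. Two steps, in (3b) and (3a), would fail as you describe them.

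\textbf{The step in (3b).} Compactness of the image of the cocycle over a quasicompact base cannot produce a lattice, because a compact set of values need not generate a compact subgroup. Concretely, let $E_q=\mathbb{G}_m^{\an}/q^{\Z}$ be a Tate curve, and let $\bbL$ be the rank-one $\Q_p$-local system obtained from the $\Z$-torsor $\mathbb{G}_m^{\an}\to E_q$ via $1\mapsto p$.
\begin{itemize}
\item $\bbL$ is trivial on each member of a finite cover of $E_q$ by annuli, so its cocycle takes finitely many values.
\item Yet $\bbL$ has no lattice on $E_q$. A lattice would pull back to a constant lattice $p^k\underline{\Z_p}$ on the connected space $\mathbb{G}_m^{\an}$, and this would have to be preserved by the deck transformation, which acts by $p$.
\end{itemize}
Since $E_q$ is itself a quasicompact neighbourhood of each of its points, any argument using only compactness of the gluing data proves too much. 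Likewise, ``descending from a pro-\'etale cover by approximation'' is exactly the nontrivial point, because a lattice is an infinite-level object. The input has to be genuinely local at $s$, of the type in the spreading-out argument in the proof of Theorem~\ref{thm: KL-semi-continuity}. The paper does not reprove this; it cites \cite[Prop.~8.4.6]{kedlaya-liu-relative-padichodge}.

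\textbf{The step in (3a).} $\Lat(\bbL)$ is \emph{not} \'etale-locally isomorphic to the constant sheaf $\mathrm{GL}_n(\Q_p)/\mathrm{GL}_n(\Z_p)$, so the verification you postpone cannot succeed.
\begin{itemize}
\item Take $S=\Spa(K)$ and $\bbL=\Q_p\oplus\Q_p(1)$. No open subgroup of $G_K$ fixes all lattices, since it acts through an infinite non-scalar group.
\item Correspondingly, $T$ is a disjoint union of $\Spa(L_i)$ with $[L_i:K]$ unbounded, not of copies of opens of $S$.
\end{itemize}
The correct statement is weaker but suffices. Take an open $U$ with $\bbL|_U=\bbT[1/p]$, which (3b) provides. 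Then $\Lat(\bbL)|_U=\bigcup_N\Lat_N$, where $\Lat_N$ is the sheaf of lattices between $p^N\bbT$ and $p^{-N}\bbT$. Each $\Lat_N$ is finite locally constant, and each inclusion $\Lat_N\subseteq\Lat_{N+1}$ is an open and closed immersion. Hence $\Lat(\bbL)|_U$ is a disjoint union of finite \'etale $U$-spaces. So $\Lat(\bbL)$ is an object of $\Cov^{\et}_S$, and it is a geometric covering because $\UCov^{\et}_S\subseteq\GCov_S$. The lifting property does not come from any trivial local structure.

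This is exactly the paper's route. Theorem~\ref{thm: generalized de Jong lattice sheaf} and Remark~\ref{rem: relation with lattice cover} obtain representability in $\Cov^{\et}$ from \cite[Thm.~4.4.1]{ALYVariants}, and the first sentence of (3a) is cited from \cite[Lem.~3.10]{MannWerner-Localsystemsondiamonds}. With this correction, your connected-component step is correct.
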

\begin{proof}
The equivalences in $(1a)$\footnote{The case where $S$ is a locally Noetherian adic space follows from \cite[Prop.~8.2]{scholze-p-adic-hodge}.} and $(1b)$ follow from \cite[Lem.~9.1.11]{kedlaya-liu-relative-padichodge}. We also invoke \cite[Cor.~3.15]{MannWerner-Localsystemsondiamonds} to ensure that \cite[Defn.~3.3]{MannWerner-Localsystemsondiamonds} agrees with the standard definitions (cf.\ \cite[Defn.~8.1]{scholze-p-adic-hodge}). The categories in $(1a)$ (resp.\ $(2a)$) are equivalent to the corresponding categories in $(1b)$ (resp.\ $(2b)$) equipped with the \'etale topology because $S_{\et} \cong S^\diamondsuit_{\et}$, cf.\ \cite[Lem.~15.6]{scholze-etalecohomologyofdiamonds}. The categories in $(1b)$ are mutually equivalent by combining \cite[Props.~3.5, 3.7, \& 3.9]{MannWerner-Localsystemsondiamonds}. Statement $(3b)$ is \cite[Prop.~8.4.6]{kedlaya-liu-relative-padichodge}, and statement $(3a)$ is \cite[Lem.~3.10]{MannWerner-Localsystemsondiamonds}. 

The remaining parts of $(3a)$ follow from Theorem~\ref{thm: generalized de Jong lattice sheaf} and Remark~\ref{rem: relation with lattice cover} that the sheaf representing lattices in $\bbL$ is represented by a geometric covering. Then one can use properties of geometric coverings to show the rest, cf. Proposition~\ref{prop: geometric covering-input}.

To prove that the categories in $(2b)$ are mutually equivalent, we first show that the pullback functor defines a fully faithful functor $\Loc_{\Q_p}(S^\diamondsuit_{\et}) \to \Loc_{\Q_p}(S^\diamondsuit_{\vv})$. By the definition of $\Q_p$-local systems on $S^\diamondsuit_{\et}$, it is sufficient to prove fully faithfulness when $S^\diamondsuit=\Spd(A,A^+)$. However, in this case, $\Q_p$-local systems over $S^\diamondsuit_{\et}$ are equivalent to isogeny $\Z_p$-local systems (see, for example, \cite[Cor.~8.4.7]{kedlaya-liu-relative-padichodge}), and fully faithfulness follows directly from the $\Z_p$-local system case. Essential surjectivity follows from \cite[Thm.~3.11]{MannWerner-Localsystemsondiamonds} and its proof: for any $\Q_p$-local system $\mathbb{L}$ on $S^\diamondsuit_\vv$, statement $(3a)$ guarantees an \'etale covering $U \to S^\diamondsuit$ such that $\mathbb{L}|_U$ is an isogeny $\Z_p$-local system over $U^{\diamondsuit}_\vv$. By $(1)$, this isogeny $\Z_p$-local system descends to $S^\diamondsuit_{\et}$. Fully faithfulness then implies that the descent isomorphism also descends, thereby producing a $\Q_p$-local system on $S^{\diamondsuit}_{\et}$ by definition. Applying \cite[Thm.~3.11]{MannWerner-Localsystemsondiamonds} again yields $(2b)$.
\end{proof}

\begin{rem}
By the theorem above, in the remainder of the paper, we will omit the hat and write $\bL$ for the object denoted by $\hat{\bL}$ in \S\ref{sec: arith loc sys}.
\end{rem}

If $S$ is a smooth rigid analytic variety over $K$, we have the natural structure sheaves $\calO_\tau\coloneqq \calO_S$ on $S_{\tau}$ for $\tau \in \{\mathrm{an}, \et\}$. More generally, once we fix a morphism $S\to \Spd \Z_p$, we can define the ``structure sheaf'' $\calO_\tau\coloneqq\widehat{\calO}_S$ on $S_{\tau}$ for $\tau \in \{\qproet, \vv\}$. This is defined by assigning to each $\Spa(R,R^+) \in S_{\tau}$ the ring $R^\sharp$, where $(R^\sharp,R^{\sharp,+})$ is the untilt of $(R,R^+)$ induced by the composition $\Spa(R,R^+) \to S \to \Spd \Z_p$.

Once the structure sheaves $\calO_\tau$ are defined, one can formalize the category of vector bundles over $S_\tau$\footnote{When $\tau =\vv$, we must specify a morphism $S \to \Spd \Z_p$ to define $\calO_\tau$ as discussed. We will omit this specification when the morphism is clear from the context.}. For the precise definition, we refer the reader to \cite[\S2.1]{Benvlinebundle}. We recall the following result concerning vector bundles over $S_\tau$.

\begin{prop}\label{prop: tau vector bundles}
If $S$ is a smooth rigid analytic space over $K$, then the categories of vector bundles over $S^{(\diamondsuit)}_{\tau}$ are equivalent for $\tau\in \{\proet,\qproet,\vv\}$.
\end{prop}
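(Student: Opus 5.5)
The plan is to compare each of the three topologies with the $\vv$-topology. The key input is Kedlaya--Liu/Scholze $\vv$-descent for vector bundles on perfectoid spaces, namely that $T\mapsto \Vect(T)$ is a $\vv$-stack on $\Perf$ (cf.\ \cite[Prop.~17.1.8]{SWBerkeleyNotes}, \cite{kedlaya-liu-relative-padichodge}). On the pro-\'etale side I will use Remark~\ref{rem:equiv of PerfSdiam and PerfdS} to identify perfectoid spaces over $S$ with $\Perf_{S^\diamondsuit}$. Since $S$ is smooth over $K$, it is locally \'etale over a torus $\bT^d_K$, so $S$ admits a pro-\'etale cover by affinoid perfectoids, for instance via the toric tower after adjoining $K_{\mathrm{cycl}}$. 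These form a basis of $S_{\proet}$ and also of $S^\diamondsuit_{\qproet}$. Consequently, a vector bundle in each topology is determined by its values on affinoid perfectoid objects $U=\Spa(R,R^+)$, where $\calO(U)=R^\sharp$ in every case, since $\widehat{\calO}_{X_{\proet}}$ evaluates to $R^\sharp$ on affinoid perfectoids by \cite[Lem.~4.10]{scholze-p-adic-hodge}.

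The steps are as follows. First, I compare $\proet$ with $\qproet$. The functor $S_{\proet}\to S^\diamondsuit_{\qproet}$, $U\mapsto U^\diamondsuit$, sends affinoid perfectoid pro-\'etale objects to quasi-pro-\'etale affinoid perfectoids, and covers to covers. Restricting to the common basis of affinoid perfectoids, both categories of vector bundles become the category of compatible systems of finite projective $R^\sharp$-modules with descent data. This is because every quasi-pro-\'etale affinoid perfectoid over $S^\diamondsuit$ can be refined, locally, by pro-\'etale affinoid perfectoids coming from $S_{\proet}$. Second, I compare $\qproet$ with $\vv$, and this is where I expect the main difficulty to lie. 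Pullback is fully faithful because $\vv$-descent of sections of $\calO$ holds on affinoid perfectoids: $H^0_\vv(U,\calO)=R^\sharp$ for affinoid perfectoid $U$. Essential surjectivity is harder. I would argue as follows: given a $\vv$-vector bundle $\calE$, restrict it to each affinoid perfectoid $U$ in the quasi-pro-\'etale site. On $U$, $\vv$-descent for vector bundles on perfectoid spaces \cite[Lem.~17.1.8]{SWBerkeleyNotes} shows that $\calE|_U$ comes from an analytic vector bundle on $U$, hence, by Kedlaya--Liu, from a finite projective $R^\sharp$-module. These modules are compatible under quasi-pro-\'etale maps and glue to a $\qproet$-bundle whose $\vv$-pullback is $\calE$. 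The main obstacle is that the restriction along the untilt must be taken carefully: the structure sheaf on $\Perf_{S^\diamondsuit}$ uses the untilt induced by $S\to\Spd\Z_p$, so one must check that $\vv$-descent for $\calO^\sharp$-bundles holds. I would handle this by applying \cite[Prop.~17.1.8]{SWBerkeleyNotes} directly to the untilted perfectoid spaces $T^\sharp$, noting that a $\vv$-cover of $T$ gives a $\vv$-cover of $T^\sharp$.

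Finally, I compose the two equivalences and check that they are compatible with the evident pullback functors $S_{\proet}\to S^\diamondsuit_{\qproet}\to S^\diamondsuit_\vv$, which yields the proposition.
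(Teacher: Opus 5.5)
Your argument is correct and is essentially the standard one behind the result the paper simply cites from \cite{Benvlinebundle}: work on affinoid perfectoid objects, where every topology yields finite projective $R^\sharp$-modules by $\vv$-descent of vector bundles on perfectoid spaces (Kedlaya--Liu, Scholze), and then descend. The one point to tighten is the $\proet$ versus $\qproet$ comparison, since there is no literal common basis: the affinoid perfectoids of $S_{\proet}$ form only a subclass of those in $S^\diamondsuit_{\qproet}$. It is cleaner to identify both categories with finite projective $\widehat{\calO}(\widetilde S)$-modules equipped with descent data along $\widetilde S\times_S\widetilde S$, for one fixed affinoid perfectoid pro-\'etale cover $\widetilde S\to S$.
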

\begin{proof}
This is proven in \cite[\S1.1]{Benvlinebundle}.
\end{proof}

\begin{rem}\label{rem: extend BdR and HT to v loc sys}
Using Proposition~\ref{prop: tau vector bundles} and Theorem~\ref{thm: equiv of all kinds of locsys}, certain $p$-adic Hodge properties of local systems $\bL$ over $S_{\et}$ (discussed in \S\ref{sec: arith loc sys}) can be verified $\vv$-locally. 
\begin{enumerate}
	\item One can define potentially unramified local systems and Hodge--Tate local systems of weight $0$ on a smooth rigid analytic space purely using the $\vv$-site. To do so, one simply replaces the pro-\'etale site with the $\vv$-site in Proposition~\ref{prop: Condition for L to be HT wt 0} and Theorem~\ref{thm: RpMT for HT0}.
	\item The notion of $\mathbb{B}_{\text{dR}}^{+}$-local systems over $S_{\proet}$ in the sense of \cite[Defn.~7.1]{scholze-p-adic-hodge} naturally extends to $(S^\diamondsuit/\Spd K)_\vv$ via Proposition~\ref{prop: tau vector bundles}, and the fact that $\mathbb{B}_{\text{dR}}^{+}$ satisfies $\vv$-descent.
\end{enumerate}
\end{rem}

\subsection{The Fargues--Fontaine curve}

Recall that $\Perf$ denotes the category of perfectoid spaces over $\F_p$. We fix a complete discrete valuation ring $\calO_E$ of mixed characteristic $(0,p)$, with a finite residue field $k$ of cardinality $q\coloneqq p^h$. In later sections, we will be primarily interested in the case where $\calO_E=\Z_p$. Let $E$ be the fraction field of $\calO_E$, and let $\pi$ denote a uniformizer of $\calO_E$.

\begin{defn}
For any affinoid perfectoid space $S=\Spa(R,R^+)$ in $\Perf$, and any pseudouniformizer $\varpi \in R^+$, we define:
\begin{itemize}
	\item $\calY_S \coloneqq \Spa( W_{\calO_E}(R^+))\setminus V([\varpi])$;
	\item $Y_S \coloneqq \Spa( W_{\calO_E}(R^+))\setminus V(\pi[\varpi])$.
\end{itemize}
\end{defn}

Note that once $\varpi$ and $\pi$ are fixed as above, there exists a ``relative position function'' 
\[
\kappa\colon \lvert\Spa( W_{\calO_E}(R^+))^a\rvert \to [0,\infty]
\]
defined by $\kappa(x)\coloneqq \ln|[\varpi](x_{\max})|/ \ln |\pi(x_{\max})|$. Here, $\Spa( W_{\calO_E}(R^+))^a$ denotes the analytic locus of $\Spa( W_{\calO_E}(R^+))$, which is strictly $\Spa( W_{\calO_E}(R^+))\setminus (V(\pi) \cap V([\varpi]))$. For any point $x$ in an analytic adic space, $x_{\max}$ represents its maximal generalization (cf.\ \cite[Prop.~4.2.5]{SWBerkeleyNotes}). Because $x_{\max}$ is of rank $1$, the terms $\ln|[\varpi](x_{\max})|$ and $\ln |\pi(x_{\max})|$ are well-defined and take values in $[-\infty,0)$, as $[\varpi]$ and $\pi$ are topologically nilpotent. The assumption that $x$ lies in the analytic locus ensures that $\ln|[\varpi](x_{\max})|$ and $\ln |\pi(x_{\max})|$ cannot simultaneously be $-\infty$. The function $\kappa$ is continuous (cf.\ \cite[Prop.~II.1.16]{FarguesScholze}).

For any affinoid perfectoid space $S=\Spa(R,R^+)$ in $\Perf$, fix a pseudouniformizer $\varpi \in R^+$. For any interval $I=[a,b]\subset [0,\infty)$ with rational endpoints (allowing $a=b$), the preimage $Y_{S,I}= \kappa^{-1}(I)$ is an open subset (cf.\ \cite[Props.~II.1.1 \& II.1.16]{FarguesScholze}). More explicitly:
\[
Y_{S,I}=\{\lvert\pi\rvert^b\le \lvert[\varpi]\rvert\le \lvert\pi\rvert^a\}\subset \operatorname{\kappa}^{-1}(I)\subset Y_S \quad \text{if $a>0$,}
\]
\[
Y_{S,[0,b]}=\{\lvert\pi\rvert^b\le \lvert[\varpi]\rvert \neq 0\}.
\]
One defines $Y_{S,J}$ similarly for intervals $J=(a,b)$, $[a,b)$, or $(a,b]$ with rational endpoints inside $(0,\infty)$. 

One constructs $X_S$ as the quotient of $Y_{S,[a,b]}$ (for $a > 0$ and $b/a \geq q$) via the identification
$\varphi\colon Y_{S,[a,a]}\xrightarrow{\sim} Y_{S,[qa,qa]}$. In particular, both $Y_{S,[1,q]}$ and $X_S$ are qcqs when $S$ is affinoid. 

For an affinoid perfectoid $S=\Spa(A,A^+)\in \Perf$, we define the following variants of Robba rings following \cite{kedlaya-liu-relative-padichodge}.
\begin{defn}\label{defn: Robba rings} 
For any rational number $r >0$, and a fixed pseudouniformizer $\varpi \in A^+$, we define the following rings:
\begin{enumerate}
	\item $\widetilde{\calR}_S^{\rmint, r} \coloneqq H^0(Y_{S,[0,r]},\calO)$;
	\item $\widetilde{\calR}_S^{\rmint}\coloneqq \colim_{r>0} \widetilde{\calR}_S^{\rmint, r}$, where the transition maps are given by rational localizations;
	\item $\widetilde{\calR}_S^{\mathrm{bd}}\coloneqq \colim_{r>0}\widetilde{\calR}_S^{\rmint, r}[1/\pi]$;
	\item $\widetilde{\calR}_S^{r} \coloneqq H^0(Y_{S,(0,r]},\calO)$, and $\widetilde{\calR}_S\coloneqq \colim_{r} \widetilde{\calR}_S^{r}$;
	\item $\widetilde{\calE}_S \coloneqq W(A)[1/\pi]$.
\end{enumerate}
\end{defn}

\begin{rem}
We also recall the following facts from \cite[Chap.~II, \S1.1 \& \S1.2]{FarguesScholze}. The definitions of $\calY_S$ and $Y_S$ do not depend on the choice of $\varpi$ and $\pi$. The $q$-th power Frobenius induces an action $\varphi$ on both $\calY_S$ and $Y_S$. Moreover, $\calY_S$ and $Y_S$ both carry the structure of adic spaces over $\calO_E$, and both are \emph{analytic}. This sheafiness allows us to define $\calY_S$ and $Y_S$ rigorously for any $S\in \Perf$. Using the map $\kappa$, one can easily show that $\varphi$ acts freely and totally discontinuously on $Y_S$. When $\calO_E=\Z_p$, $\calY_S$ is sometimes denoted as $S \dot{\times} \Z_p$ (cf.\ \cite{SWBerkeleyNotes}).
\end{rem}

In the remainder of this subsection, we define the diamantine Fargues--Fontaine curve over a diamond (associated to analytic adic spaces). 

Note that there is a natural $q$-th power Frobenius action $\varphi$ on the set of untilts, which defines an action of $\varphi^{\Z}$ on $\Spd S$. We possess the following ``functor of points'' descriptions of $\calY_S$ and $Y_S$:

\begin{prop}[{\cite[Props.~II.1.2 \& II.1.17]{FarguesScholze}}]
For any $S\in \Perf$, the following isomorphisms hold:
\begin{itemize}
	\item $\calY_S^\diamondsuit \simeq \Spd S \times \Spd \calO_E$;
	\item $Y_S^\diamondsuit \simeq \Spd S \times \Spd E$.
\end{itemize}
\end{prop}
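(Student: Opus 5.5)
We prove both isomorphisms by comparing functors of points on $\Perf$. Both sides are $\vv$-sheaves, and everything is compatible with gluing. So we may assume $S=\Spa(R,R^+)$ is affinoid perfectoid, with a fixed pseudouniformizer $\varpi$. For $T=\Spa(A,A^+)\in\Perf$ affinoid, a $T$-point of $\Spd S\times\Spd\calO_E$ is a pair: a map $T\to S$, that is, a map $(R,R^+)\to(A,A^+)$, together with an untilt $T^\sharp=\Spa(A^\sharp,A^{\sharp+})$ of $T$ over $\calO_E$. Such untilts correspond to primitive elements $\xi$ of degree one in $W_{\calO_E}(A^+)$, up to units, with $A^{\sharp+}=W_{\calO_E}(A^+)/\xi$.

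\textbf{First isomorphism.} Given a $T$-point as above, the composite
\[
W_{\calO_E}(R^+)\to W_{\calO_E}(A^+)\to A^{\sharp+}
\]
is continuous. Since $\varpi$ maps to a pseudouniformizer of $A^+$, the element $[\varpi]$ maps to a topologically nilpotent unit of $A^\sharp$. The map therefore defines a morphism $T^\sharp\to\Spa W_{\calO_E}(R^+)$ landing in the open locus $[\varpi]\neq0$, that is, a $T$-point of $\calY_S^\diamondsuit$.

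Conversely, let $(T^\sharp, f\colon T^\sharp\to\calY_S)$ be a $T$-point of $\calY_S^\diamondsuit$. Composing $f$ with the projection gives a map $(W_{\calO_E}(R^+),\ldots)\to(A^\sharp,A^{\sharp+})$. Reducing modulo $\pi$ and tilting, this gives $R^+\to A^{\sharp+}/\pi\to\ldots\to A^+$. Here we use that $R^+$ is perfect, so the map lifts uniquely through $\lim_{x\mapsto x^q}$. Since $|[\varpi]|\neq0$ on $T^\sharp$, the image of $\varpi$ is a pseudouniformizer, so this yields a map $T\to S$. The structure map $\calO_E\to W_{\calO_E}(R^+)$ then gives the untilt over $\calO_E$.

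To show the two constructions are mutually inverse, we use the universal property of Witt vectors. A continuous map $W_{\calO_E}(R^+)\to A^{\sharp+}$ is uniquely determined by its tilt $R^+\to A^+$, namely as the composite $W_{\calO_E}(R^+)\to W_{\calO_E}(A^+)\xrightarrow{\theta}A^{\sharp+}$. This holds because $A^{\sharp+}$ is $\pi$-adically complete and $\pi$-adically separated, and $W_{\calO_E}(R^+)$ is the universal $\pi$-adically complete $\calO_E$-lift of the perfect ring $R^+$.

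\textbf{Second isomorphism.} $Y_S$ is the open subspace of $\calY_S$ where $\pi\neq0$. Under the first isomorphism, this corresponds to the locus where $\pi$ is invertible on $T^\sharp$, i.e.\ where the untilt lives over $E$. That is exactly the open subfunctor $\Spd S\times\Spd E$.

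\textbf{Main obstacle.} The delicate point is the analytic-adic bookkeeping. We must check that the maps on the plus-rings and the analytic-locus conditions ($[\varpi]$ invertible, boundedness) match on both sides. We must also check that $\calY_S$ is sheafy, so that its diamond is defined. For sheafiness we would cover $\calY_S$ by rational opens $Y_{S,[a,b]}$ (including $a=0$) and use that these are sousperfectoid, following Fargues--Scholze. The matching of points is then checked on this cover.
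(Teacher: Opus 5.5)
Your proposal is correct and follows essentially the same route as the source the paper relies on. The paper gives no argument of its own and simply cites \cite[Props.~II.1.2 \& II.1.17]{FarguesScholze}, where the isomorphism is proved exactly as you do: by matching $T$-points through the universal property of $W_{\calO_E}(R^+)$ for $\pi$-adically complete $\calO_E$-algebras, the identification $\varprojlim_{x\mapsto x^q}A^{\sharp+}/\pi\cong A^+$, and restriction to the open locus where $\pi$ is invertible for $Y_S$, with sheafiness of $\calY_S$ (via sousperfectoidness) handled separately in \emph{loc.\ cit.}
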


This naturally motivates the following definition.

\begin{defn}
Let $S$ be an analytic adic space over $\Spa\,\mathbb{Z}_p$, or more generally any diamond $S^\diamondsuit$. Let $\varphi$ denote the absolute Frobenius on the diamond $S^\diamondsuit$. We make the following definitions:
\begin{itemize}
	\item $\mathcal{Y}_S^{\diamondsuit} \cong S^\diamondsuit \times \Spd\mathcal{O}_E$;
	\item $Y_S^{\diamondsuit} \cong S^\diamondsuit \times \Spd E$;
	\item $X_S^{\diamondsuit} \cong (S^\diamondsuit \times \Spd E) / (\varphi^{\Z}\times \mathrm{id})$.
\end{itemize}
We call $X_S^{\diamondsuit}$ the \emph{diamantine Fargues--Fontaine curve} over $S$.
\end{defn}

\begin{rem}\label{rem: cartier divisor on diam FF curve}
\begin{enumerate}
	\item When $S\in \Perf$, this definition agrees with the diamond associated to the \emph{adic Fargues--Fontaine curve} over $S$ (cf.\ \cite[Prop.~II.1.17]{FarguesScholze}).
	\item Fix a morphism $S^\diamondsuit \to \Spd \Z_p$. Then for any $T\in \Perf/S^{\diamondsuit}$, the composition $T \to S^\diamondsuit \to \Spd \Z_p$ defines an untilt $T^\sharp$ of $T$, which in turn defines a closed Cartier divisor $D_{T^\sharp} \subset X_T$ (cf.\ \cite[Prop.~II.1.18]{FarguesScholze}). The Cartier divisor $D_{T^\sharp} \subset X_T$ is functorial in $T$, yielding a closed Cartier divisor $D_{S^\sharp} \subset X_S^\diamondsuit$.
\end{enumerate} 
\end{rem}

\begin{eg}
Suppose $S^\sharp =\Spa(A,A^+)$ is an affinoid perfectoid space over $\Q_p$, and let $S=\Spa(R,R^+) \in \Perf$ be its tilt. The morphism $S^\sharp \to \Spa \Q_p$ defines a degree-$1$ Cartier divisor generated by the ideal $(\xi) \in \calO(\calY_S)$, where $\xi$ is primitive of degree $1$ in the sense of \cite[Defn.~6.2.9]{SWBerkeleyNotes}. If we further assume that $S^\sharp \to \Spa \Q_p$ factors through $\Spa \C_p \to \Spa \Q_p$, then we may assume that $\xi = \varphi^n(p-[p^\flat])$ for some $n\in\Z$. We will always normalize the ``relative position function'' $\kappa$ such that, in this situation, $\kappa$ is defined by choosing $\varpi=p^\flat$. Consequently, $\kappa(D_{S^\sharp})=1$. This follows the identical normalization utilized in \cite[\S11-14]{SWBerkeleyNotes}. 
\end{eg}

It is instructive to examine the case where $S$ is a Noetherian affinoid analytic adic space.

\begin{lem}\label{lem: S as G-torsor quot}
There exists a profinite group $G$ and a perfectoid pro-\'etale covering $\widetilde{S}$ of $S$ which is a $\underline{G}$-torsor (i.e., $\widetilde{S} \times_S \widetilde{S} \cong \underline{G} \times \widetilde{S}$, cf.\ \cite[Defn.~10.12]{scholze-etalecohomologyofdiamonds}). In particular, one has $S^\diamondsuit \cong \widetilde{S}^\diamondsuit/\underline{G} \cong \widetilde{S}^\flat/\underline{G}$.
\end{lem}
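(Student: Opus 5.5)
The plan is to build $\widetilde{S}$ explicitly by passing to a toric chart, extracting all $p$-power roots of the coordinates, and adjoining all $p$-power roots of unity. Here $S$ is a Noetherian affinoid analytic adic space; in our setting it is a smooth affinoid rigid space over $K$, and the argument is written for that case. There are two steps. First, reduce to the case where $S$ admits an étale map $S \to \bT^d_K$ that is a composite of rational localizations and finite étale maps. Second, take $\widetilde{S}$ to be the fiber product of $S$ with the standard perfectoid tower over $\bT^d_{K}$, further base changed along a perfectoid profinite Galois extension of $K$. For instance, use $\C_p$, viewed as the completion of $\overline{K}$, or the completion of $K_{\mathrm{cycl}}$ together with $p$-power roots of the $T_i$.

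For the concrete choice, put
\[\widetilde{S}=S\times_{\bT^d_K}\bigl(\varprojlim_n \bT^{d}_{K_n}\bigr)^{\wedge},\]
where $\bT^d_{K_n}$ has coordinates $T_i^{1/p^n}$ and $K_n=K(\zeta_{p^n})$. In the notation of \cite{scholze-p-adic-hodge}, this is an affinoid perfectoid object of $S_{\proet}$. By \cite[Lem.~4.5, Prop.~4.8]{scholze-p-adic-hodge}, it is pro-finite-étale over $S$ and Galois with group
\[G=\Z_p(1)^d\rtimes \Gal(K_{\mathrm{cycl}}/K)\]
(or an open subgroup of it, if $S$ is disconnected over the chart). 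The torsor identity $\widetilde{S}\times_S\widetilde{S}\cong \underline{G}\times\widetilde{S}$ can be checked at each finite level, where $S_n\to S$ is finite étale Galois with group $G_n$ and $S_n\times_S S_n\cong \underline{G_n}\times S_n$. Passing to the limit and using \cite[Lem.~10.13]{scholze-etalecohomologyofdiamonds} gives the identity in the diamond sense. The isomorphism $S^\diamondsuit\cong \widetilde{S}^\diamondsuit/\underline{G}$ then follows because $\widetilde{S}^\diamondsuit\to S^\diamondsuit$ is a pro-étale surjection, hence a $\vv$-cover. Diamonds are $\vv$-sheaves, so $S^\diamondsuit$ is the coequalizer of $\widetilde{S}^\diamondsuit\times_{S^\diamondsuit}\widetilde{S}^\diamondsuit\rightrightarrows\widetilde{S}^\diamondsuit$, which is exactly $\widetilde{S}^\diamondsuit/\underline{G}$. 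Finally, $\widetilde{S}^\diamondsuit\cong\widetilde{S}^\flat$ because $\widetilde{S}$ is perfectoid.

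The main obstacle is globalizing over all of $S$, which may not admit a single toric chart. I would try two options. The first is to refine $S$ by a finite rational cover with charts and form a disjoint union, but a disjoint union of torsors over different opens is not a torsor over $S$. The second, which I prefer, avoids charts entirely. Since $S$ is qcqs and Noetherian, the pro-étale site has a universal cover in the following sense: take the inverse limit over all finite étale Galois covers of $S$ (a connected component at a time), base changed to $\C_p$. By \cite[Prop.~4.8]{scholze-p-adic-hodge}, locally on a chart this object is perfectoid, since it dominates the toric tower. Being perfectoid is local, so the limit is a perfectoid pro-finite-étale $\underline{G}$-torsor, where $G$ is the profinite étale fundamental group extended by $G_K$. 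The remaining point I expect to need care is verifying that the quotient identity holds in the $\vv$-topology, which follows from the torsor property as above.
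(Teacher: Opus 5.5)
Your charted construction is correct. If $S$ has a global chart $S\to\bT^d_K$, the pullback of the perfectoid toric tower is affinoid perfectoid and a $\underline G$-torsor; no open subgroup is needed, since a base change of a torsor is a torsor for the same group. Your $\vv$-descent argument for $S^\diamondsuit\cong\widetilde S^\diamondsuit/\underline G$ is also fine.

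The gap is in the general case, which the lemma genuinely needs: it is applied to arbitrary Noetherian affinoids, e.g.\ in Lemma~\ref{lem: Bun as phi gamma modules}. There you claim the universal pro-finite-\'etale cover $\widetilde S$ is perfectoid because, over a chart neighbourhood $U\subset S$, it ``dominates the toric tower''. This is not justified and does not follow formally from anything you cite.
\begin{itemize}
\item The toric tower over $U$ extracts $p$-power roots of units $T_i\in\calO(U)^\times$. These Kummer covers usually do not come from finite \'etale covers of $S$.
\item By contrast, $\widetilde S\times_S U$ is only the limit of restrictions to $U$ of finite \'etale covers of $S$.
\item Domination would require the Kummer characters $\pi_1(U)\to\mu_{p^n}$ to kill $\ker(\pi_1(U)\to\pi_1(S))$. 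That is a nontrivial statement about covers of $U$ that do not extend.
\end{itemize}
For example, let $S$ be the closed unit disc and $U=\{|T-1|\le|p|\}$ with chart $T$. The cover $T^{1/p}$ of $U$ is nontrivial, and its evident extension to $S\setminus\{T=0\}$ does not extend across $T=0$. You give no reason why some other cover of $S$ should trivialize it over $U$. So perfectoidness of $\widetilde S$, the only non-formal point of the lemma, is left unproved.

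The missing ingredient is a chart-free argument; this is the standard one behind \cite[Lem.~10.1.7]{SWBerkeleyNotes}, which the paper simply cites.
\begin{itemize}
\item Let $S=\Spa(R,R^+)$ over $\Q_p$, and let $\widetilde R$ be the completed colimit of all finite \'etale $R$-algebras $R'$.
\item $\widetilde R$ contains $p^{1/p}$. So it suffices that every power-bounded $x\in R'$ becomes a $p$-th power modulo $p$ after a further finite \'etale extension.
\item Take $P(Y)=Y^{p^2}-pY-x$. A multiple root $\alpha$ at any point would satisfy $\alpha^{p^2-1}=1/p$. Then $P(\alpha)=\alpha(p^{-1}-p)-x$ with $|\alpha(p^{-1}-p)|=|p|^{-1-1/(p^2-1)}>1\ge|x|$, so $P(\alpha)\neq 0$, a contradiction.
\item Hence $R'[Y]/(P)$ is finite \'etale over $R'$. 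The image $y$ of $Y$ is power-bounded, being integral over $R'^{\circ}$, and satisfies $x\equiv (y^p)^p \bmod p$.
\item Therefore Frobenius is surjective on $\widetilde R^{\circ}/p$, and $\widetilde S$ is affinoid perfectoid, with no charts or smoothness needed.
\end{itemize}
Your torsor and descent steps then go through with $G=\pi_1(S,\bar s)$ for connected $S$. For disconnected $S$ with components $S_i$, take $G=\prod_i G_i$ and $\widetilde S=\coprod_i \widetilde S_i\times\prod_{j\neq i}\underline{G_j}$.
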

\begin{proof}
This construction fundamentally illustrates why $S^{\diamondsuit}$ is a diamond; the proof of this fact can be found in \cite[Lem.~10.1.7]{SWBerkeleyNotes}.
\end{proof}

\begin{cor}
Maintaining the notation from Lemma~\ref{lem: S as G-torsor quot}, we have
\[
X_S^{\diamondsuit} \cong (\widetilde{S}^{\diamondsuit} \times \Spd E) / ((\underline{G} \times \varphi^{\Z}) \times \mathrm{id}).
\]
In particular, $X_S^{\diamondsuit}$ is a diamond over $\Spd E$.
\end{cor}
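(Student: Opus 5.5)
The plan is to feed the torsor presentation of Lemma~\ref{lem: S as G-torsor quot} into the definition of $X_S^\diamondsuit$ and let the quotient by $\underline{G}$ commute past the product with $\Spd E$ and the quotient by $\varphi^{\Z}$.

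First, Lemma~\ref{lem: S as G-torsor quot} gives $S^\diamondsuit \cong \widetilde{S}^\diamondsuit/\underline{G}$ as $\vv$-sheaves. Here $\widetilde{S}^\diamondsuit=\widetilde{S}^\flat$ is representable by a perfectoid space of characteristic $p$, and the quotient is the $\vv$-sheaf quotient of a torsor.

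Second, take the product with $\Spd E$. Fibre products of $\vv$-sheaves commute with quotients by a group acting on one factor. Indeed, $\widetilde{S}^\diamondsuit\times\Spd E\to S^\diamondsuit\times\Spd E$ is the base change of the $\underline{G}$-torsor $\widetilde{S}^\diamondsuit\to S^\diamondsuit$, so it is again a $\underline{G}$-torsor, with $\underline{G}$ acting on the first factor. This gives
\[
Y_S^\diamondsuit \cong (\widetilde{S}^\diamondsuit\times \Spd E)/(\underline{G}\times \mathrm{id}).
\]

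Third, pass to the quotient by $\varphi^{\Z}\times\mathrm{id}$. The absolute Frobenius $\varphi$ is a natural endomorphism of every $\vv$-sheaf on $\Perf$. It therefore commutes with the $\underline{G}$-action on $\widetilde{S}^\diamondsuit$ and is compatible with the projection to $S^\diamondsuit$. So the two actions combine into an action of $\underline{G}\times\varphi^{\Z}$ on $\widetilde{S}^\diamondsuit\times\Spd E$. Taking quotients in stages, first by $\underline{G}$ and then by $\varphi^{\Z}$, agrees with the quotient by the product group, because iterated colimits of sheaves compute the colimit over the product action. This yields the displayed formula.

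Finally, I would show that $X_S^\diamondsuit$ is a diamond. Since $\widetilde{S}^\diamondsuit=\widetilde{S}^\flat$ is perfectoid, the sheaf $\widetilde{S}^\diamondsuit\times\Spd E$ is identified with $Y_{\widetilde{S}^\flat}^\diamondsuit$ by \cite[Prop.~II.1.17]{FarguesScholze}, hence with the diamond of the analytic adic space $Y_{\widetilde{S}^\flat}$. Frobenius acts freely and totally discontinuously on it, so its quotient by $\varphi^{\Z}$ is $X_{\widetilde{S}^\flat}^\diamondsuit$, the diamond of the adic Fargues--Fontaine curve. This object is a diamond, and by Remark~\ref{rem: cartier divisor on diam FF curve}(1) it lives over $\Spd E$. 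The residual action of $\underline{G}$ makes $X_{\widetilde{S}^\flat}^\diamondsuit\to X_S^\diamondsuit$ a $\underline{G}$-torsor, since it is base changed from $\widetilde{S}^\diamondsuit\to S^\diamondsuit$. A quotient of a diamond by a free action of a profinite group is again a diamond, by \cite[Lem.~10.13]{scholze-etalecohomologyofdiamonds}, or more directly because $X_{\widetilde{S}^\flat}$ is perfectoid-covered and the torsor map is a pro-étale surjection.

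The main obstacle I expect is the last point: making sure the relevant quotients are taken in $\vv$-sheaves and that the $\underline{G}\times\varphi^{\Z}$-action is free enough to give a diamond rather than merely a small $\vv$-sheaf. The commutation of quotients in the earlier steps is formal.
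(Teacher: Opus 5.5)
Your argument is correct, and it is organized differently from the paper's. The paper treats the displayed isomorphism as immediate from Lemma~\ref{lem: S as G-torsor quot} and the definition of $X_S^\diamondsuit$. It then gets the diamond property in one step, applying \cite[Prop.~11.6]{scholze-etalecohomologyofdiamonds} directly to the quotient presentation by $\underline{G}\times\varphi^{\Z}$.

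You do two things differently:
\begin{enumerate}
\item You actually justify the isomorphism, using universality of colimits in the $\vv$-topos and the naturality of the absolute Frobenius (so it commutes with the $\underline{G}$-action).
\item You split the quotient into two stages. The Frobenius quotient is identified, via \cite[Prop.~II.1.17]{FarguesScholze}, with the diamond of the honest adic curve $X_{\widetilde{S}^\flat}$. What remains is a free quotient by a profinite group, which torsor representability handles.
\end{enumerate}
Your route needs only the profinite case of the quotient criterion. It also exhibits $X_{\widetilde{S}^\flat}^\diamondsuit\to X_S^\diamondsuit$ as a $\underline{G}$-torsor, which is exactly the structure used later in Lemma~\ref{lem: Bun as phi gamma modules}. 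The paper's route is shorter but rests entirely on the general criterion.

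One step should be tightened. The map $X_{\widetilde{S}^\flat}^\diamondsuit\to X_S^\diamondsuit$ is not literally a base change of $\widetilde{S}^\diamondsuit\to S^\diamondsuit$. The reason is that $X_S^\diamondsuit$ has no map to $S^\diamondsuit$: the projection $Y_S^\diamondsuit\to S^\diamondsuit$ is only $\varphi$-equivariant, not $\varphi$-invariant. Argue instead as follows.
\begin{enumerate}
\item The square formed by the two quotient maps $Y_{\widetilde{S}^\flat}^\diamondsuit\to X_{\widetilde{S}^\flat}^\diamondsuit$ and $Y_S^\diamondsuit\to X_S^\diamondsuit$ is cartesian. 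Both vertical maps are $\underline{\Z}$-torsors, and an equivariant map of torsors over the same base is an isomorphism. This uses that $\varphi^{\Z}$ acts freely, which you can see, e.g., from the relative position function.
\item Hence, after the $\vv$-cover $Y_S^\diamondsuit\to X_S^\diamondsuit$, your map becomes $Y_{\widetilde{S}^\flat}^\diamondsuit\to Y_S^\diamondsuit$, and that map is a base change of $\widetilde{S}^\diamondsuit\to S^\diamondsuit$.
\item Being a $\underline{G}$-torsor is local on the target, so the torsor property descends.
\end{enumerate}
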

\begin{proof}
The fact that $X_S^{\diamondsuit}$ is a diamond follows directly from \cite[Prop.~11.6]{scholze-etalecohomologyofdiamonds}. 
\end{proof}

\begin{eg}\label{eg: absolute FF curve}
When $S=\Spa(L, L^+)$, where $L$ is a complete algebraically closed nonarchimedean field of characteristic $p$ and $E=\Q_p$, we call the curve $X_{(L, L^+)} \coloneqq X_S$ the \emph{absolute} Fargues--Fontaine curve defined over $L$. 
\end{eg}

\subsection{Vector bundles over the Fargues--Fontaine curve and shtukas}\label{sec: vb over FF}

In this subsection, we recall essential facts regarding vector bundles over the relative Fargues--Fontaine curve, $p$-adic shtukas, and admissible modifications of vector bundles over $X_S$ along a fixed characteristic-$0$ untilt. We then generalize these concepts to the \emph{diamantine} Fargues--Fontaine curve $X_S^\diamondsuit$ and relate them to other well-known categories, such as the categories of $(\varphi,\Gamma)$-modules and relative $B$-pairs. From now on, we assume $E=\Q_p$ in the definition of the relative Fargues--Fontaine curve unless stated otherwise.

First, consider the case where $S$ is a perfectoid space. Recall that in this scenario, $Y_S$ is an analytic adic space and is quasi-Stein (cf.\ \cite[Defn.~2.6.2]{kedlaya-liu-relative-padichodgeII}). Consequently, there is a well-behaved category of vector bundles over $Y_S$ and $X_S$ (see also \cite[\S~II.2]{FarguesScholze}). We summarize the requisite results when $S=\Spa(R,R^+)$ is an affinoid perfectoid space in the following lemma.

\begin{lem}
Let $S=\Spa(R,R^+)$ be an affinoid perfectoid space. Taking global sections induces an equivalence between the category of vector bundles over $Y_S$ (in the analytic, finite \'etale, or \'etale topology) and the category of finite projective modules over $\calO(Y_S)$. Moreover, $H^i(Y_S,\calE)=0$ for all vector bundles $\calE$ over $Y_S$ and all $i>0$.
\end{lem}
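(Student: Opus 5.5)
The plan is to deduce the lemma from the quasi-Stein property of $Y_S$ together with Kiehl-type gluing for finite projective modules along a Stein exhaustion, in the spirit of \cite[\S2.6]{kedlaya-liu-relative-padichodgeII} and \cite[\S II.2]{FarguesScholze}. I will proceed in three steps.

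\emph{Step 1: an exhaustion of $Y_S$.} Fix a pseudouniformizer $\varpi$. Then $Y_S = \bigcup_n Y_{S,[a_n,b_n]}$, where $a_n \downarrow 0$ and $b_n \uparrow \infty$ are rational. Each $Y_{S,[a,b]}$ is an affinoid rational subset of $\Spa W(R^+)$, say $\Spa(B_{[a,b]}, B_{[a,b]}^+)$. It is sheafy because it is sousperfectoid: after adjoining $p^{1/p^\infty}$ it becomes perfectoid. I would then verify two facts about the restriction maps $B_{[a_{n+1},b_{n+1}]} \to B_{[a_n,b_n]}$. First, they have dense image, which should follow from the density of finite sums $\sum [r_i]p^i$. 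Second, they are flat rational localizations. Together these are exactly the quasi-Stein hypotheses.

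\emph{Step 2: vector bundles on each affinoid piece.} On each sousperfectoid affinoid $Y_{S,I}$, I would invoke the known results: vector bundles in the analytic, étale, or finite étale topology are equivalent to finite projective $B_I$-modules, and these have vanishing higher cohomology. This is \cite[Thm.~8.2.22]{kedlaya-liu-relative-padichodge} or \cite[Prop.~II.2.3]{FarguesScholze}. For the étale and finite étale topologies, it uses étale descent for finite projective modules over sheafy sousperfectoid rings, due to Kedlaya--Liu.

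\emph{Step 3: gluing along the exhaustion (the main obstacle).} A vector bundle $\calE$ on $Y_S$ gives a compatible system of finite projective $B_{I_n}$-modules $M_n$. I expect the hardest step to be showing that $M = \varprojlim M_n$ is finite projective over $\calO(Y_S) = \varprojlim B_{I_n}$ with $M \otimes B_{I_n} \cong M_n$, together with the vanishing $R^1\varprojlim = 0$. For the cohomology, I would use the Mittag-Leffler argument for Fréchet spaces with dense transition maps, which gives $H^i(Y_S,\calE)=\varprojlim^i H^0(Y_{S,I_n},\calE)=0$ for $i>0$. For finiteness, I would use the uniform bound on ranks coming from the finitely many connected components of $\Spa$ of the relevant ring, or else the Kedlaya--Liu criterion \cite[Prop.~2.6.17]{kedlaya-liu-relative-padichodgeII}: a uniformly finitely generated compatible family of modules over a quasi-Stein space yields finite projective global sections. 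If that bound fails in general, I would fall back on citing \cite[Prop.~II.2.3 and Thm.~II.2.6]{FarguesScholze} directly.

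With these steps in place, full faithfulness and essential surjectivity of global sections follow from Steps 2 and 3.
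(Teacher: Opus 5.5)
Your Step~2 is in substance the paper's whole proof. The paper applies \cite[Thm.~8.2.22]{kedlaya-liu-relative-padichodge} after checking stable adicity, using that sousperfectoidness is preserved by rational localizations and finite \'etale maps. It does no separate gluing along an exhaustion. Your Steps~1 and~3 try to make the passage from the affinoids $Y_{S,I}$ to the non-quasicompact $Y_S$ explicit, and the cohomology and full-faithfulness parts are fine. Indeed $H^i(Y_S,\calE)=R^i\varprojlim_n \calE(Y_{S,I_n})$ vanishes for $i>0$ by topological Mittag-Leffler, since $\calE(Y_{S,I_n})=\calE(Y_{S,I_{n+1}})\otimes B_{I_n}$ and the restriction maps have dense image. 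The flatness claimed in Step~1 plays no role, so drop it rather than try to justify it. The gap is essential surjectivity: you must show that $M=\varprojlim M_n$ is finite projective with $M\otimes B_{I_n}\cong M_n$, and the arguments you propose for this do not work.

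By the acyclicity, that conclusion is equivalent to $\calE$ being a quotient of $\calO_{Y_S}^{\oplus m}$ for some $m$. This forces a uniform bound on the number of generators of the $M_n$, and such a bound is exactly the input the Kedlaya--Liu criterion you quote needs to produce it. Nothing in your proposal supplies that bound:
\begin{itemize}
\item The premise of finitely many connected components is false for general affinoid perfectoid $S$. Take $S=\Spa(C^0(\Z_p,C),C^0(\Z_p,C^\circ))$ for a perfectoid field $C$ of characteristic $p$.
\item More to the point, a bound on ranks does not bound numbers of generators. Without a dimension bound of Forster--Swan type, which $\calO(Y_{S,I})$ lacks for general $S$, the rank of a projective module does not control its minimal number of generators.
\item Falling back on a citation is not an argument.
\end{itemize}

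For the bundles the paper actually feeds into this lemma, namely pullbacks of bundles on $X_S$ (Lemma~\ref{lem: vb as phi modules}), you can close the gap as follows.
\begin{enumerate}
\item Ampleness of $\calO(1)$ on $X_S$ for affinoid $S$ (Kedlaya--Liu) gives a surjection $\calO(-n)^{\oplus m}\twoheadrightarrow\calE$.
\item Its pullback to $Y_S$ is a surjection $\calO_{Y_S}^{\oplus m}\twoheadrightarrow\calE|_{Y_S}$ with vector-bundle kernel $\calF$.
\item Since $H^1(Y_S,\underline{\Hom}(\calE|_{Y_S},\calF))=0$ by your Mittag-Leffler argument, the identity of $\calE|_{Y_S}$ lifts to a section of this surjection.
\item Hence $\calE(Y_S)$ is a direct summand of $\calO(Y_S)^{\oplus m}$, compatibly with restriction to each $Y_{S,I}$.
\end{enumerate}
For an arbitrary vector bundle on $Y_S$, not coming from $X_S$, you would still need a separate argument producing finitely many global generators.
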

\begin{proof}
It suffices to check that $Y_S$ satisfies conditions (b), (c), and (d) in \cite[Thm.~8.2.22]{kedlaya-liu-relative-padichodge}. It remains to show that $Y_S$ is stably adic in the sense of \cite[Defn.~8.2.19]{kedlaya-liu-relative-padichodge}. The sheafiness of $Y_S$ follows because $\calO(Y_S)$ is sousperfectoid (cf.\ \cite[Defn.~7.1]{Hansen-Kedlaya-sousperfectoid}). The property of $Y_S$ being stably adic then follows from the fact that being sousperfectoid is stable under rational localization and finite \'etale extensions (cf.\ \cite[Defn.~7.1 \& Lem.~7.5]{Hansen-Kedlaya-sousperfectoid}).
\end{proof}

An immediate consequence of the lemma above is that the category $\Bun(X_S)$ of vector bundles over $X_S$ can be characterized using the following categories of semilinear data.

\begin{lem}\label{lem: vb as phi modules}
When $S=\Spa(A,A^+)$ is an affinoid perfectoid space, the following categories are equivalent:
\begin{enumerate}
	\item The category of vector bundles over $X_S$;
	\item The category of $\varphi$-modules over $\calO(Y_S)$ (i.e., pairs $(M,\varphi_{M})$ where $M$ is a finite projective $\calO(Y_S)$-module and $\varphi_{M} \colon \varphi^\ast M \xrightarrow{\cong} M$ is an isomorphism of $\calO(Y_S)$-modules);
	\item The category of $\varphi$-modules over $\widetilde{\mathcal{R}}_S$ (i.e., pairs $(M,\varphi_{M_S})$ where $M_S$ is a finite projective $\widetilde{\mathcal{R}}_S$-module and $\varphi_{M_S} \colon \varphi^\ast M_S \xrightarrow{\cong} M_S$ is an isomorphism of $\widetilde{\mathcal{R}}_S$-modules).
\end{enumerate}
\end{lem}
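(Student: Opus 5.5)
The plan is to establish the equivalences (1)$\iff$(2) and (2)$\iff$(3) in turn, using the preceding lemma as the main input: vector bundles on $Y_S$ correspond to finite projective $\calO(Y_S)$-modules, with vanishing higher cohomology.

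For (1)$\iff$(2), I would use that $X_S = Y_S/\varphi^{\Z}$, where $\varphi$ acts freely and totally discontinuously on $Y_S$. A vector bundle on $X_S$ is therefore the same as a $\varphi$-equivariant vector bundle on $Y_S$, namely a bundle $\calE$ on $Y_S$ together with an isomorphism $\varphi^\ast\calE\xrightarrow{\cong}\calE$. This follows by descent along the étale covering $Y_S\to X_S$, which is a $\underline{\Z}$-torsor. Taking global sections and applying the preceding lemma (using that $\varphi^\ast$ commutes with global sections) identifies this with the category of $\varphi$-modules over $\calO(Y_S)$. Concretely, one can use the fundamental domain $Y_{S,[1,q]}$ and glue along $\varphi\colon Y_{S,[1,1]}\cong Y_{S,[q,q]}$. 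Conversely, a $\varphi$-module $M$ gives $\widetilde{M}$ on $Y_S$ with descent datum, hence a bundle on $X_S$.

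For (2)$\iff$(3), the functor is base change along $\calO(Y_S)\to\widetilde{\calR}_S$. Here $\widetilde{\calR}_S=\colim_r \calO(Y_{S,(0,r]})$ consists of functions on a punctured neighborhood of the "$\kappa=0$" end, and $\varphi$ moves these neighborhoods: $\varphi(Y_{S,(0,r]}) = Y_{S,(0,qr]}$ up to normalization. Given a $\varphi$-module $M_S$ over $\widetilde{\calR}_S$, I would first descend it to a $\varphi$-module over some $\widetilde{\calR}_S^{r}$. For this I would use that finite projective modules and isomorphisms over a filtered colimit come from a finite stage, which requires the colimit to be suitably Henselian or, more simply, a finite presentation argument. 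The resulting descended module is defined on $Y_{S,(0,r]}$ with $\varphi^\ast M^{r}\cong M^{r}$ over $Y_{S,(0,r/q]}$. Then $\varphi$-translates extend $M^r$ to all of $Y_S=\bigcup_n \varphi^{-n}(Y_{S,(0,r]})$ (or $\varphi^n$, depending on direction). Gluing these translates is compatible by the $\varphi$-structure, and the result is a $\varphi$-equivariant bundle, which is finite projective by the lemma. For full faithfulness I would argue similarly: a morphism over $\widetilde{\calR}_S$ is defined over some $\widetilde{\calR}_S^r$ and propagates uniquely by $\varphi$. I would check that the resulting constructions are mutually inverse.

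The main obstacle is the descent from $\widetilde{\calR}_S$ to a finite level $\widetilde{\calR}_S^r$ together with the uniformity of the gluing. I would not redo this by hand but cite Kedlaya--Liu, \cite[Thm.~8.7.7 / Lem.~6.1.4]{kedlaya-liu-relative-padichodge}, where the equivalence between $\varphi$-modules over $\widetilde{\calR}_S$ and over $\calO(Y_S)$ (and with $\Bun(X_S)$) is proved for perfect Robba rings over perfectoid bases. This is also the content of the comparison recalled in \cite[\S II.2]{FarguesScholze}.
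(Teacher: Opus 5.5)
Your proposal is correct and follows the paper's route: the paper proves this lemma purely by citing \cite[Thm.~6.3.12]{kedlaya-liu-relative-padichodge}, and your sketch is exactly the content of that reference. The sketch consists of $\varphi$-descent along the $\underline{\Z}$-torsor $Y_S\to X_S$ combined with the preceding lemma, followed by spreading a $\varphi$-module over $\widetilde{\calR}_S$ out from some $\widetilde{\calR}_S^r$ to all of $Y_S$ via $\varphi$-translates. The one correction is bibliographic: the statement covering all three categories at once is Thm.~6.3.12 of Kedlaya--Liu, so you can cite that directly instead of the theorem and lemma numbers you guessed.
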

\begin{proof}
This is established in \cite[Thm.~6.3.12]{kedlaya-liu-relative-padichodge}.
\end{proof}

\begin{rem}\label{rem: phi bundles}
It is convenient to recall that Kedlaya and Liu define the notion of $\varphi$-bundles over $\calO(Y_{S,I})$ (cf.\ \cite[Defn.~6.1.2]{kedlaya-liu-relative-padichodge}) for intervals $I=[a,\infty)$ or $I=[a,b]$ where $b/a>q$. They demonstrate that when $S$ is affinoid, restriction defines an equivalence between vector bundles over $X_S$ and $\varphi$-bundles over $\calO(Y_{S,I})$ (cf.\ \cite[Lem.~6.1.5]{kedlaya-liu-relative-padichodge}).
\end{rem}

This framework generalizes to the following definition.

\begin{defn}\label{defn: vb on X_Sdiamond}
Let $S$ be an analytic adic space over a $p$-adic field $K$. We define
\[
\Bun(X_S^{\diamondsuit}) = 2\text{-}\lim \Bun(X_{T}),
\]
where the $2$-limit is taken over pairs $(T, f_{T})$ with $T \in \Perf$ and $f_{T}\colon T \to S^{\diamondsuit}$ in $\Shv(\Perf_{\vv})$. In other words, an object in $\Bun(X_S^{\diamondsuit})$ is a functorial assignment of an object in $\Bun(X_{T})$ for each section $f_T \in S^{\diamondsuit}(T)$. 
\end{defn}

\begin{rem}
By \cite[Prop.~II.2.1]{FarguesScholze}, the assignment
\(T \mapsto \Bun(X_T)\)
forms a $\vv$-stack. Therefore, for any $\vv$-cover $\widetilde{S} \to S$ with \v{C}ech nerve $\widetilde{S}^\bullet$, the natural functor induces an equivalence of groupoids:
\[
\Bun(X_S) \;\simeq\; \lim\nolimits_{\Delta}\, \Bun\bigl(X_{\widetilde{S}^\bullet}\bigr).
\]
This implies the limit in Definition~\ref{defn: vb on X_Sdiamond} can be computed using a single \v{C}ech nerve.
\end{rem}

\begin{lem}\label{lem: Bun as phi gamma modules}
Assume $S=\Spa(A,A^+)$ is a Noetherian affinoid analytic space, and let $\widetilde{S}$ be the $\underline{G}$-torsor over $S$ from Lemma~\ref{lem: S as G-torsor quot}. Then $\Bun(X_S^{\diamondsuit})$ is equivalent to the category of triples $(M,\varphi_{M},\rho)$, where $(M,\varphi_{M})$ is a $\varphi$-module over $\calO(Y_{\widetilde{S}})$, and $\rho$ is a continuous semilinear $G$-action on $M$ that commutes with $\varphi_M$.
\end{lem}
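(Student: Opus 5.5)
The plan is to compute the $2$-limit in Definition~\ref{defn: vb on X_Sdiamond} along the single $\vv$-cover $\widetilde{S}^\flat \to S^\diamondsuit$ provided by Lemma~\ref{lem: S as G-torsor quot}, and then to translate the resulting descent datum into a continuous semilinear $G$-action. By the remark following Definition~\ref{defn: vb on X_Sdiamond}, $T\mapsto \Bun(X_T)$ is a $\vv$-stack, so $\Bun(X_S^\diamondsuit)$ is equivalent to the category of vector bundles $\calE$ on $X_{\widetilde{S}^\flat}$ together with an isomorphism $p_1^\ast\calE\cong p_2^\ast\calE$ on $X_{\widetilde{S}^\flat\times_{S^\diamondsuit}\widetilde{S}^\flat}$ satisfying the cocycle condition on the triple fiber product. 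Since $\widetilde{S}$ is a $\underline{G}$-torsor, $\widetilde{S}\times_S\widetilde{S}\cong \underline{G}\times\widetilde{S}$ and $\widetilde{S}\times_S\widetilde{S}\times_S\widetilde{S}\cong \underline{G}\times\underline{G}\times\widetilde{S}$. All of these are affinoid perfectoid, because $\underline{G}\times \widetilde{S}=\Spa(C^0(G,A_\infty),C^0(G,A_\infty^+))$ when $\widetilde{S}=\Spa(A_\infty,A_\infty^+)$. Hence Lemma~\ref{lem: vb as phi modules} applies at every level of the \v{C}ech nerve and turns bundles into $\varphi$-modules over the rings $\calO(Y_{\widetilde{S}^{\bullet}})$.

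The key computation is the identification
\[
\calO(Y_{\underline{G}\times\widetilde{S}})\cong C^0\bigl(G,\calO(Y_{\widetilde{S}})\bigr),
\]
compatible with $\varphi$ and with the two projections. Under this identification, the first projection corresponds to the constant-function inclusion and the second to the action map $f\mapsto (g\mapsto g\cdot f)$. To prove it, I would work on each $Y_{\widetilde{S},[a,b]}$ separately. Writing $G=\varprojlim G/H$, we get $\underline{G}\times\widetilde{S}=\varprojlim \coprod_{G/H}\widetilde{S}$. Sections on $Y_{[a,b]}$ of a finite disjoint union are the finite product, and the passage to the limit follows from the description $W(C^0(G,A^+_\infty))=C^0(G,W(A^+_\infty))$ together with completion and rational localization commuting with this continuous-function construction. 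Taking the inverse limit over intervals then gives the statement for $Y$.

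With this identification, a descent isomorphism on the finite projective module $M=\calO(Y_{\widetilde{S}})$-module becomes a $C^0(G,\calO(Y_{\widetilde{S}}))$-linear isomorphism
\[
M\otimes C^0\bigl(G,\calO(Y_{\widetilde{S}})\bigr)\cong C^0(G,M).
\]
Such an isomorphism is the same as a continuous family $(\rho_g)_{g\in G}$ of $g$-semilinear bijections $M\to M$. The cocycle condition becomes $\rho_{gh}=\rho_g\rho_h$, and compatibility with $\varphi$ becomes commutation with $\varphi_M$. Morphisms match in the same way, which gives the asserted equivalence with triples $(M,\varphi_M,\rho)$.

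The main obstacle is the identification of $\calO(Y_{\underline{G}\times\widetilde{S}})$ with continuous functions, and in particular making precise that continuity of $\rho$ (in the natural Fréchet topology of $M$, tested on each $Y_{[a,b]}$) is exactly what a module over $C^0(G,-)$ encodes. The first thing I would try is to reduce to finite projective $M$ as a direct summand of a free module. In that case $C^0(G,M)=M\otimes C^0(G,\calO)$ holds, and the remaining claim is a statement about functions valued in a Banach ring on each interval.
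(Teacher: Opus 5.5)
Your proposal is correct and follows essentially the same route as the paper. Both arguments use $\vv$-descent along the $\underline{G}$-torsor $\widetilde{S}\to S$ and identify the \v{C}ech nerve with $\underline{G}\times\widetilde{S}$ and $\underline{G}\times\underline{G}\times\widetilde{S}$ via $(C^0(G,A_\infty),C^0(G,A_\infty^+))$. Both then rely on the key identification $\calO(Y_{\widetilde{S}\times_S\widetilde{S},I})\cong C^0(G,\calO(Y_{\widetilde{S},I}))$, obtained because $C^0(G,-)$ commutes with Witt vectors and completions. Finally, both read the descent datum and cocycle condition as a continuous semilinear action with $\rho(gh)=\rho(g)\rho(h)$ commuting with $\varphi_M$.

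The only cosmetic differences are that the paper works on the single interval $I=[1/q,1]$ (via the $\varphi$-bundle description of vector bundles) rather than on all $[a,b]$ followed by a limit. Also, your remark that $M\otimes C^0(G,\calO(Y_{\widetilde{S}}))\cong C^0(G,M)$ for finite projective $M$ justifies a step the paper only asserts.
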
 

\begin{proof}
Note that the assignment $T \mapsto \Bun(X_T)$ satisfies $\vv$-descent by \cite[Prop.~II.2.1]{FarguesScholze}. Choosing $\widetilde{S}=\Spa(A_\infty,A_\infty^+) \to S$ to be the $\underline{G}$-torsor as in Lemma~\ref{lem: S as G-torsor quot}, we obtain a canonical isomorphism of $\vv$-sheaves $\underline{G} \times \widetilde{S} \xrightarrow{\sim} \widetilde{S} \times_S \widetilde{S}$ mapping $(g,x) \mapsto (gx,x)$. Because $G$ is a profinite group, it acts naturally as a $\vv$-sheaf (see discussions around \cite[Defn.~10.12]{scholze-etalecohomologyofdiamonds}). By \cite[Lem.~10.13]{scholze-etalecohomologyofdiamonds}, pullbacks of $\underline{G}$-torsors are $\underline{G}$-torsors, and $\underline{G}$-torsors over perfectoid spaces are representable by perfectoid spaces. It is straightforward to see that $\widetilde{S} \times_S \widetilde{S}$ is represented by the Huber pair $(C^0(G, A_\infty), C^0(G, A_\infty^+))$. The exact same reasoning applies to the triple product, yielding $\widetilde{S} \times_S \widetilde{S} \times_S \widetilde{S} \cong \underline{G \times G} \times \widetilde{S}$.

By $\vv$-descent, an object $\calE$ in $\Bun(X_S^{\diamondsuit})$ is uniquely determined by an object $\widetilde{\calE} \in \Bun(X_{\widetilde{S}})$ equipped with a descent datum $f\colon p_0^\ast \widetilde{\calE} \xrightarrow{\sim} p_1^\ast\widetilde{\calE}$ in $\Bun(X_{\widetilde{S} \times_S \widetilde{S}})$, which must satisfy the standard cocycle condition on the triple product (where $p_0, p_1$ are the two natural projections). 

Because $\widetilde{S}$ is perfectoid, we have $X_{\widetilde{S}} = Y_{\widetilde{S}}/\varphi^\Z$. Therefore, $\Bun(X_{\widetilde{S}})$ is canonically equivalent to the category of finite projective modules $M$ over $\calO(Y_{\widetilde{S}})$ equipped with an isomorphism $\varphi_M\colon \varphi^*M \xrightarrow{\sim} M$. 

To translate the descent datum $f$ into a continuous $G$-action, we evaluate global sections over the affinoid pieces $Y_{T, I}$ for a closed interval $I=[1/q,1]$ and a fixed pseudouniformizer $\varpi \in \calO^+(\widetilde{S}^\flat)$ (cf.\ \cite[Lem.~6.1.5 \& Thm.~6.3.12]{kedlaya-liu-relative-padichodge}). Recall that for any $T=\Spa(B,B^+) \in \Perf$, using \cite[Ex.~7.4.2 (2)]{SWBerkeleyNotes}, we have
\[
\calO(Y_{T,I})=W(B^{\flat, \circ})\langle \frac{p}{[\varpi]}, \frac{[\varpi]^q}{p} \rangle\Big[\frac{1}{p}\Big].
\]
Applying this to $T = \widetilde{S} \times_S \widetilde{S}$, we strictly observe $B^{\flat, \circ} = C^0(G, A_\infty^{\flat, \circ})$. Because $G$ is a profinite group, the functor of continuous functions $C^0(G, -)$ commutes naturally with both the Witt vector functor and the $p$-adic and $\varpi$-adic completions. Thus, we canonically obtain an honest isomorphism (avoiding using almost isomorphisms):
\[
\calO(Y_{\widetilde{S} \times_S \widetilde{S}, I}) \cong C^0(G, \calO(Y_{\widetilde{S}, I})).
\]
A similar argument for the triple product yields $\calO(Y_{\widetilde{S} \times_S \widetilde{S} \times_S \widetilde{S}, I}) \cong C^0(G \times G, \calO(Y_{\widetilde{S}, I}))$.

Under these identifications, the pullback $p_1^\ast\widetilde{\calE}$ corresponds to the space of continuous functions from $G$ to $M$. The descent datum $f$ then translates exactly to an invertible, $\calO(Y_{\widetilde{S}})$-semilinear, continuous map $\rho\colon G \times M \to M$ that commutes with $\varphi_M$. Finally, the geometric cocycle condition on the triple product maps precisely to the algebraic group associativity constraint $\rho(gh) = \rho(g)\rho(h)$, concluding the proof.
\end{proof}

\begin{eg}
For example, consider the case when $S=\Spa(A,A^+)$ where $\Spf(A^+)$ is a small affine $p$-adic formal scheme over $\Spf \calO_K$. A vector bundle over $X^\diamondsuit_{S}$ translates to a relative $(\varphi,\Gamma)$-module over $\calR_{A}$. When the $(\varphi,\Gamma)$-module is \'etale, the above statement matches the essence of \cite[Lem.~2.13]{min-wang-rel-phi-gamma-prism-F-crys}.   
\end{eg}

We now proceed to the definition of shtukas with one leg at a given characteristic-$0$ untilt. We fix a diamond $S^\diamondsuit \to \Spd K$. By Remark~\ref{rem: cartier divisor on diam FF curve}, this morphism defines a Cartier divisor $D_{S^\sharp} \subset X_S^\diamondsuit$. We first recall the definition when $S^\diamondsuit$ is represented by a space $S \in \Perf$.

\begin{defn}[Shtuka over $S$ with one leg at $D_{S^\sharp}$]\label{defn: shtuka-one-leg}
Let $S=\Spa(R,R^+)\in \Perf$, making $S$ an affinoid perfectoid space over $k$.
Let $S^\sharp=\Spa(R^\sharp,R^{\sharp +})$ be an untilt of $S$ over $\Z_p$. By Remark~\ref{rem: cartier divisor on diam FF curve}, $S^\sharp$ defines a morphism $S \xrightarrow{f_{S^\sharp}} \Spd \Z_p$, as well as a closed Cartier divisor $D_{S^\sharp} \xhookrightarrow{} \calY_S$.

A \emph{shtuka over $S$ with one leg at $S^\sharp$ (or $D_{S^\sharp}$)} is a pair $(\calE,\varphi_{\calE})$, where $\calE$ is a vector bundle on $\calY_S$ and
\[
\varphi_{\calE}:\ 
\Frob_{\calY_S}^*(\calE)\big|_{\calY_S\setminus D_{S^\sharp}}
\;\xrightarrow{\ \sim\ }\;
\calE\big|_{\calY_S\setminus D_{S^\sharp}}
\]
is an isomorphism on the complement of $D_{S^\sharp}$ in $\calY_S$, which is allowed to be meromorphic along the divisor $D_{S^\sharp}\subset \calY_S$. To maintain compatibility with Definition~\ref{defn: shtuka-vsheaf}, we will simply denote this category by $\Sht(S\xrightarrow{f_{S^\sharp}}\Spd \Z_p)$. 
\end{defn}

For any $n\in\Z$, there are well-defined categories $\Sht(S\xrightarrow{f_{S^\sharp}\circ\varphi^n}\Spd \Z_p)$, which corresponds to replacing $D_{S^\sharp}$ with $\varphi^{-n}(D_{S^\sharp})$ in the definition above.

\begin{defn}[Relative shtukas over a $\vv$-sheaf {\cite[Defn.~2.3.1]{Pappas-Rapoport-padicsht}}]\label{defn: shtuka-vsheaf}
Let $\calX$ be a $\vv$-sheaf equipped with a fixed morphism $f\colon \calX \to \Spd(\Z_p)$.
A \emph{shtuka} $(\calE,\varphi_{\calE})$ over $\calX/\Spd(\Z_p)$ is a section of the $\vv$-stack of the groupoid of shtukas over $\calX$. Equivalently, it is a functorial assignment which, for every $S\in \Perf$ and every point $x\in \calX(S)$, produces a shtuka $(\calE_S,\varphi_{\calE_S})$ over $S$ with one leg at the untilt $S^\sharp$ induced by the composition
\[
S \xrightarrow{x} \calX \to \Spd(\Z_p)
\]
(following Remark~\ref{rem: cartier divisor on diam FF curve}). We denote this category by $\Sht(\calX \xrightarrow{f} \Spd \Z_p)$.
\end{defn}

A closely related definition, which will be utilized later, is the following.

\begin{defn}\label{defn: modification of vector bundles}
For $S^\diamondsuit \to \Spd K$ as above, a \emph{modification of vector bundles} over $X_S^\diamondsuit$ along $S^\diamondsuit/\Spd K$ is a triple $(\calE_0,\calE_1,\beta)$, where $\calE_0$ and $\calE_1$ are objects in $\Bun(X_S^\diamondsuit)$ and $\beta$ is a modification of vector bundles along the untilt defined by $S^\diamondsuit/\Spd K$. More precisely, $\beta=\{\beta_T\}$ for each $T\in \Perf$ equipped with an arrow $f_T\colon T \to S^\diamondsuit$, such that 
\[
\beta_T\colon \calE_0|_{X_T\setminus D_{T^\sharp}} \xrightarrow{\cong} \calE_1|_{X_T\setminus D_{T^\sharp}}
\]
is an isomorphism of vector bundles over $X_T\setminus D_{T^\sharp}$, where $D_{T^\sharp}$ is the closed Cartier divisor on $X_T$ associated with the untilt defined by $T\to S^\diamondsuit \to \Spd K$. The map $\beta_T$ is meromorphic along $D_{T^\sharp}$, and $\beta_T$ is naturally functorial in $T$.

A modification of vector bundles $(\calE_0,\calE_1,\beta)$ over $X_S^\diamondsuit$ along $S^\diamondsuit/\Spd K$ is called \emph{admissible} if $\calE_1$ is semistable of slope $0$ in the sense of Definition~\ref{defn: vector bundle being semistable of slope lambda}. We will use $\mathrm{Modif}(S^\diamondsuit \to \Spd K)$ (resp.\ $\admmodif(S^\diamondsuit \to \Spd K)$) to denote the category of modifications (resp.\ admissible modifications) of vector bundles over $X_S^\diamondsuit$ along $S^\diamondsuit/\Spd K$.
\end{defn}

\begin{eg}\label{ex: shtuka-over a field}
Let $S$ be an analytic adic space over $\Spa(\Q_p)$, and let $S^{\diamondsuit} \to \Spd(\Q_p)$ denote the associated morphism of $\vv$-sheaves. The preceding definition naturally yields the notion of a shtuka over $S^{\diamondsuit}/\Spd(\Q_p)$. Equivalently, a shtuka over $S^{\diamondsuit}/\Spd(\Q_p)$ is a functorial assignment equipping each object $(T,f_T) \in \Perf_{S^\diamondsuit}$ with a shtuka with one leg at the untilt given by
\[
T \xrightarrow{f_T} S^{\diamondsuit} \to \Spd(\Q_p),
\]
where $\Perf_{S^\diamondsuit}$ was defined in Remark~\ref{rem:equiv of PerfSdiam and PerfdS}. Remark~\ref{rem:equiv of PerfSdiam and PerfdS} gives the equivalence $\Perf_{S^\diamondsuit}\cong \Perfd_{S}$, so a shtuka over $S^{\diamondsuit}/\Spd(\Q_p)$ is exactly a functorial assignment equipping each object $(T,f_{T}) \in \Perfd_{S}$ with a shtuka over $T^\flat$ with one leg at the canonical untilt defined by $T$. We can succinctly write this as the limit formulation:
\[
\Sht(S^\diamondsuit/\Spd \Q_p) = (2\text{-})\lim_{(T\to \Spa\Q_p) \in \Perfd_S} \Sht(T/\Spa \Q_p).
\]
\end{eg}

In the remainder of this subsection, we recall basic facts concerning the \emph{absolute} Fargues--Fontaine curve $X_S$, where $S=\Spa(L, L^+)$ and $L$ is an algebraically closed nonarchimedean field over $\F_p$.

Recall from Example~\ref{eg: absolute FF curve} that when $L$ is a complete algebraically closed nonarchimedean field of characteristic $p$ and $E=\Q_p$, we define the absolute Fargues--Fontaine curve $X_{(L, L^+)}$. We recall the following fundamental result describing the isomorphism classes in $\Bun(X_{(L,L^+)})$.

\begin{lem}\label{lem: fact about Bun X L}
\begin{enumerate}
	\item The inclusion $L^+ \subset L^\circ$ induces a map $X_{(L, L^\circ)} \to X_{(L, L^+)}$ that identifies the categories of vector bundles on $X_{(L, L^\circ)}$ and $X_{(L, L^+)}$;
	\item The map $(D,\varphi_D) \mapsto \calE(\underline{D})\coloneqq(D\otimes \calO(Y_S), \varphi_D \otimes \varphi)$ defines a bijection between the isomorphism classes in $\Isoc_{\overline{\F}_p}$ and the isomorphism classes in $\Bun(X_{(L,L^+)})$. 
\end{enumerate}
\end{lem}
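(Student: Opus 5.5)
Both parts reduce to known structural results on Fargues--Fontaine curves, and the plan treats them separately.

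For part (1), we use that $X_{(L,L^+)}$ is built from $Y_{(L,L^+)} = \Spa W(L^+)\setminus V(p[\varpi])$. The open subspace $Y$ only involves points on which $p$ and $[\varpi]$ are invertible, and the comparison is easiest on the $\varphi$-module side via Lemma~\ref{lem: vb as phi modules}.

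\begin{itemize}
\item The key observation is that $\calO(Y_{S,I})$ depends only on $L^\circ$: on each affinoid piece $Y_{S,[a,b]}$ one has the explicit description $W(L^{\circ})\langle p/[\varpi]^{1/a}, [\varpi]^{1/b}/p\rangle[1/p]$. Replacing $W(L^+)$ by $W(L^\circ)$ does not change this ring, because $L^\circ$ is the completed integral closure obtained by inverting nothing but allowing power-bounded elements, and the rational localization already forces power-boundedness.
\item Concretely, the map $Y_{(L,L^\circ)}\to Y_{(L,L^+)}$ is an isomorphism onto the locus of points whose associated valuation on $L$ is the rank-$1$ one.
\item Its complement consists of higher-rank specializations. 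Vector bundles on analytic adic spaces are determined by global sections on affinoids, and the rings of global sections agree.
\item Hence the categories of $\varphi$-modules over $\calO(Y)$ coincide, and Lemma~\ref{lem: vb as phi modules} identifies $\Bun(X_{(L,L^\circ)})\simeq \Bun(X_{(L,L^+)})$.
\item Alternatively, one cites \cite[Prop.~II.2.5]{FarguesScholze}-type statements that $\Bun$ only depends on $(L,L^\circ)$.
\end{itemize}

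For part (2), by (1) we may assume $L^+=L^\circ$, and then invoke the Fargues--Fontaine classification.

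\begin{itemize}
\item First check that $\calE(\underline{D})$ is indeed a vector bundle: $D\otimes_{\breve{\Q}_p}\calO(Y_S)$ with the diagonal Frobenius is a $\varphi$-module, using $W(\overline{\F}_p)\subset W(L^\circ)$.
\item By Dieudonn\'e--Manin, every isocrystal over $\overline{\F}_p$ is a direct sum of simple isocrystals of slopes $\lambda=d/h$, and $\calE$ sends the simple one of slope $\lambda$ to $\calO(-\lambda)$ (with the paper's sign conventions). This gives an injective map on isomorphism classes, provided the HN slopes of $\calE(\underline D)$ recover the Newton slopes of $D$; that holds because $\calO(\lambda)$ is stable of slope $\lambda$ and the HN polygon is an invariant.
\item Surjectivity is the content of the Fargues--Fontaine theorem: every vector bundle on $X_{(L,L^\circ)}$ splits as a direct sum of $\calO(\lambda)$'s, cf.\ \cite{FarguesFontaineAsterisque} and \cite[Thm.~II.2.14]{FarguesScholze}.
\end{itemize}

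The main obstacle is surjectivity in (2). It rests on the full classification theorem, whose proof uses the computation of $H^0,H^1$ of $\calO(\lambda)$ through Banach--Colmez spaces and the existence of degree-one points (modifications at $\infty$). I would not reprove it but quote it. The only care needed is checking that the normalizations of $\varphi$ (that is, $\varphi_D\otimes\varphi$ versus its inverse) match so that the slopes correspond exactly up to the stated sign.
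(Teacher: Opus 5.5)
Your proof is correct, and for (2) it is the paper's argument: the paper simply cites the Fargues--Fontaine classification \cite[Thm.~8.2.10]{FarguesFontaineAsterisque}. Your Dieudonn\'e--Manin and HN-polygon discussion of injectivity just unpacks that citation.

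For (1) you take a slightly different route. The paper invokes GAGA \cite[Thm.~6.3.9]{kedlaya-liu-relative-padichodge}, identifying both categories with vector bundles on the schematic curve, which is built from $L$ alone. You instead use the $\varphi$-module description of Lemma~\ref{lem: vb as phi modules}, and note that pullback becomes base change along a $\varphi$-equivariant isomorphism $\calO(Y_{(L,L^+)})\xrightarrow{\sim}\calO(Y_{(L,L^\circ)})$. Both rest on the same input: $\calO(Y)$ does not see $L^+$. The GAGA route lands exactly where the classification of \cite{FarguesFontaineAsterisque} is stated, so one identification serves both parts. Yours stays on the adic side, which is why in (2) you need the adic-curve form of the classification (or GAGA after all).

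Two points in (1) should be tightened.

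\emph{Why $\calO(Y_I)$ is independent of $L^+$.} The reason is not really about a ``completed integral closure''. Since $\varpi L^\circ\subseteq L^{\circ\circ}\subseteq L^+$, one has $[\varpi]W(L^\circ)\subseteq W(L^+)$, hence $W(L^+)[1/[\varpi]]=W(L^\circ)[1/[\varpi]]$. On each rational piece $Y_I$, where $[\varpi]$ is a unit, the rings of definition built from $W(L^+)$ and from $W(L^\circ)$ are therefore commensurable. So the complete Tate rings $\calO(Y_I)$ coincide and only the plus-rings differ. Also, with the normalization $Y_{S,[a,b]}=\{\lvert p\rvert^b\le\lvert[\varpi]\rvert\le\lvert p\rvert^a\}$, the roles of $a$ and $b$ in your displayed formula are interchanged.

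\emph{The bullet about the rank-$1$ locus.} You assert that $Y_{(L,L^\circ)}\to Y_{(L,L^+)}$ is an isomorphism onto the locus over the rank-$1$ point. Drop this. That point of $\Spa(L,L^+)$ need not be open when $L^+$ has large rank, so the assertion is not meaningful as stated, and nothing in your argument uses it.
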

\begin{proof}
Statement $(1)$ is a consequence of GAGA (cf.\ \cite[Thm.~6.3.9]{kedlaya-liu-relative-padichodge}), noting that the schematic curve is constructed purely using the field $L$. Statement $(2)$ is established in \cite[Thm.~8.2.10]{FarguesFontaineAsterisque}.
\end{proof}

\begin{eg}\label{ex: example of vector bundles associated with de Rham and phiNGK-module}
\textbf{Examples of vector bundles associated with de Rham and $(\varphi,N,G_K)$-modules.}
\begin{enumerate}
\item A special case of Example~\ref{ex: shtuka-over a field} occurs when $S=\Spa(K,K^+)$ and $K$ is a $p$-adic field over $\Q_p$. In this setting, we write $X_K^\diamondsuit\coloneqq X_S^\diamondsuit$. Using Lemma~\ref{lem: Bun as phi gamma modules}, $\Bun(X_S^\diamondsuit)$ is equivalent to the category of ``$G_K$-equivariant vector bundles over $X_{\C_p^\flat}$''. These correspond precisely to triples $(M,\varphi_{M},\rho_{M})$, where $(M,\varphi_{M})$ is a $\varphi$-module over $\calO(Y_{\C_p^\flat})$, and $\rho_{M}$ is a continuous semilinear action of $G_K$ on $M$ that commutes with $\varphi_M$.
\item Let $\underline{D}=(D,\varphi_D,\rho_D)$ be a $(\varphi,G_K)$-module defined over $L/K$. We define a vector bundle $\calE(\underline{D})=\calE(D,\varphi_D,\rho_D)$ over $X_K^\diamondsuit$ as follows: Let $\calE(\underline{D})$ correspond to the triple $(M,\varphi_{M},\rho_{M})$, where $(M,\varphi_{M})=(D \otimes_{L_0} \calO(Y_{\C_p^\flat}), \varphi_D \otimes \varphi)$ and $\rho_M$ operates via the diagonal action of $G_K$. When $L=K$, we frequently denote this construction simply as $\calE(D,\varphi_D)$.
\item Let $(D,\varphi_D,N,\rho_D)$ be a $(\varphi,N,G_K)$-module defined over $L/K$. We define a vector bundle $\calE(D,\varphi_D,N,\rho_D)$ over $X_K^\diamondsuit$ via the corresponding triple $(M,\varphi_{M},\rho_{M})$, where $(M,\varphi_{M})=(D \otimes_{L_0} \calO(Y_{\C_p^\flat}), \varphi_D \otimes \varphi)$, and $\rho_M$ is the unique continuous semilinear action extending
\[
g(y\otimes 1) = \exp\!\bigl(-\chi(g)t\,N\bigr)\bigl(g(y)\otimes 1\bigr).
\]
Here, for any $a\in \calO(Y_{\C_p^\flat})$, $\exp(-aN)\coloneqq \sum_{i\ge 0}\frac{(-a)^i}{i!}N^i$. Because $N$ is nilpotent and the relations $\varphi(t)=pt$ and $p\varphi N =N\varphi$ hold by assumption, the expression $g(y\otimes 1)$ is well-defined and strictly commutes with $\varphi$. 
\item From the definitions above, $\calE(D,\varphi_D,N,\rho_D)$ agrees with $\calE(D,\varphi_D,\rho_D)$ upon pullback to $X_{\C_p^\flat}$ (ignoring the $G_K$-action).
\end{enumerate}
\end{eg}

\begin{rem}
Example~\ref{ex: example of vector bundles associated with de Rham and phiNGK-module} is compatible with \cite[Chap.~10]{FarguesFontaineAsterisque} under GAGA for Fargues--Fontaine curve. Recall that we use the identical convention $N(\log[\underline{\varpi}])=1$, also found in \cite[\S10.3.2]{FarguesFontaineAsterisque} when defining $N$ on $B_{\log}$, yielding $\log_{\underline{\varpi},g}\coloneqq g(\log(\underline{\varpi}))-\log(\underline{\varpi}) =\chi(g) t \in B_{\dR}^+$. 
\end{rem}

\subsection{Shtukas in families and Newton polygon function}

We will provide an alternative description of shtukas over $S$ with one leg at $S^\sharp$ (when $S^\sharp$ is an untilt in characteristic $0$) as modifications of vector bundles over the relative Fargues--Fontaine curve. To accomplish this, we first introduce the theory of slopes. Specifically, we will define the Harder--Narasimhan slope function
\[
\NP(\calE)\colon \lvert S^\diamondsuit \rvert \to \Conv
\]
for any analytic adic space $S$ and any $\calE \in \Bun(X_S^\diamondsuit)$. We will then state a result describing shtukas over $S$ with \emph{no legs} in terms of local systems. Finally, we will adapt this to shtukas over $S$ with one leg and relate them to admissible modifications of vector bundles.

\begin{defn}\label{defn: many NP functions}
\begin{enumerate}
	\item The \emph{Newton polygon} of a multiset \[\underline{x}=\{x_i\}=\{x_1 \leq x_2 \leq \cdots \leq x_r\},\] with elements $x_i \in \mathbb{R}$, is defined to be the unique continuous, piecewise linear convex function $\NP(\underline{x}) \colon [0,r] \to \mathbb{R}$ passing through $(0,0)$ and having slope $x_i$ over the interval $[i-1,i]$. We will refer to the multiset $\underline{x}$ as the (multi)set of slopes of $\NP(\underline{x})$. 
	\item We define a partial order $\succeq$ on the collection of such multisets having the same size $r$ and the same total sum $\sum_{i=1}^r x_i$. For two such multisets $\underline{x} = \{x_1 \leq \dots \leq x_r\}$ and $\underline{y} = \{y_1 \leq \dots \leq y_r\}$, we say $\underline{x} \succeq \underline{y}$ if for all integers $1 \leq k < r$, we have
\[
\sum_{i=1}^k x_i \geq \sum_{i=1}^k y_i.
\]
Geometrically, this implies that $\underline{x} \succeq \underline{y}$ if and only if their Newton polygons share the exact same endpoints, and the polygon $\NP(\underline{x})$ lies \emph{on or above} the polygon $\NP(\underline{y})$ (i.e., $\NP(\underline{x})(t) \geq \NP(\underline{y})(t)$ for all $t \in [0,r]$).
	\item For any multiset $\underline{x}$ as in $(1)$, and any $a \in \bR$, we let $a\underline{x}$ denote the multiset $\{a \cdot x_i\}$, and we write $a\NP(\underline{x})\coloneqq \NP(a\underline{x})$. We simply write $-\underline{x}$ for $(-1)\underline{x}$, and $-\NP(\underline{x})$ for $(-1)\NP(\underline{x})$. 
	\item Let $k$ be a field of characteristic $p$. For any $\underline{D}=(D,\varphi_D) \in \Isoc_k$, we define $\NP(\underline{D})$ to be the Newton polygon formed by the multiset of $p$-adic valuations of the eigenvalues of $\varphi_D$. When $k$ is algebraically closed, the Dieudonn\'e--Manin classification guarantees that the isomorphism class of any $(D,\varphi_D) \in \Isoc_k$ is uniquely determined by $\NP(\underline{D})$.
\item Let $(L,L^+)$ be an algebraically closed nonarchimedean field of characteristic $p$. By Lemma~\ref{lem: fact about Bun X L}(2), any $\calE \in \Bun(X_L)$ admits an isomorphism $\calE \simeq \calE(\underline{D})$ for some $\underline{D}\in \Isoc_{\overline{\F}_p}$. We define $\NP(\calE)\coloneqq -\NP(\underline{D})$. We define the \emph{slopes} of $\calE$ to be the multiset of slopes of $\NP(\calE)$. By definition, the slopes of $\calE$ depend only on its isomorphism class. It is straightforward to check that for any morphism $(L_1,L_1^+) \to (L_2,L_2^+)$ between fields of this type, the slopes of any $\calE \in \Bun(X_{L_1})$ precisely match the slopes of the pullback $\calE\mid_{X_{L_2}}$.
\item Let $S$ be an analytic adic space. For any point $s\in \lvert S^\diamondsuit \rvert$, let $s\colon \Spd(L,L^+) \to S$ be the corresponding morphism of $\vv$-sheaves originating from an algebraically closed nonarchimedean field $(L,L^+)$ of characteristic $p$. For any $\calE \in \Bun(X_S^\diamondsuit)$, we define $\NP(\calE)(s)\coloneqq \NP(\calE_s)$, where $\calE_s \in \Bun(X_L)$ is the vector bundle obtained as the pullback of $\calE$ along $s\colon \Spd(L,L^+) \to S$. Note that following the discussion in $(4)$, $\NP(\calE)(s)$ is well-defined and depends only on the equivalence class of the map $\Spd(L,L^+) \to S$ corresponding to $s$, as delineated in \cite[Prop.~11.13]{scholze-etalecohomologyofdiamonds}.
\item Let $\calS_0$ be a scheme in characteristic $p$. As detailed in Construction~\ref{const: F-isocrystal at x}, for a finite locally free $F$-isocrystal $\underline{\calE}_{\cris}=(\calE, \varphi_{\calE})$ over $(\calS_0)^{\perf}_\Prism$ and any point $x \in |\calS_0|$, restriction defines an $F$-isocrystal $(\calE_x, \varphi_{\calE_x})$ over $\kappa(x)^{-p^\infty}$, which acts identically as an object in $\Isoc_{\kappa(x)^{-p^\infty}}$. We define $\NP(\underline{\calE}_{\cris})(x)=\NP((\calE_x, \varphi_{\calE_x}))$.
\end{enumerate}
\end{defn}

For the remainder of this subsection, we fix an analytic adic space $S$ over a $p$-adic field $K$, and we let $\lambda =\frac{d}{h}$ denote a rational number in lowest terms (so $h>0$ and $\gcd(d,h)=1$ if $d\neq 0$; if $\lambda=0$, we adopt the convention $d=0$ and $h=1$).

\begin{defn}\label{defn: vector bundle being semistable of slope lambda}
A vector bundle $\calE \in \Bun(X_S^\diamondsuit)$ is called \emph{semistable of slope $\lambda$} if for all $s\in \lvert S \rvert$, the Newton polygon $\NP(\calE)(s)$ has only $\lambda$ (with multiplicity) as its slope. We let $\Bun(X_S^\diamondsuit)^{ss,\lambda}$ denote the full subcategory of semistable vector bundles of slope $\lambda$ over $X_S^\diamondsuit$.
\end{defn}

\begin{thm}\label{thm: local system and ssvb of slope 0}
There is a natural equivalence between the category $\Bun(X_S^\diamondsuit)^{ss,0}$ and the category of $\Q_p$-local systems over $S^\diamondsuit_{\vv}$.
\end{thm}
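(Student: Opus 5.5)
The plan is to reduce the statement to the perfectoid case and then use $\vv$-descent on both sides. For an affinoid perfectoid $T=\Spa(R,R^+)\in\Perf$, the known result (Kedlaya--Liu \cite[Thm.~8.5.12, Cor.~8.7.?]{kedlaya-liu-relative-padichodge}, and in the adic setting \cite[Cor.~II.2.20]{FarguesScholze}) gives an equivalence between vector bundles on $X_T$ that are semistable of slope $0$ at every geometric point and pro-\'etale $\Q_p$-local systems on $T$. The functor is $\bL\mapsto \bL\otimes_{\underline{\Q}_p}\calO_{X_T}$, and its quasi-inverse is $\calE\mapsto H^0(X_{(-)},\calE)$. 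This is compatible with pullback along any $T'\to T$, since both the functors and the pointwise semistability condition are functorial. In particular, semistability of slope $0$ is checked on geometric points and the slopes are preserved under pullback, by Definition~\ref{defn: many NP functions}(5),(6).

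Next we globalize. By Definition~\ref{defn: vb on X_Sdiamond}, $\Bun(X_S^\diamondsuit)=2\text{-}\lim_{T\to S^\diamondsuit}\Bun(X_T)$. An object $\calE$ lies in $\Bun(X_S^\diamondsuit)^{ss,0}$ exactly when each $\calE_T$ is semistable of slope $0$, because every geometric point of $T$ maps to a point of $\lvert S^\diamondsuit\rvert$ and $\NP$ is pulled back compatibly. Hence
\[
\Bun(X_S^\diamondsuit)^{ss,0}\simeq 2\text{-}\lim_{T\to S^\diamondsuit}\Bun(X_T)^{ss,0}\simeq 2\text{-}\lim_{T\to S^\diamondsuit}\Loc_{\Q_p}(T_{\proet}).
\]
The remaining task is to identify the right-hand side with $\Q_p$-local systems on $S^\diamondsuit_\vv$. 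By the $\vv$-stack property of $T\mapsto\Bun(X_T)$ \cite[Prop.~II.2.1]{FarguesScholze}, this limit may be computed on the \v{C}ech nerve of a single $\vv$-cover. What has to be checked is that $T\mapsto \Loc_{\Q_p}(T)$ is also a $\vv$-stack on $\Perf$ and that it agrees with $\Q_p$-local systems on $T_\vv$. For this I will use \cite[Props.~3.5, 3.7, 3.9, Thm.~3.11]{MannWerner-Localsystemsondiamonds}, which are already invoked in the proof of Theorem~\ref{thm: equiv of all kinds of locsys}(2b). Given these, the limit is $\Loc_{\Q_p}(S^\diamondsuit_\vv)$, and the equivalence is natural by construction.

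The main obstacle is the $\Q_p$ (rather than $\Z_p$) issue: the perfectoid equivalence is cleanest for isogeny $\Z_p$-lattices, while a $\Q_p$-local system need only admit lattices locally. I will handle this as follows. Work on an affinoid perfectoid $T$. By Theorem~\ref{thm: equiv of all kinds of locsys}(3a),(3b), together with the analogous statement for slope-$0$ bundles (Kedlaya--Liu's local existence of $\varphi$-stable lattices, \cite[Thm.~8.5.3]{kedlaya-liu-relative-padichodge}), both sides are, locally on $T$, given by integral objects. I will then check full faithfulness through $H^0(X_T,\underline{\Hom}(\calE,\calE'))=\Hom(\bL,\bL')$, and glue by descent. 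A secondary point needing care is that the pointwise definition of semistability in Definition~\ref{defn: vector bundle being semistable of slope lambda} matches the hypothesis of the cited perfectoid theorem. This holds because the cited theorem is itself formulated pointwise.
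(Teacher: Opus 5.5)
Your proposal is correct and follows essentially the same route as the paper. Both arguments prove the perfectoid case via Kedlaya--Liu's Thm.~8.5.12, together with the identification of pro-\'etale and $\vv$-local systems. Both then pass to the 2-limit over $T\to S^\diamondsuit$, using that $\Q_p$-local systems satisfy $\vv$-descent and that semistability of slope $0$ is detected on perfectoid test objects. Your separate lattice-by-lattice treatment of the $\Q_p$ versus $\Z_p$ issue is harmless but unnecessary, because the cited perfectoid theorem is already stated for $\Q_p$-local systems.
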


\begin{proof}
When $S\in\Perf$, this follows directly from \cite[Thm.~8.5.12]{kedlaya-liu-relative-padichodge} and Theorem~\ref{thm: equiv of all kinds of locsys}. The general case follows from the limit formula
\[
\Loc_{\Q_p}(S^\diamondsuit_{\vv}) \cong \lim_{T \to S^\diamond}\Loc_{\Q_p}(T_{\vv}) \cong \lim_{T \to S^\diamond} \Bun(X_T)^{ss,0} \cong \Bun(X_S^\diamondsuit)^{ss,0},
\]
where the limit is taken over all $T\in \Perf$ admitting an arrow $T\to S^\diamondsuit$, with $T=\Spa(A,A^+)$. The first equivalence holds because $\Loc_{\Q_p}(S^\diamondsuit_{\vv})$ satisfies $\vv$-descent by definition. The second equivalence holds due to the perfectoid basis case, and the final equivalence follows from Definition~\ref{defn: vb on X_Sdiamond} and the fact that being semistable of slope $0$ descends along $\vv$-coverings. 
\end{proof}

The following definition and Corollary~\ref{cor: local system and shtuka with no leg} provide the analogous statement for $\Z_p$-local systems.

\begin{defn}
\begin{enumerate}
\item For $S\in \Perf$, a \emph{shtuka with no legs} over $S$ is a pair $(\calE,\varphi_{\calE})$, where $\calE$ is a vector bundle on $\calY_S$ and
\[
\varphi_{\calE}:\ 
\Frob_S^\ast \calE \xrightarrow{\sim} \calE
\]
is a global isomorphism of vector bundles over $\calY_S$. 
\item Let $\calX$ be a $\vv$-sheaf. A \emph{shtuka with no legs} $(\calE,\varphi_{\calE})$ over $\calX$ is a section of the $\vv$-stack of the groupoid of shtukas with no legs over $\calX$. 
\end{enumerate}
\end{defn}

\begin{cor}\label{cor: local system and shtuka with no leg}
Let $S$ be an analytic rigid space over $\Spa \Z_p$. Then the assignment
\[
\mathbb{L} \mapsto (\mathbb{L} \otimes \calO_{\calY_S^\diamondsuit}, 1 \otimes \varphi)
\]
defines an equivalence between the category $\Loc_{\Z_p}(S^\diamondsuit_{\vv})$ and the category of shtukas with no legs over $S^\diamondsuit$.
\end{cor}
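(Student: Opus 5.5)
We reduce to the perfectoid case and then glue by $\vv$-descent, following the proof of Theorem~\ref{thm: local system and ssvb of slope 0}. Both sides are $\vv$-stacks in $S^\diamondsuit$. For $\Z_p$-local systems this is Theorem~\ref{thm: equiv of all kinds of locsys}, which identifies $\Loc_{\Z_p}(S^\diamondsuit_\vv)$ with the étale and pro-étale variants. For shtukas with no legs it holds by definition, as sections of a $\vv$-stack; the required descent for vector bundles on $\calY_T$ is the analogue of \cite[Prop.~II.2.1]{FarguesScholze} for $\calY_T$ in place of $X_T$. The assignment $\bL\mapsto(\bL\otimes\calO_{\calY},1\otimes\varphi)$ is compatible with pullback along $T'\to T$ in $\Perf$. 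It therefore suffices to prove the equivalence for $T=\Spa(R,R^+)\in\Perf$ affinoid and then pass to the $2$-limit over $\Perf_{S^\diamondsuit}$:
\[
\Loc_{\Z_p}(S^\diamondsuit_\vv)\cong\lim_{T\to S^\diamondsuit}\Loc_{\Z_p}(T_\vv)\cong\lim_{T\to S^\diamondsuit}\Sht_{\emptyset}(T)\cong\Sht_\emptyset(S^\diamondsuit).
\]

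For the perfectoid case, full faithfulness should be handled first. Homomorphisms of local systems $\bL\to\bL'$ correspond to $\varphi$-invariant sections of $\underline{\Hom}(\bL,\bL')\otimes\calO(\calY_T)$. Working locally, where $\bL$ is trivialized modulo $p^n$ and then passing to the limit, this reduces to the identity $H^0(\calY_T,\calO)^{\varphi=1}=C^0(\lvert T\rvert,\Z_p)$. Here $\calO(\calY_T)$ is built from $W(R^+)$, and one checks $W(R^+)^{\varphi=1}=W(\F_p)$-valued continuous functions, so $\Z_p$. The passage to the analytic locus $\calY_T=\Spa W(R^+)\setminus V([\varpi])$ is handled by bounding denominators in $[\varpi]$ and using that $\varphi$ raises $\lvert[\varpi]\rvert$ to the $p$-th power.

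Essential surjectivity is the key step. Given $(\calE,\varphi_\calE)$ on $\calY_T$, the plan is to extend $\calE$ to a $\varphi$-module over $W(R^+)$, or at least over a neighborhood $Y_{T,[0,r]}$, i.e.\ over $\widetilde{\calR}^{\rmint,r}_T$ in Definition~\ref{defn: Robba rings}. Because $\varphi$ is an isomorphism over all of $\calY_T$, including the locus near $V(\pi)$ where $\kappa\to\infty$, one can use $\varphi$ to spread a lattice from $Y_{T,[0,r]}$ across the whole space. This reduces the question to étale $\varphi$-modules over $W(R)$ (integrally over $W(R^+)$ up to almost issues). There the classical Katz/Kedlaya--Liu equivalence \cite[Thm.~8.5.12 and \S8.4]{kedlaya-liu-relative-padichodge} with $\Z_p$-local systems on $T_{\et}$ applies. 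This can be done by reducing mod $p^n$, solving Artin--Schreier--Lang equations pro-étale locally, and passing to the limit. One then identifies the resulting local system with $\calE$ via the map in the statement.

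The main obstacle is this extension from $\calY_T$ to a genuine $W(R^+)$-lattice. I would first try \cite[Lem.~6.1.5, Thm.~6.3.12]{kedlaya-liu-relative-padichodge}: by Theorem~\ref{thm: local system and ssvb of slope 0}, $\calE|_{Y_T}/\varphi^\Z$ is semistable of slope $0$, hence comes from a $\Q_p$-local system $\bL_\Q$. The extension of $\calE$ over $V(\pi)$ then singles out a $\Z_p$-lattice in $\bL_\Q$, via the comparison $\calO(\calY_T)\supset W(R^+)$ near $\pi=0$. This should be proved pro-étale locally, where $\bL_\Q$ is trivial and the lattice is read off from $\calE$ restricted to $\{\pi=0\}$.
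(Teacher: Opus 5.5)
Your proposal is the paper's argument. The same chain of equivalences reduces, by $\vv$-descent, to affinoid perfectoid $T\in\Perf$, where the paper simply cites the Kedlaya--Liu/Scholze--Weinstein equivalence (cf.\ Lemma~\ref{lem: shtuka with no leg as phi modules}) together with Theorem~\ref{thm: equiv of all kinds of locsys}, and your sketch ultimately rests on the same inputs. Your heuristic account of the perfectoid step has slips you should fix: $V(\pi)\cap\calY_T=T$ is the locus $\kappa=0$ (not $\kappa\to\infty$), with $Y_{T,[0,r]}$ its neighborhood, and no $W(R^+)$-lattice is needed or in general available, since restricting to $Y_{T,[0,r]}$ already gives an \'etale $\varphi$-module over $\widetilde{\calR}^{\rmint}_T\subset W(R)$, to which the Katz/Kedlaya--Liu equivalence applies.
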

\begin{proof}
This follows via the limit formula
\[
\begin{aligned}
\Loc_{\Z_p}(S^\diamondsuit_{\vv})
&\cong \varprojlim_{T \to S^\diamondsuit}\Loc_{\Z_p}(T_{\vv})
\cong \varprojlim_{T \to S^\diamondsuit}\{\text{shtukas with no legs over }T\} \\
&\cong \{\text{shtukas with no legs over }S^\diamondsuit\},
\end{aligned}
\]
where the limit is taken over all affinoid perfectoid spaces $T=\Spa(A,A^+)$ admitting an arrow $T\to S^\diamondsuit$. The first equivalence stems from the fact that $\Loc_{\Z_p}(S^\diamondsuit_{\vv})$ satisfies $\vv$-descent. The second equivalence relies on \cite{kedlaya-liu-relative-padichodge} or \cite[Prop.~22.6.1]{SWBerkeleyNotes}\footnote{To strictly apply the cited results, one replicates the proof from \cite[Prop.~12.3.5]{SWBerkeleyNotes}.}, combined with Theorem~\ref{thm: equiv of all kinds of locsys}. The final equivalence is true by definition. 
\end{proof}

Under the equivalence in Corollary~\ref{cor: local system and shtuka with no leg}, restriction from $\calY_S^\diamondsuit$ to $Y_S^\diamondsuit$ naturally induces a functor from the category of shtukas with no legs to the category of vector bundles over $X_S^\diamondsuit$. An object $\calE \in \Bun(X_S^\diamondsuit)$ is said to be \emph{extendable to a $\varphi$-module over $\calY_S^\diamondsuit$} if it lies in the essential image of this functor.

The following two corollaries manifest directly from Theorem~\ref{thm: local system and ssvb of slope 0} and Corollary~\ref{cor: local system and shtuka with no leg}. Recall that the category of admissible modifications along $S^\diamondsuit \to \Spd K$ was addressed in Definition~\ref{defn: modification of vector bundles}. 

\begin{cor}\label{cor: shtukas with no leg and ss vb of slope 0}
An object $\calE \in \Bun(X_S^\diamondsuit)$ is extendable to a $\varphi$-module over $\calY_S^\diamondsuit$ if and only if $\calE \in \Bun(X_S^\diamondsuit)^{ss,0}$ and the $\Q_p$-local system corresponding to $\calE$ via Theorem~\ref{thm: local system and ssvb of slope 0} admits a $\Z_p$-lattice. Furthermore, there is a one-to-one correspondence between the $\varphi$-modules over $\calY_S^\diamondsuit$ mapping to $\calE$ and the set of $\Z_p$-lattices of the $\Q_p$-local system corresponding to $\calE$ via Theorem~\ref{thm: local system and ssvb of slope 0}. 
\end{cor}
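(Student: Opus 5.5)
The plan is to combine Corollary~\ref{cor: local system and shtuka with no leg} with Theorem~\ref{thm: local system and ssvb of slope 0}, checking that the two equivalences are compatible with the restriction functor from $\calY_S^\diamondsuit$ to $Y_S^\diamondsuit$ and then taking the $\varphi$-quotient. Concretely, given a $\Z_p$-local system $\mathbb{T}$ on $S^\diamondsuit_\vv$, the shtuka with no legs $(\mathbb{T}\otimes\calO_{\calY_S^\diamondsuit},1\otimes\varphi)$ restricts over $Y_S^\diamondsuit$ to $(\mathbb{T}[1/p]\otimes\calO_{Y_S^\diamondsuit},1\otimes\varphi)$. Its descent to $X_S^\diamondsuit$ is, by construction, the object attached to the $\Q_p$-local system $\mathbb{T}[1/p]$ under Theorem~\ref{thm: local system and ssvb of slope 0}. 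I would verify this compatibility first over affinoid perfectoid $T\to S^\diamondsuit$, where both functors are the Kedlaya--Liu constructions, and then pass to the $2$-limit, since every step is functorial in $T$.

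For the ``only if'' direction: if $\calE$ is the restriction of a shtuka with no legs, then Corollary~\ref{cor: local system and shtuka with no leg} writes that shtuka as $\mathbb{T}\otimes\calO_{\calY}$. By the compatibility above, $\calE$ corresponds to $\mathbb{T}[1/p]$. Hence $\calE$ is semistable of slope $0$, which can also be checked pointwise, and $\mathbb{T}$ is the required lattice. For the ``if'' direction: if $\calE\in\Bun(X_S^\diamondsuit)^{ss,0}$ corresponds to $\mathbb{L}$ and $\mathbb{T}\subset\mathbb{L}$ is a $\Z_p$-lattice, then the shtuka attached to $\mathbb{T}$ restricts to an object isomorphic to $\calE$. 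By full faithfulness of Theorem~\ref{thm: local system and ssvb of slope 0}, this isomorphism is the canonical one induced by $\mathbb{T}[1/p]\cong\mathbb{L}$.

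For the bijection, I would define the map that sends a pair consisting of a shtuka with no legs $\mathcal{M}$ and an identification $\mathcal{M}|_{Y}/\varphi^{\Z}\cong\calE$ to the lattice $\mathbb{T}(\mathcal{M})\subset\mathbb{L}$, obtained via Corollary~\ref{cor: local system and shtuka with no leg} and the identification. Injectivity reduces to one claim: two $\Z_p$-local systems with the same rationalization inside $\mathbb{L}$ are equal as subsheaves. Under the equivalences this amounts to showing that a morphism of $\Z_p$-local systems is determined by its rationalization, which holds because $\Z_p$-local systems are $p$-torsion free. Surjectivity is exactly the construction used in the ``if'' direction.

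The step I expect to be the main obstacle is the compatibility of the two equivalences along restriction, in particular making ``$\varphi$-modules over $\calY_S^\diamondsuit$ mapping to $\calE$'' precise as subobjects rather than as isomorphism classes. I would handle this by showing that the restriction functor from shtukas with no legs to $\Bun(X_S^\diamondsuit)$ is faithful. That faithfulness comes from the identification of morphisms with $\Hom$ of local systems on both sides, checked $\vv$-locally on affinoid perfectoid $T$, where it is the statement $\Hom_{\Z_p}(\mathbb{T}_1,\mathbb{T}_2)\hookrightarrow\Hom_{\Q_p}(\mathbb{T}_1[1/p],\mathbb{T}_2[1/p])$.
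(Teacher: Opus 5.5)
Your proposal is correct and takes the same approach as the paper: the paper deduces this corollary directly by combining Theorem~\ref{thm: local system and ssvb of slope 0} with Corollary~\ref{cor: local system and shtuka with no leg}. Your checks that the two equivalences are compatible under restriction from $\calY_S^\diamondsuit$ to $X_S^\diamondsuit$, and that the restriction functor is faithful (so the extensions of $\calE$ form a set), make explicit what the paper leaves implicit.
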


\begin{cor}\label{cor: shtukas with one leg and admissible modification of vb}
Similarly, there is a well-defined functor from the category of shtukas over $S^\diamondsuit/\Spd K$ to the category of admissible modifications of vector bundles over $X_S^\diamondsuit$ along $S^\diamondsuit/\Spd K$. Its essential image consists precisely of those admissible modifications $(\calE_0,\calE_1,\beta)$ for which $\calE_1$ corresponds to a $\Q_p$-local system admitting a $\Z_p$-lattice. Furthermore, there is a one-to-one correspondence between the shtukas over $S^\diamondsuit/\Spd K$ that restrict to $(\calE_0,\calE_1,\beta)$ and the set of $\Z_p$-lattices of the $\Q_p$-local system associated with $\calE_1$ under Theorem~\ref{thm: local system and ssvb of slope 0}.
\end{cor}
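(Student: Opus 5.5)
The plan is to build the functor from shtukas to admissible modifications by restricting from $\calY$ to $Y$ and then quotienting by Frobenius. Let $(\calE,\varphi_\calE)$ be a shtuka over $S^\diamondsuit/\Spd K$, and fix $T\in\Perf$ with $f_T\colon T\to S^\diamondsuit$. The leg $D_{T^\sharp}$ is in characteristic $0$ because $S^\diamondsuit\to\Spd K$. Hence $D_{T^\sharp}\subset Y_T$ and $\kappa(D_{T^\sharp})$ is bounded away from $0$ and $\infty$.

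I will follow the standard Fargues--Scholze argument (cf.\ \cite[\S II.1, II.2]{FarguesScholze}, \cite[\S23]{SWBerkeleyNotes}).
\begin{enumerate}
\item On a neighborhood $\calY_{T,[0,r)}$ of the locus $\kappa=0$ missing the leg, $\varphi_\calE$ is an isomorphism. So $\calE$ restricted there, together with its Frobenius, is a shtuka with no legs near $\pi=0$.
\item By Corollary~\ref{cor: local system and shtuka with no leg} and the usual spreading argument, this gives a $\Z_p$-local system $\bT$, functorial in $T$, with an isomorphism $\calE|_{\calY_{T,[0,r)}}\cong \bT\otimes\calO$.
\item Set $\calE_1:=(\bT\otimes\calO_{Y_T})/\varphi^\Z$. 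It is semistable of slope $0$ by Theorem~\ref{thm: local system and ssvb of slope 0}.
\item Set $\calE_0$ to be the descent of $\calE|_{Y_{T,[a,qa]}}$, for $a$ with $Y_{T,[a,qa]}$ beyond all legs, glued via $\varphi_\calE$ to $X_T$.
\item The modification $\beta$ comes from transporting the trivialization of step 2 along the iterates $\varphi_\calE^n$, $n\ge0$. These iterates are isomorphisms away from $\bigcup_{n}\varphi^{n}(D_{T^\sharp})$, which maps to the single divisor $D_{T^\sharp}\subset X_T$.
\end{enumerate}
All of these steps are functorial in $T$ and compatible with $\vv$-descent (\cite[Prop.~II.2.1]{FarguesScholze}). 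They therefore define an object of $\admmodif(S^\diamondsuit\to\Spd K)$, and by construction $\calE_1$ is the bundle attached to the lattice $\bT$ of the rational local system $\bT[1/p]$.

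\textbf{Essential image and fibers.} Suppose $(\calE_0,\calE_1,\beta)$ is admissible and $\calE_1$ corresponds to $\bL$. Choose a $\Z_p$-lattice $\bT\subset\bL$. By Corollary~\ref{cor: shtukas with no leg and ss vb of slope 0}, these lattices are exactly the $\varphi$-modules over $\calY_S^\diamondsuit$ extending $\calE_1$. We reverse the construction above:
\begin{enumerate}
\item Over $\calY_{T,[0,r)}$, take $\bT\otimes\calO$.
\item Over $Y_T$, take the pullback of $\calE_0$ to $Y_T$.
\item Glue the two over $Y_{T,(0,r)}$ using $\beta$, which is an isomorphism there once $r<\kappa(D_{T^\sharp})$.
\item The Frobenius is the one induced from the $\varphi$-structures on each piece. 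It is meromorphic only along $D_{T^\sharp}$, because $\beta$ is compatible with $\varphi$ away from the translates of the leg.
\end{enumerate}
Vector bundles glue here since $\calY_T=\calY_{T,[0,r)}\cup Y_T$ is an open cover. Conversely, if $\calE_1$ comes from a shtuka, then step 2 of the first construction exhibits a lattice. This proves that the essential image is exactly as stated.

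The bijection between shtukas lifting $(\calE_0,\calE_1,\beta)$ and lattices holds for two reasons. A shtuka is recovered from its restriction to $\calY_{T,[0,r)}$ (its no-leg part) together with its restriction to $Y_T$, glued along the overlap. The first piece is classified by lattices via Corollary~\ref{cor: shtukas with no leg and ss vb of slope 0}, and the second piece together with the gluing is forced by $(\calE_0,\beta)$.

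The main obstacle is step 2 of the first construction. For a general affinoid perfectoid $T$, the region near $\kappa=0$ is not an affinoid annulus, so one must check that a $\varphi$-equivariant bundle on $\calY_{T,[0,r)}$ descends to a $\Z_p$-local system. I would handle this by reducing to \cite[Prop.~22.6.1]{SWBerkeleyNotes}, replicating \cite[Prop.~12.3.5]{SWBerkeleyNotes} as in the proof of Corollary~\ref{cor: local system and shtuka with no leg}. The remaining checks, the meromorphy and functoriality claims, are routine.
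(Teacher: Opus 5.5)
\textbf{Verdict: genuine gap.} Your forward construction is fine. You restrict near $\kappa=0$ and use the no-leg classification to get $\bT$, restrict beyond the leg and descend to get $\calE_0$, and transport along iterates of $\varphi_\calE$ to get $\beta$. The gap is in the reverse construction, which carries both the essential-image claim and the bijection with lattices.

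\textbf{Where it fails.} You glue $\bT\otimes\calO$ on $\calY_{T,[0,r)}$ to the pullback of $\calE_0$ on all of $Y_T$ along $Y_{T,(0,r)}$, asserting that $\beta$ is an isomorphism there once $r<\kappa(D_{T^\sharp})$. But $\beta$ lives on $X_T\setminus D_{T^\sharp}$. The preimage of $D_{T^\sharp}$ under $Y_T\to X_T$ is $\bigsqcup_{n\in\Z}\varphi^n(D_{T^\sharp})$, and $\kappa(\varphi^{-n}(D_{T^\sharp}))=q^{-n}\kappa(D_{T^\sharp})\to 0$. So the pullback of $\beta$ degenerates inside $Y_{T,(0,r)}$ for every $r>0$; this is exactly the accumulation you noted yourself in step~5.

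The problem is not just the choice of $r$. The restriction of a shtuka to $Y_T$ is \emph{not} the pullback of $\calE_0$: it differs from it along the translates $\varphi^{-n}(D_{T^\sharp})$, $n\ge 0$. If you glue two pieces whose Frobenii are both invertible along a $\varphi$-compatible isomorphism, the result has invertible Frobenius everywhere, i.e.\ no leg. Moreover, since $Y_{T,(0,r)}$ contains a fundamental domain for $\varphi$, the gluing isomorphism would propagate to all of $Y_T$ and force $\beta$ to be an isomorphism. So your step~(4), ``meromorphic only along $D_{T^\sharp}$'', has no mechanism behind it. Your fibre argument, ``the second piece is forced by $(\calE_0,\beta)$'', is likewise unjustified as stated.

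\textbf{The fix.} Glue along an annulus strictly between $D_{T^\sharp}$ and $\varphi(D_{T^\sharp})$. Normalize $\kappa(D_{T^\sharp})=1$, which is possible $\vv$-locally on $T$, and choose $1<c<q$. Cover $\calY_T$ by $U_1=\calY_{T,[0,c)}$ and $U_2=Y_{T,(1,\infty)}$. The overlap $Y_{T,(1,c)}$ contains no translate of the leg, so $\beta$ is an isomorphism there.
\begin{itemize}
\item On $U_1$ the Frobenius of a shtuka is only needed over $\varphi^{-1}(U_1)=\calY_{T,[0,c/q)}$, which misses the leg. So $\calE|_{U_1}$ is a no-leg $\varphi$-module, classified by $\bT$.
\item $\calE|_{U_2}$ is a $\varphi$-bundle, and it descends to $\calE_0$.
\item The gluing datum on the overlap is $\beta$.
\end{itemize}
Conversely, given $(\calE_0,\calE_1,\beta)$ and a lattice $\bT$, glue $\bT\otimes\calO$ on $U_1$ to the pullback of $\calE_0$ on $U_2$ via $\beta$. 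For a small neighborhood $W$ of $D_{T^\sharp}$, $\varphi(W)\subset Y_{T,(c,\infty)}$. Hence on $W$ the glued Frobenius is $\beta\circ\varphi_{\calE_0}$, which is invertible off $D_{T^\sharp}$ and meromorphic along it. Elsewhere the glued Frobenius is the invertible Frobenius of one of the two pieces. This gives the inverse construction and the lattice bijection at the same time.

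\textbf{Comparison with the paper's route.} Alternatively, you can follow the paper's route via Theorems~\ref{thm: Fargues thm for admissible modification} and~\ref{thm: Fargues' theorem}, which rest on Beauville--Laszlo gluing. There both categories are described by pairs carrying the same $\mathbb{B}^+_{\dR}$-lattice $\Xi=(\calE_0)^+_{\dR}$, and the functor becomes $(\bT,\Xi)\mapsto(\bT[1/p],\Xi)$. The essential image and the fibres can then be read off immediately.
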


It is also advantageous to recall the following lemma.
\begin{lem}[{\cite[Thm.~8.5.3]{kedlaya-liu-relative-padichodge}}]\label{lem: shtuka with no leg as phi modules}
Let $S=\Spa(A,A^+)\in \Perf$. Base change along the map $\calO(\calY_S) \to \widetilde{\calR}_S^{\rmint}$ establishes an equivalence between the category of shtukas with no legs over $S$ and the category of \'etale $\varphi$-modules over $\widetilde{\calR}_S^{\rmint}$ (i.e., pairs $(M,\varphi_{M})$ where $M$ is a finite projective $\widetilde{\calR}_S^{\rmint}$-module and $\varphi_{M} \colon \varphi^\ast M \xrightarrow{\cong} M$ is an isomorphism of $\widetilde{\calR}_S^{\rmint}$-modules).
\end{lem}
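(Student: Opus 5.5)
The statement to prove is Lemma~\ref{lem: shtuka with no leg as phi modules}: base change along $\calO(\calY_S)\to\widetilde{\calR}_S^{\rmint}$ is an equivalence between shtukas with no legs over $S$ and \'etale $\varphi$-modules over $\widetilde{\calR}_S^{\rmint}$. The plan is to route both categories through $\Z_p$-local systems on $S$, and then check that the resulting composite agrees with base change.

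First, Corollary~\ref{cor: local system and shtuka with no leg} (in the perfectoid case $S=\Spa(A,A^+)\in\Perf$) identifies shtukas with no legs over $S$ with $\Loc_{\Z_p}(S_\vv)$ via $\bbL\mapsto(\bbL\otimes\calO_{\calY_S},1\otimes\varphi)$. A quasi-inverse is given by taking $\varphi$-invariants of sections over $\calY_S$. Second, I will identify \'etale $\varphi$-modules over $\widetilde{\calR}_S^{\rmint}$ with \'etale $\varphi$-modules over $W(A^+)$, or equivalently $W(A)$. This uses Kedlaya--Liu's overconvergence theorem for \'etale $\varphi$-modules over perfect Robba rings, and the latter category is classically equivalent to $\Z_p$-local systems on $S$ by the Katz/Artin--Schreier--Witt correspondence. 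Concretely, the functor is $\bbL\mapsto\bbL\otimes\widetilde{\calR}_S^{\rmint}$. Its inverse is $M\mapsto (M\otimes\widetilde{\calR}^{\rmint}_{\widetilde S})^{\varphi=1}$ computed pro-\'etale locally, and \'etale-local trivialization of $M$ is obtained by successively solving the Artin--Schreier equations modulo powers of $p$ on finite \'etale covers.

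Third, I will check that the composite of these two equivalences is naturally isomorphic to base change. Given a shtuka $(\calE,\varphi_\calE)\cong(\bbL\otimes\calO_{\calY_S},1\otimes\varphi)$, its base change to $\widetilde{\calR}_S^{\rmint}$ is $\bbL\otimes\widetilde{\calR}^{\rmint}_S$, which is exactly the image of $\bbL$. This requires naturality of the comparison maps, and the fact that global sections of $\calE$ over $\calY_S$ form a finite projective $\calO(\calY_S)$-module. For the latter I will use the quasi-Stein property together with descent of projectivity along the $\vv$-cover trivializing $\bbL$.

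I expect the main obstacle to be the second step, namely the \'etale-local trivialization of \'etale $\varphi$-modules over the integral Robba ring rather than over $W(A)$. The rank $1$ (or mod $p$) slope computation near the boundary $[\varpi]\to 0$ must be done uniformly over an affinoid perfectoid base. I would first reduce modulo $p$ to $\varphi$-modules over $A$ (overconvergence only matters after inverting $p$), trivialize those by finite \'etale covers as in Katz, and then lift $p$-adically by successive approximation. The passage from $W(A^+)$-coefficients to $\widetilde{\calR}^{\rmint,r}_S$ would be handled by the $\varphi$-descent argument that pushes the radius of convergence, following the proof of \cite[Thm.~8.5.3]{kedlaya-liu-relative-padichodge}.
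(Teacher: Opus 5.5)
The gap is in Step~1. Quoting Corollary~\ref{cor: local system and shtuka with no leg} for affinoid perfectoid $S$ is formally allowed, since it precedes the lemma. But modulo your Step~2 and the evident compatibility, that corollary is \emph{equivalent} to the present lemma. Step~2 is Kedlaya--Liu's equivalence between $\Z_p$-local systems on $S$ and \'etale $\varphi$-modules over $\widetilde\calR^{\rmint}_S$, and the paper imports the corollary from the same Kedlaya--Liu/Scholze--Weinstein sources. So your argument relocates the content of the lemma rather than supplying it.

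What is specific to the lemma is the comparison between $\varphi$-equivariant bundles on the non-quasicompact space $\calY_S$ and $\varphi$-modules over $\widetilde\calR^{\rmint}_S$. This never appears in your proposal, yet it is elementary and needs no local systems:
\begin{itemize}
\item $\calY_S$ is exhausted by the affinoids $Y_{S,[0,r]}=\{\lvert p\rvert^r\le\lvert[\varpi]\rvert\neq0\}$. Their rings of functions are $\widetilde\calR^{\rmint,r}_S$, and on them vector bundles are finite projective modules, by the same sousperfectoid argument the paper uses for $Y_S$.
\item Frobenius multiplies $\kappa$ by $p$, so it maps $Y_{S,[0,r]}$ isomorphically onto $Y_{S,[0,pr]}$. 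Then $\varphi_\calE$ gives $\calE|_{Y_{S,[0,pr]}}\cong(\Frob^{-1})^*\bigl(\calE|_{Y_{S,[0,r]}}\bigr)$.
\item Hence a shtuka with no legs, and any $\varphi$-equivariant morphism between two of them, is determined by its restriction to $Y_{S,[0,r]}$.
\item Conversely, a finite projective $\widetilde\calR^{\rmint,r}_S$-module with a Frobenius isomorphism over $\widetilde\calR^{\rmint,r/p}_S$ glues, via these pullbacks, to a $\varphi$-bundle on all of $\calY_S$.
\item Passing to the colimit $r\to0$ gives the equivalence, since modules, Frobenius structures and morphisms over $\widetilde\calR^{\rmint}_S$ descend to a finite level. ``\'Etale'' is automatic, because $\varphi_M$ is an isomorphism without inverting $p$.
\end{itemize}
In particular, the \'etale-local trivialization and overconvergence you single out as the main obstacle are not needed for this lemma. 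They are the content of the separate equivalence with local systems.

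There are two further errors.

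First, \'etale $\varphi$-modules over $W(A^+)$ are \emph{not} equivalent to those over $W(A)$. By Katz, the former classify $\Z_p$-local systems on $\Spec A^+$. An Artin--Schreier covering $x^p-x=\varpi^{-1}f$ of $\Spec A$ (with $f\in A^+$) generally does not extend over $\Spec A^+$. Since $\widetilde\calR^{\rmint}_S/p=A$, the correct comparison ring in Step~2 is $W(A)$. Your final ``passage from $W(A^+)$-coefficients'' must start from $W(A)$, or from $W$ of an ind-finite \'etale $A$-algebra.

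Second, in Step~3 the quasi-Stein property alone does not make $H^0(\calY_S,\calE)$ finite projective. Descent of projectivity along $\calO(\calY_S)\to\calO(\calY_{\widetilde S})$ for a $\vv$-cover $\widetilde S\to S$ is not an available tool. Finiteness comes from the uniform bound on the number of generators over the exhausting affinoids, which $\varphi$ provides as above. Alternatively, you can avoid global sections altogether by working with $\calE|_{Y_{S,[0,r]}}$ throughout.
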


We can now formulate equivalent descriptions of shtukas with one leg over a fixed morphism $f\colon S^\diamondsuit \to \Spd K$, where $S$ is an analytic rigid space over a $p$-adic field $K$. We begin with a definition.

\begin{defn}[{\cite[Cor.~17.1.9]{SWBerkeleyNotes}, \cite[Defn.~3.5]{liu-zhu-rigidity}}]
Let $S$ and $K$ be as above. Let $\mathbb{B}_{\dR}^+$ be the functor defined by assigning to each $\Spa(R,R^+)\in \Perf$ equipped with an arrow $\Spd(R,R^+) \to S^\diamondsuit (\to \Spd K)$ the ring $\mathbb{B}_{\dR}(R^\sharp,R^{\sharp,+})$\footnote{By definition, these spaces $\Spa(R,R^+)$ form a basis for the $\vv$-site on $S^\diamondsuit_{\vv}$.}, where $(R^\sharp,R^{\sharp,+})$ is the characteristic-$0$ untilt of $(R,R^+)$ specified by $\Spd(R,R^+) \to \Spd K$. A $\mathbb{B}^+_{\dR}$-local system over $S^{\diamondsuit}/\Spd K$ is defined as a section of the $\vv$-stack\footnote{This structure follows directly from \cite[Cor.~17.1.9]{SWBerkeleyNotes}. By Proposition~\ref{prop: tau vector bundles}, this definition seamlessly agrees with the definitions presented in \cite[Cor.~17.1.9]{SWBerkeleyNotes} and \cite[Defn.~3.5]{liu-zhu-rigidity}.} of the groupoid of vector bundles over $\mathbb{B}_{\dR}(R^\sharp,R^{\sharp,+})$.
\end{defn}

\begin{rem}[The complete stalk as a $\mathbb{B}_{\dR}^+$-local system]
Once we fix $f\colon S^\diamondsuit \to \Spd K$ as above, we can formalize the statement that the complete stalk of $X_S^\diamondsuit$ or $\calY_S^\diamondsuit$ at $f$ is exactly $\mathbb{B}_{\dR}^+$. For any $\Spd(R,R^+) \in  S^\diamondsuit_\vv$, let $(R^{\sharp},R^{\sharp,+},(\xi))$ be the specific untilt defined by $\Spd(R,R^+) \to \Spd K$. The natural map $\mathbb{A}_{\inf}(R^{\sharp},R^{\sharp,+})[1/p] \to \mathbb{B}_{\dR}^+(R^{\sharp},R^{\sharp,+})$ defined by taking the $(\xi)$-adic completion successfully factors through $\calO_{\calY_{(R,R^+)}}$, establishing a well-defined morphism $\calO_{\calY_{(R,R^+)}} \to \mathbb{B}_{\dR}^+(R^{\sharp},R^{\sharp,+})$. This naturally constructs well-defined morphisms of $\vv$-sheaves (both of which we denote $f_\infty$) $\calO_{\calY_S^\diamondsuit} \to \mathbb{B}_{\dR}^+$\footnote{By this, we specifically mean a functorial assignment mapping $\calO_{\calY_{T^\sharp}} \to \mathbb{B}_{\dR}^+(T^\sharp)$ for all untilts $T^\sharp$ designated by $T \to S^\diamondsuit \to \Spd K$.} and $\calO_{X_S^\diamondsuit} \to \mathbb{B}_{\dR}^+$. We designate these morphisms as the completions at the untilt defined by $f\colon S^\diamondsuit \to \Spd K$. Consequently, if $\calE$ is a vector bundle over $\calY_S^\diamondsuit$ (or $X_S^\diamondsuit$), we define $\calE^+_{\dR}$ to be the $\vv$-sheaf obtained by pulling back $\calE$ along $f_\infty$. This serves as the "complete stalk of $\calE$ at the untilt defined by $f$." Note that if $\calE$ is the vector bundle corresponding to a $\Z_p$- (or $\Q_p$-) local system $\mathbb{L}$, then $\calE^+_{\dR}$ is canonically isomorphic to $\mathbb{L}\otimes \mathbb{B}_{\dR}^+$.
\end{rem}

\begin{thm}[Fargues' Theorem for admissible modifications]\label{thm: Fargues thm for admissible modification}
Let $S^\diamondsuit \to \Spd K$ be a fixed morphism of small $\vv$-sheaves. Then the assignment
\[
(\calE_0,\calE_1,\beta) \mapsto (\bL, (\calE_0)_{\dR}^+)
\]
(where $\bL$ is the $\Q_p$-local system corresponding to $\calE_1$ under Theorem~\ref{thm: local system and ssvb of slope 0}) defines an equivalence of categories between:
\begin{enumerate}
	\item The category $\admmodif(S^\diamondsuit \to \Spd K)$ of admissible modifications along $S^\diamondsuit \to \Spd K$;
	\item The category of pairs $\{(\mathbb{L},\Xi_{\dR}^+)\}$, where $\mathbb{L}\in \Loc_{\Q_p}(S^\diamondsuit_\vv)$ and $\Xi_{\dR}^+$ is a $\mathbb{B}^+_{\dR}$-local system over $S^\diamondsuit/\Spd K$ satisfying the isomorphism $\Xi_{\dR}^+ \otimes \mathbb{B}_{\dR} \cong \mathbb{L}\otimes \mathbb{B}_{\dR}$.
\end{enumerate}
\end{thm}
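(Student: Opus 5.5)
The plan is to reduce to the case where $S$ is an affinoid perfectoid space and then glue by $\vv$-descent. Both sides of the equivalence are $\vv$-stacks in $T\in\Perf_{S^\diamondsuit}$. For the left side this is Definition~\ref{defn: vb on X_Sdiamond}, \cite[Prop.~II.2.1]{FarguesScholze}, and the fact that meromorphic isomorphisms along $D_{T^\sharp}$ descend. For the right side it is $\vv$-descent for $\Q_p$-local systems (Theorem~\ref{thm: equiv of all kinds of locsys}) together with the definition of $\bB_{\dR}^+$-local systems as sections of a $\vv$-stack. The functor commutes with pullback, so it is enough to prove an equivalence for each affinoid perfectoid $T=\Spa(R,R^+)$ with untilt $T^\sharp$ over $K$, naturally in $T$.

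Fix such a $T$ and construct the functor. Given $(\calE_0,\calE_1,\beta)$ with $\calE_1$ semistable of slope $0$, Theorem~\ref{thm: local system and ssvb of slope 0} gives $\bL$ with $\calE_1\cong\bL\otimes\calO_{X_T}$. Completing $\beta$ at $D_{T^\sharp}$ gives $(\calE_0)^+_{\dR}\otimes\bB_{\dR}\cong(\calE_1)^+_{\dR}\otimes\bB_{\dR}=\bL\otimes\bB_{\dR}$. The inverse comes from Beauville--Laszlo gluing along the Cartier divisor $D_{T^\sharp}\subset X_T$. Given $(\bL,\Xi^+_{\dR})$, set $\calE_1=\bL\otimes\calO_{X_T}$ and let $\calE_0$ be its modification at $D_{T^\sharp}$: away from the divisor take $\calE_1|_{X_T\setminus D_{T^\sharp}}$, and over the formal completion take the lattice $\Xi^+_{\dR}\subset\bL\otimes\bB_{\dR}$. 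The map $\beta$ is the identity off the divisor. Since $\calE_1$ is semistable of slope $0$, this is automatically admissible.

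The main obstacle is justifying Beauville--Laszlo for vector bundles on the relative curve $X_T$ along $D_{T^\sharp}$ when $T$ is a general affinoid perfectoid rather than a point, where the $\bB_{\dR}^+$-lattice need only be locally free $\vv$-locally. I would cite \cite[Prop.~II.2.? / \S III]{FarguesScholze} or \cite[Lem.~5.2.9]{SWBerkeleyNotes}, which identify modifications of a bundle at $D_{T^\sharp}$ with $\bB_{\dR}^+(T^\sharp)$-lattices in the $\bB_{\dR}$-completion. If needed, I would argue directly, working on $Y_{T,I}$ with $\varphi$-bundles (Remark~\ref{rem: phi bundles}) and an interval $I$ containing $\kappa(D_{T^\sharp})=1$ but no other Frobenius translate. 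There the divisor is cut out by a single nonzerodivisor $\xi$, and gluing a finite projective module over $\calO(Y_{T,I})[1/\xi]$ with one over $\bB_{\dR}^+(T^\sharp)$, the $\xi$-adic completion, is the usual Beauville--Laszlo lemma. To get a vector bundle on $X_T$, the result is then transported along $\varphi$ to the remaining translates.

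Finally, check that the two constructions are mutually inverse and fully faithful. Morphisms of admissible modifications are pairs of bundle maps compatible with $\beta$. Such a pair is determined by the map on $\calE_1$, which corresponds to a map of local systems by Theorem~\ref{thm: local system and ssvb of slope 0}, subject to the condition that the induced map on $\bB_{\dR}$-completions preserves the $\bB_{\dR}^+$-lattices. This is exactly what a morphism of pairs $(\bL,\Xi^+_{\dR})$ is. Naturality in $T$ then allows descent to all of $S^\diamondsuit$.
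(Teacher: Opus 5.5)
Your proposal is correct and follows the paper's proof. Both sides are ($2$-)limits over perfectoid test objects $T\to S^\diamondsuit$, so it suffices to work over affinoid perfectoid $T$, where modifications of $\calE_1=\bL\otimes\calO_{X_T}$ along $D_{T^\sharp}$ are identified with $\bB_{\dR}^+$-lattices in $\bL\otimes\bB_{\dR}$ by Beauville--Laszlo gluing. The paper applies this gluing on the schematic relative curve (\cite[Thm.~8.9.6]{kedlaya-liu-relative-padichodge}, \cite[Thm.~3.5.1]{CaraianiScholzecpt}) rather than on an annulus $Y_{T,I}$ via $\varphi$-bundles, which is an inessential difference; your worry about $\Xi^+_{\dR}$ being only $\vv$-locally free is also harmless, since both sides are $\vv$-stacks and local essential surjectivity suffices.
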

\begin{proof}
Because both categories can be cleanly written as $2$-limits, it suffices to verify the equivalence over perfectoid spaces over $K$. In that context, the result follows via Beauville--Laszlo gluing over the schematic relative Fargues--Fontaine curve (cf.\ \cite[Thm.~8.9.6]{kedlaya-liu-relative-padichodge} and \cite[Thm.~3.5.1]{CaraianiScholzecpt}).
\end{proof}

\begin{thm}[Fargues' Theorem for shtukas with one leg, cf.\ {\cite[Theorem 14.1.1]{SWBerkeleyNotes}}]\label{thm: Fargues' theorem}
Let $S^\diamondsuit \to \Spd K$ be a fixed morphism of small $\vv$-sheaves. Then the following categories are equivalent:
\begin{enumerate}
	\item The category of shtukas $\Sht(S\xrightarrow{f\circ\varphi} \Spd K)$ with one leg over $S\xrightarrow{f\circ\varphi} \Spd K$;
	\item The category of pairs $\{(\mathbb{L},\Xi_{\dR}^+)\}$, where $\mathbb{L}\in \Loc_{\Z_p}(S^\diamondsuit_\vv)$ and $\Xi_{\dR}^+$ is a $\mathbb{B}^+_{\dR}$-local system over $S^\diamondsuit/\Spd K$ such that $\Xi_{\dR}^+ \otimes \mathbb{B}_{\dR} \cong \mathbb{L}\otimes \mathbb{B}_{\dR}$;
	\item The category of quadruples $(\calE_0,\calE_1,\beta,\mathbb{L})$, where $(\calE_0,\calE_1,\beta)$ is an admissible modification of vector bundles over the diamantine Fargues--Fontaine curve $X_S^\diamondsuit$ along the untilt defined by $S^\diamondsuit \to \Spd K$, and $\mathbb{L}$ acts as a $\Z_p$-lattice inside the $\Q_p$-local system corresponding to $\calE_1$ under Theorem~\ref{thm: local system and ssvb of slope 0}.
\end{enumerate}
In the equivalence mapping from (3) to (2), $\Xi_{\dR}^+$ is obtained by taking the complete stalk of $\calE_0$ at the untilt defined by $f\colon S^\diamondsuit \to \Spd K$. For the equivalence from (1) to (3), upon evaluating over affinoid perfectoids, the pair $(\calE_1,\bL)$ corresponds to the shtuka with no legs obtained by the base change of a shtuka with one leg along $\calO(\calY_S) \to \widetilde{\calR}_S^{\rmint}$ (using Corollary~\ref{cor: local system and shtuka with no leg} and Lemma~\ref{lem: shtuka with no leg as phi modules}). The bundle $\calE_0$ is generated by restricting the shtuka with one leg to $Y_{S,[a,\infty)}$ for $a \gg 0$, governed by Remark~\ref{rem: phi bundles}.
\end{thm}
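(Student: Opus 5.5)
The plan is to reduce to the perfectoid case via $\vv$-descent and then invoke the classical Fargues theorem for shtukas with one leg, \cite[Thm.~14.1.1]{SWBerkeleyNotes}, over affinoid perfectoid bases. All three categories are $2$-limits over $T\in\Perf$ with $T\to S^\diamondsuit$:
\begin{enumerate}
\item for (1), this holds by Definition~\ref{defn: shtuka-vsheaf};
\item for (2), it holds because $\Loc_{\Z_p}(S^\diamondsuit_\vv)$ and $\bB^+_{\dR}$-local systems are $\vv$-stacks;
\item for (3), it follows from Definition~\ref{defn: modification of vector bundles} together with Definition~\ref{defn: vb on X_Sdiamond}, using that lattices in local systems descend along $\vv$-covers.
\end{enumerate}
It therefore suffices to construct the functors functorially in $T=\Spa(R,R^+)$ affinoid perfectoid with untilt $T^\sharp$ over $K$, and to check that they are equivalences there.

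The equivalence (2)$\Leftrightarrow$(3) is immediate from Theorem~\ref{thm: Fargues thm for admissible modification}. That theorem identifies admissible modifications $(\calE_0,\calE_1,\beta)$ with pairs $(\bL_{\Q_p},(\calE_0)^+_{\dR})$, where $\bL_{\Q_p}$ is the $\Q_p$-local system of $\calE_1$ from Theorem~\ref{thm: local system and ssvb of slope 0}. Adding the datum of a $\Z_p$-lattice $\bL$ on both sides gives the equivalence, and the comparison $\Xi^+_{\dR}\otimes\bB_{\dR}\cong\bL\otimes\bB_{\dR}$ is the restriction of $\beta$ to the punctured formal neighborhood of $D_{T^\sharp}$.

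For (1)$\Rightarrow$(3), start from a shtuka $(\calE,\varphi_\calE)$ over $T$ with leg at $\varphi^{-1}(D_{T^\sharp})$, i.e.\ at the untilt defined by $f\circ\varphi$. Then do the following.
\begin{enumerate}
\item Near $\kappa=0$ the leg is absent, since $\kappa(D)>0$. Base change along $\calO(\calY_T)\to\widetilde{\calR}^{\rmint}_T$ therefore yields an étale $\varphi$-module. By Lemma~\ref{lem: shtuka with no leg as phi modules} and Corollary~\ref{cor: local system and shtuka with no leg}, this is a $\Z_p$-local system $\bL$, whose associated slope-$0$ bundle is $\calE_1$.
\item Restricting to $Y_{T,[a,\infty)}$ for $a\gg0$ (beyond all translates $\varphi^{n}(D)$ with $n\ge0$ relevant to the leg) gives a $\varphi$-bundle in the sense of Remark~\ref{rem: phi bundles}, hence a bundle $\calE_0$ on $X_T$.
\item Propagate the shtuka over the region $(0,a]$ by iterating $\varphi_\calE$. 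The single leg, moved by $\varphi^{\Z}$, produces exactly one modification on $X_T$ at $D_{T^\sharp}$; the shift by $\varphi$ in the leg is what places the modification at $D_{T^\sharp}$ rather than at a Frobenius translate. This gives $\beta\colon\calE_0\dashrightarrow\calE_1$.
\end{enumerate}
For the inverse, given $(\calE_0,\calE_1,\beta,\bL)$, glue the vector bundle $\bL\otimes\calO$ on $Y_{T,[0,a)}$ with the pullback of $\calE_0$ on $Y_{T,(0,\infty)}$ along $\beta$, modifying only at the appropriate Frobenius translate of the leg. This is Beauville--Laszlo gluing, as in \cite[Prop.~12.3.5, Thm.~14.1.1]{SWBerkeleyNotes}.

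The main obstacle is step 3: the bookkeeping of which Frobenius translate of the divisor carries the leg, so that the twist $f\circ\varphi$ matches the convention $\kappa(D_{T^\sharp})=1$. One must also check that these constructions are independent of $a$ and functorial in $T$, which is what allows them to descend. I would handle this by following \cite[Thm.~14.1.1]{SWBerkeleyNotes} verbatim over $T$, tracking the $\kappa$-intervals. Functoriality then follows because every construction is a base change along rational localizations compatible with pullback in $T$.
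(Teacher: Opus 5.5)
Your proposal is correct, but it is organized differently from the paper. The paper proves (1)$\Leftrightarrow$(2) in one stroke by citing \cite[Prop.~2.5.1]{Pappas-Rapoport-padicsht}, which carries the argument of \cite[Prop.~12.4.6]{SWBerkeleyNotes} over to $\vv$-sheaves and rests on Beauville--Laszlo gluing \cite[Lem.~5.2.9]{SWBerkeleyNotes}. In the paper, (3) is reached only implicitly, by adjoining a lattice to Theorem~\ref{thm: Fargues thm for admissible modification}, exactly as in your (2)$\Leftrightarrow$(3).

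You instead prove (1)$\Leftrightarrow$(3) directly, and only afterwards pass to (2). You run the geometric-point proof of \cite[Thm.~14.1.1]{SWBerkeleyNotes} over an arbitrary affinoid perfectoid $T$, using Kedlaya--Liu's inputs in families: \'etale $\varphi$-modules over $\widetilde{\calR}^{\rmint}_T$, $\varphi$-bundles on $Y_{T,[a,\infty)}$, and gluing for the inverse. Your route actually verifies the explicit description of the functor (1)$\to$(3) asserted at the end of the statement, which the paper's one-line proof does not. The paper's route is shorter and lands directly on the $(\bL,\Xi^+_{\dR})$ form, which is what Constructions~\ref{const: rel FF dR} and \ref{const: rel FF logcris} use.

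Two phrases in your bookkeeping should be corrected.
\begin{enumerate}
\item With the paper's normalization, $\varphi$ multiplies $\kappa$ by $p$, and the shtuka on $\calY_T$ has a single leg, at $\varphi^{-1}(D_{T^\sharp})$. So in step 2 you need only $a>\kappa(\varphi^{-1}(D_{T^\sharp}))$; no $a$ lies beyond all $\varphi^n(D)$ with $n\ge 0$, since their $\kappa$-values tend to infinity.
\item $D_{T^\sharp}$ and $\varphi^{-1}(D_{T^\sharp})$ have the same image in $X_T$, so the twist $f\circ\varphi$ does not move the modification. What it governs is the identification of $\Xi^+_{\dR}$. Because $\kappa(D_{T^\sharp})=p\,\kappa(\varphi^{-1}(D_{T^\sharp}))$, the divisor $D_{T^\sharp}$ lies on the $\calE_0$-side of the leg. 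Hence $\Xi^+_{\dR}$ is the completed stalk of the shtuka itself at $D_{T^\sharp}$, a lattice over $\bB^+_{\dR}$ of the untilt defined by $f$.
\end{enumerate}
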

\begin{proof}
The strict equivalence of (1) and (2) was established in \cite[Prop.~2.5.1]{Pappas-Rapoport-padicsht}. As noted in the proof of \textit{loc.\ cit.}, the detailed arguments found in \cite[Prop.~12.4.6]{SWBerkeleyNotes} carry over identically, primarily resting on a robust Beauville--Laszlo type gluing theorem (see \cite[Lem.~5.2.9]{SWBerkeleyNotes}). 
\end{proof}

Finally, we define the Newton polygon function for admissible modifications and shtukas with one leg. By Theorem~\ref{thm: Fargues thm for admissible modification} and Theorem~\ref{thm: Fargues' theorem}, there is a natural forgetful functor mapping $\Sht(S\xrightarrow{f\circ\varphi} \Spd K) \to \admmodif(S^\diamondsuit \to \Spd K)$, achieved simply by forgetting the associated $\Z_p$-lattice $\bL$ mandated in (3) of Theorem~\ref{thm: Fargues' theorem}.

\begin{defn}
For an admissible modification of vector bundles $(\calE_0,\calE_1,\beta)$ over $X_S^\diamondsuit$ along a fixed untilt, we define the associated Newton polygon function via 
	\[
	\NP((\calE_0,\calE_1,\beta,\mathbb{L}))\coloneqq \NP(\calE_0)\footnote{Note that by our established convention, $\calE_1$ is inherently assumed to be semistable of slope $0$ (cf.\ Definition~\ref{defn: modification of vector bundles}).}.\] 
For a shtuka $\underline{\calF}$ with one leg corresponding to the quadruple $(\calE_0,\calE_1,\beta,\mathbb{L})$ under Theorem~\ref{thm: Fargues' theorem}, we explicitly set the associated Newton polygon function as:
\[
\NP(\underline{\calF})\coloneqq \NP((\calE_0,\calE_1,\beta,\mathbb{L}))= \NP(\calE_0).
\]
\end{defn}

\section{Shtukas attached to arithmetic local systems}\label{sec: Shtukas}

In this section, we fix a smooth rigid analytic variety $S$ over a $p$-adic field $K$, viewed as an adic space. Let $\mathbb{L}$ be a $\mathbb{Z}_p$-local system of rank $n$ on $S^\diamondsuit_{\vv}$. Note that by Theorem~\ref{thm: equiv of all kinds of locsys}, $\mathbb{L}$ can equivalently be viewed as a $\mathbb{Z}_p$-local system on $S_{\proet}$ or $S_{\et}$. Consequently, we are free to utilize the arithmetic properties of $\mathbb{L}$ defined in \S\ref{sec: arith loc sys}. Whenever we consider $p$-adic logarithmic formal schemes $(\calS,M_{\calS})$, we assume they satisfy the conditions specified at the beginning of \S\ref{sec: log-crystalline local systems}.  

\subsection{Geometric Galois representations and shtukas}\label{subsec: FF shtukas realization}

This subsection serves as a warm-up for the subsequent subsections. We begin with the case where the local system is defined over the diamond associated with a $p$-adic field $K$; equivalently, this corresponds to the setting of $G_K$-representations. This framework essentially traces back to Fargues and Fontaine \cite[Chap.~10]{FarguesFontaineAsterisque}.

Throughout this subsection, let $S=\Spa(K,K^+)$, where $K$ is a $p$-adic field. We briefly review the Fargues--Fontaine construction: we construct shtukas over $X_S^{\diamondsuit}$ arising from de Rham or potentially log-crystalline Galois representations of $G_K$. Because $G_K$ is a profinite group, every $p$-adic representation of $G_K$ admits a $\Z_p$-lattice.

As an application of Theorem~\ref{thm: Fargues' theorem}, we reproduce the following Fargues--Fontaine construction in the language of relative shtukas over $S \to \Spd \Q_p$.

\begin{construction}\label{const: FF dR}
Let $T \in \Rep_{\Z_p}^{\dR}(G_K)$. We define $\Xi=D_{\dR}(T\otimes \Q_p)\otimes B_{\dR}^+$ and endow it with the unique $G_K$-semilinear action extending the trivial action on $D_{\dR}(T\otimes \Q_p)$. By Part (2) of Theorem~\ref{thm: Fargues' theorem}, this defines a shtuka with one leg at $K/\Q_p$ over $X_K^\diamondsuit$. We denote the shtuka attached to $T$ in this manner by $\underline{\calF}_{\dR}(T)$ or $\underline{\calF}_{\dR}(\mathbb{L})$.
\end{construction}

\begin{construction}\label{const: FF logcris}
Let $(D,\varphi_D,N,\Fil^\bullet D_L,\rho_D)$ be a filtered $(\varphi,N,G_K)$-module defined over $L$. Following Example~\ref{ex: example of vector bundles associated with de Rham and phiNGK-module}, let $\calE(D,\varphi_D,N,\rho_D)$ denote the associated vector bundle over $X_K^\diamondsuit$. 

Following the proof of \cite[Prop.~10.3.18]{FarguesFontaineAsterisque}, we consider the complete stalk $\calE(D,\varphi_D,N,\rho_D)_{\infty}^\wedge$ of $\calE(D,\varphi_D,N,\rho_D)$ at the untilt $\infty$ defined by the structure morphism $\Spd K \to \Spd \Q_p$. By definition, this complete stalk is a $\bB^+_{\dR}$-local system over $S^\diamondsuit_{\vv}$, corresponding to a continuous semilinear representation of $G_K$ on a free module over $B^+_{\dR}\coloneqq \mathbb{B}^+_{\dR}(\C_p,\C_p^\circ)$. Specifically, there is a natural $G_K$-equivariant isomorphism between this stalk and $D\otimes_{L_0} B^+_{\dR}\cong D_L\otimes_{L} B^+_{\dR}$, where the latter is endowed with the unique semilinear extension of the $G_K$-action on $D$. 

Utilizing the filtration $\Fil^\bullet D_L$, we define a modification $\calE(D,\varphi_D,N,\Fil^\bullet D_L,\rho_D)$ of the bundle $\calE(D,\varphi_D,N,\rho_D)$ along the untilt $\Spd K \to \Spd \Q_p$. This modification is uniquely determined by requiring its complete stalk to be
\[
\calE(D,\varphi_D,N,\Fil^\bullet D_L,\rho_D)_{\infty}^\wedge = \Fil^0(D_L \otimes_L B_{\dR}),
\]
which forms a $G_K$-stable $B_{\dR}^+$-lattice inside $\calE(D,\varphi_D,N,\rho_D)_{\infty}^\wedge[1/t]$. When the filtered module $(D,\varphi_D,N,\Fil^\bullet D_L,\rho_D)$ is weakly admissible, Fargues and Fontaine demonstrated that $\calE(D,\varphi_D,N,\Fil^\bullet D_L,\rho_D)$ is semistable of slope $0$. Consequently, there exists a Galois representation $V$ of $G_K$ whose restriction $V|_{G_L}$ is log-crystalline with $D_{\cris}(V|_{G_L})= (D,\varphi_D,N)$, such that $\calE(D,\varphi_D,N,\Fil^\bullet D_L,\rho_D) \cong V\otimes_{\Q_p} \calO_{X_S^\diamondsuit}$. This geometric reinterpretation is essentially due to \cite[\S10.5]{FarguesFontaineAsterisque}, under the application of GAGA.

In summary, let $T$ be a $\Z_p$-lattice inside a potentially log-crystalline representation $V$ of $G_K$. Let its potentially log-crystalline period module $\underline{D}_{\pst}$ be the corresponding (weakly) admissible filtered $(\varphi,N,G_K)$-module $(D,\varphi_D,N,\Fil^\bullet D_L,\rho_D)$ defined over $L$. Then $T$ naturally defines a shtuka with one leg over $S^\diamondsuit/\Spd K$, which corresponds to the quadruple $(\calE_0,\calE_1,\beta,\mathbb{L})$ under Theorem~\ref{thm: Fargues' theorem}, where:
\begin{itemize}
    \item $\calE_1 \coloneqq \calE(D,\varphi_D,N,\Fil^\bullet D_L,\rho_D)$;
    \item $\calE_0 \coloneqq \calE(D,\varphi_D,N,\rho_D)$;
    \item $\beta$ is the modification at $\infty$ determined by the $B_{\dR}^+$-lattice $\Xi=D_{\dR}(V)\otimes_K B^+_{\dR}$;
    \item $\mathbb{L}$ is the $\Z_p$-local system over $S^\diamondsuit_\vv$ corresponding to the $G_K$-representation $T$.
\end{itemize}
\end{construction}

\begin{prop}[Newton polygon of the shtuka attached to a de Rham representation]
Let $T$ be a $\Z_p$-lattice inside a de Rham representation $V$ of $G_K$, and let $\underline{D}=(D,\varphi_D,N,\Fil^\bullet D_L,\rho_D)$ be the filtered $(\varphi,N,G_K)$-module corresponding to the potentially log-crystalline period of $V$. Then we have
\[
\NP(\underline{\calF}_{\dR}(T))(s) = -\NP(D,\varphi_D)
\]
for the unique point $s \in \lvert \Spd K \rvert = \{\ast\}$ (defined by an algebraically closed field).
\end{prop}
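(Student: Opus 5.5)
The plan is to identify the shtuka $\underline{\calF}_{\dR}(T)$ with the one produced in Construction~\ref{const: FF logcris}. Then read off $\calE_0$, whose Newton polygon is by definition $\NP(\underline{\calF}_{\dR}(T))(s)$.

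First, by Berger's theorem $V$ is potentially log-crystalline, so $\underline{D}_{\pst}(V)=\underline{D}$ is a weakly admissible filtered $(\varphi,N,G_K)$-module over some finite Galois $L/K$. Construction~\ref{const: FF logcris} then attaches to $T$ a quadruple $(\calE_0,\calE_1,\beta,\mathbb{L})$ with:
\begin{itemize}
\item $\calE_0=\calE(D,\varphi_D,N,\rho_D)$;
\item $\calE_1\cong V\otimes\calO_{X_S^\diamondsuit}$;
\item $\beta$ the modification determined by the lattice $\Xi=D_{\dR}(V)\otimes_K B_{\dR}^+$.
\end{itemize}

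Second, compare this with Construction~\ref{const: FF dR}. Via part (2) of Theorem~\ref{thm: Fargues' theorem}, $\underline{\calF}_{\dR}(T)$ corresponds to the pair $(T, D_{\dR}(V)\otimes_K B_{\dR}^+)$. By the equivalence $(2)\Leftrightarrow(3)$ of that theorem, its quadruple is determined by two pieces of data:
\begin{itemize}
\item the local system $\mathbb{L}=T$, which fixes $\calE_1=V\otimes\calO$;
\item the complete stalk of $\calE_0$, which must equal $D_{\dR}(V)\otimes_K B_{\dR}^+$ inside $V\otimes B_{\dR}$.
\end{itemize}
So it suffices to check one fact: the complete stalk at $\infty$ of $\calE(D,\varphi_D,N,\rho_D)$, embedded in $V\otimes B_{\dR}$ via $\beta$, is $G_K$-equivariantly $D_{\dR}(V)\otimes_K B_{\dR}^+$.

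The construction identifies this stalk with $D_L\otimes_L B_{\dR}^+$, carrying the semilinear $G_K$-action extending $\rho_D$. The comparison isomorphism $D_L\otimes_L B_{\dR}\cong V\otimes B_{\dR}$ comes from $B_{\st}\otimes_{L_0}D\cong B_{\st}\otimes V$ through $B_{\st}\hookrightarrow B_{\dR}$, which uses the fixed $u_{\underline{\varpi}}=\log[\underline{\varpi}]$. Taking $G_K$-invariants gives $(D_L\otimes_L B_{\dR})^{G_K}=D_L^{G_{L/K}}=D_{\dR}(V)$. Hilbert 90 for $L/K$ then gives $D_L=D_{\dR}(V)\otimes_K L$, so the two $B_{\dR}^+$-lattices coincide.

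The main obstacle is the twist by $\exp(-\chi(g)tN)$ in the $G_K$-action on $\calE(D,\varphi_D,N,\rho_D)$. One must check that this twisted action on $D\otimes_{L_0}B_{\dR}^+$ matches the action transported from $B_{\st}\otimes D$ under $B_{\st}\to B_{\dR}$. The plan is to use the standard isomorphism
\[
D\otimes B_{\st}\cong (D\otimes B_{\cris})[u],\qquad d\mapsto\exp(-u_{\underline{\varpi}}N)d,
\]
which intertwines the two actions, since $g(u_{\underline{\varpi}})=u_{\underline{\varpi}}+\chi(g)t$. This is a direct check, as noted in the remark after Example~\ref{ex: example of vector bundles associated with de Rham and phiNGK-module}.

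Given the identification, $\NP(\underline{\calF}_{\dR}(T))(s)=\NP(\calE_0)(s)$. By Example~\ref{ex: example of vector bundles associated with de Rham and phiNGK-module}(4), the pullback of $\calE_0$ to $X_{\C_p^\flat}$ is $\calE(D,\varphi_D)$, where the action of $G_K$ and $N$ is forgotten. By Definition~\ref{defn: many NP functions}(5) and Lemma~\ref{lem: fact about Bun X L}(2), its Newton polygon is $-\NP(D,\varphi_D)$. Base-changing $D$ to $\breve{L}_0$ does not change the slopes, which finishes the proof.
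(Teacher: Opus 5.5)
Your proposal is correct and takes essentially the same route as the paper. The paper's proof observes that $D_{\dR}(V)\otimes_K B_{\dR}^+$ and $D_{\pst}(V)\otimes B_{\dR}^+$ coincide as $G_K$-stable $B_{\dR}^+$-lattices, deduces from Fargues' theorem that $\calE_0\cong\calE(D,\varphi_D,N,\rho_D)$, and reads off the slopes from Example~\ref{ex: example of vector bundles associated with de Rham and phiNGK-module}(4); your Galois-descent step and your check that $\exp(-u_{\underline{\varpi}}N)$ is a $B_{\dR}^+$-linear automorphism intertwining the two $G_K$-actions (it preserves the lattice because $u_{\underline{\varpi}}\in\Fil^1 B_{\dR}$) simply make explicit what the paper leaves implicit.
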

\begin{proof}
As representations of $G_K$ over $B_{\dR}^+$-lattices, we have the natural isomorphism $D_{\dR}(V)\otimes B_{\dR}^+ \cong D_{pst}(V)\otimes B_{\dR}^+$. The result then follows from Fargues' theorem (cf.\ Theorem~\ref{thm: Fargues thm for admissible modification}) and the discussion in Example~\ref{ex: example of vector bundles associated with de Rham and phiNGK-module}.
\end{proof}

\subsection{Admissible modifications attached to de Rham local systems}
\label{subsec:shtuka-derham}

This subsection extends Construction~\ref{const: FF dR} to the relative case. This specific construction was also examined in \cite[\S2.6]{Pappas-Rapoport-padicsht}.

\begin{construction}\label{const: rel FF dR}
Let $S$ be a smooth rigid analytic variety over $K$, and let $\bL$ be a de Rham $\Q_p$-local system over $S$ as discussed in \S\ref{sec: dR and HT loc sys}. Recall from Remark~\ref{rem: M0 and M_1 for de Rham as proet vb} that attached to $\bL$ is an auxiliary $\mathbb{B}_{\text{dR}}^{+}$-local system $\bM_0(\bL)$ over $X_{\proet}$. This defines a $\mathbb{B}_{\text{dR}}^{+}$-lattice inside the $\mathbb{B}_{\text{dR}}$-local system
\[
\bL\otimes\mathbb{B}_{\text{dR}} \cong \bM_1(\bL)\otimes\mathbb{B}_{\text{dR}} \cong \bM_0(\bL)\otimes\mathbb{B}_{\text{dR}},
\] 
as detailed in Remark~\ref{rem: M0 and M_1 for de Rham as proet vb}. By Remark~\ref{rem: extend BdR and HT to v loc sys} and Theorem~\ref{thm: Fargues thm for admissible modification}, this isomorphism defines an object in $\admmodif(S^\diamondsuit/\Spd K)$.

We call the resulting functor 
\[
T_{\mathrm{modif}}\colon \Loc_{\Q_p}^{\dR}(S) \to \admmodif(S^\diamondsuit/\Spd K)
\]
the \emph{modification realization}. When $\bL$ is a de Rham $\Z_p$-local system over $S$, Theorem~\ref{thm: Fargues' theorem} allows us to define a functor
\[
T_{\mathrm{sht}}\colon \Loc_{\Z_p}^{\dR}(S) \to \mathrm{Sht}(S^\diamondsuit/\Spd K),
\]
which we call the \emph{shtuka realization}.
\end{construction}

\begin{lem}\label{lem: dR case compatible with pullback}
The functors $T_{\mathrm{modif}}$ and $T_{\mathrm{sht}}$ are fully faithful and compatible with pullbacks along arbitrary morphisms $S' \to S$ of smooth rigid analytic spaces over $K$.
\end{lem}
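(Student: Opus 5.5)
Via Theorem~\ref{thm: Fargues thm for admissible modification}, I would identify $\admmodif(S^\diamondsuit/\Spd K)$ with the category of pairs $(\bL,\Xi^+_{\dR})$. Under this identification $T_{\mathrm{modif}}(\bL)$ is the pair $(\bL,\bM_0(\bL))$, where $\bM_0(\bL)\subset \bL\otimes\mathbb{B}_{\dR}$ is the lattice of Remark~\ref{rem: M0 and M_1 for de Rham as proet vb}, extended to the $\vv$-site as in Remark~\ref{rem: extend BdR and HT to v loc sys}. A morphism of pairs $(\bL,\bM_0(\bL))\to(\bL',\bM_0(\bL'))$ is a morphism $f\colon\bL\to\bL'$ of $\Q_p$-local systems such that $f\otimes\mathbb{B}_{\dR}$ carries $\bM_0(\bL)$ into $\bM_0(\bL')$. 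Functoriality of $T_{\mathrm{modif}}$ amounts to checking that every $f$ has this property. For $T_{\mathrm{sht}}$, Theorem~\ref{thm: Fargues' theorem}(2) reduces everything to the same statement, with $\Z_p$-local systems in place of $\Q_p$-local systems.

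The plan for functoriality runs as follows. A morphism $f\colon \bL\to\bL'$ of de Rham local systems induces, by uniqueness and functoriality of $D_{\dR}$ (via \cite[Thm.~7.6]{scholze-p-adic-hodge}, i.e.\ $D_{\dR}(\bL)=\nu_*(\bL\otimes\mathcal{O}\mathbb{B}_{\dR})$), a morphism $D_{\dR}(f)$ of filtered flat bundles. It therefore induces a map
\[
D_{\dR}(\bL)\otimes\mathcal{O}\mathbb{B}^+_{\dR}\to D_{\dR}(\bL')\otimes\mathcal{O}\mathbb{B}^+_{\dR}
\]
commuting with $\nabla$. Taking $\nabla=0$ then gives $\bM_0(\bL)\to\bM_0(\bL')$, and this map is compatible with $f\otimes\mathbb{B}_{\dR}$ under the comparison isomorphism. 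Hence the image lands in the lattice, and $T_{\mathrm{modif}}$ is a functor.

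Faithfulness is immediate, since the pair remembers $\bL$. Fullness is also formal: a morphism of pairs in particular contains a morphism $f\colon\bL\to\bL'$ of local systems, and the lattice condition holds automatically by the previous paragraph. So $\Hom$ on both sides coincides with $\Hom(\bL,\bL')$. The same argument applies verbatim to $T_{\mathrm{sht}}$, using lattices.

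For pullback compatibility along $g\colon S'\to S$, I would use that $g^*$ on both sides is computed by restricting $\vv$-sheaves along $S'^\diamondsuit\to S^\diamondsuit$; the relevant untilts are compatible because both spaces lie over $\Spd K$. The remaining claim is $g^*\bM_0(\bL)\cong\bM_0(g^*\bL)$ inside $g^*\bL\otimes\mathbb{B}_{\dR}$. To get this I would first show $D_{\dR}(g^*\bL)\cong g^*D_{\dR}(\bL)$ as filtered flat bundles: the pulled-back bundle satisfies the defining isomorphism of Definition~\ref{defn: de Rham local system} over $S'$, via the natural map $g^{-1}\mathcal{O}\mathbb{B}_{\dR,S}\to\mathcal{O}\mathbb{B}_{\dR,S'}$ compatible with connections, so uniqueness applies. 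Horizontal sections of $\mathcal{O}\mathbb{B}^+_{\dR}$-extensions then match. This step is the main obstacle. Since $g$ need not be smooth, the relative connections differ; I would handle this by checking the identification of horizontal sections locally, where Scholze's description $\mathcal{O}\mathbb{B}^+_{\dR}\cong\mathbb{B}^+_{\dR}[[X_1,\dots,X_d]]$ in toric charts makes the $\nabla$-invariants explicit. Alternatively, I would characterize $\bM_0(\bL)$ intrinsically as the $\mathbb{B}^+_{\dR}$-lattice generated by $\bL\otimes\mathbb{B}_{\dR}$-sections coming from $D_{\dR}(\bL)$ via the Poincaré-lemma comparison, so that compatibility with $g^{-1}$ becomes formal.
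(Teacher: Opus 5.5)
Your proposal is correct and uses the same two ingredients as the paper's proof. The first is full faithfulness: every morphism $f\colon\bL\to\bL'$ automatically carries $\bM_0(\bL)$ into $\bM_0(\bL')$, so after the identification of Theorem~\ref{thm: Fargues thm for admissible modification} both $\Hom$-sets are just $\Hom(\bL,\bL')$. The second is compatibility of $\bM_0$ with pullback.

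The routes differ only in where these facts come from. The paper takes both as a black box: functoriality and base-change compatibility of Liu--Zhu's arithmetic Riemann--Hilbert functor \cite[Thm.~3.9]{liu-zhu-rigidity}. You derive them from \cite[Thm.~7.6]{scholze-p-adic-hodge}. That is more elementary, but it obliges you to prove $g^*\bM_0(\bL)\cong\bM_0(g^*\bL)$ yourself, and you leave this step only sketched.

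The step is easier than you suggest and needs no toric charts. Functoriality of $\mathcal O\mathbb B^+_{\dR}$ and of its connection gives a natural map $g^*\bM_0(\bL)\to\bM_0(g^*\bL)$. This map is compatible with the embeddings of both sides into $g^*\bL\otimes\mathbb B_{\dR}$. After reducing along $\theta$, it becomes, via \cite[Prop.~7.9]{scholze-p-adic-hodge}, the canonical isomorphism $g^*\bigl(D_{\dR}(\bL)\otimes\widehat{\calO}_{S}\bigr)\cong g^*D_{\dR}(\bL)\otimes\widehat{\calO}_{S'}$. Both sides are locally free of the same rank over the $\ker\theta$-adically complete sheaf $\mathbb B^+_{\dR}$, so Nakayama shows the map is an isomorphism.

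You should drop your alternative ``intrinsic'' description of $\bM_0(\bL)$. When $\nabla\neq 0$, sections of $D_{\dR}(\bL)$ live in $\bL\otimes\mathcal O\mathbb B_{\dR}$, not in $\bL\otimes\mathbb B_{\dR}$. So they do not generate a $\mathbb B^+_{\dR}$-lattice in the latter; only their horizontal combinations do, and that is just the definition of $\bM_0$.
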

\begin{proof}
Both claims are a direct consequence of \cite[Thm.~3.9]{liu-zhu-rigidity}. Specifically, the construction of $\bM_0(\bL)$ via the arithmetic Riemann--Hilbert functor is functorial in $\bL$, and the arithmetic Riemann--Hilbert functor is compatible with pullbacks.
\end{proof}

\begin{defn}\label{defn: NP for dR loc sys}
For a de Rham $\Z_p$-local system (resp.\ $\Q_p$-local system) $\bL$ over $S$, we define $\NP(\bL)\coloneqq\NP(T_{\mathrm{sht}}(\bL))$ (resp.\ $\NP(\bL)\coloneqq\NP(T_{\mathrm{modif}}(\bL))$). We refer to this as the Newton polygon function on $\lvert S \rvert$ associated with $\bL$.
\end{defn}

\subsection{Shtukas attached to log-crystalline local systems}
\label{subsec:shtuka-logcrys}

This subsection extends Construction~\ref{const: FF logcris} to the relative case for log-crystalline local systems. In the purely crystalline setting, a similar construction is considered in \cite[Const.~3.19]{ImaiKYtannakian}.

\begin{construction}\label{const: rel FF logcris}
We begin with the following observation. Recall from Definition~\ref{defn: analytic F crystal} that for $(B,I) \in \calS^\perf_\Prism$, the category $\Vect^{\mathrm{an},\varphi}(B,\varphi(I))$ consists of pairs $(\mathcal{E}_B, \varphi_{\mathcal{E}_B})$, where $\mathcal{E}_B$ is a vector bundle over $\Spec(B)\smallsetminus V(p,\varphi(I))$ and $\varphi_{\mathcal{E}_B}$ is an isomorphism of vector bundles:
\[
    \varphi_{\mathcal{E}_B}\colon \varphi_B^\ast(\mathcal{E}_B)[\varphi(I)^{-1}] \simeq \mathcal{E}_B[\varphi(I)^{-1}].
\]
Assuming $\calS$ is normal, any affinoid perfectoid $T=\Spa(A,A^+)$ over the generic fiber $S$ of $\calS$ naturally defines a perfect prism $(B,I)$ in $\calS_{\Prism}$ via \cite[Thm~3.10]{BhattScholzePrism}, with $B=W(A^{+,\flat})$. We claim that this defines a shtuka with one leg along $\varphi(I)$. To prove this, it suffices to construct a pair $(\bL, \Xi_{\dR}^+)$ as required by Theorem~\ref{thm: Fargues thm for admissible modification}(2). We define $\bL$ to be the \'etale realization of $(\mathcal{E}_B, \varphi_{\mathcal{E}_B})$, and we define $\Xi_{\dR}^+$ as the base change of $\mathcal{E}_B[1/p]$ along the map $B[1/p]\to \bB_{\dR}^+(T)$ defined by taking $\varphi(I)$-adic completion. 

It is straightforward to verify that this construction is functorial in $T$. Thus, for a pair $(\calS,M_{\calS})$ conforming to the conditions at the beginning of \S\ref{sec: log-crystalline local systems}, this procedure defines a composition
\[
T_{\mathrm{sht}}\colon \Vect^{\mathrm{an},\varphi}((\calS,M_{\calS})_\Prism) \to \Vect^{\mathrm{an},\varphi}(\calS^\perf_\Prism) \to \Sht(S^\diamondsuit \xrightarrow{f^\diamondsuit\circ \varphi}\Spd K),
\]
where the first arrow restricts to the perfect site, the second arrow is given by the construction above, and $f^\diamondsuit$ is the functor associated with the structure morphism $S \to \Spa K$ of the generic fiber of $\calS$. We call $T_{\mathrm{sht}}$ the \emph{shtuka realization functor}.
\end{construction}

\begin{rem}
When the log structure $M_{\calS}$ is trivial and $\calS$ is formally smooth, it is straightforward to check that our construction agrees with the one in \cite[\S3.3]{ImaiKYtannakian}, courtesy of Proposition~\ref{propdefn: lattice realization}.
\end{rem}

\begin{prop}\label{prop: Tsht are compatible}
Let $\bL$ be a log-crystalline $\Z_p$-local system over $S$ with respect to $(\calS, M_{\calS})$. Its shtuka realization constructed by treating $\bL$ as a log-crystalline $\Z_p$-local system coincides with its shtuka realization constructed by treating $\bL$ as a de Rham $\Z_p$-local system.
\end{prop}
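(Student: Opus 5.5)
Both shtuka realizations are built from a pair $(\bL,\Xi_{\dR}^+)$ via Theorem~\ref{thm: Fargues' theorem}(2), and that correspondence is an equivalence. So I would reduce the statement to two identifications: the $\Z_p$-local systems agree, and the $\bB_{\dR}^+$-local systems agree as lattices inside $\bL\otimes\bB_{\dR}$. Both sides are $\vv$-sheaves on $S^\diamondsuit/\Spd K$, so it suffices to work functorially on affinoid perfectoid $T=\Spa(A,A^+)$ over $S$. Each such $T$ gives a perfect prism $(B,I)$ with $B=W(A^{+,\flat})$ as in Construction~\ref{const: rel FF logcris}.

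For the local system, the crystalline construction takes the étale realization of $\calE_{\Prism,B}$. By Theorem/Definition~\ref{thm: etale realization of Fcrystals} and the remark after it, this is exactly $T_{\et}(\calE_\Prism)|_T$. By Theorem~\ref{thm: prismatic classification} it equals $\bL|_T$, functorially in $T$, so the first components coincide.

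For the lattice, the crystalline side is $\calE_{\Prism,B}[1/p]\otimes_{B[1/p]}\bB_{\dR}^+(T)$, where the map is completion along $\varphi(I)$. This map is the composite of $\varphi\colon B\to B$ with completion along $I$, which is the canonical map $\bA_{\inf}\to\bB_{\dR}^+(T)$. Proposition/Definition~\ref{propdefn: lattice realization} identifies this base change with $\calE_\Q(\bB_{\dR}^+)(T)$ and hence with $\bM_0(\bL)(T)$. The latter is precisely the lattice $\Xi_{\dR}^+$ used in Construction~\ref{const: rel FF dR}, so the second components agree as abstract $\bB_{\dR}^+$-local systems.

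The main obstacle is checking that the two gluing isomorphisms with $\bL\otimes\bB_{\dR}$ coincide, not merely that the lattices are abstractly isomorphic. On the crystalline side, the gluing comes from $\varphi_{\calE}$ together with the étale comparison $\calE_B[1/I]^\wedge_p\cong \bL\otimes B[1/I]^\wedge_p$, inverting $\varphi(I)$ and then tensoring to $\bB_{\dR}$. On the de Rham side, it comes from $\bM_0(\bL)\otimes\bB_{\dR}\cong\bL\otimes\bB_{\dR}$, induced by $D_{\dR}(\bL)\otimes\mathcal O\bB_{\dR}\cong\bL\otimes\mathcal O\bB_{\dR}$ on horizontal sections. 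I would compare them through the crystalline comparison $\alpha_{\cris}$ of Definition~\ref{defn: log crystalline loc sys}, base changed along $\bB_{\cris}\to\bB_{\dR}$ (twisted by $\varphi$ as in Proposition/Definition~\ref{propdefn: lattice realization}). Concretely, I expect to use \cite[Prop.~3.44]{du-liu-moon-shimizu-purity-F-crystal}, which constructs the isomorphism \eqref{eq: M0 for cris} compatibly with the comparison maps to $\bL\otimes\mathcal O\bB_{\dR}$. Both gluings then factor through $\alpha_{\cris}\otimes\bB_{\dR}$.

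Finally, I would invoke full faithfulness in Theorem~\ref{thm: Fargues' theorem} to conclude that the two shtukas are isomorphic. Functoriality in $T$, which is the same check performed uniformly over the $\vv$-site, makes this isomorphism canonical.
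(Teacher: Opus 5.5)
Your route is the paper's. Its proof is the one-line observation that, by Theorem~\ref{thm: Fargues' theorem}, each realization is determined by a pair $(\bL,\Xi_{\dR}^+)$, and that Proposition/Definition~\ref{propdefn: lattice realization} identifies the crystalline lattice with $\bM_0(\bL)$. You are also right to insist on matching the gluing isomorphisms with $\bL\otimes\bB_{\dR}$, which the paper leaves implicit.

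However, the step linking Construction~\ref{const: rel FF logcris} to the explicit formula is wrong as written. Let $c_I\colon B[1/p]\to\bB_{\dR}^+(T)$ be the canonical map, i.e.\ completion along $I=\ker\theta$. Then $c_I\circ\varphi$ sends $b$ into $\ker(\theta\colon\bB_{\dR}^+(T)\to A)$ exactly when $\varphi(b)\in I$. So $c_I\circ\varphi$ is completion along $\varphi^{-1}(I)$, not along $\varphi(I)$. Completion along $\varphi(I)$, transported to $\bB_{\dR}^+(T)$, is $c_I\circ\varphi^{-1}$. You are therefore equating two lattices that differ by a Frobenius twist, and this is not harmless. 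Completing the value of the crystal on $(B,\varphi(I))$ along $\varphi(I)$ means completing at the leg of the shtuka. There the Frobenius structure identifies the stalk with the slope-$0$ lattice $\bL\otimes\bB_{\dR}^+$, so you would get the trivial modification rather than $\bM_0(\bL)$. This is why the leg and the lattice sit at different untilts in Theorem~\ref{thm: Fargues' theorem}.

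The $\varphi$ in Proposition/Definition~\ref{propdefn: lattice realization} should instead come from the crystal property. The Frobenius $\varphi_B$ is an isomorphism of perfect prisms $(B,I)\to(B,\varphi(I))$ over $\calS$, with $B/\varphi(I)\cong A^+$ via $\theta\circ\varphi^{-1}$. Hence the value of $\calE_\Prism$ on $(B,\varphi(I))$ is $\varphi_B^{*}$ of its value on $(B,I)$. Base change the former along $c_I$, i.e.\ complete at the untilt $T$, away from the leg $V(\varphi(I))$. This gives $\calE_\Prism(B,I)[1/p]\otimes_{B[1/p],\varphi}\bB_{\dR}^+(T)$, which is the displayed formula with the prism taken to be $(B,I)$. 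So the crystalline lattice must be read as completion along $I$, not $\varphi(I)$. With that correction, your identification of the second components and the rest of your argument go through as in the paper.
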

\begin{proof}
The two constructions are identified as follows from Theorem~\ref{thm: Fargues' theorem} and Proposition~\ref{propdefn: lattice realization}.
\end{proof}

\begin{cor}\label{cor: logcris case compatible with pullback}
The functor $T_{\mathrm{sht}}$ is fully faithful and is compatible with pullbacks along arbitrary morphisms $(\calS, M_{\calS}) \to (\calS', M_{\calS'})$ of $p$-adic logarithmic formal schemes as defined in \S\ref{sec: log-crystalline local systems}.
\end{cor}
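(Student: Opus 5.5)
The plan is to reduce both claims to Proposition~\ref{prop: Tsht are compatible} and Lemma~\ref{lem: dR case compatible with pullback}, supplemented by the prismatic classification Theorem~\ref{thm: prismatic classification}.

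\emph{Full faithfulness.} By Theorem~\ref{thm: prismatic classification}, the \'etale realization $T_{\et}$ identifies $\Vect^{\an,\varphi}((\calS,M_{\calS})_\Prism)$ with the category of log-crystalline $\Z_p$-local systems on $S$. This is a full subcategory of $\Loc_{\Z_p}^{\dR}(S)$, since log-crystalline local systems are de Rham (cf.\ \cite[Cor.~3.46]{du-liu-moon-shimizu-purity-F-crystal}). Proposition~\ref{prop: Tsht are compatible} gives a natural isomorphism between $T_{\mathrm{sht}}$ and the composite of $T_{\et}$ with the de Rham shtuka realization of Construction~\ref{const: rel FF dR}. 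The latter is fully faithful by Lemma~\ref{lem: dR case compatible with pullback}. A composite of an equivalence onto a full subcategory with a fully faithful functor is fully faithful.

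One point needs checking here: the natural isomorphism must be functorial in morphisms of $F$-crystals and not merely objectwise. I expect this to follow from the functoriality of Proposition-Definition~\ref{propdefn: lattice realization}, namely that the identification $\calE_\Q(\bB_{\dR}^+)\cong \bM_0(\bL)$ is canonical. There is also a Frobenius-twist mismatch in the leg, $f^\diamondsuit\circ\varphi$ versus the de Rham normalization. I would handle it by noting that Theorem~\ref{thm: Fargues' theorem} already builds in this twist, so both functors land in the same category.

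\emph{Pullback compatibility.} Take a morphism $g\colon(\calS,M_{\calS})\to(\calS',M_{\calS'})$ with generic fiber $g_\eta\colon S\to S'$. Three compatibilities are needed.
\begin{enumerate}
\item $T_{\et}$ commutes with pullback. This is clear from Theorem-Definition~\ref{thm: etale realization of Fcrystals}, because a perfect prism over $\calS$ maps to one over $\calS'$, and Laurent $F$-crystals pull back accordingly.
\item The de Rham shtuka realization commutes with $g_\eta^\ast$, by Lemma~\ref{lem: dR case compatible with pullback}.
\item The isomorphism of Proposition~\ref{prop: Tsht are compatible} is compatible with base change.
\end{enumerate}
Alternatively, and more directly, one can check compatibility from Construction~\ref{const: rel FF logcris} itself. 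An affinoid perfectoid $T\to S$ composes to $T\to S'$, and the associated perfect prisms $(W(A^{+,\flat}),I)$ coincide, because by \cite[Thm.~3.10]{BhattScholzePrism} they depend only on $T$. The crystal property of $\calE_\Prism$ then gives $(g^\ast\calE'_\Prism)_B\cong \calE'_{\Prism,B}$. Consequently both $\bL$ and $\Xi^+_{\dR}$ agree for the pulled-back object, functorially in $T$, and this matches the description of pullback of shtukas as restriction of $\vv$-sections.

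\emph{Main obstacle.} The delicate step is making sure the perfect-site restriction respects the log structures under $g$. This is where I would invoke Remark~\ref{rem: perfect site log vs non log}: every perfect prism carries a unique compatible log structure, so the log data impose no extra constraint on $T_{\mathrm{sht}}$, and pullback reduces to the non-log perfect prismatic site.
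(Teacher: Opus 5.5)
Your proposal is correct and follows the paper's own argument. The paper deduces both claims from the same three inputs: Proposition~\ref{prop: Tsht are compatible}, Lemma~\ref{lem: dR case compatible with pullback}, and the pullback functoriality of prismatic $F$-crystals (\cite[App.~C]{du-liu-moon-shimizu-purity-F-crystal}). Your explicit use of Theorem~\ref{thm: prismatic classification} for full faithfulness, and of Remark~\ref{rem: perfect site log vs non log} to handle the log structures under pullback, just spells out steps the paper leaves implicit.
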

\begin{proof}
This follows by combining Proposition~\ref{prop: Tsht are compatible}, Lemma~\ref{lem: dR case compatible with pullback}, and the pullback functoriality of prismatic $F$-crystals (cf.\ \cite[App.~C]{du-liu-moon-shimizu-purity-F-crystal}).
\end{proof}

\begin{rem}
In \cite[Const.~3.43, Prop.~3.35]{du-liu-moon-shimizu-purity-F-crystal}, we in fact construct a modification of vector bundles on $X_S^{\diamondsuit}$ along $S^\diamondsuit \to \Spd K$ starting from a \emph{filtered $F$-isocrystal} on $(\calS,M_{\calS})$; see Construction~3.43 of \textit{loc.\ cit.} for details. We may therefore define a \emph{filtered $F$-isocrystal} to be admissible if the corresponding modification of vector bundles on $X_S^{\diamondsuit}$ is admissible. It is straightforward to show that this yields an equivalence between crystalline $\Q_p$-local systems and admissible \emph{filtered $F$-isocrystals}. Moreover, admissibility can be checked pointwise by \cite[Cor.~7.3.9]{kedlaya-liu-relative-padichodge}.
\end{rem}

\subsection{The Newton polygon function under specialization maps}

In this subsection, we maintain the notations established in \S\ref{sec: log-crystalline local systems}, where $(\mathcal{S}, M_{\mathcal{S}})$ denotes a (logarithmic) $p$-adic formal scheme over $\Spf \calO_K$. Let $S$ denote the adic generic fiber of $\calS$, which is, by definition, a smooth rigid analytic variety over $K$ (see also \cite[\S2.4]{du-liu-moon-shimizu-purity-F-crystal}). Because $\mathcal{S}$ is admissible and normal (regular), Construction~\ref{const: specialization map for formal scheme of certain type} applies and furnishes a map
\[
\mathrm{sp}_{\calS}:\ |S|\longrightarrow |\calS_{\mathrm{red}}|.
\]
Because the absolute Frobenius induces a universal homeomorphism on any scheme in characteristic $p$ (cf.\ \cite[Tag 0CC8]{stacks-project}), we may freely replace $|\calS_{\mathrm{red}}|$ (which is the same as $|\calS_{1}|$) with its perfection $|\calS_{\mathrm{red}}^{-p^\infty}|$.

Let $\bL$ be a log-crystalline local system over $S$ with respect to $(\mathcal{S}, M_{\mathcal{S}})$, meaning $\bL$ is associated with an $F$-isocrystal $\underline{\calE}_\cris$ over $\calS_1$ as in Definition~\ref{defn: log crystalline loc sys}. Let $x$ be a point of $S$ corresponding to a morphism $x\colon \Spa(K_x,K_x^+) \to S$ as described in Appendix~\ref{sec: specialization}. Let $\underline{\calF} = T_{\mathrm{sht}}(\bL)$ denote the shtuka realization of $\bL$ obtained via Construction~\ref{const: rel FF logcris} (which agrees with Construction~\ref{const: rel FF dR}). Recall the definition of the associated Newton polygon function $\NP(\bL)$ from Definition~\ref{defn: NP for dR loc sys}. 

The primary goal of this subsection is to establish the following theorem.

\begin{thm}\label{thm: NP function under specialization}
For any $y,x\in |S|$ with $x$ a generalizations of $y$, we have 
\[
\NP(\bL)(x)=\NP(\bL)(y).
\] 
Furthermore, for all $x\in |S|$, we have
\[
\NP(\bL)(x)=-\NP(\calE_\cris)(\mathrm{sp}_{\calS}(x_{\max})),
\]
where $x_{\max}$ is the maximal (rank-$1$) generalization of $x$.
\end{thm}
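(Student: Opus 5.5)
The first claim is essentially formal and I will dispose of it first. By Definition~\ref{defn: many NP functions}(6), $\NP(\bL)(x)$ is computed by pulling back $\calE_0$ along a map $\Spd(L,L^+)\to S^\diamondsuit$ from an algebraically closed perfectoid field representing $x$. If $x$ is a generalization of $y$ with $y$ represented by $\Spd(L,L^+)$, then $x$ is represented by $\Spd(L,L'^+)$ for a larger valuation ring $L'^+\supset L^+$. By Lemma~\ref{lem: fact about Bun X L}(1), vector bundles on $X_{(L,L^+)}$ and $X_{(L,L'^+)}$ are identified via the natural map, so the slopes agree. Hence $\NP(\bL)$ is constant along generalizations. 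In particular $\NP(\bL)(x)=\NP(\bL)(x_{\max})$, and it suffices to prove the second formula for rank-$1$ points $x=x_{\max}$.

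For a rank-$1$ point $x$, I would compute $\calE_0$ at $x$ via the prismatic description. By Proposition~\ref{prop: Tsht are compatible}, $T_{\mathrm{sht}}(\bL)$ may be taken to be the shtuka realization of the analytic prismatic $F$-crystal $\calE_\Prism$ from Theorem~\ref{thm: prismatic classification}. Choose a completed algebraic closure $C$ of $\mathcal{H}(x)$, and the map $\Spa(C,\calO_C)\to S$. Since $x$ has rank $1$, this extends to $\Spf\calO_C\to\calS$, and the special point lands on $\spe_\calS(x)$. This gives a perfect prism $(B,I)=(\bA_{\inf}(\calO_C),\ker\theta)$ in $\calS_\Prism^\perf$, and by Remark~\ref{rem: perfect site log vs non log} it lies in the log site. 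By Theorem~\ref{thm: Fargues' theorem}, $\calE_0$ at $x$ is the restriction of $\calE_\Prism(B)$ to $Y_{[a,\infty)}$ for $a\gg0$, that is, near the crystalline point $p=0$ of $\Spa \bA_{\inf}$. I will define $\calE_0$ on $Y_{C^\flat}$ with the Frobenius twist of Construction~\ref{const: rel FF logcris}, keeping track of $\varphi(I)$.

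The key step is to compare $\calE_\Prism(B)$ near the $p=0$ boundary with the isocrystal at $\spe_\calS(x)$. Let $k$ be the residue field of $\calO_C$, and consider the map of prisms $\bA_{\inf}(\calO_C)\to W(k)$, which corresponds to $\Spec k\to\calS_0$ through $\spe_\calS(x)$. By crystal base change and Proposition~\ref{prop: crystalline realization}, $\calE_\Prism(W(k))[1/p]$ is the Dieudonné module $\calE_{\cris,\spe(x)}\otimes W(k)[1/p]$. To connect this with the curve, I would use the standard fact (Fargues--Fontaine, Kedlaya) that a Frobenius module over $\bA_{\inf}$, defined on the analytic locus with $\varphi$ an isomorphism away from $\varphi(I)$, becomes isomorphic near the boundary $\kappa\to\infty$ to the base change of its reduction to $W(k)[1/p]$. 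This rests on \cite[Prop.~12.3.5]{SWBerkeleyNotes} together with the Dieudonné--Manin splitting over $W(\bar k)[1/p]$. Concretely, for $a\gg0$ the leg $\varphi(I)$ lies outside $Y_{[a,\infty)}$, so $\calE_\Prism(B)|_{Y_{[a,\infty)}}$ is a $\varphi$-module over $\calO(Y_{[a,\infty)})$. The comparison then gives $\calE_0|_{X_{C^\flat}}\cong\calE(\underline{D})$ with $\underline{D}=\calE_{\cris,\spe(x)}\otimes\breve{\Q}_p$. Using Definition~\ref{defn: many NP functions}(5), we get $\NP(\calE_0)(x)=-\NP(\underline D)=-\NP(\calE_\cris)(\spe_\calS(x))$. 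Newton slopes are insensitive to the perfection and to the extension $\kappa(\spe(x))\to k$.

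I expect this comparison near $p=0$ to be the main obstacle, namely producing the isomorphism of $\varphi$-bundles between $\calE_\Prism(\bA_{\inf})$ and $\underline D\otimes\bA_{\inf}$ after inverting $p$ and restricting to $Y_{[a,\infty)}$. In the log case the extra difficulty is that $\calE_\Prism(\bA_{\inf})$ may be only a vector bundle on the analytic locus. My first attempt would be to lift the Dieudonné--Manin basis of $\underline D$ to $\calE_\Prism(\bA_{\inf})\otimes\calO(Y_{[a,\infty)})$ by successive approximation, using that $\varphi$ is contracting near $p=0$ on the relevant pieces. I would follow \cite[\S12.3]{SWBerkeleyNotes}, or alternatively invoke the crystalline comparison \cite[Cor.~3.31]{du-liu-moon-shimizu-purity-F-crystal} via $\bA_\cris$, which directly identifies the bundle with $\calE_\cris(\bA_\cris(\calO_C))$.
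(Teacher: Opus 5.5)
Your proposal is correct and follows essentially the paper's route. You get invariance under generalization from Lemma~\ref{lem: fact about Bun X L}(1), then evaluate $\calE_\Prism$ at the perfect prism $(\bA_{\inf}(\calO_C),\ker\theta)$ and compare the bundle near the crystalline point with its base change to $W(k)[1/p]$, which computes the isocrystal at $\spe_\calS(x)$ after extension to $k$. The step you flag as the main obstacle is what the paper outsources: it identifies analytic $F$-crystals over $\Spf(\calO_C)^{\perf}_\Prism$ with Breuil--Kisin--Fargues modules via \cite[Thm.~14.1.1]{SWBerkeleyNotes} and then quotes \cite[Thm.~13.2.1, Thm.~13.4.1, Rem.~13.4.2]{SWBerkeleyNotes} for the slope comparison. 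Your worry about only having a bundle on the analytic locus is therefore harmless, since $V(p,\xi)$ is just the closed point of $\Spec\bA_{\inf}(\calO_C)$. One labelling slip: the crystalline point is where $[\varpi]=0$, i.e.\ $\kappa=\infty$, not where $p=0$.
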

\begin{proof}
First, Lemma~\ref{lem: fact about Bun X L}(1) ensures that $\NP(\bL)$ is invariant under generalizations. Therefore, it suffices to prove that
\[
\NP(\bL)(x)=-\NP({\calE}_\cris)(\mathrm{sp}_{\calS}(x))
\]
when $x \in S$ is specifically a rank-$1$ point. Let $x\colon \Spa(K, K^\circ) \to S$ be the map defining the point $x$, where we may assume $K$ is algebraically closed. This map induces a canonical morphism of formal schemes $\Spf(K^\circ) \to \calS$, which we will also denote by $x$.

Let $\calE_\Prism$ be the analytic $F$-crystal over $(\mathcal{S}, M_{\mathcal{S}})$ given by Theorem~\ref{thm: prismatic classification}. Because the shtuka realization is defined entirely over the perfect prismatic site $\mathcal{S}^{\perf}_\Prism$, it is sufficient to evaluate the restriction $\calE_\Prism^{\perf}$ of $\calE_\Prism$ to $\mathcal{S}^{\perf}_\Prism$. For any $p$-adic formal scheme $\calT$, let $\calT_1$ denote its modulo $p$ fiber, and let $i_{\calT}$ denote the corresponding closed immersion. 

Consider the following commutative diagram, where $i_{x,\Prism}^{-1}$ denotes the functor induced by the modulo $p$ morphism $K^\circ \to K^\circ/p$, and $\overline{x}\colon \Spec(K^\circ/p) \to \calS_1$ is induced by $x \colon \Spf(K^\circ) \to \calS$:
\[
\begin{tikzcd}
\Vect^{\mathrm{an},\varphi}(\calS^\perf_\Prism) \arrow[r, "x_{\Prism}^{-1}"] \arrow[d, "i_{\calS_1,\Prism}^{-1}"']
& \Vect^{\mathrm{an},\varphi}(\Spf(K^\circ)^\perf_\Prism) \arrow[d, "i_{x,\Prism}^{-1}"] \\
\Vect^{\mathrm{an},\varphi}((\calS_1)^\perf_\Prism) \arrow[r, "\overline{x}_{\Prism}^{-1}"]
& \Vect^{\mathrm{an},\varphi}(\Spec(K^\circ/p)^\perf_\Prism)
\end{tikzcd}
\]
Because the perfection of $K^\circ/p$ has to be reduced, the perfection is nothing but the residue field $k$ of $K^\circ$. Recall that $\calE_\cris$ is the crystalline realization of $\calE_\Prism$ (by Theorem~\ref{thm: prismatic classification}), and from Proposition~\ref{prop: crystalline realization}, the crystalline realization functor is strictly compatible with $i_{\calS_1,\Prism}^{-1}$. Following the definition of $\NP(\calE_{\cris})$ given in Definition~\ref{defn: many NP functions}, the theorem reduces to proving the following specific statement: For an analytic prismatic $F$-crystal $\calE_\Prism$ over $\Spf (K^\circ)_{\Prism}^{\perf}$, let $\calE_\cris$ be its restriction to $\Spec(k)_\Prism$, which is (by definition) an isocrystal $M$ over $k$. Then the multiset $-\NP(T_{\mathrm{sht}}(\calE_\Prism))(\ast)$ precisely equals the multiset of Frobenius slopes of $M$, where $\ast$ is the unique point in $\Spa(K,K^\circ)$.

Since $K^\circ$ is perfectoid, $(K^\circ)_\Prism^{\perf}$ possesses the final object $(\bA_{\inf}(K^\circ),(d))$. By \cite[Thm.~14.1.1]{SWBerkeleyNotes}, the category of analytic prismatic $F$-crystals over $\Spf (K^\circ)_{\Prism}^{\perf}$ is equivalent to the category of Breuil--Kisin--Fargues modules over $\bA_{\inf}(K^\circ)$ (in the sense of \cite[Defn.~11.4.3]{SWBerkeleyNotes}). Furthermore, restriction to $\Spec(k)_\Prism$ corresponds precisely to base change along the map $\bA_{\inf}(K^\circ) \to W(k)[1/p]$.

Finally, \cite[Thm.~13.2.1, Thm.~13.4.1, and Rem.~13.4.2]{SWBerkeleyNotes} demonstrate that in this exact setup, the multiset $-\NP(T_{\mathrm{sht}}(\calE_\Prism))(\ast)$ agrees perfectly with the slopes of the isocrystal.
\end{proof}

Theorem~\ref{thm: NP function under specialization} motivates the formulation of $\mathrm{sp}_{\max,\calS}$ presented in Definition~\ref{defn: spmax}. In particular, $\mathrm{sp}_{\max,\calS}$ and $\mathrm{sp}_{\calS}$ coincide on rank-$1$ points of $S$, and Theorem~\ref{thm: NP function under specialization} can be concisely stated as
\[
\NP(\bL)(x)=-\NP(\calE_\cris)(\mathrm{sp}_{\max,\calS}(x)),
\]
for any log-crystalline local system $\bL$.

\medskip
\noindent
\textbf{Openness of level sets around rank-$1$ points.}

\begin{thm}\label{thm: logcris case_open around rk 1 point}
Let $\bL$ be a log-crystalline local system over $S$ with respect to $(\calS, M_{\calS})$ as before. For any fixed multiset of rational numbers $\calP$, define the level set 
\[
\lvert S\rvert^{\calP}\coloneqq\{x\in \lvert S\rvert \mid \NP(\bL)(x)=\calP \}.
\] 
If $\lvert S\rvert^{\calP}$ is non-empty for some $\calP$, then it must possess a non-empty topological interior. Moreover, every rank-$1$ point of $\lvert S\rvert^{\calP}$ is contained strictly within its interior. 
\end{thm}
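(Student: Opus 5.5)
By Theorem~\ref{thm: NP function under specialization}, the level set is a preimage under the specialization map:
\[
\lvert S\rvert^{\calP}=\mathrm{sp}_{\max,\calS}^{-1}\bigl(\lvert\calS_{\mathrm{red}}\rvert^{-\calP}\bigr),
\]
where $\lvert\calS_{\mathrm{red}}\rvert^{-\calP}$ is the Newton stratum of the $F$-isocrystal $\calE_\cris$ on the special fiber. So the plan is to study the Newton stratification of $\calE_\cris$ on $\calS_{\mathrm{red}}$ and then pull it back along $\mathrm{sp}_{\max,\calS}$.

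\textbf{Step 1 (special fiber).} I will first recall the Grothendieck specialization theorem and the de Jong--Oort / Katz results for $F$-isocrystals over a noetherian $\F_p$-scheme. Newton polygons only go up under specialization, and the locus where the polygon lies on or above a given polygon is Zariski closed. In the log case I would reduce to a non-log isocrystal stratum by stratum, or cite the log version. The upshot is that $\lvert\calS_{\mathrm{red}}\rvert^{-\calP}$ is locally closed, and locally it is the difference of two closed sets: the locus where the polygon is $\succeq$ the given one, minus the finite union of strictly more special strata. Since there are finitely many polygons with fixed endpoints and denominators bounded by the rank, this union is finite.

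\textbf{Step 2 (pull back).} Lemma~\ref{lem: anticontinuous} says $\mathrm{sp}_{\max,\calS}$ is anticontinuous: locally on $\calS$, preimages of closed sets are open and preimages of open sets are closed. Write the stratum locally as $Z\cap U$ with $Z$ closed and $U$ open. Then $\mathrm{sp}_{\max,\calS}^{-1}(Z)$ is open, so the interior of $\lvert S\rvert^{\calP}$ contains $\mathrm{sp}_{\max,\calS}^{-1}(Z)\cap \operatorname{int}\mathrm{sp}_{\max,\calS}^{-1}(U)$.

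To handle a rank-$1$ point $x$ with $\bar x=\mathrm{sp}_{\calS}(x)$, I will shrink $\calS$ to an affine open around $\bar x$ and make a different choice of $Z$ and $U$. Take $Z=\overline{\{\bar x\}}$, which lies in the closed locus $\{\NP\succeq\}$. Take $U$ to be the complement of the more special strata that do not contain $\bar x$; such a $U$ exists because strata are constructible. The preimage of $U$ is closed and contains the tube $]\,\overline{\{\bar x\}}\cap U[$. The delicate point is to see that $\mathrm{sp}_{\max,\calS}^{-1}(\overline{\{\bar x\}})$ is an open neighborhood of $x$ lying entirely in the stratum. Its points specialize into $\overline{\{\bar x\}}$, so their polygon is $\succeq\NP(\bar x)$. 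I then need the reverse inequality.

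\textbf{Main obstacle.} Points of $\overline{\{\bar x\}}$ may have a strictly more special Newton polygon than $\bar x$. I would first try to replace $\overline{\{\bar x\}}$ by $\overline{\{\bar x\}}\cap W$, where $W$ is the open subset of $\overline{\{\bar x\}}$ on which the Newton polygon is constant, equal to $\NP(\bar x)$. Such a $W$ exists because the generic Newton polygon is attained on a dense open subset. The set $\overline{\{\bar x\}}\cap W$ is locally closed, and I would use anticontinuity together with an affine cover to show that its preimage still contains an open neighborhood of $x$. Concretely, I would pick an open $V\subset\calS_{\mathrm{red}}$ with $V\cap\overline{\{\bar x\}}=W$; the preimage of $V$ is then closed, but it contains the whole open tube of $V$, and the tube of $V$ restricted to the tube of $\overline{\{\bar x\}}$ should be open around $x$. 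This is the step I expect to require the most care. If it works, every rank-$1$ point of $\lvert S\rvert^{\calP}$ is interior.

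Non-emptiness of the interior then follows: a non-empty $\lvert S\rvert^{\calP}$ contains some point, hence its maximal generalization, which is a rank-$1$ point of $\lvert S\rvert^{\calP}$ because $\NP$ is invariant under generalization.
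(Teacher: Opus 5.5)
Your route is the paper's. You write $\lvert S\rvert^{\calP}=\mathrm{sp}_{\max,\calS}^{-1}(\lvert\calS_{\mathrm{red}}\rvert^{-\calP})$ using Theorem~\ref{thm: NP function under specialization}, use that Newton strata on the special fiber are locally closed, and then show that rank-$1$ points of the tube of a locally closed set are interior. However, the step you flag as ``requiring the most care'' and leave at ``should be open around $x$ \dots\ if it works'' is the only nontrivial part of the argument. The paper isolates it as Theorem~\ref{thm: tubes are open near rk 1}, and your proposal does not prove it. Your sentence ``the preimage of $V$ is then closed, but it contains the whole open tube of $V$'' runs $\mathrm{sp}_{\calS}$ and $\mathrm{sp}_{\max,\calS}$ together, and separating them is exactly what is needed. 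As written this is a genuine gap, though it closes in a few lines using tools you already invoked.

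Here is the fix. Shrink to an affine $\calU=\Spf R$ around $\mathrm{sp}_{\calS}(x)$ and write the stratum there as $F\cap U$, with $F$ closed and $U$ open. Unlike $\mathrm{sp}_{\max,\calS}$, the map $\mathrm{sp}_{\calS}$ is continuous, so $\mathrm{sp}_{\calS}^{-1}(U)$ is open. Open sets are stable under generalization, so $y\in\mathrm{sp}_{\calS}^{-1}(U)$ forces $y_{\max}\in\mathrm{sp}_{\calS}^{-1}(U)$; that is, $\mathrm{sp}_{\calS}^{-1}(U)\subseteq\mathrm{sp}_{\max,\calS}^{-1}(U)$. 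Now set
\[
\Omega\coloneqq\mathrm{sp}_{\calS}^{-1}(U)\cap\mathrm{sp}_{\max,\calS}^{-1}(F).
\]
This set is open, using Lemma~\ref{lem: anticontinuous} for the second factor. It contains $x$ because $x=x_{\max}$. It is contained in $\mathrm{sp}_{\max,\calS}^{-1}(F\cap U)$, which is the level set inside $\calU_\eta$.

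The paper's Theorem~\ref{thm: tubes are open near rk 1} is the same mechanism made explicit: it uses the rational domain $\{\lvert g\rvert=1,\ \lvert f_i\rvert^{N_i}\le\lvert\varpi\rvert\}$, where $F=V(\bar f_1,\dots,\bar f_r)$ and $\mathrm{sp}_{\calS}(x)\in D(\bar g)\subseteq U$. The exponents $N_i$ exist precisely because $x$ has rank $1$.

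With this in hand, your detour through $\overline{\{\bar x\}}$ and the open subset $W$ on which the polygon is generic is unnecessary. The ``main obstacle'' you describe, namely more special points inside $\overline{\{\bar x\}}$, is already removed by intersecting with the open part $U$ of the locally closed stratum. The first paragraph of your Step~2 already had the right set, $\mathrm{sp}_{\max,\calS}^{-1}(F)\cap\operatorname{int}\mathrm{sp}_{\max,\calS}^{-1}(U)$. The one observation missing there is that $\mathrm{sp}_{\calS}^{-1}(U)$ is an open subset of $\mathrm{sp}_{\max,\calS}^{-1}(U)$ containing all of its rank-$1$ points, so every such $x$ lies in that interior. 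Your treatment of non-emptiness of the interior, via maximal generalizations, matches the paper.
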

\begin{proof}
It is sufficient to show that any rank-$1$ point $y \in \lvert S\rvert^{\calP}$ is contained within an open subset entirely situated inside the level set. If this holds, the remaining claims follow automatically because for any arbitrary $x \in \lvert S\rvert^{\calP}$, its maximal generalization $x_{\max}$ also belongs to $\lvert S\rvert^{\calP}$ by Theorem~\ref{thm: NP function under specialization}.

To prove this, we first examine the analogous question over the modulo $p$ fiber $\calS_1$. By \cite[Cor.~2.3.2]{katzslopefiltration} or \cite[Thm.~3.12(a)]{KedlayaNotesonisocrystals}, the level sets 
\[
\lvert \calS_1\rvert^{-\calP}\coloneqq\{x\in \lvert \calS_1\rvert \mid \NP(\calE_\cris)(x)=-\calP \}
\]
of $\NP(\calE_\cris)$ form locally closed subsets of $\lvert \calS_1\rvert$. By Theorem~\ref{thm: NP function under specialization}, if $\lvert S\rvert^{\calP}$ is non-empty, then $\lvert \calS_1\rvert^{-\calP}$ must also be non-empty. Now, invoking Theorem~\ref{thm: tubes are open near rk 1}, we deduce that there exists an open subset $V\subset \lvert S \rvert$ satisfying $y \in V \subset \mathrm{sp}_{\calS}^{-1}(\lvert \calS_1\rvert^{-\calP})$. By Theorem~\ref{thm: NP function under specialization}, this precisely means $y \in V \subset \lvert S\rvert^{\calP}$.
\end{proof}

\begin{rem}
All the properties discussed in this section hold more generally for log-crystalline $\Q_p$-local systems. Because the statements are \'etale locally on $S$, one may apply Theorem~\ref{thm: equiv of all kinds of locsys} and Theorem~\ref{thm: localalteration} to assume without loss of generality that $\bL$ is an isogeny $\Z_p$-local system and that $S$ is the generic fiber of a \emph{small affine} log formal scheme $(\calS,M_{\calS})$ (in the sense of \cite[\S2.2]{du-liu-moon-shimizu-purity-F-crystal}). In this scenario, the results of \cite{du-liu-moon-shimizu-purity-F-crystal} apply directly.
\end{rem}

\subsection{\texorpdfstring{$\RpMT_1 \Longrightarrow \CNP_1$}{RpMT1 implies CNP1}}\label{RpMT1 to CNP1}

The goal of this subsection is to establish that $\RpMT_1 \Longrightarrow \CNP_1$, which forms one direction of Theorem~\ref{thm:main_rpmt1}. We recall the local relative $p$-adic monodromy conjecture $\RpMT_1$.

\begin{conj}[Local $p$-adic monodromy conjecture]\label{conj: local pMT}
Let $S$ be a smooth rigid-analytic space over a $p$-adic field $K$, and let $\mathbb{L}$ be a de Rham $\mathbb{Q}_p$-local system on $S$. For any point $s\in S$, there exists an open neighborhood $U$ of $s$, and a finite \'etale covering $V \to U$ of smooth rigid analytic varieties over $K$, such that $\bL|_{V}$ is log-crystalline at all classical points of $V$.
\end{conj}

Note that this paper will only consider Conjecture~\ref{conj: local pMT} for rank-$1$ points, which corresponds exactly to Conjecture~\ref{conj: RpMT1}.

Assuming Conjecture~\ref{conj: local pMT} holds for rank-$1$ points, we will demonstrate that the level set of the Newton polygon function attached to a de Rham local system is topologically open around any rank-$1$ point. In other words, the Newton polygon function $\NP(\bL)$ associated with a de Rham local system $\bL$ is locally constant around rank-$1$ points ($\CNP_1$).

First, we recall the following foundational theorem in $p$-adic geometry.

\begin{thm}[\cite{hartl-semistablemodels, TemkinlocalAlteration}, simplified version]\label{thm: localalteration}
Let $X$ be a smooth, qcqs rigid analytic variety over a complete rank-$1$ valued field $K$. For any $x \in X$, there exists an open neighborhood $U$ of $x$, a finite extension $L$ of $K$, and a finite \'etale covering $U' \to U_L$ such that $U'$ is the generic fiber of an affinoid semistable formal scheme $\calU'$ over $\Spf \calO_L$.
\end{thm}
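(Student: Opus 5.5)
This statement is the local form of Hartl--Temkin's $p$-adic local alteration theorem, which the paper invokes as an external input rather than proving. A plan for deducing the stated form from those sources runs as follows.

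\textbf{Reduction to an affinoid.} Replace $X$ by an affinoid neighbourhood of $x$. Because $X$ is qcqs and smooth, we may take this affinoid $X=\Spa(A,A^\circ)$ to be connected and smooth over $K$. We may also assume $x$ is a rank-$1$ point: any point has a maximal generalization $x_{\max}$, and every open set containing $x$ also contains $x_{\max}$.

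\textbf{Global alteration.} Next, choose a formal model $\frakX$ of the affinoid. Hartl's semistable reduction \cite{hartl-semistablemodels} (or Temkin's local uniformization/alteration \cite{TemkinlocalAlteration}) then supplies three pieces of data:
\begin{itemize}
\item a finite extension $L/K$;
\item an admissible formal blowup $\frakX'\to\frakX_{\calO_L}$;
\item an étale-locally-on-the-generic-fibre surjective morphism $\frakY\to\frakX'$, with $\frakY$ a strictly semistable formal scheme, such that $\frakY_\eta\to X_L$ is an alteration.
\end{itemize}
On a dense open subset of $X_L$ containing the image of $x$, the map $\frakY_\eta\to X_L$ is finite étale, after possibly shrinking and using generic étaleness in characteristic $0$. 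The delicate point is to arrange that $x$ itself lies in this étale locus. This is exactly where the local version of Temkin is needed. It gives, for the valuation attached to $x$, a finite separable extension of $\mathcal H(x)$ over which the local ring acquires a semistable (log smooth) model, and this is realized étale locally near $x$.

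\textbf{Spreading out.} Concretely, pick a point $x'$ over $x$ at which the alteration is finite étale. Since $\frakY_\eta\to X_L$ is finite étale in a neighbourhood of $x'$, there is an open $U\ni x$ whose preimage $U'$ is finite étale over $U_L$. Shrinking $U$ to a rational subdomain, we then make $U'$ the generic fibre of an open affine of $\frakY$. An open affine of a semistable formal scheme is again semistable and affinoid, so $U'$ has the required form.

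\textbf{Main obstacle.} The hard step is guaranteeing that the alteration is finite étale exactly over the given point $x$, rather than merely generically. This is genuinely the content of Temkin's local theorem. I would cite it directly, in the form: for every $x$ there exists an étale neighbourhood that becomes semistable after a finite extension. The stated version then follows by taking the Galois closure, which converts an étale neighbourhood into a finite étale cover of a Zariski-open neighbourhood.
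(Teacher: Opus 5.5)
Your stance matches the paper, which does not prove this statement but quotes it from Hartl (discretely valued case) and Temkin (general case). However, the bridge you build in place of a direct citation has a genuine gap at exactly the step you say you would rely on. You cite only an étale-neighbourhood form (``every point has an étale neighbourhood that becomes semistable after a finite extension'') and then claim the stated form follows ``by taking the Galois closure''. It does not.

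An étale map $V\to X_L$ that is étale at a point $x'$ over $x$ is, after shrinking, merely an open subspace of some finite étale cover $W$ of an open neighbourhood of the image of $x'$. The semistable model lives on $V$, not on $W$, and a Galois closure of $W$ is a further finite étale cover. Semistable models do not pass to finite étale covers. For example, for $p$ odd, $W=\{y^2=pT\}$ over the smooth affinoid $\{\lvert T\rvert=1\}$ over $\Qp$ is finite étale. Yet $\lvert y\rvert=\lvert p\rvert^{1/2}$ at every point, so no formal model of $W$ over $\Zp$ has reduced special fibre: the point lying over a generic point of such a special fibre would have value group $\lvert p\rvert^{\Z}$. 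Enlarging $L$ rescues this particular example. In general, though, producing a semistable model for a finite étale cover, or for a small open piece, of a semistable generic fibre is the entire content of the Hartl--Temkin theorem, not a formal consequence of it.

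The same objection defeats your ``spreading out'' step. Generic fibres of open affines of $\frakY$ are tubes of Zariski opens of $\frakY_{\mathrm{red}}$, and the preimage of a small rational neighbourhood of $x$ is in general not of that form. Your ``global alteration'' paragraph also produces nothing at $x$: generic étaleness gives a dense étale locus with no reason to contain $x$, as you concede.

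Separately, your reduction to rank-$1$ points runs in the wrong direction. That every open neighbourhood of $x$ contains $x_{\max}$ shows only that the statement at $x$ implies the statement at $x_{\max}$. You need the converse, and an open neighbourhood of $x_{\max}$ need not contain $x$. On the closed unit disc, for instance, the rational open $\{\lvert T\rvert\ge 1\}$ contains the Gauss point but not its rank-$2$ specialization pointing into the open unit disc. Higher-rank points genuinely matter in the paper, e.g.\ in the proof of Theorem~\ref{thm: HT0 implies unramified proet}, where an honest étale covering of $Y_K$ is needed.

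Both gaps disappear if you do what the paper does. Quote Hartl and Temkin in a form that already supplies, after a finite extension $L$, a finite étale cover with an affinoid semistable model over $U_L$ for every member $U$ of an open covering of the whole adic space $X$ (e.g.\ a finite admissible affinoid covering). Then no rank-$1$ reduction, spreading out, or Galois closure is required.
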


Combining this theorem with Theorem~\ref{thm: pointwise} yields a critical reduction: if Conjecture~\ref{conj: local pMT} holds, we may assume without loss of generality that the variety $V$ specified in Conjecture~\ref{conj: local pMT} admits a semistable formal model $\calV$ over $\Spf \calO_L$, and the local system $\bL|_{V}$ is log-crystalline (in the sense of Faltings, see Definition~\ref{defn: log crystalline loc sys}).

This leads to the following consequence of Theorem~\ref{thm: logcris case_open around rk 1 point}.

\begin{thm}\label{thm: one direct}
Let $S$ be a smooth rigid analytic variety over a $p$-adic field $K$, and let $\bL$ be a de Rham local system on $S$. Assume Conjecture~\ref{conj: local pMT} holds around rank-$1$ points. Then, for any piecewise linear function $\calP$, the level set 
\[
\{x\in \lvert S\rvert \mid \NP(\bL)(x)=\calP \},
\] 
if non-empty, possesses a non-empty interior. Moreover, every rank-$1$ point inside $\{x\in \lvert S\rvert \mid \NP(\bL)(x)=\calP \}$ is contained in the interior of $\{x\in \lvert S\rvert \mid \NP(\bL)(x)=\calP \}$. 
\end{thm}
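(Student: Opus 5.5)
The plan is to reduce to Theorem~\ref{thm: logcris case_open around rk 1 point} via Conjecture~\ref{conj: local pMT}, Theorem~\ref{thm: localalteration} and Theorem~\ref{thm: pointwise}. The pieces that need care are the compatibility of $\NP(\bL)$ with the relevant pullbacks and the passage of openness down along the covering.

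\emph{Reduction to rank-$1$ points.} By Theorem~\ref{thm: NP function under specialization} (more precisely Lemma~\ref{lem: fact about Bun X L}(1), which works for any de Rham $\bL$), $\NP(\bL)(x)=\NP(\bL)(x_{\max})$. So if every rank-$1$ point of the level set $\lvert S\rvert^{\calP}$ lies in its interior, then any nonempty $\lvert S\rvert^{\calP}$ contains a rank-$1$ point $x_{\max}$, and hence has nonempty interior. It therefore suffices to fix a rank-$1$ point $y$ with $\NP(\bL)(y)=\calP$ and produce an open neighborhood of $y$ contained in $\lvert S\rvert^{\calP}$.

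\emph{Passing to a log-crystalline cover.} Apply Conjecture~\ref{conj: local pMT} at $y$ to obtain an open $U\ni y$ and a finite étale $V\to U$ such that $\bL|_V$ is log-crystalline at all classical points. Shrink $U$ so that it is affinoid (hence qcqs). Choose a rank-$1$ point $y'\in V$ over $y$; this exists by surjectivity of the finite étale map and because maximal generalizations lift.

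Next apply Theorem~\ref{thm: localalteration} at $y'$. This gives an open $W\ni y'$ in $V$, a finite extension $L/K$, and a finite étale $W'\to W_L$ with $W'$ the generic fiber of an affinoid semistable $\calW'$ over $\calO_L$. By Theorem~\ref{thm: pointwise}, $\bL|_{W'}$ is log-crystalline with respect to $\calW'$, because classical points of $W'$ map to classical points of $V$ and log-crystallinity of Galois representations is preserved under finite extension. Using Theorem~\ref{thm: equiv of all kinds of locsys}(3b), shrink further so that $\bL$ admits a $\Z_p$-lattice, if this is needed to apply Theorem~\ref{thm: logcris case_open around rk 1 point}.

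\emph{Compatibility of $\NP$ with pullback.} The composite $f\colon W'\to S$ is étale, after viewing $W'$ over $K$ via $L/K$. The key claim is
\[
\NP(\bL|_{W'})(w)=\NP(\bL)(f(w)) \quad \text{for all } w\in\lvert W'\rvert.
\]
This should follow from Lemma~\ref{lem: dR case compatible with pullback}, which says $T_{\mathrm{modif}}$ is compatible with pullback, together with the fact that $\NP$ of a vector bundle on $X^\diamondsuit$ at a point is computed by pulling back along a geometric point. For the base change to $L$, the diamond $W'^\diamondsuit\to\Spd K$ factors through $\Spd L$, and the untilt (hence the modification) is unchanged. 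This is the step I expect to need the most care, since one must check that the $\bM_0$-lattice is stable under change of base field; I would check this using the description of $\bM_0$ via $D_{\dR}$ and horizontal sections, which commutes with the étale base change.

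\emph{Openness on the cover and descent.} Choose a rank-$1$ point $w$ over $y'$. By Theorem~\ref{thm: logcris case_open around rk 1 point}, there is an open $O\ni w$ in $W'$ with $\NP(\bL|_{W'})\equiv\calP$ on $O$. Since étale morphisms of rigid spaces are open maps on underlying adic spaces, $f(O)$ is an open neighborhood of $y$. By the pullback compatibility above, every point of $f(O)$ has Newton polygon $\calP$, so $f(O)\subset\lvert S\rvert^{\calP}$. This places $y$ in the interior of $\lvert S\rvert^{\calP}$, which completes the argument.
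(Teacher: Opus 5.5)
Your proposal is correct and follows essentially the same route as the paper: reduce to rank-$1$ points via $\NP(\bL)(x)=\NP(\bL)(x_{\max})$, then pass through Conjecture~\ref{conj: local pMT}, Theorem~\ref{thm: localalteration} and Theorem~\ref{thm: pointwise} to a log-crystalline situation with a semistable model. There you apply Theorem~\ref{thm: logcris case_open around rk 1 point}, which implicitly uses Proposition~\ref{prop: Tsht are compatible} to identify the log-crystalline and de Rham realizations, and push the open set down using pullback-invariance of $\NP$ and openness of \'etale maps.

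One small ordering fix: if you need a $\Z_p$-lattice, take the neighborhood from Theorem~\ref{thm: equiv of all kinds of locsys}(3b) before applying Theorem~\ref{thm: localalteration}, since shrinking $W'$ afterwards can destroy its semistable model.
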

\begin{proof}
Following the logic in the proof of Theorem~\ref{thm: logcris case_open around rk 1 point}, the statement is local. It is sufficient to prove that for any rank-$1$ point $x \in S$ located in a non-empty level set of $\NP(\bL)$, there is an open neighborhood $U$ of $x$ entirely contained within that same level set. 

Because the value $\NP(\bL)(x)$ is invariant under pullback to any point lying over $x$ (cf.\ Definition~\ref{defn: many NP functions} (6) and Lemma~\ref{lem: fact about Bun X L}), and because \'etale morphisms between rigid analytic spaces are open maps on the underlying topological spaces (\cite[Prop.~1.7.8]{Huber-etale}), the statement can be checked after applying finite base field extensions and \'etale coverings. 

Assuming Conjecture~\ref{conj: local pMT} holds, we can invoke Theorem~\ref{thm: localalteration}, Theorem~\ref{thm: pointwise}, and Theorem~\ref{thm: logcris case_open around rk 1 point} to deduce that, after appropriate base field extensions and \'etale coverings, there exists an open neighborhood $V$ of a preimage of $x$ that resides entirely within the level set determined by $\NP(\bL)(x)=\NP(\underline{\calF})(x)$, where $\underline{\calF}=T_{\mathrm{sht}}(\bL)$. This deduction crucially relies on the fact that the construction of $\underline{\calF}$ starting from a de Rham local system is perfectly compatible with the construction of $\underline{\calF}$ starting from a log-crystalline local system whenever the initial de Rham local system happens to be log-crystalline (cf.\ Proposition~\ref{prop: Tsht are compatible}).
\end{proof}

\begin{rem}\label{rem: shimizu thm}
Note that Shimizu proved that Conjecture~\ref{conj: local pMT} holds in a neighborhood of every classical point; see \cite[Thm.~9.2]{Shimizu-monodromy}. In fact, Shimizu established a stronger statement: after restricting to an affinoid open neighborhood $V$ of $x$, the de Rham local system $\bL$ becomes horizontally de Rham in the sense of Definition~\ref{defn: horizontally de Rham}. Moreover, after a finite extension of the base field, $\bL$ becomes horizontally log-crystalline, in the sense that it is associated with a \emph{constant} isocrystal over $k$.

One way to see that this is compatible with \cite[\S4.2]{Shimizu-monodromy} is via purity; see \cite[Thm.~6.1]{Shimizu-monodromy} and \cite[Thm.~4.1]{du-liu-moon-shimizu-purity-F-crystal}. Assuming Shimizu's result, it follows immediately that $\NP(\bL)$ is locally constant and is equal to the negatives of the slopes of $D_{\mathrm{pst}}(\bL|_V)$, as defined in \cite[Defn.~5.6]{Shimizu-monodromy}.
\end{rem}

\section{The theory of slopes and slope filtrations}\label{sec: slopes}

This section reviews the theory of slopes and slope filtrations, following Kedlaya--Liu. Moreover, we carefully study the category of semistable vector bundles over diamantine Fargues--Fontaine curves and relate them to the category of $(d,h)$-local systems.

\subsection{Generic and special Newton polygons}

Recall the variants of period rings for $\varphi$-modules from Definition~\ref{defn: Robba rings}. As mentioned in \S\ref{sec: vb over FF}, the category of $\varphi$-modules over $\widetilde{\mathcal{R}}_A$ is equivalent to the category of vector bundles over $X_S$ when $S=\Spa(A,A^+)$ is affinoid (see also \cite[Thm.~6.3.12]{kedlaya-liu-relative-padichodge}). 

\begin{defn}\label{defn: NP function for a phi module over tdR_A}
For a $\varphi$-module $M$ over $\widetilde{\mathcal{R}}_A$, one can define the Harder--Narasimhan slope function 
\[
\NP(M)\colon \lvert S \rvert \to \Conv
\]
by requiring $\NP(M)(x) =\NP({\calE}(M))(x)$, where ${\calE}(M)$ is the vector bundle over $X_S$ corresponding to $M$ under Lemma~\ref{lem: vb as phi modules}.
\end{defn}

For a $\varphi$-module $M^{\bd}$ over $\widetilde{\mathcal{R}}^{\mathrm{bd}}_A$, there are two associated Newton polygon functions, which we now recall.

\begin{defn}[Special and generic Newton polygons]\label{defn: special and generic Newton polygons}
Let $M^{\bd}$ be a nontrivial $\varphi$-module over $\widetilde{\mathcal{R}}^{\mathrm{bd}}_A$.
\begin{itemize}
	\item Define the \emph{special} Newton polygon function 
\[
\NP_{s}(M^{\bd})\colon \lvert S \rvert \to \Conv
\] 
via $\NP_{s}(M^{\bd})(x) \coloneqq \NP(M)(x)$, where $M$ is the base change of $M^{\bd}$ along $\widetilde{\mathcal{R}}^{\mathrm{bd}}_A \to \widetilde{\mathcal{R}}_A$, and $\NP(M)$ is the function from Definition~\ref{defn: NP function for a phi module over tdR_A}.
	\item Define the \emph{generic} Newton polygon function
\[
\NP_{\eta}(M^{\bd})\colon |S| \to \Conv
\]
by
\[
\NP_{\eta}(M^{\bd})(x) \coloneqq 
-\NP\!\left(
M^{\bd}_{x_{\max}}
\otimes_{\widetilde{\mathcal{R}}^{\mathrm{bd}}_{K_{x_{\max}}}}
\widetilde{\calE}_{K_{x}}
\right),
\]
where $x_{\max}$ is the rank-$1$ generalization of $x$. Note that
$K_{x_{\max}}=K_{x}$ by Lemma~\ref{lem: Spa(K,K^+) is a chain}(4), so the tensor product inside $\NP$ on the right-hand side is an isocrystal over $K_x$. The Newton polygon function of an isocrystal is given in Definition~\ref{defn: many NP functions}.
\item When $S=\Spa(L,L^\circ)$ is an analytic field, we simply write $\NP_{s}(M^{\bd})$ and $\NP_{\eta}(M^{\bd})$ for $\NP_{s}(M^{\bd})(\ast)$ and $\NP_{\eta}(M^{\bd})(\ast)$, respectively, where $\ast$ is the unique point in $S$.
\end{itemize}
\end{defn}

\begin{rem}\label{rem: slope of phi^h module convention}
For any positive integer $h$, one can similarly define the category of $\varphi^h$-modules, together with their associated Newton polygon functions. Note that for a $\varphi$-module $(M,\varphi_M)$, the pair $(M,\varphi_M^h)$ is a $\varphi^h$-module, and we have
\[
h\NP_{\ast}(M,\varphi_M) = \NP_{\ast}(M,\varphi^h_M),
\]
where $\ast$ could be $\emptyset$, or $s$ and $\eta$ if $M$ is defined over the bounded Robba ring. Note that this convention may differ from the one in \cite[Defn.~7.4.1]{kedlaya-liu-relative-padichodge}, which is specifically defined for $h>1$. Our convention agrees with \cite[\S11]{FarguesFontaineAsterisque}.
\end{rem}

We summarize some basic properties of $\NP_{s}(M^{\bd})$ and $\NP_{\eta}(M^{\bd})$ from \cite{Kedlayapadiclocalmonodromy,Kedlayaslopefiltrationsrevisited} and \cite[\S7.4]{kedlaya-liu-relative-padichodge}.

\begin{thm}\label{thm: properties of generic and special NP} 
For any nontrivial $\varphi$-module $M^{\bd}$ over $\widetilde{\mathcal{R}}^{\mathrm{bd}}_A$ and any $x\in |S|$:
\begin{enumerate}
	\item Let $s\colon \Spa(L,L^\circ) \to S$ be a map from an analytic field $L$, and let $x$ be the image of the unique point in $\Spa(L,L^\circ)$. Then $\NP_{s}(M^{\bd})(x)$ (resp.\ $\NP_{\eta}(M^{\bd})(x)$) is equal to $\NP_{s}(M^{\bd}_L)$ (resp.\ $\NP_{\eta}(M^{\bd}_L)$). 
	\item The endpoints of $\NP_{s}(M^{\bd})$ and $\NP_{\eta}(M^{\bd})$ are locally constant.
	\item $\NP_{\eta}(M^{\bd}) \succeq \NP_{s}(M^{\bd})$ as functions in $\Conv$ (in the sense of Definition~\ref{defn: many NP functions}).
\end{enumerate}
\end{thm}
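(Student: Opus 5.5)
Part (1) is a pure base-change statement, so I will treat it first. The Newton polygon of a vector bundle on an absolute curve is invariant under extension of algebraically closed fields (Definition~\ref{defn: many NP functions}(5) together with Lemma~\ref{lem: fact about Bun X L}). Hence $\NP_s(M^{\bd})(x)$ only depends on the pullback of $M^{\bd}\otimes\widetilde{\calR}_A$ to $X_C$ for some algebraically closed $C$ over $\mathcal{H}(x)$. Since $\Spa(L,L^\circ)\to S$ factors through $\Spa(\mathcal{H}(x),\mathcal{H}(x)^+)$, and the base change of $\varphi$-modules is transitive along $\widetilde{\calR}^{\bd}_A\to\widetilde{\calR}^{\bd}_L\to\widetilde{\calR}_L$, the special polygons agree. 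For the generic polygon, the same factorization shows that $M^{\bd}_L\otimes\widetilde{\calE}_L$ is the base change of $M^{\bd}_{x_{\max}}\otimes\widetilde{\calE}_{K_x}$ along $\widetilde{\calE}_{K_x}\to\widetilde{\calE}_L$. Newton polygons of isocrystals (or $\varphi$-modules over $W(\cdot)[1/p]$ of perfect fields) are invariant under such extension, by Dieudonn\'e--Manin after passing to algebraic closures. Here one uses $K_{x_{\max}}=K_x$ to ignore the $+$-ring.

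Given (1), parts (2) and (3) can be checked pointwise on rank-$1$ points, and for local constancy on a neighborhood. For (3), I reduce to $S=\Spa(L,L^\circ)$ with $L$ algebraically closed and invoke the semicontinuity result of Kedlaya: for a $\varphi$-module over $\widetilde{\calR}^{\bd}_L$, the generic polygon lies on or above the special one with the same endpoints. This is \cite[Prop.~5.5.1]{Kedlayaslopefiltrationsrevisited} in Kedlaya's setting, and \cite[Thm.~7.4.5]{kedlaya-liu-relative-padichodge} in general. The only thing to check is the sign convention: our $\NP$ of a bundle is the negative of the isocrystal polygon, and Definition~\ref{defn: special and generic Newton polygons} inserts a compensating minus sign in $\NP_\eta$. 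With these signs, the inequality becomes exactly $\NP_\eta\succeq\NP_s$.

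For (2), the right endpoint is the total rank and the degree, namely the slope sum. The rank is locally constant because $M^{\bd}$ is finite projective. For the degree, I take the top exterior power $\det M^{\bd}$, a rank-one $\varphi$-module. Both polygons of $M^{\bd}$ have endpoint equal to the (generic, respectively special) slope of $\det M^{\bd}$, since determinants commute with base change and with the passage to the isocrystal. So it suffices to show that for a rank-one $\varphi$-module $N$ over $\widetilde{\calR}^{\bd}_A$, the generic and special slopes coincide and are locally constant.

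The main obstacle is this rank-one step. My plan is to work locally on $S$ with a basis $v$ of $N$ over some $\widetilde{\calR}^{\rmint,r}_A[1/p]$ and write $\varphi(v)=av$ with $a$ a unit. Locally one can write $a=p^{d}u$ with $u$ a unit of $\widetilde{\calR}^{\rmint}_A$ and $d\in\Z$ locally constant; this is \cite[Lem.~7.4.2 and its proof]{kedlaya-liu-relative-padichodge}. One then shows that the rank-one étale $\varphi$-module given by $u$ has slope $0$ both generically and specially, which handles the rank-one case. Combining this with (1) gives local constancy of both endpoints, while the left endpoint is trivially $(0,0)$.
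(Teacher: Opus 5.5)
Your proposal is correct. For (1) and (3) it coincides with the paper. The paper reads (1) off the definitions (also citing \cite[Rem.~7.4.2]{kedlaya-liu-relative-padichodge}), and obtains (3) from \cite[Prop.~5.5.1]{Kedlayaslopefiltrationsrevisited} at a point, after (1), with exactly the sign bookkeeping you describe. Your secondary reference \cite[Thm.~7.4.5]{kedlaya-liu-relative-padichodge} is not relevant there: it is the semicontinuity of the special polygon (Theorem~\ref{thm: KL-semi-continuity}), not a generic-versus-special comparison.

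For (2) your route genuinely differs. The paper simply quotes \cite[Lem.~7.2.2]{kedlaya-liu-relative-padichodge}, whereas you pass to $\det M^{\bd}$ and handle rank one by hand via $a=p^du$. What your route buys is a self-contained argument that also shows directly that the generic and special endpoints coincide, which is the endpoint half of $\succeq$ in (3). The cost is two steps you only gesture at; \cite[Lem.~7.4.2]{kedlaya-liu-relative-padichodge} is not the right source for them (the paper uses 7.4.2 for (1)).

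\textbf{Local freeness.} You must show that a rank-one projective $\widetilde{\calR}^{\bd}_A$-module becomes free on a rational neighborhood of a point $\beta$. This needs a spreading-out from the fiber at $\beta$, where $\widetilde{\calR}^{\bd}_{\mathcal{H}(\beta)}$ is a field, in the style of Lemma~\ref{lem: spead out lemma in KL}.

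\textbf{The factorization $a=p^du$.}
\begin{enumerate}
\item Put $b=p^ma$ and $c=p^ma^{-1}$, both in $\widetilde{\calR}^{\rmint,r}_A$.
\item The pointwise $p$-adic order of $b$ is the least $i$ whose Teichm\"uller coefficient is nonzero at $\beta$. This is upper semicontinuous, and so is the order of $c$.
\item Since the two orders sum to $2m$, both are locally constant, so $d$ is locally constant.
\item On a neighborhood where $d$ is constant, uniformity of $A$ gives $u=p^{-d}a\in\widetilde{\calR}^{\rmint,r}_A$ with image in $A$ nonvanishing everywhere, hence a unit.
\item Such an element is a unit of $\widetilde{\calR}^{\rmint,s}_A$ for $s$ small enough.
\end{enumerate}

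With these filled in, the remaining inputs are standard: rank-one \'etale modules have slope $0$ generically and specially (trivial over algebraically closed fields), and degree equals degree of the determinant.
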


\begin{proof}
Claim (1) follows from the definition or \cite[Rem.~7.4.2]{kedlaya-liu-relative-padichodge}. Claim (2) is \cite[Lem.~7.2.2]{kedlaya-liu-relative-padichodge}. Claim (3) is \cite[Prop.~5.5.1]{Kedlayaslopefiltrationsrevisited}.
\end{proof}

\subsection{The semi-continuity result of Kedlaya--Liu}

In this subsection, we review the theorem of Kedlaya--Liu regarding the semi-continuity of the Newton polygon function attached to a vector bundle over $X_S$, or equivalently, a $\varphi$-module over $\widetilde{\mathcal{R}}_A$. This can be viewed as a generalization of Shatz's result \cite{Shatzfamilyofvb} for vector bundles over a family of Fargues--Fontaine curves.

\begin{thm}\label{thm: KL-semi-continuity}
For any $\varphi$-module $M$ over $\widetilde{\mathcal{R}}_A$, the function $\NP(M)$ is lower semicontinuous. More precisely, the following statements hold:
\begin{enumerate}
	\item When $M$ is of constant rank $h$, the endpoint function $\NP(M)(\mathrm{-})(h)$ is locally constant as a function from $|S|$ to $\bR$. For each $a \in [0,h]$, viewing $\NP(M)(\mathrm{-})(a)$ as a function from $|S|$ to $\bR$, for every rank-$1$ point $\beta \in |S|$ there is an open neighborhood $U$ of $\beta$ in $|S|$ such that $\NP(M)(x)(a) \geq \NP(M)(\beta)(a)$ for all $x \in U$. In particular, $\NP(M)(\mathrm{-})(a)$ is lower semi-continuous as a function from $|S|$ to $\bR$ for each $a \in [0,h]$.
	\item Moreover, after shrinking $S$ to a rational localization $S' \subset S$ containing $\beta$ and passing to a finite \'etale (Galois) covering $T \to S'$ such that $\beta$ has a unique preimage, there is a positive integer $m$ such that $(M_T,\varphi_{M_T}^m)$ admits a $\widetilde{\mathcal{R}}_T^{\bd}$-lattice $M_T^{\bd}$ as a $\varphi^m$-module. The generic Newton polygon function of this $M_T^{\bd}$ is constant on $T$, and the generic Newton polygon function at each fiber equals the Newton polygon of $(M_T,\varphi_{M_T}^m)$ at $\beta$.
\end{enumerate}
\end{thm}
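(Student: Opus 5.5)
This is essentially Kedlaya--Liu's semicontinuity theorem (\cite[\S7.4]{kedlaya-liu-relative-padichodge}), so I would reduce to their setup and retrace the argument for rank-$1$ points. The first step is the endpoint claim in (1). The value $\NP(M)(x)(h)$ is the degree, i.e.\ the negative of the slope sum. It can be computed from the determinant $\wedge^h M$, which is a rank-$1$ $\varphi$-module. For a line bundle, the slope is locally constant on $|S|$: locally, after a rational localization, $\wedge^h M\cong \calO(d)$ for some $d$, by the classification of line bundles on relative Fargues--Fontaine curves (\cite[Prop.~II.2.5]{FarguesScholze}). This gives local constancy of the endpoint.

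The heart of the argument is (2), and (1) will be deduced from it. Fix a rank-$1$ point $\beta$ with completed residue field $L=K_\beta$. By Lemma~\ref{lem: fact about Bun X L}, after passing to the completed algebraic closure of $L$, the fiber $M_\beta$ is isomorphic to $\bigoplus_i \calO(\lambda_i)$. Choose $m$ to be a common multiple of the denominators of the $\lambda_i$. Then $(M_\beta,\varphi^m)$ becomes a direct sum of pure pieces with integral slopes, and it admits a basis on which $\varphi^m$ acts by a diagonal matrix of powers of $p$. Such a basis is already defined over a finite Galois extension of $L$.

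The next task is to spread this out. By Krasner/Elkik-type arguments, the finite extension of $L$ extends to a finite étale covering $T\to S'$ over some rational neighbourhood $S'$ of $\beta$ in which $\beta$ has a unique preimage. Since $\widetilde{\calR}_{T}$ is a colimit, the chosen basis at $\beta$ lifts to a basis of $M_T$ on some $Y_{T,(0,r]}$, after shrinking $T$. With respect to this basis, the matrix of $\varphi^m$ is close to the diagonal matrix $\mathrm{diag}(p^{n_i})$ near $\beta$. I would then run the approximation lemma of \cite[Lem.~7.4.x]{kedlaya-liu-relative-padichodge}, a successive change of basis. When the error term is small in the Gauss norms on a suitable annulus, this produces a basis in which the matrix of $\varphi^m$ has entries in $\widetilde{\calR}^{\rmint}_T[1/p]=\widetilde{\calR}^{\bd}_T$ and equals $\mathrm{diag}(p^{n_i})$ times an invertible matrix congruent to the identity. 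The span of this basis is the desired lattice $M_T^{\bd}$. On each fiber, $M^{\bd}_T\otimes \widetilde{\calE}$ then has Frobenius matrix $\mathrm{diag}(p^{n_i})(1+\text{small})$, so by Dieudonné--Manin its slopes are the $n_i$. Hence the generic Newton polygon is constant and equals the Newton polygon of $(M_T,\varphi^m)$ at $\beta$, using Remark~\ref{rem: slope of phi^h module convention}.

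The inequality in (1) then follows by combining Theorem~\ref{thm: properties of generic and special NP}(1) and (3). For $x$ in the image of $T$, which is an open neighbourhood $U$ of $\beta$ because étale maps are open, we have $\NP_s(M^{\bd}_T)(x)\preceq\NP_\eta(M^{\bd}_T)(x)=m\NP(M)(\beta)$. Since the special polygon is $m\NP(M)(x)$, this gives $\NP(M)(x)\preceq\NP(M)(\beta)$. Given the normalization $\NP(\calE)=-\NP(\underline{D})$, this is the reversed inequality $\NP(M)(x)(a)\ge\NP(M)(\beta)(a)$; I would check the sign carefully here. Lower semicontinuity at arbitrary points then follows, because $\NP(M)(x)=\NP(M)(x_{\max})$ and every open set containing $x$ contains $x_{\max}$. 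The main obstacle I expect is the approximation step: the initial basis is only known to be good at the single point $\beta$, and one must control the Frobenius matrix uniformly over a neighbourhood and over an annulus $[r/q,r]$. I would first try to imitate directly Kedlaya's proof of \cite[Thm.~7.4.5]{kedlaya-liu-relative-padichodge}, with norms made uniform by continuity of the spectral seminorm in $x$.
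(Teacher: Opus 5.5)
Your overall route is the paper's, i.e.\ Kedlaya--Liu's. You diagonalize $\varphi^m$ on the fibre at a rank-$1$ point $\beta$. You spread this out over a rational neighbourhood and a finite \'etale cover with a unique point over $\beta$. You build a $\widetilde{\calR}^{\bd}$-lattice on which $\varphi^m$ acts by $FD$, read off a constant generic polygon (this is Lemma~\ref{lem: find generic slopes from valuations of D}, which needs $F-I_h$ with entries in $p\widetilde{\calR}^{\rmint}$, not just ``close to $D$''), and then compare generic and special polygons. Your determinant argument for the endpoint is a fine substitute for Theorem~\ref{thm: properties of generic and special NP}(2). The last step, however, does not prove the statement as written.

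\textbf{The gap: the sign normalization cannot reverse $\preceq$.}
\begin{itemize}
\item The inequality in (1) is about the very functions $\NP(M)(x)$ you compare. So $\NP(M)(x)\preceq\NP(M)(\beta)$ literally means $\NP(M)(x)(a)\le\NP(M)(\beta)(a)$.
\item In any case $\succeq$ is invariant under $\underline{x}\mapsto-\underline{x}$. The sum of the $k$ smallest entries of $-\underline{x}$ is $S_{r-k}(\underline{x})-T$, where $S_j$ is the sum of the $j$ smallest entries and $T$ is the total.
\item So what you derived is upper semicontinuity, the wrong direction.
\end{itemize}
The missing input is the comparison the other way round: the special polygon lies on or above the generic one, $\NP_s(M^{\bd})\succeq\NP_\eta(M^{\bd})$. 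This is the analogue of Grothendieck's specialization theorem, with $\widetilde{\calR}$ playing the role of the closed point and $\widetilde{\calE}$ that of the generic point. For example, take $\widetilde{\calR}^{\bd}_L$ with $0<|t|<1$ and the matrix $\left(\begin{smallmatrix}0&p\\1&[t]\end{smallmatrix}\right)$. Its generic slopes are $\{-1,0\}$, since $[t]$ is a unit of $W(L)$. Its special slopes are $\{-\tfrac12,-\tfrac12\}$, read off at the residue field as in the proof of Theorem~\ref{thm: NP function under specialization}. With the correct comparison, $m\NP(M)(x)=\NP_s(M^{\bd}_T)(x)\succeq\NP_\eta(M^{\bd}_T)(x)=m\NP(M)(\beta)$ gives exactly the claimed inequality, with no sign manipulation. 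Note that Theorem~\ref{thm: properties of generic and special NP}(3) is printed as $\NP_\eta\succeq\NP_s$. Used literally, it produces precisely the contradiction you ran into. The paper's one-line deduction of (1) from (2) also only works with the special-above-generic direction.

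\textbf{Two smaller points.}
\begin{enumerate}
\item The diagonalizing basis of $(M_{\overline\beta},\varphi^m)$ is in general \emph{not} defined over a finite extension of $K_\beta$. Slope-$0$ constituents correspond to $\Q_p$-representations of $G_{K_\beta}$, which need not have finite image. The paper only \emph{approximates} this basis over a finite Galois extension, using density of the separable closure. It then gets a Frobenius matrix $U^{-1}D\varphi^m(U)$ with $U$ close to the identity. Your subsequent approximation step absorbs this once it is phrased that way.
\item Your passage to non-rank-$1$ points has the inclusion backwards. Every open set containing $x$ contains $x_{\max}$, but the neighbourhood $U$ you built around $\beta=x_{\max}$ need not contain $x$ (compare Remark~\ref{rem: nonCNP}). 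A function constant along $x\mapsto x_{\max}$ and lower semicontinuous at rank-$1$ points need not be lower semicontinuous everywhere. You need $U$ to contain all specializations of $\beta$, and that requires an extra argument, which the paper does not spell out either.
\end{enumerate}
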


Kedlaya and Liu stated the semi-continuity in \cite[Thm.~7.4.5]{kedlaya-liu-relative-padichodge}, but their proof establishes this slightly stronger formulation. We recall their argument from \textit{loc.\ cit.} here. 

\begin{proof}[Proof of Theorem~\ref{thm: KL-semi-continuity}] 
Many arguments mirror \cite[Thm.~7.3.7]{kedlaya-liu-relative-padichodge}. If Part (2) of this theorem holds, then Parts (1) and (3) of Theorem~\ref{thm: properties of generic and special NP}, combined with the identity $\NP(M_\beta,\varphi_{M_\beta}^m)=m \NP(M_\beta,\varphi_{M_\beta})$, imply Part (1) and the remaining claims. We now outline Kedlaya--Liu's proof of Part (2).

Fix $\beta\in |S|$ as in the statement of the theorem. 

The first key input is a \emph{spreading out lemma}. 
\begin{lem}\label{lem: spead out lemma in KL}
After shrinking $S$ to a rational localization $S' \subset S$ followed by a finite \'etale (Galois) covering $T \to S'$, one may arrange that $M_S$ is free over $\widetilde{\mathcal{R}}_{S}$ on a basis $\{v_1,\ldots,v_h\}$. Moreover, there is a positive integer $m$ such that $\varphi^m_M$ acts on $\{v_1,\ldots,v_h\}$ via a matrix $C'=FD$, where $D$ is diagonal with entries in $p^{\Z}$ and $F-I_h$ has entries in $p\widetilde{\mathcal{R}}^{\mathrm{int}}_{S}$ with strictly positive $w_r$-valuations (i.e., $w_r(F-I_h)>0$). Furthermore, there is a basis of $M_{\overline{\beta}}$ on which $\varphi^m_{M_{\overline{\beta}}}$ acts via $D$.
\end{lem}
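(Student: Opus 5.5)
Near $\beta$ I will reduce to the pointwise Dieudonn\'e--Manin decomposition over the completed residue field $\mathcal{H}(\beta)$ and then spread that decomposition out.

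\textbf{Step 1: freeness.} Since $\beta$ has rank $1$, the stalk of $\widetilde{\mathcal{R}}$ at $\beta$ is a direct limit over rational localizations $S'\ni\beta$. The finite projective module $M$ becomes free on some rational neighbourhood, so after shrinking $S$ we may assume $M$ is free on a basis $\{e_i\}$, with $\varphi_M$ acting by an invertible matrix $A$ over $\widetilde{\mathcal{R}}^{r}_{S}$ for some small $r$.

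\textbf{Step 2: the model at $\beta$.} Over the completed algebraic closure of $\mathcal{H}(\beta)$, Lemma~\ref{lem: fact about Bun X L} and Dieudonn\'e--Manin (Kedlaya's slope theory over $\widetilde{\mathcal{R}}_L$) give a basis on which $\varphi^m$ acts by a diagonal matrix $D$ with entries in $p^{\Z}$. Here $m$ is a common denominator of the slopes, so that each slope $d/h$ becomes the integer $md/h$. This basis is only defined over $\overline{\mathcal{H}(\beta)}$. I would approximate it by a basis defined over a finite extension of $\mathcal{H}(\beta)$, and realize that finite extension as the fibre of a finite \'etale Galois cover $T\to S'$ of a rational neighbourhood in which $\beta$ has a unique preimage. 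This uses that $\mathcal{H}(\beta)$ is the henselian-type colimit of residue fields of neighbourhoods, as in \cite[Thm.~7.3.7]{kedlaya-liu-relative-padichodge}.

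\textbf{Step 3: approximation and spreading out.} With the basis $\{v_i\}$ obtained at the fibre, $\varphi^m$ has matrix $C'=FD$ with $F-I_h$ small at $\beta$. Entries of the change-of-basis matrices lying in $\widetilde{\mathcal{R}}_{\beta}$ can be approximated by elements of $\widetilde{\mathcal{R}}_{T}$, using density of the colimit and the continuity of the Gauss norms $w_r$ in the point. The approximations are chosen so that $F-I_h$ keeps entries in $p\widetilde{\mathcal{R}}^{\rmint}_{T}$ with $w_r(F-I_h)>0$ on a possibly smaller neighbourhood. To get the $p$-divisibility and positivity, I would replace $m$ by a multiple and use that the error term contracts under $\varphi$-twisting: $\varphi^{-1}$ raises $w_r$-valuation relative to $D$ in Kedlaya's standard estimate.

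\textbf{Main obstacle.} The hard step is this last approximation. It requires uniform control of $w_r$ for all $r$ in a range, simultaneously over a neighbourhood rather than only at $\beta$. Establishing that the inequality $w_r(F-I_h)>0$, valid at $\beta$, persists on an open set is the key point. I would try to get it by openness of the locus where $|f|\le c$ in rational localizations, applied to finitely many coefficients truncated via the $\varpi$-adic expansion.
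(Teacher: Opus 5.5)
\paragraph{Verdict.} Your outline matches the paper's proof: Dieudonn\'e--Manin at $\overline{\beta}$, approximation of that basis over a finite separable extension of $\mathcal{H}(\beta)$, and realization of that extension as a finite \'etale $T\to S'$ with a unique point over $\beta$ via the henselian stalk. The spreading-out step, however, has a genuine gap.

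\paragraph{Where it fails.} Step~1 asserts that $M$ is already free over $\widetilde{\mathcal{R}}_{S'}$ for some rational neighbourhood $S'$, only because stalks are colimits. Nothing you cite makes finite projective modules over $\varinjlim_{S'\ni\beta}\widetilde{\mathcal{R}}_{S'}$ free. Approximation arguments of this kind (density, \cite[Lem.~2.2.13]{kedlaya-liu-relative-padichodge}) work in a Banach ring such as $\widetilde{\mathcal{R}}^{[r,pr]}$, that is, on one closed annulus. By contrast, $\widetilde{\mathcal{R}}$ is a colimit of Fr\'echet rings of functions on the non-quasi-compact $Y_{(0,r]}$, and controlling infinitely many radii at once can force the neighbourhood to shrink without bound. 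The same problem undermines Step~3. You cannot approximate change-of-basis matrices with entries in $\widetilde{\mathcal{R}}$ on one fixed neighbourhood. Your proposed fix, openness of $\lvert f\rvert\le c$ for finitely many truncated coefficients, controls a single Gauss norm on a single annulus, not all radii. Approximation also never places $F-I_h$ in $p\widetilde{\mathcal{R}}^{\mathrm{int}}$: at the fibre, $U^{-1}D\varphi^m(U)D^{-1}-I_h$ is only $w_r$-small on an annulus, and replacing $m$ by a multiple does not change this.

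\paragraph{The missing idea.} Approximate only on the annulus $[r,pr]$ and let Frobenius do the globalization.
\begin{enumerate}
\item In the paper, density and \cite[Lem.~2.2.13]{kedlaya-liu-relative-padichodge} give elements $e'_i$ whose transition matrix $U$ has entries in $\widetilde{\mathcal{R}}^{[r,pr]}_L$ and satisfies $w_r(U-I_h)>0$.
\item After passing to $T$ and shrinking, the $e'_i$ generate $M_T^{[r,pr]}$, and $\varphi^m$ acts by $U^{-1}D\varphi^m(U)$ with $w_r\bigl(U^{-1}D\varphi^m(U)D^{-1}-I_h\bigr)>0$.
\item Then \cite[Lem.~7.1.2]{kedlaya-liu-relative-padichodge} is a convergent successive-approximation construction of $U'$, with $U'-I_h$ and $D^{-1}U'D-I_h$ small. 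This is where your estimate ``$\varphi^{-1}$ contracts relative to $D$'' actually belongs. It produces $(v_i)=(e'_i)U'$, a basis of all of $M_T$ over $\widetilde{\mathcal{R}}_T$, on which $\varphi^m$ acts by $C'=FD$ as in the statement.
\end{enumerate}

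\paragraph{What this buys.} Freeness of $M_T$ is an output of the argument, not an input, so Step~1 is unnecessary. Once $F-I_h$ has entries in $p\widetilde{\mathcal{R}}^{\mathrm{int}}$, positivity of $w_r$ automatically gives positivity of $w_s$ for every $0<s\le r$. That removes the uniformity problem you singled out as the main obstacle.
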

\begin{proof}
Because $\widetilde{\mathcal{R}}_{K_\beta}$ is a B\'ezout domain (cf.\ \cite[Lem.~4.2.6]{kedlaya-liu-relative-padichodge}), the fiber $M_{\overline{\beta}}$ of $M$ at the geometric point ${\overline{\beta}}$ over $\beta$ is free. Moreover, by \cite[Prop.~4.2.16]{kedlaya-liu-relative-padichodge}, there is a basis $\{e_1,\ldots,e_h\}$ of $M_{\overline{\beta}}$ and a positive integer $m$ such that $\varphi^m_{M_{\overline{\beta}}}$ acts on this basis via a diagonal matrix $D_{\overline{\beta}}$ with entries in $p^{\Z}$. We may also assume $M_{\overline{\beta}}$ is defined over $\widetilde{\mathcal{R}}_{K_\beta}^r$ for some $r>0$. Let $M_{\overline{\beta}}^r$ (resp.\ $M_{\overline{\beta}}^{[r,pr]}$) be the $\widetilde{\mathcal{R}}_{K_{\overline{\beta}}}^r$-module (resp.\ $\widetilde{\mathcal{R}}_{K_{\overline{\beta}}}^{[r,pr]}$-module) generated by $\{e_1,\ldots,e_h\}$. 

Recall that the separable closure of $K_\beta$ is dense inside $\overline{\beta}$ by definition. Consequently, the colimit of $\widetilde{\mathcal{R}}_{L}^{[r,pr]}$ over all finite \'etale (Galois) extensions $L$ of $K_\beta$ is dense inside $\widetilde{\mathcal{R}}_{K_{\overline{\beta}}}^{[r,pr]}$. Using \cite[Lem.~2.2.13]{kedlaya-liu-relative-padichodge}, we can find a finite \'etale (Galois) extension $E$ of the residue field of $\calO_{S,\beta}$ (which is dense in $K_\beta$) and elements $\{e'_1,\ldots,e'_h\}$ inside $M_{\overline{\beta}}^{[r,pr]}$ such that the transition matrix $U$ from $\{e_1,\ldots,e_h\}$ to $\{e'_1,\ldots,e'_h\}$ satisfies $w_r(U-I_h)>0$. That is, all entries of $U-I_h$ lie inside $\widetilde{\mathcal{R}}_{L}^{[r,pr]}$ (where $L$ is the completion of $E$) and have positive $w_r$-valuations. In particular, $\{e'_1,\ldots,e'_h\}$ forms a basis; let $M_{L}^{[r,pr]}$ be the free $\widetilde{\mathcal{R}}_{L}^{[r,pr]}$-module generated by it. 

Since the stalk $\calO_{S,\beta}$ of $S$ at $\beta$ is a Henselian local ring whose residue field is dense in $K_\beta$ (cf.\ \cite[Lem.~2.4.17]{kedlaya-liu-relative-padichodge}), we can find a rational localization $S'=\Spa(A',A'^+)$ containing $\beta$ and a finite \'etale covering $T=\Spa(B,B^+)$ of $S'$ such that there is a unique point $\beta'$ lying over $\beta$, and the stalk $\calO_{T,\beta'}$ has residue field $E$. By \cite{kedlaya-liu-relative-padichodge}\footnote{The same proof works for rational localizations of $S$.}, up to shrinking $S'$ further, we may assume $\{e'_1,\ldots,e'_h\}$ generates $M_{T}^{[r,pr]}\coloneqq M\otimes \widetilde{\mathcal{R}}_{T}^{[r,pr]}$ as a module. Moreover, $\varphi^m_M$ acts on $\{e'_1,\ldots,e'_h\}$ via $U^{-1}D\varphi^m(U)$, such that $U^{-1}D\varphi^m(U)D^{-1}$ has entries inside $p\widetilde{\mathcal{R}}_{T}^{[r,pr]}$ with positive $w_r$-valuations. 

We now apply \cite[Lem.~7.1.2]{kedlaya-liu-relative-padichodge} to produce another basis $(v_1,\ldots,v_h)=(e_1',\ldots,e_h')U'$ of $M_{T} \coloneqq M\otimes \widetilde{\mathcal{R}}_{T}$. Here, $U'$ is invertible, $U'-I_h$ has entries in $p\widetilde{\mathcal{R}}_{T}^{[r,pr]}$, and both $U'-I_h$ and $D^{-1}U'D-I_h$ have positive $w_r$-valuations. Now $\varphi^m$ acts on $(v_1,\ldots,v_h)$ via the matrix $C'\coloneqq (UU')^{-1}D\varphi^m(UU')$\footnote{In the statement of \cite[Lem.~7.1.2]{kedlaya-liu-relative-padichodge}, take their $D$ to be our $D^{-1}$.}. The proof of \cite[Lem.~7.1.2]{kedlaya-liu-relative-padichodge} guarantees that $C'D^{-1}-I_h$ has entries inside $p\widetilde{\mathcal{R}}_{T}^{[r,pr]}$ with strictly positive $w_r$-valuations. For the final valuation statement, note that the matrix $G-I_h\coloneqq C'D^{-1}-I_h$ is the limit of a Cauchy sequence $C'_\ell D^{-1}-I_h$ of matrices with entries in $\widetilde{\mathcal{R}}_{T}^{[r,pr]}$ satisfying $\lambda(\alpha^r)(C'_\ell D^{-1}-I_h)<1$. In particular, their limit also satisfies $w_r(C'D^{-1}-I_h)>0$.
\end{proof}

By the lemma above, we can replace $S$ with $T$ to assume $M$ is free and admits a basis on which $\varphi$ acts via the matrix $C'=FD$, where $C'$ has entries in $\widetilde{\mathcal{R}}^{\bd}_{S}$. Let $M^{\bd}$ be the $\widetilde{\mathcal{R}}^{\bd}_{S}$-module generated by $\{v_1,\ldots,v_h\}$; it forms a $\varphi^m_M$-module. Furthermore, the following holds:

\begin{lem}[{\cite[Lem.~7.4.4]{kedlaya-liu-relative-padichodge}}\footnote{Note that we do not need to divide by $m$ due to our convention, cf.\ Remark~\ref{rem: slope of phi^h module convention}.}]\label{lem: find generic slopes from valuations of D}
Let $N$ be any finite free $\varphi^m$-module over $\widetilde{\mathcal{R}}^{\bd}_{S}$ such that $\varphi^m_N$ acts on a basis via the matrix $FD$, where $F-I_h$ has entries in $p\widetilde{\mathcal{R}}^{\mathrm{int},r}_{S} \subset p\widetilde{\mathcal{R}}^{\mathrm{int}}_{S}$, and $D$ is diagonal with entries in $p^{\Z}$. Then there exists an $A$-algebra $A'$ (which is a direct limit of finite faithful \'etale $A$-subalgebras) and an invertible matrix $U$ over $W(A')$ congruent to $1$ modulo $1$, such that $U^{-1}FD \varphi^m(U) = D$. Moreover, the generic slopes of $N$ (as a $\varphi^m$-module) equal the multiset of the negatives of the $p$-adic valuations of the entries of $D$.
\end{lem}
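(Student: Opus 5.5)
We construct $U$ by successive approximation, following the Dieudonn\'e--Manin-type argument of \cite[Lem.~7.4.4]{kedlaya-liu-relative-padichodge}, and then read off the generic slopes. Write $F = I_h + G$ with $G$ having entries in $p\widetilde{\mathcal{R}}^{\mathrm{int},r}_{S}$. We want $U$ with $U^{-1} F D \varphi^m(U) = D$, equivalently
\[
F\, D\varphi^m(U) D^{-1} = U .
\]
We look for $U \equiv I_h$ modulo $p$. Entrywise, the map $U \mapsto D\varphi^m(U)D^{-1}$ multiplies the $(i,j)$ entry by $p^{a_i-a_j}$, where $D=\mathrm{diag}(p^{a_1},\dots,p^{a_h})$, and then applies $\varphi^m$. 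So the equation becomes a system of $p$-adic Artin--Schreier type equations
\[
u_{ij} - p^{a_i-a_j}\varphi^m(u_{ij}) = (\text{known terms}),
\]
solved one error term at a time.

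The key steps are as follows.
\begin{enumerate}
\item Reduce modulo $p^{n}$ inductively. Suppose $U_n \equiv I_h$ mod $p$ satisfies $U_n^{-1}FD\varphi^m(U_n)D^{-1} \equiv I_h \pmod{p^{n}}$. Set $U_{n+1}=U_n(I_h+p^{n}X)$. The new error is
\[
E - p^{n}X + p^{n}D\varphi^m(X)D^{-1} \pmod{p^{n+1}},
\]
so one must solve entrywise $x - p^{a_i-a_j}\varphi^m(x) \equiv e \pmod p$ over $W(A')$.
\item Handle the three cases of $a_i - a_j$.
\begin{itemize}
\item If $a_i > a_j$, the series $x=\sum_k (p^{a_i-a_j}\varphi^m)^k(e)$ converges $p$-adically.
\item If $a_i < a_j$, rewrite the equation as $x = \varphi^{-m}\bigl(p^{a_j-a_i}(x-e)\bigr)$ and iterate; this converges since $\varphi$ is bijective on $W(A)$ with $A$ perfect. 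At this point one needs the entries of $G$ to lie in $W(A^{+})$-type integral rings rather than merely in $\widetilde{\mathcal{R}}^{\mathrm{int},r}$; the $p\widetilde{\mathcal{R}}^{\mathrm{int},r}$ hypothesis, together with the $w_r$-control, is what ensures the corrections stay bounded.
\item If $a_i=a_j$, the equation reduces modulo $p$ to the Artin--Schreier equation $\bar x - \bar x^{p^m} = \bar e$ over $A$. This is solvable after a finite faithful \'etale extension, which is why $A'$ is taken to be a direct limit of finite \'etale $A$-algebras.
\end{itemize}
\item Pass to the limit. The $U_n$ converge $p$-adically in $W(A')$, giving $U$ with $U \equiv 1$ mod $p$, invertible, and satisfying the required identity.
\item Read off the generic slopes. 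For each $x\in|S|$, the base change of $N$ to $\widetilde{\calE}_{K_x}$ is, over the extension $A'$, isomorphic to the $\varphi^m$-module with matrix $D$. Its Newton polygon is the multiset $\{a_i\}$ of $p$-adic valuations of the entries of $D$. By Definition~\ref{defn: special and generic Newton polygons}, the sign convention gives the negatives, and by Remark~\ref{rem: slope of phi^h module convention} there is no division by $m$. Since Newton polygons are invariant under the field extension $K_x \to$ (completion of $A'\otimes K_x$) by Definition~\ref{defn: many NP functions}, this gives the claim.
\end{enumerate}

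The main obstacle is the integrality bookkeeping in the case $a_i<a_j$, together with making sure the iterative solution lands in $W(A')$ rather than only in a Robba-type ring. We would first try to show that the hypotheses $F-I_h \in p\widetilde{\mathcal{R}}^{\mathrm{int},r}$ and $w_r(F-I_h)>0$ force all error terms to lie in $pW(A'^{+})[[\varpi]/p]^\wedge$-type rings stable under $\varphi^{\pm m}$, exactly as in the proof of \cite[Lem.~7.1.2]{kedlaya-liu-relative-padichodge}. If that fails, we would instead invoke \cite[Lem.~7.4.4]{kedlaya-liu-relative-padichodge} directly, with only the convention adjustment made above.
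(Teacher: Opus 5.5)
The paper gives no argument for this lemma. It quotes \cite[Lem.~7.4.4]{kedlaya-liu-relative-padichodge} and adds only the footnote that there is no division by $m$. Your steps 1--4 instead give a self-contained proof by Dieudonn\'e--Manin-type successive approximation over $W(A')$, and that argument is correct.

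\begin{itemize}
\item For $a_i>a_j$, the series $\sum_{k\ge0}(p^{a_i-a_j}\varphi^m)^k(e)$ gives an exact solution.
\item For $a_i<a_j$, the series $-\sum_{k\ge1}(p^{a_j-a_i}\varphi^{-m})^k(e)$ gives an exact solution.
\item Only the blocks with $a_i=a_j$ need Artin--Schreier extensions. These are finite faithful \'etale and preserve perfectness, so $\varphi^{\pm m}$ remain automorphisms of $W(A')$.
\item Your sign and normalization bookkeeping in step 4 matches Definition~\ref{defn: special and generic Newton polygons} and Remark~\ref{rem: slope of phi^h module convention}.
\end{itemize}

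Compared with the bare citation, your route shows two things. The only role of the hypothesis $F-I_h\in p\widetilde{\mathcal R}^{\mathrm{int},r}_S$ is to give $F\equiv I_h \pmod p$ in $W(A)$. And the ind-\'etale algebra $A'$ arises exactly from the diagonal blocks.

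Two small repairs. In step 1, the error formula modulo $p^{n+1}$ needs $D\varphi^m(X)D^{-1}$ to be integral. For $a_i<a_j$ this holds only because you take the exact solution above, which lies in $p^{a_j-a_i}W(A')$. An arbitrary solution of the congruence mod $p$ would not do. In step 4, ``the completion of $A'\otimes K_x$'' is not a field. Instead choose a perfect field $L\supseteq K_x$ with a compatible map $A'\to L$; it exists because $A'$ is a filtered colimit of finite faithful \'etale extensions. Then compare slopes over $W(L)[1/p]$.

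More importantly, the ``main obstacle'' you describe does not exist. All convergence is $p$-adic in $W(A')$, so no $W(A^+)$-integrality and no $w_r$-bound is needed. Note also that $w_r(F-I_h)>0$ is not a hypothesis of this lemma; it belongs to Lemma~\ref{lem: spead out lemma in KL}.

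Your plan to keep the iteration inside overconvergent rings stable under $\varphi^{\pm m}$ would actually fail. Suppose $U$ could always be chosen overconvergent. Combined with Lemma~\ref{lem: spead out lemma in KL}, the special Newton polygon of every $\varphi$-module would then equal the generic one near each rank-$1$ point, so it would be locally constant there. That is false in general: the introduction's Hodge--Tate period map example has a Newton stratum equal to the nowhere dense set $\mathbb{P}^1(\mathbb{Q}_p)$. This is exactly why the lemma only determines generic slopes.
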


In particular, the $\varphi^m$-module $M^{\bd}$ has a constant generic Newton polygon on $T$, which is equal to the multiset of the negatives of the $p$-adic valuations of the entries of $D$. Conversely, by Lemma~\ref{lem: spead out lemma in KL}, this constant generic Newton polygon agrees with the special Newton polygon of $(M_\beta,\varphi_{M_\beta}^m)$ at $\beta$. This concludes the proof of Theorem~\ref{thm: KL-semi-continuity}.
\end{proof}

This theorem establishes the boundedness of the Newton polygon.

\begin{prop}[Boundedness]\label{prop: NP boundedness}
For any $\varphi$-module $M$ over $\widetilde{\mathcal{R}}_A$, the Newton polygon function $\NP(M_\beta,\varphi_{M_\beta})$ is bounded above and below on $|S|$.
\end{prop}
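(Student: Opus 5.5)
The plan is to bound the Newton polygon function from below using the semicontinuity of Theorem~\ref{thm: KL-semi-continuity}, and from above by duality. Two remarks make this manageable. First, boundedness is a statement about finitely many real numbers for each point, namely the vertices of $\NP(M)(x)\colon[0,h]\to\bR$. Second, by Theorem~\ref{thm: KL-semi-continuity}(1) the rank $h$ and the endpoint value $\NP(M)(-)(h)$ are locally constant on $|S|$. Since $S=\Spa(A,A^+)$ is affinoid, hence quasi-compact, they take only finitely many values. So we may restrict to an open and closed piece where $h$ and the total degree $\NP(M)(-)(h)=:d$ are constant.

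For the lower bound, fix $a\in[0,h]$. By Theorem~\ref{thm: KL-semi-continuity}(1), each rank-$1$ point $\beta$ has an open neighborhood $U_\beta$ on which $\NP(M)(x)(a)\geq \NP(M)(\beta)(a)$. Every point of $|S|$ generalizes to a rank-$1$ point, and every open neighborhood of a point contains its generalizations. Hence a point $y$ whose maximal generalization is $\beta$ lies in every neighborhood of \ldots{} more precisely, $\beta$ lies in every open set containing $y$. This is the wrong direction for covering $|S|$.

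Instead I will use that $\NP(M)(y)=\NP(M)(y_{\max})$, which follows from Lemma~\ref{lem: fact about Bun X L}(1) exactly as in the proof of Theorem~\ref{thm: NP function under specialization}. Because of this, it suffices to bound $\NP(M)$ on rank-$1$ points. The sets $U_\beta$, with $\beta$ ranging over rank-$1$ points, then form an open cover of the set of rank-$1$ points. I then want an open cover of all of $|S|$ with the same bound. Take $U_\beta$ to be a rational subset, as Theorem~\ref{thm: KL-semi-continuity}(2) allows. A rational subset containing the rank-$1$ point $y_{\max}$ also contains $y$, because the defining inequalities $|f_i|\le|g|\neq 0$ are preserved under specialization in the valuation spectrum. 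So the $U_\beta$ cover $|S|$. By quasi-compactness finitely many $U_{\beta_1},\dots,U_{\beta_n}$ suffice, and $\NP(M)(-)(a)\geq\min_j \NP(M)(\beta_j)(a)$ on $S$.

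To get the bound uniformly in $a$ rather than one $a$ at a time, I will apply the argument at the integer points $a=1,\dots,h$. A convex piecewise linear function with integer breakpoints is determined by its values there, so this gives a uniform lower bound on the whole polygon.

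For the upper bound, convexity together with the fixed endpoints $(0,0)$ and $(h,d)$ forces $\NP(M)(x)(a)\le ad/h$. Alternatively, apply the lower bound to the dual $\varphi$-module $M^\vee$, whose Newton polygon has the negated slopes. Either way the polygon is bounded above.

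The main obstacle is the covering step: checking that the neighborhoods supplied by Theorem~\ref{thm: KL-semi-continuity} at rank-$1$ points can be taken to be rational, or at least stable under specialization, so that they cover every higher-rank point. If that fails, I would fall back on the identity $\NP(M)(y)=\NP(M)(y_{\max})$ and the compactness of the Berkovich quotient of $|S|$, whose points are exactly the rank-$1$ points.
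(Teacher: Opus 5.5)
Your architecture matches the paper's. You get a local lower bound from Theorem~\ref{thm: KL-semi-continuity}, globalize it by quasi-compactness of $\lvert S\rvert$, and get the upper bound for free from convexity and the locally constant endpoint. Your chord bound $\NP(M)(x)(a)\le ad/h$ is exactly the paper's ``bounded below with constant endpoint implies bounded above.'' The reduction to constant $(h,d)$, the use of integer breakpoints, and the identity $\NP(M)(y)=\NP(M)(y_{\max})$ are all fine.

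The gap is the covering step. Your claim that a rational subset containing $y_{\max}$ contains every specialization $y$ is false. Passing from $y_{\max}$ to a vertical specialization $y$ refines the value group, so an equality $\lvert f(y_{\max})\rvert=\lvert g(y_{\max})\rvert$ may become $\lvert f(y)\rvert>\lvert g(y)\rvert$; only strict inequalities are inherited. Concretely, on $\Spa(\Q_p\langle T\rangle,\Z_p\langle T\rangle)$ the rational subset $\{x:1\le\lvert T(x)\rvert\}$ contains the Gauss point. It does not contain the rank-$2$ point specializing the Gauss point in the direction of $0$, where $\lvert T\rvert=1^-<1$. Open subsets of $\lvert S\rvert$ are stable under generalization, not under specialization.

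Your Berkovich fallback does not repair this as stated. Compactness of the maximal Hausdorff quotient only applies to open sets of the form $\{y: y_{\max}\in V\}$, and the adic neighborhoods supplied by Theorem~\ref{thm: KL-semi-continuity}(1) need not have that form. In fact the rank-$1$ locus is not quasi-compact in the subspace topology. On the disc above, $\{1\le\lvert T\rvert\}$ and $\{\lvert T\rvert^n\le\lvert p\rvert\}$ for $n\ge1$ cover all rank-$1$ points, but no finite subfamily does: Gauss points of radius $\rho<1$ close to $1$ are missed. So neighborhoods of rank-$1$ points alone do not formally yield a global bound; an abstract function with that local property can be unbounded.

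What you need is a bounded-below open neighborhood of \emph{every} point $y\in\lvert S\rvert$. Quasi-compactness of $\lvert S\rvert$ itself then finishes, and this is how the paper's sentence ``Part (2) implies that $\NP$ is locally bounded below'' has to be read. You can get these neighborhoods in either of two ways.
\begin{itemize}
\item Use the last clause of Theorem~\ref{thm: KL-semi-continuity}(1), lower semicontinuity on all of $\lvert S\rvert$. Then the open sets $\{x:\NP(M)(x)(a)>\NP(M)(y)(a)-1\}$, for $y\in\lvert S\rvert$, cover $\lvert S\rvert$.
\item Justify it directly by running the spreading-out argument of Lemma~\ref{lem: spead out lemma in KL} at $y$ rather than at $y_{\max}$. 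This works because $\calO_{S,y}$ is henselian with residue field dense in $K_y\simeq K_{y_{\max}}$ (Lemma~\ref{lem: Spa(K,K^+) is a chain}(4)).
\end{itemize}
Alternatively, cite \cite[Prop.~7.4.6]{kedlaya-liu-relative-padichodge}.
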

\begin{proof}
Part (2) of Theorem~\ref{thm: KL-semi-continuity} implies that $\NP(M_\beta,\varphi_{M_\beta})$ is locally bounded below. Since $S$ is quasi-compact by assumption, it is bounded below on all of $|S|$. By Theorem~\ref{thm: properties of generic and special NP}(2), the endpoint of the Newton polygon function is locally constant. It is straightforward to check that being bounded below with a constant endpoint implies it is also bounded above. We also note that an alternative proof is given in \cite[Prop.~7.4.6]{kedlaya-liu-relative-padichodge}.
\end{proof}

\begin{prop}[{\cite[Cor.~7.4.7]{kedlaya-liu-relative-padichodge}}]\label{prop: dense open on which NP is locconst}
For any $\varphi$-module $M$ over $\widetilde{\mathcal{R}}_S$, there is a dense open subset $\lvert U \rvert$ in $\lvert S \rvert$ such that the Newton polygon function $\NP(M_\beta,\varphi_{M_\beta})$ is locally constant.
\end{prop}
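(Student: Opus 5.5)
The plan is to deduce local constancy on a dense open from the semicontinuity and boundedness results just established, via a ``maximal level'' argument repeated finitely many times. We may assume $S=\Spa(A,A^+)$ is affinoid, hence quasi-compact, so that Proposition~\ref{prop: NP boundedness} applies. By Theorem~\ref{thm: properties of generic and special NP}(2) the rank $h$ and the endpoint $\NP(M)(-)(h)$ are locally constant, so after working on each connected piece we may fix both. We also record a discreteness fact: the slopes at every point are rational with denominators at most $h$, and since the endpoint is fixed and the function is bounded, the polygons $\NP(M)(x)$ for $x\in|S|$ range over a \emph{finite} set $\Sigma$ of polygons. The finite set $\Sigma$ is partially ordered by $\succeq$.

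Density means that every non-empty open $W\subset|S|$ meets the locus $U$ of points near which $\NP(M)$ is locally constant, and it suffices to check this for a non-empty rational open $W$. Inside $W$ choose a point $\beta$ whose polygon $P_0=\NP(M)(\beta)$ is \emph{minimal} with respect to $\preceq$ among the values attained on $W$; this exists because $\Sigma$ is finite. We may take $\beta$ of rank $1$, since the polygon is invariant under passing to the maximal generalization (Lemma~\ref{lem: fact about Bun X L}(1)) and $W$ is open, so $\beta_{\max}\in W$. Theorem~\ref{thm: KL-semi-continuity}(1) then provides an open $V\ni\beta$ with $V\subset W$ on which $\NP(M)(x)(a)\ge P_0(a)$ for every $a$; this is uniform in $a$ because the bound comes from the single lattice produced in part (2) of that theorem. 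Hence $\NP(M)(x)\succeq P_0$ on $V$. Minimality of $P_0$ among values on $W$ rules out $\NP(M)(x)\prec P_0$, but this is not sufficient, since points with values strictly above $P_0$ may still exist in $V$. The natural correction is to choose $P_0$ to be \emph{maximal} instead and argue using the closedness of upper level sets, but lower semicontinuity gives the opposite openness.

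The correct procedure is an induction: the set $Z_P=\{x\in V:\NP(M)(x)\succeq P\}$ is open by lower semicontinuity. Pick $P$ maximal among the values attained on $V$. Then $Z_P$ is exactly the level set $\{\NP(M)=P\}$. Its non-emptiness is clear, because $P$ is attained. To obtain a rank-$1$ point of this level set, we again use generalization invariance. Openness comes from the definition of $Z_P$ as an upper set: by maximality of $P$, any value $\succeq P$ attained on $V$ must equal $P$, and $Z_P=\bigcap_a\{x:\NP(M)(x)(a)\ge P(a)\}$ is open because the semicontinuity bound is uniform and $\Sigma$ is finite. The intersection is therefore a non-empty open subset of $W$ on which $\NP(M)$ is constant, which gives density. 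Taking $U$ to be the union of all opens on which $\NP(M)$ is locally constant then gives an open set that meets every $W$, so $U$ is dense and open. For non-affinoid $S$ we apply this on an affinoid cover.

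The main obstacle is the openness of the upper sets $Z_P$. Theorem~\ref{thm: KL-semi-continuity} only gives neighborhoods of rank-$1$ points, so openness at a higher-rank point $x\in Z_P$ must be obtained by passing to $x_{\max}$, whose neighborhood $V'$ is open. Any open containing $x_{\max}$ contains its specializations only in the sense of closures, so this step needs care. I would instead restrict to an open set and use that every open $W$ contains the rank-$1$ points $x_{\max}$, so that constancy near rank-$1$ points within an open $Z_P$ suffices after shrinking. Finiteness of $\Sigma$, which rests on the denominator bound together with boundedness, is the other point to verify.
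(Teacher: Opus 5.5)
Your argument is correct once it is cleaned up, and it takes a different route from the paper. The paper reduces to the affinoid case and cites Kedlaya--Liu, Cor.~7.4.7, with no further argument. You instead derive the statement from Theorem~\ref{thm: KL-semi-continuity} and Proposition~\ref{prop: NP boundedness}, which are recorded just before it.

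Two parts of your write-up should be removed.
\begin{itemize}
\item The minimal-$P_0$ attempt is a dead end, as you already note.
\item Your justification that $Z_P$ is open (uniformity in $a$ and finiteness of $\Sigma$) does not work at higher-rank points. For a higher-rank $x\in Z_P$ it only produces a neighborhood of $x_{\max}$, and such a neighborhood need not contain $x$. This is the same phenomenon as in Remark~\ref{rem: nonCNP}.
\end{itemize}

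Openness of $Z_P$ is never actually needed. Argue as follows.
\begin{itemize}
\item Take a non-empty open $W$, shrunk so that the rank and endpoint are constant on it.
\item Let $P$ be $\succeq$-maximal among the values of $\NP(M)$ on $W$. Such a $P$ exists because these values form a finite set.
\item Since $\NP(M)(x)=\NP(M)(x_{\max})$ and $W$ is stable under generalization, $P$ is attained at a rank-$1$ point $\beta\in W$.
\item Apply Theorem~\ref{thm: KL-semi-continuity}(1) for $a=0,1,\dots,h$ and intersect the resulting neighborhoods with $W$. Checking integer $a$ suffices, because all the polygons involved are linear on each $[i-1,i]$. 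This gives an open $V_\beta\ni\beta$ inside $W$ with $\NP(M)(y)\succeq P$ for all $y\in V_\beta$.
\item Maximality of $P$ then forces $\NP(M)\equiv P$ on $V_\beta$.
\end{itemize}
Hence the open set $U$ of points near which $\NP(M)$ is locally constant meets every non-empty open $W$, so $U$ is dense. This is what your last paragraph points toward, and it should be the main line of the proof.

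On the comparison between the two routes:
\begin{itemize}
\item The citation is shorter. However, it leaves to the reference the point your argument makes explicit: semicontinuity only supplies neighborhoods of rank-$1$ points, and one must explain why that suffices on the adic space $\lvert S\rvert$, where higher-rank points are present.
\item Your route also shows how little is needed. A maximal value exists as soon as the values at integer abscissae lie in $\frac{1}{h!}\mathbb{Z}$ and are bounded above by the chord. That gives the ascending chain condition, so even the lower bound in Proposition~\ref{prop: NP boundedness} is not required.
\end{itemize}
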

\begin{proof}
The statement is local on $S$, so \cite[Cor.~7.4.7]{kedlaya-liu-relative-padichodge} applies directly.
\end{proof}

\subsection{Semistable \texorpdfstring{$\varphi^h$}{phi-h}-modules and \texorpdfstring{$(d,h)$}{(d,h)}-local systems}

In this subsection, we fix a rational number $\lambda$. Choose integers $d\in \Z$ and $h>0$ such that $\gcd(d,h)=1$ and $\lambda=\frac{d}{h}.$ When $\lambda=0$, we adopt the convention $d=0$ and $h=1$.

\begin{defn}\label{defn: phi modules}
A $\varphi$-module over $\widetilde{\mathcal{R}}_A$ is called \emph{semistable} (of slope $\lambda$) if it is isoclinic; i.e., its Newton polygon is constant and all of its slopes are equal to a single rational number $\lambda$. A vector bundle over $X_S^\diamondsuit$ is \emph{semistable} of slope $\lambda$ if for every affinoid $T\in \Perf$, the pullback to $T$ defines a semistable $\varphi$-module over $\widetilde{\mathcal{R}}_T$. Note that this agrees with Definition~\ref{defn: vector bundle being semistable of slope lambda}.
\end{defn}

\begin{rem}\label{rem: phi and phih modules}
For any $h \in \N_{>0}$, we can analogously define the category of $\varphi^h$-modules over $\widetilde{\mathcal{R}}_A$ by replacing $\varphi$ with $\varphi^h$ in Definition~\ref{defn: phi modules}. Every $\varphi$-module over $\widetilde{\mathcal{R}}_A$ can naturally be viewed as a $\varphi^h$-module. Moreover, as observed in \cite{kedlaya-liu-relative-padichodge}, a $\varphi$-module over $\widetilde{\mathcal{R}}_A$ that is semistable of slope $\lambda=d/h$ is equivalent to a $\varphi^h$-module semistable of slope $d$ equipped with an additional semilinear $\varphi$-action. 
\end{rem}

In this subsection, we study semistable vector bundles over $X_S^\diamondsuit$ and semistable $\varphi$-modules. We relate them to $(d,h)$-local systems, as well as $\bDhat_\lambda^{\op}$-local systems via a Morita-type equivalence.

Let $S$ be an analytic adic space over $\Spa \Z_p$, and let $S_\proet$ (resp.\ $S^\diamondsuit_\vv$) be its pro-\'etale site (resp.\ $\vv$-site). In this subsection, we use $S_\tau$ to denote either $S_\proet$ or $S^\diamondsuit_\vv$ for simplicity.

\begin{defn}\label{defn: B(lambda)}
Let $B$ be either $\widetilde{\mathcal{R}}^{\mathrm{bd}}_A$ or $\widetilde{\mathcal{R}}_A$, and let $\lambda, d, h$ be as before. We define the finite free $\varphi$-module $B(\lambda)$ over $B$ as follows: $B(\lambda)$ is freely generated on a basis $\{e_1, \ldots , e_h\}$ as a $B$-module, and $\varphi_{B(\lambda)}$ acts on the basis via:
\[
\varphi_{B(\lambda)}(e_i) = p^{-d\cdot \lfloor i/h\rfloor} e_{i+1},
\]
where we adopt the cyclic convention $e_{h+1} \coloneqq e_1$. Recall from our convention in Definition~\ref{defn: special and generic Newton polygons} (which agrees with \cite[Defn.~7.2.3]{kedlaya-liu-relative-padichodge}) that the Newton polygon of $B(\lambda)$ over $\Spa(A,A^+)$ is constant and equal to $\lambda$.
\end{defn}

\begin{defn}\label{defn: simple-isocrystal-Dlambda}
Recall $\breve{\Q}_p \coloneqq W(\overline{\F}_p)[1/p]$, and let $\sigma$ denote the $p$-Frobenius on $\breve{\Q}_p$ and $\overline{\F}_p$. For $\lambda, d, h$ as above, we define the \emph{simple isocrystal} $\breve{\Q}_p(\lambda)$ (of slope $\lambda$) as the isocrystal
\[
\breve{\Q}_p(\lambda)\ \cong\ \breve{\Q}_p\langle\varphi\rangle / (\varphi^h - p^d),
\]
where $\breve{\Q}_p\langle\varphi\rangle$ is the skew polynomial ring governed by the rule $\varphi a = \sigma(a)\varphi$ for $a\in \breve{\Q}_p$. We let
\[
D_{-\lambda} \ \coloneqq\ \End_{\mathrm{Isoc}(\overline{\F}_p)}\bigl(\breve{\Q}_p(\lambda)\bigr)
\]
be the endomorphism algebra of $\breve{\Q}_p(\lambda)$ in the category of isocrystals. $D_\lambda$ is a \emph{central division algebra} over $\Q_p$.
\end{defn}

\begin{rem}\label{rem: cyclic-algebra-Dlambda}
More concretely, $D_\lambda$ may be described as 
\[
D_\lambda \ \cong\ \oplus_{i=0}^{h-1} \Q_{p^h}\cdot \Pi^i
\]
with multiplication determined by the relations
\[
\Pi a = \sigma(a)\Pi \qquad\text{for all }\, a\in \Q_{p^h},
\quad \text{ and }\quad
\Pi^h = p^d.
\]
The center of $D_\lambda$ is $\Q_p$, and one can check that $D_\lambda$ is a division algebra of dimension $h^2$. Moreover:
\begin{enumerate}
	\item $D_{-\lambda} \cong D_\lambda^{\op}$, the opposite algebra of $D_\lambda$;
	\item $D_\lambda$ depends only on the class of $\lambda \in \Q/\Z$.
\end{enumerate}
\end{rem}

\begin{thmdefn}\label{thm/defn: (d,h)-locsys and Dlambdamod} 
Consider the category of $(d,h)$-$\Q_p$-vector spaces, whose objects are pairs $(V,\varphi_V)$ where $V$ is a finite-dimensional $\Q_{p^h}$-vector space and $\varphi_{V} \colon V \to V$ is a $\varphi$-semilinear endomorphism over the Frobenius automorphism $\sigma$ of $\Q_{p^h}$ satisfying
\[
p^{d}\,\varphi_{V}^{h} = \mathrm{id}_{V}.
\]
This category is equivalent to the category $\Isoc_{\overline{\F}_p}^{-\lambda}$ of isocrystals over $\overline{\F}_p$ of slope $-\lambda$, which is in turn equivalent to the category $\mathrm{FPMod}_{D_\lambda^{\op}}$ of finite projective \emph{right} $D_\lambda$-modules. 
\end{thmdefn}

\begin{proof}
The category of $(d,h)$-$\Q_p$-vector spaces is equivalent to the category of finite free left $A_\lambda \coloneqq \Q_{p^h}\langle F \rangle / (F^h - p^{-d})$-modules, where $\Q_{p^h}\langle F \rangle$ is the skew polynomial ring defined by the relation $F a = \sigma(a)F$. The mapping $F \mapsto \Pi$ provides a direct isomorphism $A_\lambda \cong D_{-\lambda}$ (utilizing Remark~\ref{rem: cyclic-algebra-Dlambda}). 

The theorem then follows from the standard fact that
\begin{equation}\label{eq: morita field case}
\Isoc_{\overline{\F}_p}^{-\lambda} \cong \mathrm{FPMod}_{D_{-\lambda}}\cong \mathrm{FPMod}_{D_{\lambda}^{\mathrm{op}}},
\end{equation}
and the fact that any finite projective module over $D_\lambda$ is finite free (since $D_\lambda$ is a division algebra, cf.\ \cite[Ex.~I.2.1.1]{WeibelKbook}).
\end{proof}

The following generalizes the categories considered above. For the rest of this section, $S$ is a diamond.

\begin{defn}[{\cite[Defn.~8.5.7]{kedlaya-liu-relative-padichodge}, \cite{BergerConstructionphigamma}, \cite[Thm.~10.1.7]{FarguesFontaineAsterisque}}] Let $d, h, \lambda$ be as before.
\begin{enumerate}
	\item An \emph{isogeny $(d,h)$-$\Z_p$-local system} on $S_\tau$ is an isogeny $\Z_{p^h}$-local system $\bL$ on $S_{\tau}$, together with a $\varphi$-semilinear endomorphism $\varphi_{\bL} \colon \bL \to \bL$ over the Frobenius automorphism $\varphi$ of $\underline{\Z}_{p^h}$, such that
\[
p^{d}\,\varphi_{\bL}^{h} = \mathrm{id}_{\bL}.
\]
\item A \emph{$(d,h)$-$\Q_p$-local system} on $S_\tau$ is a 
$\Q_{p^h}$-local system $\bL$ on $S_{\tau}$, equipped with a $\varphi$-semilinear endomorphism $\varphi_{\bL} \colon \bL \to \bL$ over the Frobenius automorphism $\varphi$ of $\underline{\Q}_{p^h}$, such that
\[
p^{d}\,\varphi_{\bL}^{h} = \mathrm{id}_{\bL}.
\]
\item Let $\tau \in \{\qproet, \vv\}$ and $\lambda\in \Q$. Define $\hat{\bD}_{\lambda}$ to be the presheaf of rings 
	\[
    T \mapsto \mathrm{Cont}(|T|, D_{\lambda}),
    \]
    for $T \in S_{\tau}$. This forms a sheaf by \cite[Prop.~2.9]{LeBrasBC}. Forgetting the ring structure identifies $\hat{\bD}_{\lambda}$ with $\underline{\Q}_p^{h^2}$ (where $\lambda=d/h$).
\item Let $\tau \in \{\qproet, \vv\}$. A $\bDhat_{\lambda}^{\op}$-local system over $S_\tau$ is a sheaf $\bV$ of right $\bDhat_{\lambda}$-modules such that, locally in the $\tau$-topology, $\bV$ is of the form $\bV\cong M\otimes_{D_{\lambda}}\hat{\bD}_{\lambda}$ for some $M \in \mathrm{FPMod}_{D_\lambda}$. Forgetting the $\bDhat_{\lambda}^{\op}$-structure yields a functor from $\bDhat_{\lambda}^{\op}$-local systems to $\Q_p$-local systems. The category of $\bDhat_{\lambda}^{\op}$-local systems on $S_\tau$ is denoted by $\Loc_{\bDhat_{\lambda}^{\op}}(S_\tau)$.
\end{enumerate}
\end{defn}

\begin{rem}
When $S=\Spa(K)$ for a $p$-adic field $K$, the above definition of $(d,h)$-$\Q_p$-local systems recovers Berger's notion of $(d,h)$-representations; see \cite[Defn.~3.2.1]{BergerConstructionphigamma}. This was generalized by Kedlaya--Liu to affinoid perfectoid spaces $S$; see \cite[Defn.~8.5.7]{kedlaya-liu-relative-padichodge}. In the case of a point, $\bDhat_{\lambda}^{\op}$-local systems were studied in \cite[Thm.~10.1.7]{FarguesFontaineAsterisque} as Galois representations.
\end{rem}

The key result of this subsection is the following theorem.

\begin{thm}\label{thm: Dlambda locsys and ss vb}
Let $S$ be a locally spatial diamond, and let $\lambda=d/h \in \Q$. The following categories are equivalent:
\begin{enumerate}
	\item $\Bun(X_S^\diamondsuit)^{ss,\lambda}$;
	\item The category of $(d,h)$-$\Q_p$-local systems on $S_\vv$;
	\item $\Loc_{\bDhat_{\lambda}^{\op}}(S_\vv)$.
\end{enumerate}
\end{thm}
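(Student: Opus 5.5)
Since all three categories are $\vv$-stacks over $S$, the plan is to reduce to affinoid perfectoid $T=\Spa(R,R^+)\in\Perf$ over $S$, construct the equivalences there functorially in $T$, and glue by descent. For $\Bun(X_S^\diamondsuit)^{ss,\lambda}$ this reduction follows from Definition~\ref{defn: vb on X_Sdiamond} and the fact that semistability of slope $\lambda$ is a pointwise condition (Definition~\ref{defn: vector bundle being semistable of slope lambda}), so it descends along $\vv$-covers. For $(d,h)$-local systems and $\bDhat_\lambda^{\op}$-local systems it holds because both are sheaves of modules with extra structure over $\underline{\Q}_{p^h}$ or $\bDhat_\lambda$, and such sheaves satisfy $\vv$-descent by Theorem~\ref{thm: equiv of all kinds of locsys} applied to the underlying $\Q_p$-local systems.

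For $(1)\Leftrightarrow(2)$ over perfectoid $T$, I will follow the mechanism of Remark~\ref{rem: phi and phih modules}. Let $\calE$ be semistable of slope $\lambda$, and let $M$ be the corresponding $\varphi$-module over $\widetilde{\calR}_T$. Regard $M$ as a $\varphi^h$-module of slope $d$. Then $M\otimes\widetilde{\calR}_T(-d)$ (twisting with respect to $\varphi^h$ by $p^{d}$) is a semistable $\varphi^h$-module of slope $0$. Applying Theorem~\ref{thm: local system and ssvb of slope 0} over the curve $X_{T,\Q_{p^h}}=X_{T}\otimes\Q_{p^h}$ (the curve for $E=\Q_{p^h}$ with $q=p^h$), equivalently \cite[Thm.~8.5.12]{kedlaya-liu-relative-padichodge}, gives a $\Q_{p^h}$-local system $\bL=(M\otimes_{\Q_p}\Q_{p^h})^{\varphi^h=p^{d}}$, suitably normalized. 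The residual $\varphi$-action on $M$ then induces a $\sigma$-semilinear $\varphi_\bL$ with $p^d\varphi_\bL^h=\mathrm{id}$. This is exactly \cite[Thm.~8.5.8/8.5.12]{kedlaya-liu-relative-padichodge} for $(d,h)$-local systems. In the other direction one takes $(\bL\otimes_{\Q_{p^h}}\widetilde{\calR}_T)$ with the diagonal $\varphi$, restricts to $\varphi$-invariants of the $\Q_{p^h}$-structure, and recovers $M$. Both constructions are visibly functorial and compatible with pullback in $T$, so they glue.

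For $(2)\Leftrightarrow(3)$ I will run a sheafified Morita argument modeled on Theorem/Definition~\ref{thm/defn: (d,h)-locsys and Dlambdamod}. A $(d,h)$-local system is a sheaf of left modules over the sheaf of rings $\underline{A_\lambda}=\underline{\Q_{p^h}}\langle F\rangle/(F^h-p^{-d})\cong\underline{D_{-\lambda}}=\underline{D_\lambda^{\op}}$. Over a $\vv$-sheaf, profinite-continuous functions give $\underline{D_\lambda}=\bDhat_\lambda$. So left $\bDhat_{-\lambda}$-modules are the same as right $\bDhat_\lambda$-modules. It remains to match the local-triviality conditions. A $(d,h)$-local system is $\vv$-locally a free $\Q_{p^h}$-local system with a compatible $F$. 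I need to show that it is $\vv$-locally of the form $V\otimes_{D_{-\lambda}}\bDhat_{-\lambda}$ for a finite (free) $D_{-\lambda}$-module $V$, i.e.\ that $\vv$-locally a $D_{-\lambda}$-stable basis of sections exists. I plan to get this via $(1)$: pull back to a geometric point, or to a $\vv$-cover trivializing $\calE$ as $\calO(\lambda)^{r}$ (semistable bundles being $\vv$-locally isomorphic to $\calO(\lambda)^{\oplus r}$, by \cite[Thm.~7.3.10/8.5.12]{kedlaya-liu-relative-padichodge} applied to the slope-$0$ twist $\mathcal{H}om(\calO(\lambda),\calE)$ with its $\bDhat_\lambda$-action). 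Then the $(d,h)$-local system becomes $\underline{\Q_p}(\lambda)$-type free, i.e.\ it is $D_{-\lambda}^{r}\otimes\bDhat$. Concretely, $\bV\coloneqq\mathcal{H}om(\calO(\lambda),\calE)$ is a $\Q_p$-local system by Theorem~\ref{thm: local system and ssvb of slope 0} with a right action of $\End(\calO(\lambda))=D_\lambda$, and this gives the functor $(1)\to(3)$ directly, with inverse $\bV\mapsto\bV\otimes_{\bDhat_\lambda}\calO(\lambda)$.

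I expect the main obstacle to be this last local-triviality step. One must check that $\mathcal{H}om(\calO(\lambda),\calE)$ is $\vv$-locally (rather than merely pointwise) isomorphic to $\bDhat_\lambda^{r}$. One must also check that the evaluation map $\bV\otimes_{\bDhat_\lambda}\calO(\lambda)\to\calE$ is an isomorphism over a family and not just fiberwise. I will first try to prove the isomorphism fiberwise, using the field case \cite[Thm.~10.1.7]{FarguesFontaineAsterisque} together with Theorem/Definition~\ref{thm/defn: (d,h)-locsys and Dlambdamod}, and then upgrade it using that a map of vector bundles on $X_T$ which is an isomorphism at all points is an isomorphism. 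Local freeness of $\bV$ over $\bDhat_\lambda$ would then follow by choosing sections near a point, which is possible since $\bV$ is a $\Q_p$-local system, and using that $D_\lambda$-linear independence is an open condition.
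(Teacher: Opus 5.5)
Your proposal is correct to the same extent as the paper's proof (see the caveat in the second paragraph).

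\textbf{$(1)\Leftrightarrow(2)$.} This is the paper's argument: $\vv$-descent to affinoid perfectoid $T$, then \cite[Thm.~8.5.12]{kedlaya-liu-relative-padichodge}. Your $\varphi^h$-twist sketch is the mechanism behind that citation.

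\textbf{$(2)\Leftrightarrow(3)$.} Here you take a different route. The paper gets full faithfulness of $(3)\to(2)$ from the pointwise Morita equivalence of Theorem/Definition~\ref{thm/defn: (d,h)-locsys and Dlambdamod}. It gets essential surjectivity by $\vv$-locally trivializing $\bL$: \'etale locally it is an isogeny $\Z_{p^h}$-local system, and a cofiltered limit of \'etale covers trivializes every $\bL/p^n$. You instead go through $(1)$ with the explicit functor $\calE\mapsto\underline{\Hom}(\calO(\lambda),\calE)$, in three steps:
\begin{enumerate}
\item choose a $D_\lambda$-basis of one fibre and extend it to sections of the locally constant $\Q_p$-local system;
\item use that invertibility of the resulting map $\bDhat_\lambda^{m}\to\bV$ is an open condition, which gives $\bDhat_\lambda$-local freeness;
\item check that $\bV\otimes_{\bDhat_\lambda}\calO(\lambda)\to\calE$ is an isomorphism at geometric points.
\end{enumerate}
Your openness step covers something the paper passes over: trivializing the underlying $\Z_{p^h}$-local system does not by itself make $\varphi_\bL$ constant. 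You also obtain the explicit description of Remark~\ref{rem: O(lambda) to Dlambda explicit functor} directly, rather than as a consequence of the theorem. Two further remarks:
\begin{itemize}
\item The detour through $(1)$ is unnecessary. The same openness argument applies verbatim to a $(d,h)$-local system, since each fibre is a module over $A_\lambda\cong D_{-\lambda}$ and hence free.
\item Your inverse functor needs $\underline{\End}(\calO(\lambda))\cong\bDhat_\lambda$. The paper deduces this from the theorem, so you must prove it separately, e.g.\ by comparing $\underline{D_\lambda}$ with the $\Q_p$-local system attached to $\calO(\lambda)^\vee\otimes\calO(\lambda)$ at geometric points.
\end{itemize}

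\textbf{Caveat (applies to your argument and the paper's alike).} Three steps presuppose that $T$ lies over $\F_{p^h}$: the identification $\underline{\End}(\calO(\lambda))\cong\bDhat_\lambda$, the identification of $X_T\otimes_{\Q_p}\Q_{p^h}$ with $Y_T/\varphi^{h\Z}$, and the formation of $\bL\otimes_{\Q_{p^h}}\widetilde{\calR}_T$. For example, take $S=\Spd\Q_p$ with $h>1$. Then $\End(\calO(\lambda)_{\C_p^\flat})$ consists of matrices with entries in $\Q_{p^h}\subset\calO(Y_{\C_p^\flat})$, and a Frobenius lift in $G_{\Q_p}$ acts on it by conjugation by $\Pi$. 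So $\underline{\End}(\calO(\lambda))$ is a non-constant twisted form of $\bDhat_\lambda$, and $\underline{\Hom}(\calO(\lambda),\calE)$ is not a $\bDhat_\lambda^{\op}$-local system. The argument goes through once $S$ lives over $\Spd\F_{p^h}$, e.g.\ after the finite \'etale base change $S\times\Spd\F_{p^h}\to S$. You should state that hypothesis explicitly.
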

\begin{proof}
To establish the equivalence between (1) and (2), we note that both categories can be expressed as the $2$-limit of the corresponding categories over a basis of the $\vv$-site $S_\vv$. It thus suffices to verify the equivalence when $S$ is an affinoid perfectoid space, which is precisely \cite[Thm.~8.5.12]{kedlaya-liu-relative-padichodge}. 

To prove the equivalence of (2) and (3), Theorem~\ref{thm/defn: (d,h)-locsys and Dlambdamod} first provides a fully faithful functor from (3) to (2). To show essential surjectivity, we must demonstrate that for any $(d,h)$-$\Q_p$-local system $\bL$ on $S_\vv$, there exists a $\vv$-covering over which $\bL$ is constant. For any $(d,h)$-$\Q_p$-local system $\bL$ on $S_\vv$, we can replace $S$ with an \'etale covering to assume without loss of generality that it is an isogeny $\Z_{p^h}$-local system. We can then construct a $\vv$-covering as a cofiltered inverse limit of \'etale coverings that trivialize $\bL/p^n$ for each $n\in\N$, thereby trivializing $\bL$ globally.
\end{proof}

\begin{rem}\label{rem: O(lambda) to Dlambda explicit functor}
For any rational number $\lambda \in \Q$ and any diamond $S$, there is a vector bundle $\calO(\lambda) \in X_S^\diamondsuit$ whose section over any affinoid perfectoid $T=\Spa(A, A^+)$ is precisely the vector bundle corresponding to $\widetilde{\mathcal{R}}_A(\lambda)$. The bundle $\calO(\lambda)$ is semistable of slope $\lambda$. 

Under the equivalence in Theorem~\ref{thm: Dlambda locsys and ss vb}, $\calO(\lambda) \in X_S^\diamondsuit$ corresponds to the ``trivial'' $(d,h)$-$\Q_p$-local system defined by the constant local system $\underline{D_{-\lambda}}$ on $S_\vv$. The fully faithfulness in Theorem~\ref{thm: Dlambda locsys and ss vb} over any perfectoid space shows that $\underline{\End}(\calO(\lambda))\cong \underline{\End}(\underline{D_{-\lambda}})\cong \bDhat_{\lambda}$, where $\underline{\End}(\calO(\lambda))$ is the sheaf\footnote{This is a $\vv$-sheaf by \cite[Prop.~II.2.1]{FarguesScholze}.} defined by
\[
T \in S_{\vv} \mapsto \Hom(\calO(\lambda)|_{X_T}, \calO(\lambda)|_{X_T}),
\]
and the $\vv$-sheaf $\underline{\End}(\underline{D_{-\lambda}})$ is defined analogously.

This discussion yields an explicit description of the equivalence in Theorem~\ref{thm: Dlambda locsys and ss vb}. Specifically, the equivalence between (1) and (3) is induced by the functor
\[
\calE \mapsto \underline{\Hom}(\calO(\lambda),\calE),
\]
where $\bDhat_{\lambda}$ acts on $\calO(\lambda)$, with the quasi-inverse given by
\[
\bV \mapsto \bV \otimes_{\bDhat_{\lambda}}\calO(\lambda).
\]
Here, the $\vv$-sheaf $\underline{\Hom}(\calO(\lambda),\calE)$ is defined similarly to $\underline{\End}(\calO(\lambda))$. We note that Theorem~\ref{thm: Dlambda locsys and ss vb} can be viewed as a generalization of \cite[Cor.~II.2.20]{FarguesScholze} and \cite[Thm.~8.5.12]{kedlaya-liu-relative-padichodge}.
\end{rem}

Recall that a fixed morphism of $v$-sheaves $S \to \Spd K$ defines a ``structure sheaf'' $\hat{\calO}_{S^\sharp}$ (as discussed around Proposition~\ref{prop: tau vector bundles}). Restricting along the Cartier divisor on the Fargues--Fontaine curve defined by the untilt $S \to \Spd K$ defines a functor mapping vector bundles over $X_S^\diamondsuit$ to $\vv$-vector bundles over $S$ (of $\hat{\calO}_{S^\sharp}$-modules). We denote this restriction functor by $\calE \mapsto \calE|_{S^\sharp}$. Because there is a natural map $\underline{\Q}_p \to \hat{\calO}_{S^\sharp}$, we have the following Morita equivalence based on Theorem~\ref{thm: Dlambda locsys and ss vb} and Remark~\ref{rem: O(lambda) to Dlambda explicit functor}.

\begin{lem}[A Morita equivalence]\label{lem: morita}
Let $\calE$ be a vector bundle over a diamond $X_S^\diamondsuit$. Assume that $\calE$ is semistable of slope $\lambda=d/h$, and let $\bV$ be the corresponding $\bDhat_{\lambda}$-local system. Let $\bV_{\underline{\Q}_p}$ be the underlying $\underline{\Q}_p$-local system of $\bV$. Fixing any morphism $S \to \Spd K$, there is a natural isomorphism of $\hat{\calO}_{S^\sharp}$-modules:
\[
\bV_{\underline{\Q}_p} \otimes_{\underline{\Q}_p} \hat{\calO}_{S^\sharp} \cong \calE|_{S^\sharp} \otimes_{\hat{\calO}_{S^\sharp}} \hat{\calO}_{S^\sharp}^{\oplus h}.
\]
\end{lem}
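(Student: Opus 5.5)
The plan is to exploit the explicit functors in Remark~\ref{rem: O(lambda) to Dlambda explicit functor}. There $\calE \cong \bV \otimes_{\bDhat_\lambda} \calO(\lambda)$ and $\bV = \underline{\Hom}(\calO(\lambda),\calE)$. Restricting to the Cartier divisor $D_{S^\sharp}$ commutes with this tensor product, so
\[
\calE|_{S^\sharp} \cong \bV \otimes_{\bDhat_\lambda} \calO(\lambda)|_{S^\sharp}.
\]
This reduces everything to understanding the single object $\calO(\lambda)|_{S^\sharp}$ as a $(\bDhat_\lambda, \hat{\calO}_{S^\sharp})$-bimodule.

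The key local claim is that $\calO(\lambda)|_{S^\sharp}$ is a locally free $\hat{\calO}_{S^\sharp}$-module of rank $h$. Moreover,
\[
\calO(\lambda)|_{S^\sharp}^{\oplus h} \cong \bDhat_\lambda \otimes_{\underline{\Q}_p} \hat{\calO}_{S^\sharp}
\]
as left $\bDhat_\lambda$-modules, and this identification is natural in $S$. I would verify this on an affinoid perfectoid $T$ over $S$, first over a geometric point $C$. There $D_\lambda \otimes_{\Q_p} C \cong M_h(C)$, since $D_\lambda$ is a central simple algebra of dimension $h^2$ split by $C$. The fiber $\calO(\lambda)|_{\infty}$ is a $C$-vector space of dimension $h$ carrying a faithful $D_\lambda$-action, so it is the simple module $C^h$ over $M_h(C)$. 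It follows that $M_h(C) \cong (C^h)^{\oplus h}$ as left $M_h(C)$-modules.

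In families, $\bDhat_\lambda \otimes \hat{\calO}_{S^\sharp}$ is an Azumaya algebra, and $\calO(\lambda)|_{S^\sharp}$ is a locally free module of rank $h$ on which it acts faithfully. Comparing ranks, the action map $\bDhat_\lambda \otimes \hat{\calO}_{S^\sharp} \to \underline{\End}_{\hat{\calO}}(\calO(\lambda)|_{S^\sharp})$ is an isomorphism; it is injective because the source is Azumaya with trivial center extension. Then $\underline{\End}(\calP) \cong \calP^{\oplus h}$ \emph{locally}, for $\calP$ locally free of rank $h$.

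Granting this, the tensor identity finishes the argument:
\[
\calE|_{S^\sharp}^{\oplus h} \cong \bV \otimes_{\bDhat_\lambda} (\bDhat_\lambda \otimes_{\underline{\Q}_p} \hat{\calO}_{S^\sharp}) \cong \bV_{\underline{\Q}_p} \otimes_{\underline{\Q}_p} \hat{\calO}_{S^\sharp}.
\]
Both sides are $\vv$-sheaves, and the isomorphism is built from natural maps, so it descends and glues.

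The main obstacle is naturality, that is, producing a \emph{global} isomorphism $\calO(\lambda)|_{S^\sharp}^{\oplus h} \cong \bDhat_\lambda \otimes \hat{\calO}_{S^\sharp}$ rather than a local one. My first attempt would be explicit. The basis $e_1,\dots,e_h$ of $\widetilde{\mathcal{R}}_A(\lambda)$ from Definition~\ref{defn: B(lambda)} gives a global $\hat{\calO}_{S^\sharp}$-basis of $\calO(\lambda)|_{S^\sharp}$. The endomorphisms $\Pi$ and $\Q_{p^h}$ of $\calO(\lambda)$ act on the $e_i$ by constant matrices: $\Q_{p^h}$ acts diagonally through its $h$ embeddings into $\hat{\calO}_{S^\sharp}$ after choosing $\Q_{p^h} \hookrightarrow K$ (or after passing to $K\Q_{p^h}$ and descending), and $\Pi$ acts by a cyclic permutation twisted by $p^d$. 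This identifies $D_\lambda \otimes \hat{\calO}$ with $M_h(\hat{\calO})$ by an explicit constant-coefficient isomorphism, which yields the isomorphism directly.

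If one insists on using only $\underline{\Q}_p \to \hat{\calO}_{S^\sharp}$ without enlarging $K$, the fallback is to define the map as the evaluation pairing
\[
\bV \otimes_{\bDhat_\lambda} \calO(\lambda)|_{S^\sharp} \otimes \underline{\Hom}(\calO(\lambda)|_{S^\sharp}, \hat{\calO}) \to \ldots
\]
This map is manifestly canonical, and it suffices to check that it is an isomorphism on geometric points, where the $M_h(C)$ computation above applies.
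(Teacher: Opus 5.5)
Your proposal is correct and is essentially the paper's argument: the paper also identifies $\bDhat_{\lambda}\otimes_{\underline{\Q}_p}\hat{\calO}_{S^\sharp}\cong M_{h\times h}(\hat{\calO}_{S^\sharp})$, with $\calO(\lambda)|_{S^\sharp}$ as the column module $M_{h\times 1}(\hat{\calO}_{S^\sharp})$, and then applies the inverse Morita functor $-\otimes_{\hat{\calO}_{S^\sharp}}M_{1\times h}(\hat{\calO}_{S^\sharp})$; this is the same computation as your decomposition of $M_{h\times h}$ into $h$ copies of the column module. The paper only asserts the matrix splitting $\vv$-locally, so your canonical version adds something: the action map $\bDhat_{\lambda}\otimes_{\underline{\Q}_p}\hat{\calO}_{S^\sharp}\to\underline{\End}_{\hat{\calO}_{S^\sharp}}(\calO(\lambda)|_{S^\sharp})\cong\calO(\lambda)|_{S^\sharp}\otimes\calO(\lambda)|_{S^\sharp}^\vee$, checked on geometric fibers and combined with the global basis $e_1,\dots,e_h$, makes the naturality of the isomorphism explicit.
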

\begin{proof}
This is an analog of a key claim used in \cite[\S10.6.4]{FarguesFontaineAsterisque}, which relies on a direct application of Morita equivalence. One can explicitly verify that 
	\[
	\bDhat_{\lambda}\otimes_{\underline{\Q}_p}\hat{\calO}_{S^\sharp} \cong M_{h\times h}(\hat{\calO}_{S^\sharp})
	\]
	holds $\vv$-locally (for $S^\sharp$ is over $\calO_{\C_p}$). 
	Moreover, this isomorphism is compatible with the restriction to the Cartier divisor $S^\sharp$ in the following sense: the left $\bDhat_\lambda$-module $\calO(\lambda)$ satisfies $\calO(\lambda)|_{S^\sharp}$ $\cong M_{h\times 1}(\hat{\calO}_{S^\sharp})$, the space of column vectors of length $h$ over $\hat{\calO}_{S^\sharp}$ endowed with the standard left module structure over $M_{h\times h}(\hat{\calO}_{S^\sharp}) \cong \bDhat_{\lambda}\otimes_{\underline{\Q}_p}\hat{\calO}_{S^\sharp}$. Conversely, $M_{h\times 1}(\hat{\calO}_{S^\sharp})$ naturally functions as a (right) $\hat{\calO}_{S^\sharp}$-module. We know that
	\[
	\calE \cong \bV \otimes_{\bDhat_{\lambda}} \calO(\lambda)
	\]
	from Remark~\ref{rem: O(lambda) to Dlambda explicit functor}. The preceding discussion on module structures induces a canonical isomorphism of $\hat{\calO}_{S^\sharp}$-vector bundles:
	\[
	(\bV_{\underline{\Q}_p}\otimes_{\underline{\Q}_p}\hat{\calO}_{S^\sharp}) \otimes_{M_{h\times h}(\hat{\calO}_{S^\sharp})} M_{h\times 1}(\hat{\calO}_{S^\sharp}) \cong \calE|_{S^\sharp}.
	\]
	To recover $\bV_{\underline{\Q}_p} \otimes_{\underline{\Q}_p}\hat{\calO}_{S^\sharp}$ from this isomorphism, we apply the inverse functor for the Morita equivalence between right $M_{h\times h}(\hat{\calO}_{S^\sharp})$-modules and right $\hat{\calO}_{S^\sharp}$-modules (cf.\ \cite[Cor.~22.6]{AndersonFuller}): 
	\[
	\bV_{\underline{\Q}_p}\otimes_{\underline{\Q}_p}\hat{\calO}_{S^\sharp} \cong \calE|_{S^\sharp} \otimes_{\hat{\calO}_{S^\sharp}} M_{1\times h}(\hat{\calO}_{S^\sharp}),
	\]
	where we identify $M_{1\times h}(\hat{\calO}_{S^\sharp})$ with the $\hat{\calO}_{S^\sharp}$-linear dual of $M_{h\times 1}(\hat{\calO}_{S^\sharp})$.
\end{proof}

\subsection{Slope filtrations}\label{sec: slope filtrations}

In this subsection, we review the abstract Harder--Narasimhan theory for $\varphi$-modules and establish the existence of a global HN filtration when the Newton polygon function is constant.

\begin{defn}[Semistable filtration]\label{defn: semistable and HN filtration}
Let $M$ be a nonzero $\varphi$-module over a ring $B$ as in Definition~\ref{defn: phi modules}. A \emph{semistable slope filtration} of $M$ is a sequence of saturated $\varphi$-submodules $\{M_i\}_{i=0}^{\ell}$
\[
0=M_0 \subsetneq M_1 \subsetneq \cdots \subsetneq M_\ell = M
\]
such that each graded piece $M_i/M_{i-1}$ is \emph{semistable}. A semistable filtration $\{M_i\}$ is called a Harder--Narasimhan filtration (HN filtration for short) if 
\begin{equation}\label{eq: semistable filtration}
\mu(M_1/M_0) > \mu(M_2/M_1) > \cdots > \mu(M_\ell/M_{\ell-1}).
\end{equation}
For any semistable filtration $\{M_i\}$, one constructs a multiset comprised of the values $\{\mu(M_{i}/M_{i-1})\}$, where each $\mu(M_{i}/M_{i-1})$ appears with multiplicity $\mathrm{rank}(M_{i}/M_{i-1})$. The Newton polygon formed by this multiset is denoted $\NP(\{M_i\}_{i=0}^\ell)$, the Newton polygon of the semistable filtration $\{M_i\}$. 
\end{defn}

\begin{rem}
Our definition of slope follows the convention of \cite{kedlaya-liu-relative-padichodge}, which differs by a sign from the convention in \cite{Kedlayaslopefiltrationsrevisited}. However, we define the Newton polygon as a convex function (as in Definition~\ref{defn: many NP functions}), meaning the slopes of the Newton polygon are arranged in increasing order to ensure convexity. Consequently, in Definition~\ref{defn: semistable and HN filtration}, we reverse the direction of the inequalities in Eq.\eqref{eq: semistable filtration}. The partial order on the corresponding Newton polygons remains unchanged.
\end{rem}

Assume $R=L$ is an analytic field (possibly with a trivial norm in this subsection). We recall the following result regarding semistable filtrations from \cite{Kedlayaslopefiltrationsrevisited}.

\begin{thm}[Harder--Narasimhan filtration]\label{thm: HN-filtration}
\begin{enumerate}
	\item Any $\varphi$-module over $\widetilde{\mathcal{R}}_L$ admits a unique HN filtration.
	\item For any semistable filtration $\{M_i\}$ of a $\varphi$-module $M$ over $\widetilde{\mathcal{R}}_L$, we have 
	\[\NP(\{M_i\}) \succeq \NP(M).\]
\end{enumerate}
\end{thm}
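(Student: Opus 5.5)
The plan is to run the abstract Harder--Narasimhan formalism on the exact category of $\varphi$-modules over $\widetilde{\mathcal{R}}_L$, following \cite{Kedlayaslopefiltrationsrevisited}. The only inputs specific to $\widetilde{\mathcal{R}}_L$ are two algebraic facts and one deep fact.

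\emph{Algebraic input.} Since $\widetilde{\mathcal{R}}_L$ is a B\'ezout domain (cf.\ \cite[Lem.~4.2.6]{kedlaya-liu-relative-padichodge}), every finite projective module is free. Every saturated $\varphi$-submodule of a $\varphi$-module $M$ is again a $\varphi$-module, with free quotient. I define $\deg(M)$ as the slope of the rank-one module $\wedge^{\rank M}M$, that is, the $p$-adic valuation of the unit by which $\varphi$ acts on a generator (with the sign convention of \cite{kedlaya-liu-relative-padichodge}). I then set $\mu=\deg/\rank$. This degree is additive in short exact sequences and takes integer values. A nonzero morphism of rank-one modules forces an inequality of degrees.

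\emph{Existence and uniqueness, part (1).} The key finiteness claim is that the degrees of saturated $\varphi$-submodules $N\subset M$ are bounded above. I would prove it by passing to $\wedge^{\rank N}M$, where a rank-one submodule gives a nonzero map $\widetilde{\mathcal{R}}_L(\mu)\to\wedge^{r}M$. I would then compare with the slopes of $\wedge^r M$ after base change to the completed algebraic closure, where the Fargues--Fontaine classification (Lemma~\ref{lem: fact about Bun X L}) identifies $M$ with $\bigoplus\calO(\lambda_i)$, and use that $\Hom(\calO(\lambda),\calO(\lambda'))=0$ for slopes on the wrong side. Since degrees lie in $\Z$ and ranks in $\{1,\dots,\rank M\}$, there is a largest $\mu$ among saturated submodules. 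Among those attaining it I take one of maximal rank, $M_1$. It is unique, because if $N,N'$ both attain the maximum then additivity applied to $0\to N\cap N'\to N\oplus N'\to N+N'\to0$, followed by saturation (which can only raise the degree), shows that the saturation of $N+N'$ attains it too. Induction on rank applied to $M/M_1$ produces the filtration with strictly decreasing slopes, and uniqueness follows by the usual argument that $M_1$ is intrinsically characterized.

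\emph{Main obstacle.} The graded pieces produced this way are semistable in the degree sense, i.e.\ no submodule has larger $\mu$. Definition~\ref{defn: phi modules} instead asks for isoclinic pieces. The hard step is showing that these notions coincide over $\widetilde{\mathcal{R}}_L$, which is Kedlaya's slope filtration theorem. I would invoke it after reducing to $L$ algebraically closed. That reduction works because Galois descent of the unique HN filtration reduces to uniqueness, and the Newton polygon is invariant under base change (Definition~\ref{defn: many NP functions}(5)). Over an algebraically closed field the statement follows from the classification $M\cong\bigoplus\calO(\lambda_i)$. In this direct sum, grouping the summands by slope gives the HN filtration, and each group is visibly isoclinic.

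\emph{Comparison, part (2).} For any semistable filtration $\{M_i\}$ with slopes $\mu_i$, I would argue as Shatz does. First, every saturated submodule $N\subset M$ satisfies the bound $\deg N\le$ the value of the HN polygon at $\rank N$ (in the appropriately signed convention). This bound follows by filtering $N$ by its intersections with the HN pieces and using that there is no nonzero map from a semistable module to one of strictly smaller slope. Next, I reorder $\{M_i\}$ by slope, which uses the same Hom-vanishing to commute adjacent pieces up to extension. Then each partial sum of the sorted slopes is the degree of some subobject, up to such reorderings, and the bound gives $\NP(\{M_i\})\succeq\NP(M)$. The endpoints agree by additivity of degree.
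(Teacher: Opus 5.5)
\textbf{Part (1).} Your argument is correct in outline and takes a more self-contained route than the paper, which only cites Kedlaya. You run the degree formalism yourself: boundedness via $\wedge^r$, the maximal destabilizing submodule, and induction on rank. This isolates the single deep input, that $\mu$-semistable equals isoclinic, which you obtain from the classification over $\widehat{\overline L}$.

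Two small repairs are needed.
\begin{enumerate}
\item In this subsection the paper allows $L$ to be trivially normed. In that case Dieudonn\'e--Manin must replace Lemma~\ref{lem: fact about Bun X L}.
\item Uniqueness of the destabilizing submodule over $\widehat{\overline L}$ only shows it is Galois-stable. To get a submodule over $L$ you need an actual descent statement, for example v-descent of vector bundles on the curve.
\end{enumerate}

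\textbf{Part (2): the gap.} The reordering step fails, and the argument cannot be repaired in the direction you are aiming for.

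Suppose $\mu(M_i/M_{i-1})<\mu(M_{i+1}/M_i)$. These two pieces cannot in general be swapped. The extension need not split, because it is classified by $H^1$ of a bundle of negative slope, which is nonzero. Hom-vanishing only helps in the opposite configuration. So the partial sums of the sorted slopes need not be degrees of subobjects.

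Here is a concrete counterexample. Let $L$ be algebraically closed and let $M=\widetilde{\mathcal R}_L^{\oplus 2}$ with trivial $\varphi$. Let $M_1=\widetilde{\mathcal R}_L\cdot(a,b)$, where $a,b\in\widetilde{\mathcal R}_L^{\varphi=p}=H^0(X_L,\calO(1))$ vanish at distinct points. This is the sequence $0\to\calO(-1)\to\calO^{2}\to\calO(1)\to 0$. Then:
\begin{itemize}
\item $M_1$ is saturated of slope $-1$ in the normalization of Definition~\ref{defn: B(lambda)}, and $M/M_1$ has slope $1$;
\item $\NP(\{M_i\})$ has slopes $\{-1,1\}$, while $\NP(M)$ has slopes $\{0,0\}$;
\item $\{-1,1\}\not\succeq\{0,0\}$.
\end{itemize}
Negating all slopes preserves $\succeq$, so no change of sign convention rescues the stated direction. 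On the other hand, if you impose decreasing slopes, the filtration is the HN filtration by (1), and (2) becomes a tautology.

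\textbf{What is true.} The correct comparison is the reverse, $\NP(M)\succeq\NP(\{M_i\})$: the associated graded is more spread out, as in Shatz-type semicontinuity. The proof runs the other way round from yours. Instead of reordering $\{M_i\}$, intersect each HN step $E_j$ of $M$ with the $M_i$.
\begin{itemize}
\item Each graded piece of $\{E_j\cap M_i\}_i$ injects into the semistable $M_i/M_{i-1}$, so it has slope at most $\mu_i$.
\item Hence $\deg E_j$ is at most the sum of the $\rank E_j$ largest filtration slopes, counted with multiplicity.
\item So the vertices of the concave HN polygon lie on or below the concave polygon of the filtration slopes. Since the latter is concave, the whole HN polygon lies on or below it.
\item With equal endpoints, passing to the paper's convex normalization gives $\NP(M)\succeq\NP(\{M_i\})$.
\end{itemize}
Your subobject bound $\deg N\le(\text{HN polygon})(\rank N)$ is correct, but it is the wrong half of the comparison for this statement.
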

\begin{proof}
Claim (1) is \cite[Prop.~5.10]{Kedlayapadiclocalmonodromy} or \cite[Prop.~3.5.4]{Kedlayaslopefiltrationsrevisited}. Claim (2) is \cite[Prop.~3.5.4]{Kedlayaslopefiltrationsrevisited}.
\end{proof}

When $S$ is a diamond, one can formulate an analogous statement for vector bundles over the diamantine Fargues--Fontaine curve $X_S^\diamondsuit$.

\begin{thm}[Global slope filtration]\label{thm: global-slope-filtration}
Let $\calE$ be a vector bundle over the diamantine Fargues--Fontaine curve $X_S^\diamondsuit$.
Assume the Newton polygon function $\NP(\calE)$ is constant on $\lvert S \rvert$.
Then there exists a unique filtration
\[
0=\calE_0 \subsetneq \calE_1 \subsetneq \cdots \subsetneq \calE_\ell = \calE
\]
by saturated subbundles such that each graded piece $\calE_i/\calE_{i-1}$ is semistable of slope $\mu_i$ and $\mu_1>\mu_2>\cdots>\mu_\ell.$ Moreover, $\NP(\{\calE_i\})=\NP(\calE)$.
\end{thm}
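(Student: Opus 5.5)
The proof reduces to the perfectoid case by $\vv$-descent. Since $\Bun(X_S^\diamondsuit)$ is defined as a $2$-limit over $T\in\Perf$ with $T\to S^\diamondsuit$, and $T\mapsto\Bun(X_T)$ is a $\vv$-stack (\cite[Prop.~II.2.1]{FarguesScholze}), it suffices to do two things. First, for each affinoid perfectoid $T=\Spa(A,A^+)$ mapping to $S^\diamondsuit$, construct a filtration of $\calE_T$ by saturated subbundles with the stated properties. Second, show that this filtration is unique and compatible with pullback along $T'\to T$. Uniqueness together with functoriality then gives descent data, so the filtrations glue to a filtration of $\calE$ on $X_S^\diamondsuit$. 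The Newton polygon of $\calE_T$ is the pullback of $\NP(\calE)$ along $|T|\to|S|$, so the constancy hypothesis holds on each $T$.

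For existence over an affinoid perfectoid $T$, I would invoke \cite[Thm.~7.4.9]{kedlaya-liu-relative-padichodge} (equivalently \cite[Thm.~II.2.19]{FarguesScholze}). This says a $\varphi$-module over $\widetilde{\mathcal{R}}_A$ with constant Newton polygon admits a filtration by saturated $\varphi$-submodules whose graded pieces are isoclinic, with slopes $\mu_1>\cdots>\mu_\ell$ being the distinct slopes of the constant polygon. Passing through Lemma~\ref{lem: vb as phi modules} turns this into a filtration of vector bundles on $X_T$. Each graded piece is semistable of slope $\mu_i$ in the sense of Definition~\ref{defn: phi modules}. Its rank equals the multiplicity of $\mu_i$ in $\NP(\calE)$, since this can be checked at geometric points, where the construction restricts to the pointwise HN filtration of Theorem~\ref{thm: HN-filtration}. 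This gives $\NP(\{\calE_i\})=\NP(\calE)$.

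For uniqueness and functoriality, suppose $\{\calE_i\}$ and $\{\calE'_i\}$ are two such filtrations on $X_T$. Consider the composite $\calE_1\to\calE\to\calE/\calE'_j$, where $j$ is the largest index with $\mu'_j\ge\mu_1$. This map vanishes because there are no nonzero morphisms from a semistable bundle of slope $\mu$ to a bundle all of whose HN slopes are $<\mu$. It suffices to check this fibrewise at geometric points: a morphism of vector bundles on $X_T$ vanishing at all geometric points vanishes, for instance via the Hom-local-system description of Theorem~\ref{thm: Dlambda locsys and ss vb} applied to the semistable bundle $\underline{\Hom}$ of appropriate slopes, or by reducedness of $\widetilde{\mathcal{R}}_A$. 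At geometric points the vanishing is the classical statement on the absolute curve. Induction on $i$ then shows $\calE_i=\calE'_i$. Compatibility with pullback follows because the pullback of the filtration again has semistable graded pieces of decreasing slopes, and uniqueness applies on $T'$.

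The main obstacle is the uniqueness/Hom-vanishing step in families. The fibrewise vanishing statement is standard, but upgrading it to a global vanishing requires care on non-reduced loci. I would handle this via the sheaf $\underline{\Hom}(\calE_1,\calE/\calE'_j)$: it corresponds to a vector bundle with constant negative slopes, and its global sections over perfectoid bases vanish by the Kedlaya--Liu computation of $H^0$ of negative-slope bundles.
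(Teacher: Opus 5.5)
Your proposal is correct and follows the paper's route. The paper obtains existence and uniqueness of the HN filtration over each affinoid perfectoid $T\to S^\diamondsuit$ from \cite[Thm.~7.4.9]{kedlaya-liu-relative-padichodge} (see also \cite[Thm.~II.2.19]{FarguesScholze}), deduces functoriality in $T$ from uniqueness, glues by $\vv$-descent, and reads off the Newton polygon equality from the perfectoid case. Your explicit Hom-vanishing argument for uniqueness only spells out what the paper takes from the citation; two small corrections: since $T$ is perfectoid there are no non-reduced loci, and the pointwise-to-global vanishing rests on uniformity of $\calO(Y_{T,I})$ rather than on reducedness alone, with $\underline{\Hom}(\calE_1,\calE/\calE'_j)$ only filtered by negative-slope semistable pieces, not semistable itself.
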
 

\begin{proof}
By the existence and uniqueness of HN filtrations for vector bundles over relative Fargues--Fontaine curves associated with perfectoid spaces (cf.\ \cite[Thm.~7.4.9]{kedlaya-liu-relative-padichodge}), these filtrations are functorial with respect to pullbacks along morphisms $T \to T'$ for $T, T' \in \Perf_{S}$. Because $\calE \in \Bun(X_S^\diamondsuit)$ is defined via $v$-descent over perfectoid test spaces $T \to S$, this functoriality ensures that the HN filtrations over each $X_T$ canonically glue to yield a global filtration of $\calE$ over $X_S^\diamondsuit$. The equality of Newton polygons follows directly from the perfectoid case. See also \cite[Thm.~II.2.19]{FarguesScholze} for the corresponding statement.
\end{proof}

\section{A relative $p$-adic monodromy theorem}\label{sec: proof of RpMT}

In this section, we establish several of the relative $p$-adic monodromy theorems mentioned in the introduction. A key result is that the local constancy of the Newton polygon function associated with a de Rham local system implies both a local and a global relative $p$-adic monodromy theorem.

\subsection{Reviewing the Fargues--Fontaine proof over classical points}\label{sec: FF's proof of pMT}

Let $K$ be a $p$-adic field, and let $\C_p$ be the completion of an algebraic closure of $K$. In this subsection, $X=X_{C}$ denotes the absolute Fargues--Fontaine curve defined by the tilt $C$ of $\C_p$. Note that the structure sheaf $\calO_X$ admits a continuous action by $G_K$.

Here, by the classical $p$-adic monodromy theorem ($p$MT), we refer to the theorem of Berger, which states that every de Rham $p$-adic Galois representation is potentially log-crystalline (i.e., potentially semi-stable). Fargues and Fontaine restate and generalize this theorem to the setting of $G_K$-equivariant vector bundles on the Fargues--Fontaine curve $X$ \cite{FarguesFontaineAsterisque}.

By interpreting $G_K$-equivariant vector bundles on the Fargues--Fontaine curve $X$ via $B$-pairs, their formulation aligns precisely with Berger's definitions. In particular, for a $G_K$-equivariant vector bundle on $X$, the de Rham property is controlled by its $B_{\dR}^+$-part, and the property of being (potentially) log-crystalline is determined by the $B_{e}$-part.

We review the strategy employed by Fargues--Fontaine in \cite[\S10.6]{FarguesFontaineAsterisque}. The most critical results extracted from their proof are Lemma~\ref{lem: Hyodolem} and Theorem~\ref{thm: key from FF}.

Let $V$ be a de Rham representation of $G_K$ on a $\Q_p$-vector space, and let $\calE_1=V\otimes_{\Q_p} \calO_X$ be the corresponding $G_K$-equivariant vector bundle on the Fargues--Fontaine curve $X$. The complete stalk $M_1 \coloneqq \calE_{1,\infty}^\wedge$ of $\calE_1$ at the point $\infty$ corresponding to the untilt $\C_p$ is isomorphic to $V\otimes B_{\dR}^+$. Let $\calE_0$ be the modification of $\calE_1$ by $M_0 \coloneqq D_{\dR}(V)\otimes B_{\dR}^+$ at $\infty$. Since $M_0$ is $G_K$-stable inside $M_1[1/t]$, $\calE_0$ is another $G_K$-equivariant vector bundle on $X$. Furthermore, the stalk $\calE_{0,\infty}^\wedge$, viewed as a $B_{\dR}^+$-representation of $G_K$, is $B_{\dR}^+$-flat in the following sense. (We note that this is the admissible modification associated with the de Rham local system over $\Spd K$ constructed in \S\ref{sec: Shtukas}).

\begin{defn}[Fargues--Fontaine]
A $B_{\dR}^+$-representation of $G_K$ is a finite free $B_{\dR}^+$-module equipped with a semilinear action of $G_K$ that is continuous with respect to the canonical topology of $B_{\dR}^+$. A $B_{\dR}^+$-representation $M$ of $G_K$ is \emph{generically flat} if $\dim_K (M\otimes_{B_{\dR}^+} B_{\dR})^{G_K} = \mathrm{rk}_{B_{\dR}^+} M$. A $B_{\dR}^+$-representation $M$ of $G_K$ is \emph{flat} if it is generically flat and admits a $G_K$-equivariant isomorphism
\[
M \cong (M\otimes_{B_{\dR}^+} B_{\dR})^{G_K} \otimes_K B_{\dR}^+.
\] 
\end{defn}

A key observation by Fargues and Fontaine is the following lemma.

\begin{lem}\label{lem: Hyodolem}
Suppose we are given an exact sequence of $G_K$-equivariant vector bundles on the Fargues--Fontaine curve $X$:
	\[
	0 \to \calE \to \calF \to \calG \to 0.
	\]
Assume that the complete stalks $\calE_\infty$ and $\calG_\infty$ are flat $B_{\dR}^+$-representations of $G_K$, that the slopes of $\calE$ are strictly greater than the slopes of $\calG$, and that both $\calE$ and $\calG$ are log-crystalline. Then the complete stalk $\calF_\infty$ is also flat as a $B_{\dR}^+$-representation of $G_K$, and $\calF$ is log-crystalline. 
\end{lem}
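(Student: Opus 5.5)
The plan is to convert the statement into Galois-cohomological data, in the way Fargues and Fontaine do in \cite[\S10.6.5]{FarguesFontaineAsterisque}. Since $\calE$ and $\calG$ are log-crystalline, after a finite extension $L/K$ they come from filtered $(\varphi,N,G_K)$-modules $D_\calE$ and $D_\calG$, and their complete stalks are $D_{\calE,L}\otimes_L B_{\dR}^+$ and $D_{\calG,L}\otimes_L B_{\dR}^+$ (flatness). Consider the internal Hom $\calH\coloneqq \calG^\vee\otimes\calE$. The extension class of $0\to\calE\to\calF\to\calG\to0$ is a class $c\in H^1(G_K, H^0(X,\calH))$: the slopes of $\calH$ are all strictly positive, so $H^1(X,\calH)=0$, and $G_K$-equivariant extensions are classified by the continuous Galois cohomology of the global sections. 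The goal is to show that $c$ becomes trivial in $H^1(G_L, H^0(X,\calH)\otimes B_{\log}\text{-part})$ after restriction. Equivalently, $c$ should lie in an ``$H^1_{\st}$'' subgroup, and then both claims follow at once.

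The first step is to write $H^0(X,\calH)$ concretely. Since $\calH=\calE(D_\calH,\varphi,N,\rho)$ with $D_\calH=\Hom(D_\calG,D_\calE)$, we have $H^0(X,\calH)=(D_\calH\otimes_{L_0}B_{\log}^+)^{\varphi=1,N=0}$ up to the usual twist. Because all slopes are positive, this is a Banach--Colmez space sitting in an exact sequence with $(D_\calH\otimes B_{\dR}^+)$-type quotients. The second step is to use flatness of $\calF_\infty$, which is the hypothesis that is still missing. Here I would argue that the image of $c$ in $H^1(G_K, D_{\calH,L}\otimes B_{\dR}^+)$ measures exactly the obstruction to $\calF_\infty$ being flat. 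The third step is Hyodo's result $H^1_g=H^1_{\st}$, specifically \cite[Prop.~10.11]{colmez-vector-space-de-Rham}: for de Rham data with positive slopes, every class in $H^1(G_K,H^0(X,\calH))$ is potentially semistable. Applied here, it shows that $c$ dies in $H^1(G_L,D_\calH\otimes B_{\st})$, so $\calF$ is log-crystalline over $L$.

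The order of the argument is therefore as follows. First trivialize the monodromy on $\calE$ and $\calG$ by passing to $L$. Then identify the extension class. Then check that $c$ is de Rham, which uses the fact that $H^1(G_K,B_{\dR}^+)$-type groups with Hodge--Tate weights in the positive range vanish, so $\calF_\infty$ is generically flat and then flat by a rank count. Finally, invoke Hyodo's $H^1_g=H^1_{\st}$ to get log-crystallinity, reconstructing $D_\calF$ as an extension of $D_\calG$ by $D_\calE$.

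The main obstacle is the de Rham/flatness step. One needs $H^1(G_K, W)$ to inject appropriately for the $B_{\dR}^+$-representation $W=D_{\calH,L}\otimes B_{\dR}^+$, or at least to know that the relevant connecting map kills $c$. I would try first to show that the composite $H^0(X,\calH)\to \calH_\infty=D_{\calH,L}\otimes B_{\dR}^+$ induces on $H^1$ a map whose target classes are all flat. For this I would use Sen/Tate vanishing $H^1(G_K,\C_p(i))=0$ for $i\neq0$ together with filtration dévissage in $t$. The slope-positivity hypothesis should enter precisely to rule out the weight-$0$ obstruction.
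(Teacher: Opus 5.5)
There is a genuine gap at exactly the step you flag, and it cannot be closed, because the flatness conclusion is false as stated. Your skeleton is the same as the paper's appeal to \cite[\S10.6.5.2, Prop.~10.6.14]{FarguesFontaineAsterisque}: the extension class $c\in H^1(G_K,H^0(X,\calH))$ with $\calH=\calG^\vee\otimes\calE$ (using $H^1(X,\calH)=0$), followed by a Colmez--Hyodo statement for the internal Hom.

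The flatness step fails for the following reason.
\begin{enumerate}
\item Since $\calE_\infty$ and $\calG_\infty$ are flat, $\calH_\infty\cong(B_{\dR}^+)^{\oplus n}$ has Hodge--Tate weight $0$, not weights in a positive range. Moreover $H^1(G_K,B_{\dR}^+)\cong H^1(G_K,\C_p)\cong K\neq 0$.
\item A flat extension of flat representations splits (split the $G_K$-invariants over $K$). So flatness of $\calF_\infty$ is \emph{equivalent} to $c$ dying in $H^1(G_K,\calH_\infty)$.
\item Slope positivity does not force this. Take $\calE=\calO(2)=\calE(D)$ with $D=K_0e$, $\varphi(e)=p^{-2}e$, and $\calG=\calO$; both are crystalline with flat stalks. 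Then $H^0(X,\calH)=B_{\cris}^{+,\varphi=p^2}$ sits in $0\to\Q_p t^2\to B_{\cris}^{+,\varphi=p^2}\to B_{\dR}^+/t^2\to 0$.
\item Since $H^2(G_K,\Q_p(2))=0$ by local duality, $H^1(G_K,B_{\cris}^{+,\varphi=p^2})$ surjects onto $H^1(G_K,B_{\dR}^+/t^2)\cong H^1(G_K,B_{\dR}^+)\cong K$.
\item Any $c$ with nonzero image gives an $\calF$ whose stalk extension is non-split, so $\calF_\infty$ is not flat. Since $H^1(G_K,B_{\dR}^+)\xrightarrow{\sim}H^1(G_K,B_{\dR})$, the $B_e$-part of $\calF$ is not even de Rham.
\end{enumerate}
So both conclusions of the statement fail in this example. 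Your Hyodo step as phrased ("every class in $H^1(G_K,H^0(X,\calH))$ is potentially semistable") is also false. The $H^1_g=H^1_{\st}$ input only covers $\ker\bigl(H^1(G_K,H^0(X,\calH))\to H^1(G_K,\calH_\infty)\bigr)$.

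What can be proved is the version in which flatness of $\calF_\infty$ is a \emph{hypothesis}, as in Fargues--Fontaine. Flatness of $\calE_\infty$ and $\calG_\infty$ then follows, since they are saturated subquotients. That hypothesis says precisely that $c$ lies in the kernel above. Your final step then gives log-crystallinity of $\calF$, namely \cite[Prop.~10.6.14]{FarguesFontaineAsterisque} for the internal Hom together with the equivalence \cite[Prop.~10.3.3]{FarguesFontaineAsterisque}; this is the paper's route.

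Be aware that the paper obtains flatness of $\calF_\infty$ from Lemma~\ref{lem: ext of BdR+}(2), whose ``extensions'' half has the same defect. For example, $B_{\dR}^+e_1\oplus B_{\dR}^+e_2$ with $g(e_2)=e_2+\log\chi(g)\,e_1$ is an extension of flat representations that is not flat. Only stability under saturated subquotients holds. That suffices for Theorem~\ref{thm: key from FF}(2) and Theorem~\ref{thm: constant case}, because there $\calF=\calE_0$ has flat stalk $D_{\dR}\otimes B_{\dR}^+$ from the outset and the HN pieces inherit it.
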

\begin{proof}
This is exactly \cite[\S10.6.5.2]{FarguesFontaineAsterisque}. The flatness of the $B_{\dR}^+$-representations of $G_K$ follows from Lemma~\ref{lem: ext of BdR+}. The remaining claims are established by comparing extension classes on both sides. Note that log-crystalline $G_K$-equivariant vector bundles on the Fargues--Fontaine curve $X$ with flat stalks at $\infty$ are equivalent to the category of $(\varphi,N)$-modules via \cite[Prop.~10.3.3]{FarguesFontaineAsterisque}. Therefore, the statement reduces to the computation of extension classes. Utilizing internal Homs, the claim effectively reduces to \cite[Prop.~10.6.14]{FarguesFontaineAsterisque}.
\end{proof}

\begin{lem}\label{lem: ext of BdR+}
\begin{enumerate}
	\item A $B_{\dR}^+$-representation $M$ of $G_K$ is flat if and only if $\overline{M}\coloneqq M\otimes_{B_{\dR}^+} \C_p$, considered as a $\C_p$-representation of $G_K$, is Hodge--Tate with all weights equal to $0$.
	\item Flat $B_{\dR}^+$-representations of $G_K$ are stable under taking saturated subquotients and extensions.
\end{enumerate}
\end{lem}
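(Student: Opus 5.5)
For part (1), I will pass to the graded pieces of the $t$-adic filtration and use the classical Hodge--Tate theory of $\C_p$-representations. The forward direction is immediate. If $M \cong M_0\otimes_K B_{\dR}^+$ with $M_0=(M[1/t])^{G_K}$, then $\overline{M}\cong M_0\otimes_K\C_p$. This is Hodge--Tate with all weights $0$.

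For the converse, assume $\overline{M}$ is Hodge--Tate of weight $0$. Then each $t^iM/t^{i+1}M\cong \overline{M}(i)$ is Hodge--Tate of the single weight $i$. By Tate's computation, $H^0(G_K,\C_p(j))=0=H^1(G_K,\C_p(j))$ for $j\neq 0$. Hence $H^0(G_K, t^iM/t^{i+1}M)=0$ and $H^1(G_K,t^iM/t^{i+1}M)=0$ for $i\ge 1$. I then run the standard d\'evissage over the finite-length quotients $M/t^nM$.
\begin{itemize}
\item The map $(M/t^nM)^{G_K}\to \overline{M}^{G_K}$ is surjective, because the obstructions lie in $H^1$ of the graded pieces with $i\ge1$, which vanish.
\item This map is injective, because $H^0$ of those same graded pieces vanishes.
\end{itemize}
Passing to the limit, which is allowed since the continuous cohomology of the $\C_p(i)$ vanishes and Mittag-Leffler holds, I get $M^{G_K}\xrightarrow{\sim}\overline{M}^{G_K}$. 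The target is a $K$-vector space of dimension $\operatorname{rk}M$ because $\overline M$ has weight $0$. I then lift a $K$-basis of $\overline{M}^{G_K}$ to $G_K$-invariants $e_1,\dots,e_r\in M$. By Nakayama these form a $B_{\dR}^+$-basis of $M$, so $M\cong M^{G_K}\otimes_K B_{\dR}^+$.

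It remains to see that $M^{G_K}=(M[1/t])^{G_K}$, which gives generic flatness with the right dimension. The same vanishing shows that $(t^{-n}M/M)^{G_K}=0$: its graded pieces are $\overline{M}(-i)$ with $i\ge 1$, which have no invariants. So every invariant of $M[1/t]$ already lies in $M$.

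For part (2), I will use part (1) to reduce everything to $\C_p$-representations. A saturated subquotient of $M$ has reduction a subquotient of $\overline{M}$. An extension of $B_{\dR}^+$-representations reduces to an extension of the corresponding $\C_p$-representations, since saturation and finite freeness make $-\otimes_{B_{\dR}^+}\C_p$ exact on these sequences. So it suffices to show that Hodge--Tate representations of weight $0$, i.e.\ those $W$ with $W\cong \C_p^{r}$ as semilinear representations, are stable under subquotients and extensions.

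\begin{itemize}
\item For subquotients, I use Sen theory: the Sen operator of $W$ is $0$, and Sen operators are compatible with subquotients, so they are $0$ on any subquotient. Being Hodge--Tate of weight $0$ is equivalent to having semisimple (here zero) Sen operator with eigenvalue $0$.
\item For extensions, I argue via the Sen operator $\Theta$ on the extension. It preserves the filtration and is zero on both graded pieces. The extension class lies in $H^1(G_K,\C_p^{r s})=H^1(G_K,\C_p)^{rs}$, which is nonzero. However, the extension is $\C_p\otimes$ of a $K$-linear extension classified by the $\log\chi$ class, and such an extension has nonzero nilpotent $\Theta$.
\end{itemize}

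The extension case is the main obstacle: an extension of $\C_p$ by $\C_p$ need not be trivial. So the honest statement for extensions needs the hypothesis that the reduction is still weight $0$, i.e.\ $\Theta=0$. This condition is automatic once the extension of $B_{\dR}^+$-representations is flat at the level $M/t^2M$? I expect to settle this by following \cite[\S10.6.5]{FarguesFontaineAsterisque}. There, extensions of flat $B_{\dR}^+$-representations are computed via $H^1(G_K,B_{\dR}^+)\cong H^1(G_K,\C_p)\cong K$. Using $H^1(G_K,B_{\dR}^+)=H^1(G_K,t^0\text{-part})$ together with the $B_{\dR}^+$ structure (the $\log\chi$ class becomes $t$-divisible in $B_{\dR}^+$ via $\log\chi(g)t=g(u)-u$ for $u=\log[\underline\varpi]$-type elements), I would try to show that this class dies, i.e.\ $H^1(G_K,B_{\dR}^+)\to H^1(G_K,B_{\dR})$ is zero on the relevant classes. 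This is the delicate point, and I would verify it first.
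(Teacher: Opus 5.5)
Your proof of (1) is correct, and more self-contained than the paper's, which simply cites \cite[Prop.~2.18]{HengduABKFM}. Your treatment of saturated subquotients in (2), via vanishing of the Sen operator, is also fine. That subquotient statement is all that the paper's ``(2) follows directly from (1)'' actually delivers.

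The extension half of (2) is a genuine gap, and it cannot be closed, because that part of the statement is false. The observation in your second bullet already gives a counterexample. Let $V=\Q_p e_1\oplus\Q_p e_2$ with $g e_1=e_1$ and $g e_2=e_2+\log\chi(g)\,e_1$, and put $M=V\otimes_{\Q_p}B_{\dR}^+$. Then $M$ is an extension of the trivial representation $B_{\dR}^+$ by itself, and both ends are flat. However, $\overline{M}=V\otimes_{\Q_p}\C_p$ is the extension of $\C_p$ by $\C_p$ given by $\log\chi$, which is nonzero in $H^1(G_K,\C_p)\cong K$ by Tate. Hence $\overline{M}^{G_K}=Ke_1$, so $\overline{M}$ is not Hodge--Tate of weight $0$, and by your own part (1) $M$ is not flat.

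Your proposed repair, showing that the $\log\chi$ class dies in $H^1(G_K,B_{\dR})$, fails. Run your d\'evissage on $t^{-n}B_{\dR}^+/B_{\dR}^+$: its graded pieces $\C_p(-i)$, $i\geq1$, have vanishing $H^0$ and $H^1$. This shows that $H^1(G_K,B_{\dR}^+)\to H^1(G_K,B_{\dR})$ is injective; both groups are $K\cdot\log\chi$. So $V$ is not de Rham, and $M$ is not even generically flat. The identity $g(u)-u\in\Z_p\,t$ for $u=\log[\underline{\varpi}]$ concerns a cocycle valued in $tB_{\dR}^+$, whose $H^1$ vanishes anyway, and it is unrelated to $\log\chi$.

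Here is what is true. Your d\'evissage also shows that $H^1(G_K,B_{\dR}^+)\to H^1(G_K,\C_p)$ is an isomorphism. For flat $M_i\cong D_i\otimes_K B_{\dR}^+$ one has $\Hom(M_2,M_1)\cong\Hom_K(D_2,D_1)\otimes_K B_{\dR}^+$. Together these show that an extension of flat representations is flat if and only if it splits. So only stability under saturated subquotients holds in general.

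That weaker statement is also all the later argument needs. The induction behind Theorem~\ref{thm: key from FF}(2), via Lemma~\ref{lem: Hyodolem}, is applied to the HN filtration of $\calE_0$. There the middle terms are saturated subbundles of $\calE_0$, so the flatness of their stalks at $\infty$ follows from the subquotient half, not from stability under extensions.
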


\begin{proof}
Claim (1) is \cite[Prop.~2.18]{HengduABKFM}. Claim (2) follows directly from (1). (We note that (2) can also be obtained independently using \cite[Prop.~10.4.4]{FarguesFontaineAsterisque}(2), although there is a typo in \textit{loc.\ cit.} where $\otimes$ should be $\oplus$. Connecting it to Hodge--Tate $\C_p$-representations facilitates the generalization to the relative case). 
\end{proof}
 
To apply Lemma~\ref{lem: Hyodolem}, Fargues and Fontaine utilize the HN filtration theory on $\calE_0$, which furnishes a filtration $0=\calF_0 \subsetneq \calF_1 \cdots \calF_r = \calF$ on $\calF\coloneqq \calE_0$. Because the HN filtration is canonical, the $\calF_i$ are $G_K$-equivariant \emph{sub-bundles}. Moreover, since the $B_e$-parts of $\calE_0$ and $\calE_1$ are identical by design, it suffices to show that $\calE_0$ is potentially log-crystalline. Let $\calG_i\coloneqq \calF_{i}/\calF_{i-1}$ for $i=1,\ldots, r$ denote the HN constituents of $\calF\coloneqq \calE_0$.

This logic yields a slightly refined version of the $p$MT.

\begin{thm}[Fargues--Fontaine]\label{thm: key from FF}
Every $G_K$-equivariant vector bundle $\calE$ on $X$ whose complete stalk $\calE_{\infty}$ is $B_{\dR}^+$-flat as a $B_{\dR}^+$-representation of $G_K$ is potentially log-crystalline. More explicitly:
\begin{enumerate}
	\item If a $G_K$-equivariant vector bundle $\calE$ on $X$ is semistable as a vector bundle over $X$ and $\calE_{\infty}$ is $B_{\dR}^+$-flat, then $\calE$ is potentially log-crystalline.
	\item For every $G_K$-equivariant vector bundle $\calE$ on $X$ with a $B_{\dR}^+$-flat stalk $\calE_{\infty}$, if all HN constituents of $\calE$ are log-crystalline as $G_K$-equivariant vector bundles, then $\calE$ itself is log-crystalline as a $G_K$-equivariant vector bundle.
\end{enumerate}
\end{thm}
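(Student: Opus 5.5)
We reduce the general statement to parts (1) and (2) via the Harder--Narasimhan filtration, and then prove (1) by the Morita-type reduction to Sen theory and (2) by induction using Lemma~\ref{lem: Hyodolem}. First, given a $G_K$-equivariant $\calE$ with $B_{\dR}^+$-flat stalk, the HN filtration $0=\calF_0\subsetneq\cdots\subsetneq\calF_r=\calE$ of Theorem~\ref{thm: HN-filtration} is canonical. It is therefore stable under $G_K$, so each $\calF_i$ and each $\calG_i=\calF_i/\calF_{i-1}$ is a $G_K$-equivariant vector bundle. The stalks at $\infty$ of the $\calF_i$ and $\calG_i$ are saturated subquotients of $\calE_\infty$, hence flat by Lemma~\ref{lem: ext of BdR+}(2). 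Each $\calG_i$ is semistable, so (1) makes it potentially log-crystalline. Passing to a common finite Galois $L/K$ makes all $\calG_i$ log-crystalline for $G_L$, and (2) over $L$ then makes $\calE$ log-crystalline for $G_L$. This gives the first assertion.

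For (2), we induct on $r$ using Lemma~\ref{lem: Hyodolem}. Consider $0\to\calF_{r-1}\to\calE\to\calG_r\to 0$. The bundle $\calF_{r-1}$ is log-crystalline by induction, since its HN constituents are $\calG_1,\dots,\calG_{r-1}$ and its stalk is flat. The slopes of $\calF_{r-1}$ are $\mu_1,\dots,\mu_{r-1}$, all strictly greater than $\mu_r$. Lemma~\ref{lem: Hyodolem} therefore applies directly, and $\calE$ is log-crystalline.

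The main step is (1). Let $\calE$ be semistable of slope $\lambda=d/h$ with flat stalk. Over a point, Theorem~\ref{thm: Dlambda locsys and ss vb} (or \cite[Thm.~10.1.7]{FarguesFontaineAsterisque}) identifies $\calE$ with $\bV\otimes_{D_\lambda}\calO(\lambda)$. Here $\bV$ is a continuous $G_K$-representation on a finite right $D_\lambda$-module, and $G_K$ acts on $\calO(\lambda)$ through the natural $\Q_{p^h}$-structure, trivially after restricting to $G_{K\Q_{p^h}}$. The Morita equivalence Lemma~\ref{lem: morita} gives $\bV\otimes_{\Q_p}\C_p\cong(\calE_\infty\otimes_{B_{\dR}^+}\C_p)^{\oplus h}$ as $\C_p$-representations, after a finite unramified extension making the action on $\calO(\lambda)|_\infty$ trivial. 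By Lemma~\ref{lem: ext of BdR+}(1), flatness means $\calE_\infty\otimes\C_p$ is Hodge--Tate of weight $0$. Hence $\bV$, viewed as a $\Q_p$-representation, is Hodge--Tate of weight $0$. Sen's theorem (\cite{Sen1980Invent}, the pointwise form of Theorem~\ref{thm: RpMT for HT0}) shows that the inertia acts through a finite quotient, so $\bV$ is potentially unramified. After a finite extension $L$, $\bV$ becomes unramified. Its $\varphi$-module structure together with $\calO(\lambda)=\calE(\breve{\Q}_p(\lambda)^\vee)$ then exhibits $\calE$ as $\calE(D,\varphi_D,\rho_D)$ for an isocrystal $D$ with $N=0$, so $\calE$ is crystalline over $L$. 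The delicate point is the compatibility of $G_K$-actions in the Morita step: the isomorphism $\bDhat_\lambda\otimes\C_p\cong M_h(\C_p)$ must be made $G_K$-equivariant, which is why we first pass to $K\Q_{p^h}$ or a finite unramified extension where $\calO(\lambda)|_\infty\cong\C_p^h$ with trivial twist. We expect this to be the main obstacle; the remaining steps are formal.
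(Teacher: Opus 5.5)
Your proposal is correct and follows the same route the paper takes. Part (2) is an induction along the $G_K$-stable HN filtration using Lemma~\ref{lem: Hyodolem}, and part (1) writes the semistable bundle as $\bV\otimes_{D_\lambda}\calO(\lambda)$, transfers Hodge--Tate weight $0$ from the stalk to $\bV$ via the Morita isomorphism (Lemma~\ref{lem: morita} and Lemma~\ref{lem: ext of BdR+}(1)), and then applies Sen's theorem, exactly as in \cite[\S10.6.4--10.6.5]{FarguesFontaineAsterisque} and the paper's relative proof of Theorem~\ref{thm: constant case}.

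One small correction concerns where the twist you need to remove actually lives. Over $K$, the stalk $\calO(\lambda)|_\infty$ is already $\C_p^h$ with the standard action. What only becomes trivial over the unramified extension $K\Q_{p^h}$ is the induced $G_K$-action on $D_\lambda=\End(\calO(\lambda))$, which comes through $\Q_{p^h}=\calO(Y_C)^{\varphi^h=1}$. That is the action which must be trivialized to make the Morita isomorphism equivariant.
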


\subsection{The Newton partition defined by a de Rham local system}

Let $\bL$ be a de Rham local system on a smooth rigid analytic space $S$ defined over a $p$-adic field $K$. In \S\ref{sec: Shtukas}, we demonstrated that $\bL$ defines an admissible modification of vector bundles $\calE_1 \to \calE_0$ over $X_S^\diamondsuit$ along $S^\diamondsuit \to \Spd K$. Consequently, $\bL$ defines a Newton polygon function $\NP(\bL)$ on $\lvert S \rvert$ via the relation $\NP(\bL)=\NP(\calE_0)$.

Motivated by \cite[\S3.5]{CaraianiScholzecpt}, we define the Newton partition on $\lvert S \rvert$ via the HN polygons of $\calE_0$.

\begin{defn}\label{defn: newton partitions}
We define the Newton partition on the underlying topological space $\lvert S \rvert$ of $S$ as the disjoint union
\[
\lvert S \rvert = \coprod_{\calP} \lvert S \rvert^{\calP},
\]
indexed by Newton polygons $\calP$ of length equal to the rank of $\bL$. Explicitly, a point $s \in \lvert S \rvert$ belongs to $\lvert S \rvert^{\calP}$ if and only if $\NP(\calE_0)(s)=\calP$.
\end{defn}

\begin{rem}
As demonstrated in Proposition~\ref{prop: properties of |S|P}, the Newton partition is a valid topological partition of $\lvert S \rvert$ in the sense of \cite[\href{https://stacks.math.columbia.edu/tag/09XZ}{Tag~09XZ}]{stacks-project}. However, unlike the setting in \cite{CaraianiScholzecpt}, it is unknown whether it defines a stratification in the strict sense of \cite[\href{https://stacks.math.columbia.edu/tag/09Y1}{Tag~09Y1}]{stacks-project} or \cite{Shatzfamilyofvb}. 
\end{rem}

The principal result of this subsection is the following proposition.

\begin{prop}\label{prop: properties of |S|P}
The subsets $\lvert S \rvert^{\calP} \subset \lvert S \rvert$ possess the following properties:
\begin{enumerate}
    \item \textbf{Local closedness.}
    Each $\lvert S \rvert^{\calP}$ is a locally closed subset of $\lvert S \rvert$.
    
    \item \textbf{Local finiteness.}
    Rational locally on $S$, there are only finitely many $\calP$ for which $\lvert S \rvert^{\calP}\neq \emptyset$.
    
    \item \textbf{Finiteness.}
    If $S$ is connected, there are only finitely many $\calP$ such that $\lvert S \rvert^{\calP}\neq \emptyset$.
    
    \item \textbf{$\CNP_1 \Longrightarrow$ Nonempty interior.}
    Assume that $\NP(\bL)$ is locally constant around rank-$1$ points. If $\lvert S \rvert^{\calP}\neq \emptyset$, then its topological interior $\lvert S \rvert^{\calP,\circ}$ is nonempty. Moreover, every rank-$1$ point of $\lvert S \rvert^{\calP}$ is strictly contained in $\lvert S \rvert^{\calP,\circ}$.
\end{enumerate}
\end{prop}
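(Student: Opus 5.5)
The four claims can be handled separately, with (1) and (2) both resting on the Kedlaya--Liu semicontinuity result, Theorem~\ref{thm: KL-semi-continuity}, applied to $\calE_0$. All statements are local on $S$. I would first reduce to $S$ affinoid and then choose a perfectoid pro-\'etale $\underline{G}$-torsor $\widetilde{S}\to S$ as in Lemma~\ref{lem: S as G-torsor quot}. Pulling back $\calE_0$ gives a $\varphi$-module $M$ over $\widetilde{\mathcal{R}}_{\widetilde{S}}$. By Definition~\ref{defn: many NP functions}(6) and Lemma~\ref{lem: fact about Bun X L}, $\NP(\calE_0)$ on $\lvert S\rvert$ is the pushforward of $\NP(M)$ along the quotient map $q\colon\lvert\widetilde{S}\rvert\to\lvert S\rvert$. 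This map is open and surjective (quotient by a profinite group action), so $\lvert S\rvert^{\calP}=q(\lvert\widetilde{S}\rvert^{\calP})$. Moreover $\lvert\widetilde{S}\rvert^{\calP}=q^{-1}(\lvert S\rvert^{\calP})$ is $G$-stable, and topological properties descend.

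\textbf{Step (2), local finiteness.} I would prove this before (1). By Proposition~\ref{prop: NP boundedness} applied on the quasi-compact $\widetilde{S}$, $\NP(M)$ is bounded above and below. By Theorem~\ref{thm: properties of generic and special NP}(2) its endpoints are locally constant. It remains to see that the break points of $\NP(M)$ lie in a lattice $\frac1{N}\Z$ independent of the point. This holds because the slopes are rationals with denominators at most $\rank\bL$, so the vertices lie in $\Z\times\frac{1}{n!}\Z$ with $n=\rank \bL$. Hence only finitely many polygons occur over an affinoid.

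\textbf{Step (1), local closedness.} Theorem~\ref{thm: KL-semi-continuity}(1) makes each function $x\mapsto\NP(M)(x)(a)$ lower semicontinuous around rank-$1$ points. Combined with invariance under generalization (Lemma~\ref{lem: fact about Bun X L}(1)), this should give that $\{x:\NP(M)(x)\succeq\calP\}$ is closed. Points of higher rank have the same value as their maximal generalization, and closures in adic spaces are compatible with this. I expect this to be the main obstacle: semicontinuity is only stated near rank-$1$ points, so one must argue that a specialization $y$ of a point in the closure inherits the inequality through $y_{\max}$. Given that, the level set is
\[
\lvert S\rvert^{\calP}=\{\succeq\calP\}\setminus\bigcup_{\calP'\succ\calP}\{\succeq\calP'\},
\]
where by Step (2) the union is finite locally. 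It is therefore closed minus closed, hence locally closed.

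\textbf{Step (3), finiteness.} For connected $S$ I would not assume quasi-compactness, so I would use an analytic-continuation argument. Cover $S$ by affinoids, each meeting finitely many strata. The Newton polygon function is bounded by the Hodge polygon determined by the Hodge--Tate weights of $\bL$, which are locally constant (Remark~\ref{rem: local constancy of HT wts}) and hence constant on connected $S$. Concretely, I would use the Fargues/Colmez--Fontaine inequality that the HN polygon of an admissible modification lies above $-$(Hodge polygon), with the same endpoints. Only finitely many polygons with lattice vertices lie between two fixed polygons, which gives (3).

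\textbf{Step (4), nonempty interior under $\CNP_1$.} Under the hypothesis, each rank-$1$ point $x$ of $\lvert S\rvert^{\calP}$ has an open neighbourhood on which $\NP$ is constant, so $x$ lies in $\lvert S\rvert^{\calP,\circ}$. If $\lvert S\rvert^{\calP}$ is nonempty, pick any $x$ in it. Its maximal generalization $x_{\max}$ is rank $1$ and lies in the same level set by Lemma~\ref{lem: fact about Bun X L}(1), so the interior is nonempty.
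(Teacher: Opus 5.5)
Your Steps (2)--(4) agree with the paper's proof. They use the reduction to a perfectoid $\underline{G}$-torsor together with Kedlaya--Liu for local finiteness, the Hodge-polygon bound (finiteness of $B(\mathrm{GL}_r,\mu)$) for (3), and passage to $x_{\max}$ for (4). The problem is Step (1), where you use the semicontinuity in the wrong direction.

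Theorem~\ref{thm: KL-semi-continuity}(1) says that near a rank-$1$ point $\beta$ the polygons lie on or \emph{above} $\NP(\beta)$. This makes $Z_{\calP}\coloneqq\{x:\NP(x)\succeq\calP\}$ open, not closed, and the closedness you assert is false. In the Legendre example with $p=7$, $Z_{(-1/2,-1/2)}$ is the union of the three open residue discs. The type-$5$ point $y$ at the inner edge of $\{\lvert\lambda-2\rvert<1\}$ lies in its closure. However, $\NP(\bL)(y)=\NP(\bL)(y_{\max})=(-1,0)$, because $y_{\max}$ is the Gauss point. This is exactly where your proposed mechanism breaks: $y\in\overline{Z_{\calP}}$ does not give $y_{\max}\in\overline{Z_{\calP}}$, since an open neighbourhood of $y_{\max}$ need not contain $y$.

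The conclusion survives once the direction is corrected, in either of two ways. You can write $\lvert S\rvert^{\calP}=Z_{\calP}\setminus\bigcup_{\calP'\succ\calP}Z_{\calP'}$ as open minus open. Or you can run your closed-minus-closed argument with $\{\NP\preceq\calP\}$ in place of $\{\NP\succeq\calP\}$, the union being finite by Step (2).

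In either version, the higher-rank points need an input beyond invariance under generalization. You need the neighbourhood of $\beta=y_{\max}$ on which $\NP\succeq\NP(\beta)$ to contain $y$, i.e.\ to be stable under specialization. Two ways to get this:
\begin{itemize}
\item choose it cut out by strict inequalities at the rank-$1$ point, as in Lemma~\ref{lem: anticontinuous};
\item pull the semicontinuity back from the Berkovich quotient along the continuous map $x\mapsto x_{\max}$.
\end{itemize}
The same example shows this step cannot be skipped. The ordinary locus is saturated under $x\mapsto x_{\max}$ and each of its rank-$1$ points is interior, yet it is not open (Remark~\ref{rem: nonCNP}).

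The paper simply cites Theorem~\ref{thm: KL-semi-continuity} at this point. With the direction corrected and this specialization-stability input added, your Step (1) matches it, and the $G$-stable sets descend to $\lvert S\rvert$ by Proposition~\ref{prop: useful top result}.
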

\begin{proof}
Claims (1) and (2) can be verified \'etale locally on $S$, allowing us to assume $S$ is affinoid and admits a chart. In this setting, by Lemma~\ref{lem: S as G-torsor quot}, there is a pro-\'etale covering of $S$ by an affinoid perfectoid $S_\infty$ such that $\lvert S \rvert = \lvert S_\infty \rvert / G$ for some profinite group $G$. By the construction of the Newton polygon function, the parts of the Newton partitions in $\lvert S_\infty \rvert$ are $G$-stable. Because the quotient map sends saturated open/closed sets to open/closed sets (cf.\ Proposition~\ref{prop: useful top result}), claims (1) and (2) reduce to Theorem~\ref{thm: KL-semi-continuity}. 

For claim (3), if $S$ is connected, the Hodge--Tate weights of $\bL$ are constant on $S$ (cf.\ Remark~\ref{rem: local constancy of HT wts} and \cite[Thm.~1.1]{Shimizu-HT}). It follows that $\lvert S \rvert^{\calP}\neq \emptyset$ is only possible for $\calP$ belonging to the finite set of Newton polygons bounded by $\mu$, that is, inside the Kottwitz set $B(GL_r,\mu)$, where $r$ is the rank of $\bL$ and $\mu$ is the cocharacter defined by the Hodge--Tate weights. For the finiteness of $B(GL_r,\mu)$ and this deduction, we refer to \cite[\S6.2 \& 6.4]{KottwitzIsocII}. 

For claim (4), suppose $s \in \lvert S \rvert^{\calP}$. Then its maximal rank-$1$ generalization $s_{\max}$ is also inside $\lvert S \rvert^{\calP}$. $\CNP_1$ means there exists an open neighborhood of $s_{\max}$ fully contained in $\lvert S \rvert^{\calP}$.
\end{proof}

The following proposition is used in the proof.

\begin{prop}\label{prop: useful top result}
Let $X$ be a topological space equipped with a group action by $G$. Let $Y = X/G$ be the orbit space endowed with the quotient topology, and let $\pi\colon X \to Y$ be the canonical projection. If $Z \subset X$ is a $G$-saturated subset (i.e., $Z$ is $G$-stable, meaning $\pi^{-1}(\pi(Z)) = Z$), then $Z$ is open in $X$ if and only if $\pi(Z)$ is open in $Y$. Similarly, $Z$ is closed in $X$ if and only if $\pi(Z)$ is closed in $Y$.
\end{prop}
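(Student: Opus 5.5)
This is essentially the definition of the quotient topology, so the plan is a direct point-set argument. The one structural input is that saturation of $Z$ means $\pi^{-1}(\pi(Z)) = Z$. Since $G$-stability gives $gZ = Z$ for all $g$, the saturation condition holds automatically for $G$-stable sets: $\pi^{-1}(\pi(Z)) = \bigcup_{g\in G} gZ = Z$.

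For the open case, the quotient topology on $Y$ says that a subset $W \subset Y$ is open if and only if $\pi^{-1}(W)$ is open in $X$. Take $W = \pi(Z)$. Then $\pi^{-1}(W) = Z$ by saturation, so $\pi(Z)$ is open in $Y$ exactly when $Z$ is open in $X$. This gives both directions at once; in particular, no openness of $\pi$ is needed.

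For the closed case, apply the same argument to the complement $X \setminus Z$. This set is again $G$-stable and saturated, and since $\pi$ is surjective we have $\pi(X\setminus Z) = Y \setminus \pi(Z)$. Indeed, if $y = \pi(x)$ with $x \notin Z$, then $y \notin \pi(Z)$, because otherwise $x \in \pi^{-1}(\pi(Z)) = Z$. Conversely, any $y \notin \pi(Z)$ has all of its preimages outside $Z$.

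So $Z$ is closed iff $X\setminus Z$ is open iff $\pi(X\setminus Z) = Y\setminus\pi(Z)$ is open iff $\pi(Z)$ is closed.

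The only point requiring care is the identity $\pi(X\setminus Z) = Y\setminus \pi(Z)$, which uses saturation and surjectivity. Everything else is immediate from the definitions. In the application in Proposition~\ref{prop: properties of |S|P}, we also need that the topology on $\lvert S\rvert$ agrees with the quotient topology of $\lvert S_\infty\rvert$ by $G$. That identification is supplied by the diamond formalism (Lemma~\ref{lem: S as G-torsor quot}), not by this proposition.
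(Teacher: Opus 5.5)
Your proof is correct and follows the paper's argument: both set $W=\pi(Z)$, use $\pi^{-1}(\pi(Z))=Z$, and apply the defining property of the quotient topology. The one small difference is in the closed case: the paper invokes directly that $W\subset Y$ is closed iff $\pi^{-1}(W)$ is closed, while you pass to the complement via $\pi(X\setminus Z)=Y\setminus\pi(Z)$, which is equally valid.
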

\begin{proof}
By the definition of the quotient topology, a subset $W \subset Y$ is open (resp.\ closed) if and only if its preimage $\pi^{-1}(W)$ is open (resp.\ closed) in $X$. Let $W = \pi(Z)$. Since $Z$ is $G$-saturated, $\pi^{-1}(W) = \pi^{-1}(\pi(Z)) = Z$. Therefore, $\pi(Z)$ is open (resp.\ closed) in $Y$ if and only if $Z$ is open (resp.\ closed) in $X$.
\end{proof}

\subsection{End of the proof of Theorem~\ref{thm:main_rpmt1}}

Recall in \S\ref{RpMT1 to CNP1}, we have shown $\RpMT_1 \to \CNP_1$. To prove the other direction, it is enough to show the following local version of Theorem~\ref{thm: CNP to RpMT}.

\begin{thm}[\,$\CNP\Longrightarrow \text{local }\RpMT$\,]\label{thm: constant case}
Let $S$ be a smooth rigid analytic space over a $p$-adic field $K$, and let $\bL$ be a de Rham $\Q_p$-local system on $S$. Let $\{\calE_1  \dashrightarrow \calE_0\}$ be the admissible modification of vector bundles on $X_S^\diamondsuit$ along $S^\diamondsuit \to \Spd K$. If the Newton polygon function $\NP(\calE_0)$ is constant on $\lvert S \rvert$, then for any $s \in \lvert S \rvert$, there is a rational open subset $V \subset S$ containing $s$ and a finite \'etale covering $V' \to V$, such that $\bL |_{V'}$ is log-crystalline at all classical points. 
\end{thm}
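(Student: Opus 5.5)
The plan is to run the Fargues--Fontaine argument of \S\ref{sec: FF's proof of pMT} in families, following Criterion~\ref{crit: equality_subcategories}, with the constancy of $\NP(\calE_0)$ supplying the global HN filtration. By Theorem~\ref{thm: global-slope-filtration}, $\calE_0$ carries a unique filtration $0=\calF_0\subsetneq\calF_1\subsetneq\cdots\subsetneq\calF_\ell=\calE_0$ by saturated subbundles on $X_S^\diamondsuit$. Its graded pieces $\calG_i$ are semistable of slopes $\mu_1>\cdots>\mu_\ell$. Since the filtration is canonical and defined over $X_S^\diamondsuit$ itself (not just over a perfectoid cover), it is automatically ``Galois equivariant''. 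First we localize: shrink $S$ to an affinoid rational neighborhood $V$ of $s$ admitting a toric chart, and replace $\bL$ by an isogeny $\Z_p$-lattice using Theorem~\ref{thm: equiv of all kinds of locsys}(3b).

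The next step is the relative version of condition (1), the semistable case. Each $\calG_i$ corresponds, via Theorem~\ref{thm: Dlambda locsys and ss vb}, to a $(d_i,h_i)$-$\Q_p$-local system, or equivalently a $\bDhat_{\mu_i}^{\op}$-local system $\bV_i$ on $S_\vv$. The complete stalk $(\calE_0)^+_{\dR}=\bM_0(\bL)$ is ``flat'': modulo $\ker\theta$ it is $D_{\dR}(\bL)\otimes\widehat{\calO}$, by Remark~\ref{rem: M0 and M_1 for de Rham as proet vb}. Flatness passes to the saturated subquotients $\calG_i|_{S^\sharp}$, so each of these is a subquotient of a bundle of the form $\calE\otimes_{\calO_{S_{\et}}}\widehat{\calO}$. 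By the Morita equivalence (Lemma~\ref{lem: morita}), $\bV_{i,\underline{\Q}_p}\otimes\widehat{\calO}$ is then a subquotient of such a bundle as well. Proposition~\ref{prop: Condition for L to be HT wt 0} therefore shows that $\bV_{i,\underline{\Q}_p}$ is Hodge--Tate of weight $0$. After passing to an isogeny lattice, Theorem~\ref{thm: RpMT for HT0} gives a finite \'etale cover $V_i'\to V$ over which $\bV_{i,\underline{\Q}_p}$ is unramified at all classical points. Taking the fibre product of the $V_i'$ (and, if needed, a finite unramified base field extension) yields a single finite \'etale $V'\to V$. Over $V'$, at every classical point $x$, each $\calG_i$ is then given by a $(\varphi,G)$-module of slope $\mu_i$ with flat stalk at $\infty$, so $\calG_i$ is crystalline at $x$.

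Finally we apply the extension step pointwise. Fix a classical point $x\in V'$, and pull back along $\Spd k(x)\to V'$; compatibility of everything with pullback follows from Lemma~\ref{lem: dR case compatible with pullback}. This gives a $G_{k(x)}$-equivariant bundle $\calE_{0,x}$ on $X_{\C_p^\flat}$ with flat stalk and with a filtration whose graded pieces are log-crystalline with strictly decreasing slopes. By induction on $\ell$, using Lemma~\ref{lem: Hyodolem} (equivalently Theorem~\ref{thm: key from FF}(2)), $\calE_{0,x}$ is log-crystalline. Since $\calE_{1,x}$ and $\calE_{0,x}$ have the same $B_e$-part, $\bL_{\overline{x}}$ is log-crystalline for every classical $x\in V'$, which is the claim.

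The hardest step should be the first one: showing that flatness of $\bM_0(\bL)$ descends to the graded pieces in the relative $\vv$-setting, and producing genuine $\calO_{S_\et}$-descended bundles so that Proposition~\ref{prop: Condition for L to be HT wt 0} applies. I would first try to reduce this to a classical-point check, in the style of \cite[Exm.~5.8]{Shimizu-HT} and as in the proof of that proposition. Restricted to a classical point, the graded pieces are saturated subquotients of a $\C_p$-representation that is Hodge--Tate of weight $0$, and Lemma~\ref{lem: ext of BdR+}(2) then gives weight $0$. It remains to pass from this pointwise statement to the finite \'etale trivialization of Theorem~\ref{thm: RpMT for HT0}, which needs the global Hodge--Tate-ness. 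This is where Shimizu's pointwise criterion for Hodge--Tate weight $0$ has to be invoked carefully.
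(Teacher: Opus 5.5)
Your proposal is correct and follows essentially the same route as the paper. The steps are: the global HN filtration from Theorem~\ref{thm: global-slope-filtration}; $\bDhat_{\lambda}^{\op}$-local systems for the graded pieces; Lemma~\ref{lem: morita} plus Proposition~\ref{prop: Condition for L to be HT wt 0} to get Hodge--Tate weight $0$; Theorem~\ref{thm: RpMT for HT0} on a rational open to get a finite \'etale cover; and Theorem~\ref{thm: key from FF}(2) at each classical point.

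The step you flag as hardest is handled in the paper exactly as in your first sketch. Restriction to $S^\sharp$ is exact on the saturated filtration, so $(\calG_i|_{S^\sharp})^{\oplus h}$ is a subquotient of $D_{\dR}(\bL)^{\oplus h}\otimes\widehat{\calO}$. Proposition~\ref{prop: Condition for L to be HT wt 0} (which itself reduces to classical points via Shimizu) then applies without any descent of the subquotient.
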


\begin{proof}
Under the assumption of constancy, Theorem~\ref{thm: global-slope-filtration}
 guarantees the existence of a global HN filtration on $\calF \coloneqq \calE_0$ as follows
\[
0=\calF_0 \subsetneq \calF_1 \subsetneq \cdots \subsetneq \calF_r = \calF.
\]	
Let $\calG_i \coloneqq \calF_i/\calF_{i-1}$ denote the HN constituents. Because the HN filtration is unique, for each classical point $s \in S$, $\calG_i|_{X_s^\diamondsuit}$ are precisely the HN constituents of $\calF|_{X_s^\diamondsuit}$. Based on the discussion in \S\ref{sec: FF's proof of pMT} (specifically Lemma~\ref{lem: Hyodolem}), it suffices to show that for each $i=1,\ldots, r$, there is a rational open $V \subset S$ containing $s$ and a finite \'etale covering $V' \to V$ such that $\calG_i|_{V'}$ is log-crystalline at all classical points.

By Theorem~\ref{thm: Dlambda locsys and ss vb} and Remark~\ref{rem: O(lambda) to Dlambda explicit functor}, we may write
\[
\calG_i \cong \bV_i \otimes_{\bDhat_{\lambda}} \calO(\lambda)
\]
for some $\bDhat_{\lambda}$-local system $\bV_i$.
	
Let $\bV_{\underline{\Q}_p}$ be the underlying $\underline{\Q}_p$-local system of $\bV_i$. The restriction $\calG_i|_{S^\sharp}$ of $\calG_i$ along the Cartier divisor defined by $S^\sharp \to \Spd K$ is a subquotient of a $\vv$-vector bundle originating from an \'etale vector bundle over $S^\diamondsuit$ (cf.\ Remark~\ref{rem: M0 and M_1 for de Rham as proet vb} and Remark~\ref{rem: extend BdR and HT to v loc sys}). By applying Lemma~\ref{lem: morita} and Proposition~\ref{prop: Condition for L to be HT wt 0}, we deduce that $\bV_{\underline{\Q}_p}$ is a Hodge--Tate local system of weight $0$.

For any $s \in \lvert S \rvert$, Theorem~\ref{thm: equiv of all kinds of locsys}(3b) allows us to find a rational open subset $U$ containing $s$ such that $\bV_{\underline{\Q}_p}|_{U}$ is an isogeny $\Z_p$-local system. Subsequently, Theorem~\ref{thm: RpMT for HT0} ensures the existence of a further finite \'etale covering $V'$ of $U$ over which $\bV_{\underline{\Q}_p}|_{V'}$ is unramified at all classical points. From the discussions in \S\ref{sec: FF's proof of pMT}, for each classical point $s' \in V'$, the restriction $\calG_i|_{X_{s'}}$ is log-crystalline as a $\Gal_{\kappa(s)}$-equivariant $\calO_{X_{\overline{s}}}$-representation, which is also $B_{\dR}^+$-flat. 

Hence all HN constituents $\calG_i|_{X_{s'}}$ are log-crystalline and $B_{\dR}^+$-flat, Theorem~\ref{thm: key from FF}(2) implies that the bundle $\calE_0|_{X_{s'}}$ is log-crystalline. Therefore, $\bL|_{V'}$ is log-crystalline at all classical points.
\end{proof}

\begin{rem}\label{rem: on conjecture of HoweKlevdalIII}
A special case of Theorem~\ref{thm: CNP to RpMT} occurs when $\NP(\bL)$ consists of a single $\lambda$ with multiplicity; in this case, it is immediate that $\bL|_{V'}$ is pointwise crystalline, as the $D_{\pst}$ are isoclinic
\end{rem}

The hypothesis in the preceding theorem is always satisfied over a dense open subset. 

\begin{prop}\label{prop: CNP is dense open}
Let $S$ be a smooth rigid analytic space over a $p$-adic field $K$, and let $\bL$ be a de Rham $\Q_p$-local system on $S$. There exists a dense open subset $U \subset \lvert S\rvert$ over which $\NP(\bL)$ is locally constant. Moreover, $U$ contains all classical points of $S$.
\end{prop}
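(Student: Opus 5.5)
The plan is to produce $U$ by gluing local dense opens obtained from the Kedlaya--Liu result, and then to check separately that every classical point lies in it. Define $U$ globally as the set of points $x \in \lvert S\rvert$ admitting an open neighborhood on which $\NP(\bL)$ is constant. This set is open by construction, and $\NP(\bL)$ is locally constant on it. Two things then remain: density, and the containment of classical points.

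For density, the question is local on $S$. Since \'etale maps are open (\cite[Prop.~1.7.8]{Huber-etale}) and $\NP(\bL)$ is compatible with pullback (Lemma~\ref{lem: dR case compatible with pullback}), I would first cover $S$ by affinoid opens $S'$ that admit a toric chart. On each such $S'$, Lemma~\ref{lem: S as G-torsor quot} provides a perfectoid $G$-torsor $S_\infty = \Spa(A_\infty, A_\infty^+) \to S'$ with $\lvert S'\rvert = \lvert S_\infty\rvert/G$. Pulling $\calE_0$ back gives a $\varphi$-module $M$ over $\widetilde{\calR}_{A_\infty^\flat}$, and $\NP(\bL)\circ\pi = \NP(M)$.

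Proposition~\ref{prop: dense open on which NP is locconst} gives a dense open $W \subset \lvert S_\infty\rvert$ on which $\NP(M)$ is locally constant. The key point is to choose $W$ canonically as the locus of local constancy of $\NP(M)$. Because $\NP(M)$ is $G$-invariant, this $W$ is $G$-saturated. Proposition~\ref{prop: useful top result} then shows that $\pi(W)$ is open in $\lvert S'\rvert$. It is dense because $\pi$ is a continuous surjection, and $\NP(\bL)$ is locally constant on it. For the local constancy, take a $G$-saturated open neighborhood, namely $\pi^{-1}\pi$ of a neighborhood on which $\NP(M)$ is constant. Such a set is open since $\pi$ is an open map for a quotient by a group action. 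Hence $\pi(W)\subset U\cap S'$, and since density of $U$ in each member of an open cover implies density in $S$, $U$ is dense.

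For classical points, I would use Shimizu's theorem (Remark~\ref{rem: shimizu thm}, \cite[Thm.~9.2]{Shimizu-monodromy}). Around a classical point $x$, it gives an open neighborhood $V$ and a finite extension over which $\bL|_V$ is horizontally log-crystalline, that is, associated with a constant isocrystal over the residue field. After applying Theorem~\ref{thm: localalteration} to get a semistable model, Theorem~\ref{thm: NP function under specialization} identifies $\NP(\bL)$ on this neighborhood with minus the Newton polygon of that constant isocrystal, so it is constant. Pushing back down via openness of étale maps gives an open neighborhood of $x$ in $S$ on which $\NP(\bL)$ is constant, so $x\in U$.

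The step I expect to be most delicate is this last transfer. Shimizu's output must be upgraded to honest log-crystallinity with respect to some semistable model, which requires purity (\cite[Thm.~4.1]{du-liu-moon-shimizu-purity-F-crystal}), and one must check that the "constant isocrystal" really forces the specialized Newton polygon to be constant on the whole neighborhood, including non-classical points. If this proves awkward, a fallback is to use Shimizu only to obtain horizontal log-crystallinity at classical points. One would then combine Theorem~\ref{thm: logcris case_open around rk 1 point}, which gives openness of the level set around the rank-$1$ point $x$, with the Kedlaya--Liu lower semicontinuity.
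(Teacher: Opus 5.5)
Your proposal is correct and matches the paper's proof: density comes from Kedlaya--Liu's local-constancy result on a perfectoid $G$-torsor, pushed down to $\lvert S\rvert$ via Proposition~\ref{prop: useful top result}, and classical points come from Shimizu's local monodromy theorem. Your fallback for classical points (Shimizu, then Theorem~\ref{thm: localalteration}, Theorem~\ref{thm: pointwise} and Theorem~\ref{thm: logcris case_open around rk 1 point}) is exactly the paper's route through Theorem~\ref{thm: one direct}, and it avoids having to identify Shimizu's constant isocrystal with the crystalline realization on all of the special fiber, which your main route would require.
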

\begin{proof}
The existence of a dense open subset $U$ is a direct consequence of Proposition~\ref{prop: useful top result} and Proposition~\ref{prop: dense open on which NP is locconst}. The fact that $U$ contains all classical points follows from Theorem~\ref{thm: one direct} and \cite[Thm.~9.2]{Shimizu-monodromy} (see also Remark~\ref{rem: shimizu thm}).
\end{proof}

\subsection{A relative \texorpdfstring{$p$}{p}-adic monodromy}

In this subsection, we will complete the proof of Theorems~\ref{thm: RpMT over dense open}, \ref{thm: CNP to RpMT}, and \ref{CNP to RpMTw}. We begin with Theorem~\ref{thm: CNP to RpMT}.

\begin{thm}[\,$\CNP\Longrightarrow \RpMT$\,]\label{thm: global case}
Let $S$ be a smooth rigid analytic space over a $p$-adic field $K$, and let $\bL$ be a de Rham local system on $S$. Let $\{\calE_1  \dashrightarrow \calE_0\}$ be the admissible modification of vector bundles on $X_S^\diamondsuit$ along $S^\diamondsuit \to \Spd K$. If the Newton polygon function $\NP(\calE_0)$ is constant on $\lvert S \rvert$, then there is a geometric \'etale covering $S'$ of $S$ such that $\bL |_{S'}$ is log-crystalline at all classical points.
\end{thm}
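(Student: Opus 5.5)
The plan is to run the proof of Theorem~\ref{thm: constant case} again, but to replace each local step (shrinking to a rational open, then taking a finite \'etale cover) by a global construction that produces a geometric covering in the sense of \cite[Defn.~5.2.2]{ALYArcs}. The pointwise endgame then stays unchanged. We may assume $S$ is connected; otherwise we argue on each connected component and take the disjoint union.

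First, since $\NP(\calE_0)$ is constant, Theorem~\ref{thm: global-slope-filtration} gives a global HN filtration $0=\calF_0\subsetneq\cdots\subsetneq\calF_r=\calE_0$ on $X_S^\diamondsuit$ with semistable graded pieces $\calG_i$ of slopes $\lambda_i$. By Theorem~\ref{thm: Dlambda locsys and ss vb} and Remark~\ref{rem: O(lambda) to Dlambda explicit functor}, each $\calG_i$ has the form $\bV_i\otimes_{\bDhat_{\lambda_i}}\calO(\lambda_i)$. Exactly as in the proof of Theorem~\ref{thm: constant case}, Lemma~\ref{lem: morita} combined with Proposition~\ref{prop: Condition for L to be HT wt 0} shows that the underlying $\Q_p$-local system $\bV_{i,\underline{\Q}_p}$ is Hodge--Tate of weight $0$ on all of $S$. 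This step is purely sheaf-theoretic and needs no localization.

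Second, I globalize the trivialization step.
\begin{enumerate}
\item For each $i$, Theorem~\ref{thm: equiv of all kinds of locsys}(3a), applied instead of (3b), gives a connected geometric covering $T_i\to S$ over which $\bV_{i,\underline{\Q}_p}$ becomes an isogeny $\Z_p$-local system.
\item Theorem~\ref{thm: RpMT for HT0} then gives a finite \'etale covering $T_i'\to T_i$ over which $\bV_{i,\underline{\Q}_p}$ is unramified at all classical points.
\item Using the stability properties of geometric coverings collected in Appendix~\ref{app:local-systems} (Proposition~\ref{prop: geometric covering-input}), namely that finite \'etale covers of geometric coverings and finite fiber products of geometric coverings are again geometric coverings, I set $S'$ to be a connected component of $T_1'\times_S\cdots\times_S T_r'$. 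This is a geometric \'etale covering of $S$, and every $\bV_{i,\underline{\Q}_p}|_{S'}$ is unramified at classical points.
\end{enumerate}

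Third, I conclude pointwise. Fix a classical point $s'\in S'$. Each $\calG_i|_{X_{s'}}$ is the tensor product of an unramified $D_{\lambda_i}$-representation with $\calO(\lambda_i)$, so it is log-crystalline (indeed crystalline) and has $B_{\dR}^+$-flat stalk. Moreover, $\calE_0|_{X_{s'}}$ has $B_{\dR}^+$-flat stalk, since the de Rham construction gives $M_0=D_{\dR}\otimes B_{\dR}^+$. The HN filtration is unique, so its restriction to $s'$ is the HN filtration of $\calE_0|_{X_{s'}}$. Theorem~\ref{thm: key from FF}(2), via Lemma~\ref{lem: Hyodolem}, then shows that $\calE_0|_{X_{s'}}$ is log-crystalline. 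Because $\calE_0$ and $\calE_1=\bL\otimes\calO$ have the same $B_e$-part, it follows that $\bL|_{S'}$ is log-crystalline at $s'$.

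I expect the main obstacle to be the second step, and more precisely applying Theorem~\ref{thm: RpMT for HT0} on $T_i$. The space $T_i$ need not be quasi-compact, and Shimizu's theorem is stated for a smooth rigid space carrying an isogeny $\Z_p$-local system. I need to check two things: that its proof yields a single finite \'etale cover without any quasi-compactness hypothesis (the cover should be the one trivializing the residual representation cut out by Sen's theory, which is defined globally), and that the composite is still a geometric covering rather than only an \'etale surjection. My first attempt would be to check that the finite \'etale cover in \cite[Thm.~5.15]{Shimizu-HT} is constructed from a finite quotient of the monodromy of $\bV_{i,\underline{\Q}_p}|_{T_i}$ modulo $p^n$, which is defined globally. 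If that fails, the fallback is to work with the lattice-moduli covering of Theorem~\ref{thm: generalized de Jong lattice sheaf} directly.
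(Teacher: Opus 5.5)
Your proposal is correct and is essentially the paper's proof. The paper likewise reruns the argument of Theorem~\ref{thm: constant case}, replacing the rational open from Theorem~\ref{thm: equiv of all kinds of locsys}(3b) by the geometric covering from (3a), and then applies Theorem~\ref{thm: RpMT for HT0} and Proposition~\ref{prop: geometric covering-input} just as you do. The quasi-compactness worry you flag is harmless, since the finite \'etale cover in Theorem~\ref{thm: RpMT for HT0} can be taken to be the global one trivializing $\mathbb{T}/p^2$ for a $\Z_p$-lattice $\mathbb{T}$; after that, Sen's finiteness of the inertia image together with the torsion-freeness of $1+p^2M_n(\Z_p)$ forces inertia to act trivially at every classical point (the same trick the paper uses in the proof of Theorem~\ref{thm: HT0 implies unramified proet}).
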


\begin{proof}
The proof mirrors that of Theorem~\ref{thm: constant case}, and we retain the notation used there. The only difference is that, rather than using a rational open subset to promote the $\Q_p$-local system $\bV_{\underline{\Q}_p}$ to an isogeny $\Z_p$-local system, we invoke Theorem~\ref{thm: equiv of all kinds of locsys} to find a geometric \'etale covering $S' \to S$ such that, after pullback to $S'$, the local system $\bV_{\underline{\Q}_p}$ admits a $\Z_p$-lattice. The remainder of the proof remains identical. When $S$ is connected, each connected component of $S'$ is a connected covering of $S$ by (3) in Proposition~\ref{prop: geometric covering-input}.
\end{proof}

\begin{cor}\label{cor: RpMT over dense}
Let $S$ be a connected smooth rigid-analytic space over a $p$-adic field $K$, and let $\mathbb{L}$ be a de Rham $\mathbb{Q}_p$-local system on $S$. There exists a dense open subset $U \subset S$ and a geometric \'etale covering $V \to U$ of smooth rigid-analytic varieties over $K$ such that $\mathbb{L}|_{V}$ is log-crystalline at all classical points of $V$. Moreover, $U$ contains all classical points of $S$.
\end{cor}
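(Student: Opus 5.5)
The plan is to combine Proposition~\ref{prop: CNP is dense open} with Theorem~\ref{thm: global case}, applied separately on each piece of the Newton partition.

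First, Proposition~\ref{prop: CNP is dense open} gives a dense open subset $U \subset \lvert S \rvert$ on which $\NP(\bL)$ is locally constant, and this $U$ already contains all classical points of $S$. Since $U$ is open in $\lvert S\rvert$, it is the underlying space of an open subspace of $S$, which is again a smooth rigid-analytic space over $K$; we use the same letter for it. Local constancy of $\NP(\bL)$ on $U$ means that for each Newton polygon $\calP$ the set $U^{\calP} \coloneqq U \cap \lvert S\rvert^{\calP}$ is open in $U$. These sets are pairwise disjoint and cover $U$, so each $U^{\calP}$ is also closed in $U$. Thus $U = \coprod_{\calP} U^{\calP}$ is a decomposition into open and closed subspaces. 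By Proposition~\ref{prop: properties of |S|P}(3), only finitely many $U^{\calP}$ are nonempty because $S$ is connected, although finiteness is not actually needed here.

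Second, on each $U^{\calP}$ the function $\NP(\calE_0)$ is constant. This uses that the admissible modification, and hence its Newton polygon function, is compatible with restriction to open subspaces (Lemma~\ref{lem: dR case compatible with pullback}). Theorem~\ref{thm: global case} applied to $\bL|_{U^{\calP}}$ then gives a geometric \'etale covering $V^{\calP} \to U^{\calP}$ such that $\bL|_{V^{\calP}}$ is log-crystalline at all classical points. Setting $V \coloneqq \coprod_{\calP} V^{\calP}$ gives an \'etale morphism $V \to U$ that is a geometric covering, because being a geometric covering is local on the target and $U$ is the disjoint union of the $U^{\calP}$ (see Appendix~\ref{app:local-systems}). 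The pointwise log-crystallinity holds on each component, hence on all of $V$.

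The only delicate point is bookkeeping. We must check that $U$, which is merely open in $\lvert S \rvert$, carries the structure of a smooth rigid space. We must also confirm that a disjoint union of geometric coverings over a disjoint union of opens is still a geometric covering in the sense of \cite{ALYArcs}. Both should follow from the definitions, since geometric coverings are defined \'etale-locally on the base. The remaining claim, that $U$ contains all classical points, is part of Proposition~\ref{prop: CNP is dense open}, which rests on Shimizu's theorem.
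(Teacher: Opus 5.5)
Your proposal is correct and follows the paper's argument. Take $U$ from Proposition~\ref{prop: CNP is dense open}, note that each $U\cap\lvert S\rvert^{\calP}$ is open (the paper writes this as $U\cap\lvert S\rvert^{\calP}=U\cap(\lvert S\rvert^{\calP})^\circ$), and apply Theorem~\ref{thm: global case} to each piece. The paper leaves your extra bookkeeping implicit, namely that the pieces are clopen in $U$ and that $\coprod_{\calP}V^{\calP}\to U$ is a geometric covering. The cleanest justification is Proposition~\ref{prop: geometric covering-input}: each clopen immersion $U^{\calP}\hookrightarrow U$ is finite \'etale, and geometric coverings are stable under composition and disjoint unions.
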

\begin{proof}
This follows immediately by applying Theorem~\ref{thm: global case} to the intersections $U \cap \lvert S \rvert^{\calP}$, where $U \subset S$ is the dense open subvariety provided by Proposition~\ref{prop: CNP is dense open}. It is clear from the definition of $U$ that $U \cap \lvert S \rvert^{\calP}=U \cap (\lvert S \rvert^{\calP})^\circ$, where $(\lvert S \rvert^{\calP})^\circ$ is the interior of $\lvert S \rvert^{\calP}$; in particular, this intersection is open.
\end{proof}

\begin{cor}[\,$\CNP_1 \Longrightarrow \RpMT^w$\,]\label{cor: RpMTw}
Let $S$ be a connected smooth rigid analytic space over a $p$-adic field $K$, and let $\bL$ be a de Rham $\Q_p$-local system on $S$. \emph{Assuming $\CNP_1$}, there exists an \'etale map $\pi\colon S' \to S$ such that $\bL |_{S'}$ is log-crystalline at all classical points, and the image of $\pi$ contains all rank-$1$ points. 
\end{cor}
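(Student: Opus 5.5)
The plan is to build $S'$ as a disjoint union of étale maps, one for each part of the Newton partition, with each piece supplied by Theorem~\ref{thm: global case}. For every multiset $\calP$ with $\lvert S\rvert^{\calP}\neq\emptyset$, let $U_{\calP}\coloneqq (\lvert S\rvert^{\calP})^\circ$ be the topological interior. Since $\lvert S\rvert$ is the underlying space of $S$, the open subset $U_{\calP}$ corresponds to an open subspace of $S$. This subspace is again a smooth rigid-analytic space over $K$ in the adic sense, though it need not be quasi-compact, which is harmless because Theorem~\ref{thm: global case} does not assume quasi-compactness. By Proposition~\ref{prop: properties of |S|P}(3), connectedness of $S$ gives only finitely many such $\calP$. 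Finiteness is not strictly needed, but it keeps $S'$ a finite disjoint union.

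The key input is Proposition~\ref{prop: properties of |S|P}(4): assuming $\CNP_1$, every rank-$1$ point of $\lvert S\rvert^{\calP}$ lies in $U_{\calP}$. Since the parts $\lvert S\rvert^{\calP}$ cover $\lvert S\rvert$, the union $\bigcup_{\calP} U_{\calP}$ contains every rank-$1$ point of $S$. On $U_{\calP}$, the function $\NP(\calE_0)$ is constant and equal to $\calP$. This uses compatibility of the admissible modification with restriction to open subspaces, which is part of Lemma~\ref{lem: dR case compatible with pullback}.

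Apply Theorem~\ref{thm: global case} to $\bL|_{U_{\calP}}$. This yields a geometric étale covering $S'_{\calP}\to U_{\calP}$ such that $\bL|_{S'_{\calP}}$ is log-crystalline at all classical points. Set
\[
S'\coloneqq \coprod_{\calP} S'_{\calP},
\]
with $\pi\colon S'\to S$ the composite $S'_{\calP}\to U_{\calP}\hookrightarrow S$. This map is étale, because it is a composite of an étale covering and an open immersion. Its image is $\bigcup_{\calP}U_{\calP}$, since each $S'_{\calP}\to U_{\calP}$ is surjective, being a covering. That image therefore contains all rank-$1$ points. Log-crystallinity at classical points holds componentwise, which gives the statement.

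The only delicate point is checking that $U_{\calP}$ is a genuine open subspace to which Theorem~\ref{thm: global case} applies. In particular, the diamond $U_{\calP}^\diamondsuit$ and the modification over it are just the restrictions from $S$, so constancy of $\NP$ on $\lvert U_{\calP}\rvert$ is inherited directly. If one wants $S'$ to be connected componentwise, invoke Proposition~\ref{prop: geometric covering-input}(3) on each connected component of $U_{\calP}$, as in the proof of Theorem~\ref{thm: global case}. Nothing deeper is required: the genuine work, namely the HN filtration, the Morita argument, and the Hodge--Tate weight $0$ step, is already contained in Theorem~\ref{thm: global case}.
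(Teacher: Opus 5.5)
Your proposal is correct and follows the paper's own argument: the paper likewise applies Theorem~\ref{thm: global case} to the interiors $(\lvert S\rvert^{\calP})^\circ$ of the Newton partition, and uses Proposition~\ref{prop: properties of |S|P}(4) to see that these interiors contain every rank-$1$ point. Your additional checks (compatibility with restriction via Lemma~\ref{lem: dR case compatible with pullback}, surjectivity of the coverings, and finiteness of the index set) only spell out what the paper's one-line proof leaves implicit.
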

\begin{proof}
One applies Theorem~\ref{thm: global case} to the interiors of the Newton partitions of $\lvert S\rvert$ defined in Definition~\ref{defn: newton partitions}, using Proposition~\ref{prop: properties of |S|P}.
\end{proof}

\section{Proof of Theorem~\ref{thm: intro HK conjecture}}\label{sec: proof of HK conjecture}

In this section, we recall the notions of neutral basic admissible pairs, admissible pairs with potential good reduction, and variations of $p$-adic Hodge structure from \cite{HoweKlevdalIII}. We then prove \cite[Conj.~4.3.4]{HoweKlevdalIII} using Theorem~\ref{thm: HT0 implies unramified proet}.

We retain the notation of \S\ref{sec: Shtukas} that $S$ is a smooth rigid analytic variety over a $p$-adic field $K$, and $S^\diamondsuit$ is its associated diamond.

\begin{defn}\label{defn: VpHS}
A \emph{variation of $p$-adic Hodge structure over $S$} is a (finitely supported) $\Q$-graded de Rham $\Qp$-local system
\[
  \mathbb V=\oplus_{\lambda\in\Q}\mathbb V_\lambda
\]
such that, for every $\lambda\in\Q$, $\mathbb V_\lambda$ is de Rham, and whenever $\mathbb V_\lambda\neq0$, the only slope of $\NP(\mathbb V_\lambda)$ is $-\lambda/2$. These objects, with grading-preserving morphisms, form the category $\mathrm{VHS}(S)$.
\end{defn}

\begin{defn}\label{defn: basic admissible pair}
Let $T/\Spd\Qp$ be a small $\vv$-sheaf. An \emph{isoclinic neutral admissible pair of weight $\lambda$ over $T$} is an admissible modification
\[
  \mathcal E_0\dasharrow\mathcal E_1
\]
on the relative Fargues--Fontaine curve $X_{T}$ along $T \to \Spd \Qp$ such that $\mathcal E_1$ is semistable of slope $0$ and
\[
  \mathcal E_0\simeq\mathcal E(\underline D)|_{X_T}
\]
for a semisimple isocrystal $\underline D$ over $\overline{\F}_p$ of slope $\lambda/2$. Here $\calE(\underline{D})|_{X_T}$ is the vector bundle defined by $(D\otimes \calO(Y_{T'}), \varphi_D \otimes \varphi)$ for all affinoid perfectoid space $T'$ under Lemma~\ref{lem: vb as phi modules}. A \emph{basic neutral admissible pair} is a finite direct sum
\[
  \oplus_{\lambda\in\Q}\mathcal E_{0,\lambda}
  \dasharrow
  \oplus_{\lambda\in\Q}\mathcal E_{1,\lambda}
\]
such that, for every $\lambda\in\Q$, $\mathcal E_{0,\lambda}
  \dasharrow
  \mathcal E_{1,\lambda}$ is an isoclinic neutral admissible pair of weight $\lambda$. We denote their category by $\AdmPair^{\mathrm{basic},\mathrm{neut}}(T)$.
\end{defn}

\begin{defn}\label{defn: pgr admissible pair}
Let $S$ be a smooth rigid analytic variety over a $p$-adic field $K$, and let $S^\diamondsuit/\Spd K$ be the associated diamond. A $\Q$-graded admissible modification $\oplus_{\lambda \in \Q}\mathcal E_{0,\lambda} \dasharrow\oplus_{\lambda \in \Q}\mathcal E_{1,\lambda}$ of vector bundle over $X_{S^\diamondsuit}$ along $S \to \Spd\Qp$ is called potentially unramified admissible pair if there is a covering $\{\widetilde{S}_i\}_{i\in I}$ by potentially unramified pro-\'etale morphisms as in Definition~\ref{defn:unramified-proet} such that the pullback of $\oplus_{\lambda \in \Q}\mathcal E_{0,\lambda} \dasharrow\oplus_{\lambda \in \Q}\mathcal E_{1,\lambda}$ to each $\widetilde{S}_i$ is inside $\AdmPair^{\text{basic,neut}}(\widetilde{S}_i/\Spd\Qp)$. The category of potentially unramified neutural admissible pairs is denoted by $\AdmPair^{\text{basic,pgr}}(S)$.
\end{defn}

Using Theorem~\ref{thm: Fargues thm for admissible modification}, one can check that Definition~\ref{defn: VpHS} (resp. Definition~\ref{defn: pgr admissible pair}) agrees with \cite[Defn.~4.3.1]{HoweKlevdalIII} (resp. \cite[Defn.~4.2.1]{HoweKlevdalIII}). 

\begin{thm}\label{thm: HK thm input}
Let $S$ be a smooth rigid analytic variety over a $p$-adic field. Their is a fully faithful functor
\[
  \omega_{\et}^{\mathcal L}\colon
  \AdmPair^{\mathrm{basic},\mathrm{pgr}}(S)
  \longrightarrow
  \mathrm{VHS}(S).
\]
More precisely, it sends
\[
  \oplus_\lambda\mathcal E_{0,\lambda}
  \dasharrow
  \oplus_\lambda\mathcal E_{1,\lambda}
\]
to $\oplus_\lambda\mathcal E_{1,\lambda}$, with the corresponded $\Q$-graded local system under Corollary~\ref{cor: shtukas with no leg and ss vb of slope 0} being de Rham.
\end{thm}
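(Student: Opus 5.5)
The plan is to follow the construction of \cite[Prop.~4.3.3]{HoweKlevdalIII}, recast in the language of \S\ref{sec: Shtukas}. There are three tasks: show that the functor $\omega_{\et}^{\mathcal L}$ is well defined, i.e.\ its output is a variation of $p$-adic Hodge structure; identify the given pair with the modification realization of its output; and deduce full faithfulness from Lemma~\ref{lem: dR case compatible with pullback}.

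\textbf{Step 1: the output is de Rham.} Start with a pair $\oplus_\lambda\calE_{0,\lambda}\dashrightarrow\oplus_\lambda\calE_{1,\lambda}$ in $\AdmPair^{\mathrm{basic},\mathrm{pgr}}(S)$. Since each $\calE_{1,\lambda}$ is semistable of slope $0$, Theorem~\ref{thm: local system and ssvb of slope 0} attaches to it a $\Q_p$-local system $\mathbb V_\lambda$, and $\omega_{\et}^{\mathcal L}$ is defined as $\oplus_\lambda \mathbb V_\lambda$. To show $\mathbb V_\lambda$ is de Rham, I would pass to a covering $\{\widetilde S_i\}$ by potentially unramified pro-\'etale morphisms over which the pair is neutral, so that $\calE_{0,\lambda}\simeq \calE(\underline D)$. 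By Theorem~\ref{thm: Fargues thm for admissible modification}, the $\bB_{\dR}^+$-lattice $\Xi=(\calE_{0,\lambda})^+_{\dR}$ is then $D\otimes_{\breve\Q_p}\bB_{\dR}^+$ inside $\mathbb V_\lambda\otimes\bB_{\dR}$. In particular, $\Xi\otimes_\theta\widehat{\calO}$ is Hodge--Tate of weight $0$ (compare Proposition~\ref{prop: Condition for L to be HT wt 0}). Over $\widetilde S_i$ the relative position of $\Xi$ and $\mathbb V_\lambda\otimes\bB_{\dR}^+$ produces a filtered vector bundle with connection whose underlying bundle is $D\otimes\calO$. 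This exhibits the de Rham comparison isomorphism of Definition~\ref{defn: de Rham local system} after pullback.

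\textbf{Step 2: descending the de Rham property (main obstacle).} The hardest point is to descend this comparison from the potentially unramified pro-\'etale cover back to $S$. My first approach would be the following. The arithmetic Riemann--Hilbert functor of \cite[Thm.~3.9]{liu-zhu-rigidity} commutes with pullback. Potentially unramified pro-\'etale morphisms are, \'etale locally, towers of finite \'etale maps that are unramified in the base-field direction. On such a tower the graded pieces of $\nu_*(\mathbb V_\lambda\otimes\mathcal O\mathbb B_{\dR})$ have the correct rank at a finite stage, by a limit argument together with Shimizu's pointwise criterion \cite[Exm.~5.8]{Shimizu-HT}. Failing this, I would simply invoke \cite[Prop.~4.3.3]{HoweKlevdalIII}, which the paper already cites as the construction of this functor.

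\textbf{Step 3: identification of lattices and the slope condition.} Next I would show that $\Xi$ coincides with the lattice $\bM_0(\mathbb V_\lambda)$ of Remark~\ref{rem: M0 and M_1 for de Rham as proet vb}. Both are $\bB_{\dR}^+$-lattices in $\mathbb V_\lambda\otimes\bB_{\dR}$ that are flat, in the sense that their reductions mod $\ker\theta$ are Hodge--Tate of weight $0$ and descend to an \'etale vector bundle. Working locally and using Lemma~\ref{lem: ext of BdR+} pointwise, such a flat lattice should be unique. Hence $T_{\mathrm{modif}}(\mathbb V_\lambda)$ recovers the given pair, and $\NP(\mathbb V_\lambda)=\NP(\calE_{0,\lambda})$. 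By neutrality of $\calE_{0,\lambda}$ after pullback and Definition~\ref{defn: many NP functions}(5), this has the single slope $-\lambda/2$, so the output is an object of $\mathrm{VHS}(S)$.

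\textbf{Step 4: full faithfulness.} Faithfulness holds because a morphism of modifications is determined by its restriction to $\calE_1$, the two bundles being isomorphic away from the divisor. For fullness, take a grading-preserving map of local systems. By Step 3, both pairs are $T_{\mathrm{modif}}$ of their local systems, and $T_{\mathrm{modif}}$ is fully faithful by Lemma~\ref{lem: dR case compatible with pullback}. So the map lifts to a morphism of admissible modifications, in agreement with \cite[Thm.~3.1.2]{HoweKlevdalIII}.
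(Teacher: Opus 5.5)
The paper proves this statement only by citation: \cite[Prop.~4.3.3]{HoweKlevdalIII} (with Lem.~4.3.2) for the construction, i.e.\ that the output is a de Rham local system lying in $\mathrm{VHS}(S)$, and \cite[Thm.~3.1.2]{HoweKlevdalIII} for full faithfulness. Your Step~2 falls back on the same citation, so for well-definedness you are doing what the paper does.

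The genuinely different part is Step~4. By Theorem~\ref{thm: Fargues thm for admissible modification}, a morphism of admissible pairs is just a map of local systems carrying one $\bB_{\dR}^+$-lattice into the other. So once $\Xi=\bM_0(\bV_\lambda)$ is known, faithfulness is automatic and fullness is functoriality of $\bM_0$ (Lemma~\ref{lem: dR case compatible with pullback}). This replaces \cite[Thm.~3.1.2]{HoweKlevdalIII} by tools internal to the paper. The cost is that everything now rests on Step~3, which you only assert (``should be unique'').

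Two repairs are needed if these steps are to stand on their own.

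\emph{Step~2.} The ``limit argument'' does not work as written. The neutralization $\calE_{0,\lambda}\simeq\calE(\underline D)$ need not exist at any finite stage; in general it lives only on the infinite-level object $\widetilde S_i$. Also, \cite[Exm.~5.8]{Shimizu-HT} is a pointwise criterion for being Hodge--Tate, which gives nothing about de Rham. An argument that does work goes pointwise. At a classical point $x$, potential unramifiedness makes the pair neutral over an unramified extension of a finite extension of $k(x)$. By Fargues--Fontaine, $\bV_{\lambda,\bar x}$ is then crystalline on an open subgroup of inertia, hence de Rham. The rigidity theorem of \cite{liu-zhu-rigidity} (de Rham at one classical point of each connected component implies de Rham) then shows that $\bV_\lambda$ is de Rham.

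\emph{Step~3.} You cannot claim that $\Xi\otimes_\theta\widehat{\calO}$ descends to an \'etale bundle on $S$; you know this only on $\widetilde S_i$. You do not need it, though. At each classical point, $\Xi_x$ is a $G_{k(x)}$-stable lattice whose reduction is Hodge--Tate of weight $0$, a property insensitive to finite and unramified extensions. So Lemma~\ref{lem: ext of BdR+}(1) gives $\Xi_x=D_{\dR}(\bV_{\lambda,\bar x})\otimes B_{\dR}^+=\bM_0(\bV_\lambda)_x$.

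What is still missing is the passage from equality at classical points to equality of sub-$\vv$-sheaves of $\bV_\lambda\otimes\bB_{\dR}$. Locally $\Xi\subseteq t^{-n}\bM_0(\bV_\lambda)$, and you must show that the induced map to $t^{-n}\bM_0(\bV_\lambda)/\bM_0(\bV_\lambda)$ vanishes. This can be checked on an affinoid perfectoid pro-finite-\'etale cover, filtering the target by $\widehat{\calO}$-bundles and using three facts:
\begin{enumerate}
\item the vanishing locus of a function is closed;
\item points lying over classical points are dense;
\item on a uniform (e.g.\ perfectoid) affinoid, a function vanishing at every point is zero.
\end{enumerate}
The reverse inclusion follows by symmetry, giving $\Xi=\bM_0(\bV_\lambda)$. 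With these two repairs, or by simply citing \cite[Thm.~3.1.2]{HoweKlevdalIII} as the paper does, your argument is complete.
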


\begin{proof}
This is \cite[Thm.~3.1.2 and Prop.~4.3.3]{HoweKlevdalIII}, together with \cite[Lem.~4.3.2]{HoweKlevdalIII}.
\end{proof}

Only finitely many graded pieces of an object of $\mathrm{VHS}(S)$ are nonzero. Once essential surjectivity for Theorem~\ref{thm: intro HK conjecture} is known for the isoclinic case, choose potentially unramified covers for each nonzero isoclinic direct summand and take their fiber product over $S$. This is again potentially unramified by \cite[Lem.~4.1.4]{HoweKlevdalIII}, and the direct sum is neutral over it. It therefore suffices to prove essential surjectivity in the isoclinic case.

The following result the key input in this section.

\begin{thm}\label{thm: HT0 implies unramified proet}
Let $\mathbb W$ be a $\bDhat_\lambda^{\op}$-local system on $S$ of constant $D$-rank $m$, let $D=D_\lambda$, and let $\mathbb U$ be its underlying $\Qp$-local system. Assume that $\mathbb U$ is Hodge--Tate with the single weight $0$. Then the frame torsor
\[
  P_D(\mathbb W)=\underline{\Isom}_D(D^m,\mathbb W)
  \longrightarrow S,
\]
regarded as an object in $S_{\proet}$ is potentially unramified. More precisely, there is a compact open subgroup $K$ in $GL_m(D)$, such that over the sheaf quotient $Y_K\coloneqq P_D(\mathbb W)/\underline{K}$, which is represented by an \'etale covering of $S$ by Proposition~\ref{prop: frame tower}, then the presentation
\[
  P_D(\mathbb W)
  \simeq
  \varprojlim_{N\triangleleft K}
  P_D(\mathbb W)/\underline N,
\]
of objects in $(Y_K)_{\proet}$, as in Proposition~\ref{prop: frame tower}, satisfies that $P_D(\mathbb W)/\underline N \to Y_K$ is \emph{potentially} strongly \'etale for each open normal subgroup $N$ of $K$ in the sense that there is an \'etale covering of $Y_K$ such that the pullback of the above morphisms are strongly \'etale for all $N$.
\end{thm}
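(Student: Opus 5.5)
We plan to reduce the statement to Theorem~\ref{thm: RpMT for HT0}, applied to $\mathbb U$ after passing to suitable \'etale covers, and then to read off strong \'etaleness of the finite levels of the frame tower from unramifiedness at classical points. This last step would use the purity theorem for strongly \'etale morphisms from Appendix~\ref{app:valuation}.

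First, choose a $\mathbb Z_p$-structure. By Theorem~\ref{thm: equiv of all kinds of locsys}(3a), and after an \'etale cover, $\mathbb U$ admits a lattice. Averaging over the maximal order $\mathcal O_D$ makes this lattice $\mathcal O_D$-stable, so $\mathbb W$ contains an $\mathcal O_D$-lattice $\mathbb W^\circ$ of rank $m$. Let $K=GL_m(\mathcal O_D)$, or a conjugate adapted to $\mathbb W^\circ$. By Proposition~\ref{prop: frame tower}, the quotient $Y_K=P_D(\mathbb W)/\underline K$ parametrizes such lattices and is an \'etale covering of $S$. Over $Y_K$ there is a tautological lattice, and
\[
P_D(\mathbb W)=\varprojlim_{N} P_D(\mathbb W)/\underline N,
\]
where $P_D(\mathbb W)/\underline N\to Y_K$ is the finite \'etale $K/N$-torsor trivializing the tautological lattice modulo $N$. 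Hodge--Tate weight $0$ is preserved under pullback, by Proposition~\ref{prop: Condition for L to be HT wt 0} and Remark~\ref{rem: HT wt 0 local system}. Therefore Theorem~\ref{thm: RpMT for HT0} applies to the underlying isogeny $\mathbb Z_p$-local system of the tautological lattice on $Y_K$. It provides a finite \'etale (hence \'etale) covering $Y'\to Y_K$ over which this lattice is unramified at all classical points.

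Second, we pass from unramifiedness to strong \'etaleness. Over $Y'$, for every open normal $N\subset K$, the pullback $P_N'\to Y'$ is a finite \'etale $K/N$-torsor. Its fibres at classical points $y$ are unramified $G_{k(y)}$-sets, because the Galois action on the lattice factors through an unramified quotient. The claim to verify is that a finite \'etale cover of a smooth rigid space which is unramified at all classical points is strongly \'etale. This is where we invoke the purity theorem of Appendix~\ref{app:valuation}. Strong \'etaleness is a condition on completed residue fields at all points, including non-classical rank-$1$ points. Purity should say that it suffices to check it at classical points, which are dense, or in codimension-one fashion. We note that a single covering $Y'$ works uniformly for all $N$, since it was chosen for the whole lattice at once. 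This gives exactly the "potentially strongly \'etale" property in the statement.

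The main obstacle is this second step. It requires controlling ramification at non-classical points, such as Gauss points, where the residue field is not discretely valued with perfect residue field. Unramifiedness at classical points alone does not formally imply strong \'etaleness there. So the content lies in the purity result, together with checking that its hypotheses hold for the Galois-closed finite \'etale torsors $P_N'\to Y'$. We would first try to apply purity to each level $N$ separately. If the appendix requires a normal or semistable formal model, we would first refine $Y'$ using Theorem~\ref{thm: localalteration}, which keeps it an \'etale cover after a finite extension of $K$.
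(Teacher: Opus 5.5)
There is a genuine gap in your second step. The purity theorem of Appendix~\ref{app:valuation} (Theorem~\ref{thm:strong-etale-purity}) does \emph{not} reduce strong \'etaleness to classical points.

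For a finite \'etale $Y\to X=\Spf(A)_\eta$ with $A$ regular, it says strong \'etaleness is equivalent to a condition at each height-one prime $\mathfrak q\ni\varpi_L$: the inertia $I_{\widehat F_{\mathfrak q}}$ must act trivially on $\Omega_{\mathfrak q}$. This is a condition at the rank-$1$ points $x_{\mathfrak q}$ attached to the generic points of the components of the special fibre. These are not classical points. Their completed residue fields $\widehat F_{\mathfrak q}$ are complete discretely valued fields with imperfect residue field (finite $p$-basis).

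Theorem~\ref{thm: RpMT for HT0} only gives unramifiedness at classical points. Nothing in your argument, or in the appendix, transfers that to the $G_{\widehat F_{\mathfrak q}}$-action. Density of classical points is of no use either, since strong \'etaleness at $x_{\mathfrak q}$ is a condition on the residue field extension at that specific point. So the implication ``finite \'etale and unramified at all classical points $\Rightarrow$ strongly \'etale'' is exactly what is missing. In fact, the detour through Shimizu's theorem is not needed at all.

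The missing idea is to work directly at the points $x_{\mathfrak q}$.
\begin{itemize}
\item Pull back the weight-$0$ isomorphism $\mathbb U\otimes_{\Qp}\widehat{\calO}_{S_{\proet}}\simeq\calF\otimes_{\calO_S}\widehat{\calO}_{S_{\proet}}$ of Remark~\ref{rem: HT wt 0 local system} along $\Spa(\widehat F_{\mathfrak q},\widehat A_{\mathfrak q})\to Y_K$. This shows that $\mathbb U_{\bar\eta_{\mathfrak q}}$ is a $C_{\mathfrak q}$-admissible representation of $G_{\widehat F_{\mathfrak q}}$.
\item Sen theory over complete discretely valued fields with imperfect residue field (Ohkubo) then shows that inertia acts through a finite quotient.
\item To turn ``finite'' into ``trivial'', you need $K$ to be torsion-free. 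The paper takes $K=(1+p^3M_{mh^2}(\Zp))\cap GL_m(D)$ via $GL_m(D)\hookrightarrow GL_{mh^2}(\Qp)$. Your choice $GL_m(\calO_D)$ contains torsion, so it would not work without shrinking. Over $Y_K$ the inertia image lies in $K$, so it must be trivial.
\end{itemize}
Condition (2) of Theorem~\ref{thm:strong-etale-purity} then holds for every open normal $N\subset K$ simultaneously. The only \'etale cover of $Y_K$ needed is the one from Theorem~\ref{thm: localalteration}, which provides semistable (hence regular) formal models locally.
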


\begin{proof}
The statement can be checked \'etale locally on $S$. Consider the standard faithful representation $GL_m(D) \to GL_{mh^2}(\Q_p)$ where $h^2$ is the $\Qp$-rank of $D$. Let $K=1+p^3M_{mh^2}(\Zp)\cap GL_m(D)$ which is a compact open inside $GL_m(D)$. We claim the statement of the theorem holds for such $K$. More precisely, we may work \'etale locally on $Y_K$ so that it admits a semistable formal model $\frakX=\Spf(A)$ over $\OL$ by Theorem~\ref{thm: localalteration} for some finite extension $L$ of $K$.

Let $\mathfrak q\subset A$ be a height-one prime containing $\varpi_L$. $A_{\mathfrak q}$ is a DVR, let $\widehat A_{\mathfrak q}$ be its completion and write $\widehat F_{\mathfrak q}=\operatorname{Frac}(\widehat A_{\mathfrak q})$. $\widehat A_{\mathfrak q}$ is a CDVR of mixed characteristic, whose residue field has finite $p$-basis. 

By Remark~\ref{rem: HT wt 0 local system}, $\mathbb U$ being Hodge--Tate with the single weight $0$ is equivalent to that there is an \'etale vector bundle $\mathcal F$ on $S$ and an isomorphism
\begin{equation}\label{eq: HT0 condition}
	  \mathbb U\otimes_{\Qp}\widehat{\mathcal O}_{S_{\proet}}
  \simeq
  \mathcal F\otimes_{\mathcal O_S}\widehat{\mathcal O}_{S_{\proet}}.
\end{equation}
Choose a geometric point $\bar\eta_{\mathfrak q}$ over $\Spa(\widehat F_{\mathfrak q},\widehat A_{\mathfrak q})$, let $\mathcal F_{\eta_{\mathfrak q}}$ be the pullback of $\calF$ to $\widehat F_{\mathfrak q}$, and let $C_{\mathfrak q}$ be the completion of a fixed algebraic closure of $\widehat F_{\mathfrak q}$. Pulling back \eqref{eq: HT0 condition} gives natural isomorphism
\[
  \mathbb U_{\bar\eta_{\mathfrak q}}
  \otimes_{\Qp}C_{\mathfrak q}
  \simeq
  \mathcal F_{\eta_{\mathfrak q}}
  \otimes_{\widehat F_{\mathfrak q}}C_{\mathfrak q}.
\]
Identify $\mathbb U_{\bar\eta_{\mathfrak q}}$ with a $\Q_p$-representation of $G_{\widehat F_{\mathfrak q}}$. Then the above isomorphism implies this $\Q_p$-representation of $G_{\widehat F_{\mathfrak q}}$ is Hodge--Tate with weight $0$, i.e., $C_{\mathfrak q}$-admissible. Hence \cite[Cor.~3.2]{OhkuboLie} implies so that $I_{\widehat F_{\mathfrak q}}$ has finite image. Since we work over $Y_K$, the image also lies in the torsion free group $1+p^3M_{mh^2}(\Zp)$, this implies that $I_{\widehat F_{\mathfrak q}}$ acts on the geometric fiber over $\mathfrak q$ is trivial.

Let $N\subseteq K$ be an open normal subgroup. By Theorem~\ref{thm: generalized de Jong lattice sheaf}, the quotient
\[
P_D(\mathbb W)/\underline N \to Y_K
\]
is represented by a finite \'etale morphism. The above discussion implies the inertia group acts trivially on each fiber above prime ideal containing $\varpi_L$ and of height-one of the above finite morphism, since the action factor through the trivial map into $K$. In particular, condition (2) in Theorem~\ref{thm:strong-etale-purity} holds \'etale locally on $Y_K$. Therefore, it implies the strongly \'etaleness stated in the theorem. 
\end{proof}

\begin{proof}[Proof of Theorem~\ref{thm: intro HK conjecture}]
By Theorem~\ref{thm: HK thm input}, it remains to prove essential surjectivity. As we have discussed, we may assume $S$ is connected and $\mathbb V=\mathbb V_\lambda$ concentrated in one degree $\lambda \in \Q$. Put $\mu=-\frac{\lambda}{2}$.

Consider the modification of vector bundles
\[
\mathcal E_0(\mathbb V)\dasharrow\mathcal E_1(\mathbb V)
\]
defined by the de Rham local system $\bV$, which coincides with the basic admissible pair as in \cite[Thm.~3.1.2]{HoweKlevdalIII}. $\mathcal E_0(\mathbb V)$ is semistable of slope $\mu$ by assumption. Theorem~\ref{thm: Dlambda locsys and ss vb} and Remark~\ref{rem: O(lambda) to Dlambda explicit functor} give a $\bDhat_\mu^{\op}$-local system $\mathbb W$ and an isomorphism
\begin{equation}\label{eq: trivialize D_lambda}
  \mathcal E_0(\mathbb V)
  \simeq
  \mathbb W\otimes_{\bDhat_\mu}\mathcal O(\mu).
\end{equation}

Define $P_D(\mathbb W)\to S$ as in Theorem~\ref{thm: HT0 implies unramified proet}. From its definition, over $P_D(\mathbb W)$ there is a canonical isomorphism of $\mathbb W$ with the constant right $D$-module $D^m$. Moreover, $D^m$ corresponds to a semisimple isocrystal $\underline{E}$ of slope $-\mu$ over $\overline{\F}_p$. Consequently, over $P_D(\mathbb W)$ the isomorphism \eqref{eq: trivialize D_lambda} becomes an identification of $\mathcal E_0(\mathbb V)$ with $\mathcal E(\underline{E})$.

On the other hand, write $\mu=d/h$ in lowest terms, and let $\mathbb U$ be the underlying $\Qp$-local system of $\mathbb W$. Lemma~\ref{lem: morita} and the identity
\[
  \bM_0(\mathbb V_\lambda)
  \otimes_{\BdRp,\theta}
  \widehat{\mathcal O}_{S_{\proet}}
  \simeq
  D_{\dR}(\mathbb V_\lambda)
  \otimes_{\mathcal O_S}
  \widehat{\mathcal O}_{S_{\proet}}
\]
give
\[
  \mathbb U\otimes_{\Qp}\widehat{\mathcal O}_{S_{\proet}}
  \simeq
  D_{\dR}(\mathbb V_\lambda)^{\oplus h}
  \otimes_{\mathcal O_S}
  \widehat{\mathcal O}_{S_{\proet}}.
\]
Thus $\mathbb U$ is Hodge--Tate with single weight $0$. Theorem~\ref{thm: HT0 implies unramified proet} shows that $P_D(\mathbb W)\to S$ is potentially unramified.
\end{proof}

\appendix

\section{Specialization maps and tubes}\label{sec: specialization}

This appendix recalls basic facts about the specialization map associated with an admissible formal model of a rigid analytic variety. Throughout, $\mathcal O$ is a complete rank-$1$ valuation ring with nontrivial valuation, and $\varpi\in\mathcal O$ is a pseudo-uniformizer. 

For a point $x$ of a rigid analytic adic space $X$, let
	\[
  	k(x)\coloneqq\mathcal O_{X,x}/\mathfrak m_x
	\]
be the uncompleted residue field of the adic local ring, equipped with the valuation induced by $x$, and let $k(x)^+$ be its valuation ring. We write $K_x$ and $K_x^+$ for the completion of $k(x)$ and $k(x)^+$. 

\begin{lem}\label{lem: Spa(K,K^+) is a chain}
Let $K$ be a complete nonarchimedean field, let $K^\circ$ be its ring of power-bounded elements, and let $K^+\subseteq K^\circ$ be an open bounded valuation subring. The points of $\Spa(K,K^+)$ correspond to the valuation rings $R$ satisfying
\[
  K^+\subseteq R\subseteq K^\circ,
\]
and are totally ordered by specialization. More precisely:
\begin{enumerate}
\item if $x_R,x_{R'}$ correspond to $R,R'$, then $x_R$ is a specialization of $x_{R'}$ if and only if $R\subseteq R'$;
\item there is a unique closed point, corresponding to $K^+$;
\item the point corresponding to $K^\circ$ is the unique generic point;
\item for an \emph{analytic} adic space $X$ and $x\in X$, the canonical morphism
\[
  j_x\colon\Spa(K_x,K_x^+)\longrightarrow X
\]
sends the closed point to $x$ and the generic point to the unique rank-$1$ maximal generalization $x_{\max}$ of $x$. Moreover, there is a canonical identification of topological fields
\[
  K_x\simeq K_{x_{\max}}.
\]
\end{enumerate}
\end{lem}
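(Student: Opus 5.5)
The plan is to reduce everything to the standard dictionary between points of $\Spa(K,K^+)$ and valuations on $K$, and then to the fact that the overrings of a valuation ring inside its fraction field are themselves valuation rings, totally ordered by inclusion.

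First, a point of $\Spa(K,K^+)$ is an equivalence class of continuous valuations $v$ on $K$ with $v(K^+)\le 1$. Each such $v$ determines its valuation ring $R_v=\{a\in K: v(a)\le 1\}$, and conversely $R$ determines $v$ up to equivalence. The condition $v(K^+)\le1$ is exactly $K^+\subseteq R_v$. Continuity forces $v(\varpi)<1$ for a pseudo-uniformizer $\varpi$, which gives $R_v\cap\{$topologically nilpotent elements$\}$ issues to sort out. The claim is that continuity is equivalent to $R_v\subseteq K^\circ$: if $a\notin K^\circ$ then $|a|>1$, so $a^{-1}$ is topologically nilpotent, hence $v(a^{-1})<1$ and $a\notin R_v$; conversely, any valuation ring between $K^+$ and $K^\circ$ has its valuation dominated compatibly with the rank-$1$ norm, which is continuous. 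Since $K^+\subseteq K^\circ$ and $K^+$ is a valuation ring of $K$, every ring $R$ with $K^+\subseteq R\subseteq K$ is a valuation ring, namely a localization $K^+_{\mathfrak p}$ at a prime. Such rings are totally ordered by inclusion because the primes of a valuation ring form a chain.

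Second, I will prove (1) from the definition of specialization: $x_R\rightsquigarrow$ is a specialization of $x_{R'}$ iff $x_R$ lies in the closure of $x_{R'}$, which for vertical generizations amounts to $v_{R'}$ being a coarsening of $v_R$, that is, $R\subseteq R'$. Then (2) and (3) are immediate: the smallest ring $K^+$ gives the closed point and the largest ring $K^\circ$ gives the generic point, which has rank $1$ since $K^\circ$ is the valuation ring of the norm.

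For (4), the morphism $j_x$ is induced by $\mathcal O_{X,x}\to k(x)\to K_x$. The closed point pulls back to the valuation defining $x$, so it maps to $x$. The generic point maps to a rank-$1$ generalization of $x$, which in an analytic adic space is unique and equals $x_{\max}$ by \cite[Prop.~4.2.5]{SWBerkeleyNotes}. For the field identification, note that $k(x)$ and $k(x_{\max})$ coincide as abstract fields, since $\mathfrak m_x$ is the support of either valuation and generalization does not change the support. The completions also agree, because the topology on $k(x)$ is defined by the rank-$1$ valuation underlying $x$, namely that of $x_{\max}$. The main subtle step is precisely this check that completing with respect to $x$ and completing with respect to $x_{\max}$ give the same topology, which follows because both are governed by the microbial rank-$1$ coarsening.
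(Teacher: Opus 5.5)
\textbf{Gap in (4).} The assertion that $k(x)$ and $k(x_{\max})$ ``coincide as abstract fields'' is false in general. They are residue fields of \emph{different} local rings. Although $x$ and $x_{\max}$ have the same support on any affinoid neighbourhood, $\mathcal O_{X,x_{\max}}$ is a colimit over strictly more neighbourhoods than $\mathcal O_{X,x}$, so one only gets an injection $k(x)\hookrightarrow k(x_{\max})$.

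For example, on the closed unit disc over $\C_p$, let $\eta$ be the Gauss point and $x$ the rank-$2$ point with $\lvert T\rvert_x=1^-$. Then $\{\lvert T\rvert=1\}$ is a neighbourhood of $\eta$ but not of $x$. The series $\sum_{n\ge0}p^{\lfloor\sqrt n\rfloor}T^{-n}$ converges on $\{\lvert T\rvert\ge1\}$ but on no $\{\lvert T\rvert\ge r\}$ with $r<1$. It therefore defines an element of $k(\eta)$ not in $k(x)$: a Mittag-Leffler decomposition rules out a meromorphic extension to a neighbourhood of $x$.

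So your last step only shows that $K_x$ is the closure of $k(x)$ inside $K_{x_{\max}}$. The topology comparison you flag as the subtle point is the easy part. What is actually missing is \emph{density}.

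\textbf{Fix.} Take an affinoid neighbourhood $\Spa(A,A^+)$ of $x$; it also contains $x_{\max}$. Let $\mathfrak p$ be the common support of $x$ and $x_{\max}$.
\begin{enumerate}
\item For any rational subset $U\ni x$ with denominator $g$, the image of $A[1/g]$ is dense in $\mathcal O(U)$, and $\mathcal O(U)\to K_x$ is continuous. Hence $\operatorname{Frac}(A/\mathfrak p)$ embeds densely in $k(x)$. The same argument works for $x_{\max}$.
\item Since $x$ is analytic, $v_x$ is microbial. So $v_x$ and its rank-$1$ coarsening $v_{x_{\max}}$ induce the same topology on $\operatorname{Frac}(A/\mathfrak p)$.
\end{enumerate}
Hence $K_x$ and $K_{x_{\max}}$ are both the completion of $\operatorname{Frac}(A/\mathfrak p)$, which gives the canonical identification. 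This is the content behind Huber's (1.1.6)--(1.1.10), which is all the paper cites.

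\textbf{Minor points in (1)--(3).} Your direct argument is a reasonable substitute there, but two steps are only asserted and need a line each.
\begin{enumerate}
\item Continuity of $v_R$ for $K^+\subseteq R\subseteq K^\circ$: the norm is a coarsening of $v_R$. So for $\gamma=v_R(c)$, pick $n$ with $\lvert\varpi^n/c\rvert<1$. Then $v_R(\varpi^n)<\gamma$, and the open subgroup $\varpi^nK^+$ lies in $\{v_R<\gamma\}$.
\item The ``only if'' half of (1) follows from the ``if'' half, the total ordering, and the fact that $\lvert\Spa(K,K^+)\rvert$ is $T_0$.
\end{enumerate}
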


\begin{proof}
This is \cite[(1.1.6)--(1.1.10)]{Huber-etale}, see also \cite[Prop.~2.29]{scholze-perfectoid}.
\end{proof}

\begin{defn}\label{defn: specialization in affine}
Let $(A,A^+)$ be a Tate Huber pair and let $\varpi\in A^+$ be a topologically nilpotent unit of $A$. For $x\in\Spa(A,A^+)$, define
\[
  \mathrm{sp}_{A^+}(x)
  \coloneqq
  \bigl\{\overline a\in A^+/\varpi A^+:\lvert a(x)\rvert<1\bigr\}
  \in\Spec(A^+/\varpi A^+),
\]
where $a\in A^+$ is any lift of $\overline a$. This is independent of the lift and defines a continuous map. The construction is functorial for morphisms of such pairs. For specialization maps in this generality, see \cite[\S4.1]{GleasonSpecialization}.
\end{defn}

\begin{construction}\label{const: specialization map for formal scheme of certain type}
Let $\mathfrak X$ be an admissible normal formal $\mathcal O$-scheme with ideal of definition $(\varpi)$, cf. \cite[\S7.4]{Bosch-Lectures}. On an affine formal open $\mathfrak U=\Spf(R)$ with generic fiber $\mathfrak U_\eta=\Spa(R[1/\varpi],R)$, Definition~\ref{defn: specialization in affine} gives
\[
  \mathrm{sp}_{\mathfrak U}\colon
  \lvert\mathfrak U_\eta\rvert
  \longrightarrow
  \lvert\Spec(R/\varpi R)\rvert.
\]
These maps glue to a continuous map
\[
  \mathrm{sp}_{\mathfrak X}\colon
  \lvert\mathfrak X_\eta\rvert
  \longrightarrow
  \lvert\mathfrak X_{\mathrm{red}}\rvert.
\]
Here $\mathfrak X_{\mathrm{red}}$ is the reduced special fiber, it has the same underlying topological space as the modulo $\varpi$ fiber.

For $Z\subseteq\lvert\mathfrak X_{\mathrm{red}}\rvert$, define its  \emph{tube} to be
\[
  ]Z[_{\mathfrak X}
  \coloneqq
  \mathrm{sp}_{\mathfrak X}^{-1}(Z)
  \subseteq\lvert\mathfrak X_\eta\rvert.
\]
\end{construction}

\begin{rem}
We use the full inverse image as our convention. Some references instead use its interior, for example \cite[II~\S4.2]{fujiwara-kato}.
\end{rem}

The next statement is motivated by Berthelot's result on tubes of locally closed subschemes in rigid geometry in the sense of Tate, see \cite[Prop.~1.1.1]{Berthelot-rigid}.

\begin{thm}\label{thm: tubes are open near rk 1}
Let $\mathfrak X$ be as in Construction~\ref{const: specialization map for formal scheme of certain type}, and assume that $\mathfrak X_{\mathrm{red}}$ is Noetherian. If $Z\subseteq\lvert\mathfrak X_{\mathrm{red}}\rvert$ is locally closed, then every rank-$1$ point of $]Z[_{\mathfrak X}$ has an open neighborhood contained in $]Z[_{\mathfrak X}$.
\end{thm}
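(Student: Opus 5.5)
The proof is local on $\mathfrak X$, so we may assume $\mathfrak X=\Spf(R)$ is affine. Then $\mathfrak X_\eta=\Spa(R[1/\varpi],R)$ and $\mathfrak X_{\mathrm{red}}$ has the same underlying space as $\Spec(R/\varpi R)$, which is Noetherian. Write $Z=F\cap W$ with $F$ closed and $W$ open in $\lvert\Spec(R/\varpi R)\rvert$. Noetherianity lets us choose finitely many $\bar f_1,\dots,\bar f_n,\bar g_1,\dots,\bar g_m\in R/\varpi R$ with lifts $f_i,g_j\in R$ such that
\[
F=V(\bar f_1,\dots,\bar f_n),\qquad W=\bigcup_j D(\bar g_j).
\]
Since $]Z[=\mathrm{sp}^{-1}(F)\cap\mathrm{sp}^{-1}(W)$, it suffices to treat the closed and open cases separately: if a rank-$1$ point $x$ has an open neighborhood $U_1\subseteq\mathrm{sp}^{-1}(F)$ and an open neighborhood $U_2\subseteq\mathrm{sp}^{-1}(W)$, then $U_1\cap U_2$ is an open neighborhood of $x$ inside $]Z[$.

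\textbf{Open part.} By Definition~\ref{defn: specialization in affine}, $\mathrm{sp}(x)\in D(\bar g)$ if and only if $\bar g\notin\mathrm{sp}(x)$, i.e.\ $\lvert g(x)\rvert\ge1$. Because $g\in R$ is power-bounded, this is equivalent to $\lvert g(x)\rvert=1$. Hence $\mathrm{sp}^{-1}(D(\bar g))=\{\lvert g\rvert\ge 1\}$, which is a closed set (it is the complement of the open set $\{\lvert g\rvert<1\}$), not an open one. This is the source of the anticontinuity mentioned earlier in the paper. We now use that $x$ has rank $1$. Since $\lvert g(x)\rvert=1$ is a real number strictly greater than $\lvert\varpi(x)\rvert$, the set $\{\lvert\varpi\rvert\le\lvert g\rvert\}$ is a rational open subset containing $x$. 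On it we still have to show $\lvert g\rvert\ge1$, and this is the main obstacle: rational opens of this form only control $\lvert g\rvert$ relative to $\lvert\varpi\rvert$, and for a general point of $\{\lvert\varpi\rvert\le\lvert g\rvert\}$ one can have $\lvert\varpi\rvert\le\lvert g\rvert<1$. The first thing I would try is the subsets $\{\lvert\varpi^{1/N}\rvert\le\lvert g\rvert\}$ after adjoining roots of $\varpi$; this still fails, because $\lvert g\rvert$ can lie strictly between $\lvert\varpi\rvert^{1/N}$ and $1$. So the statement will not come from a single rational open, and I would instead use the valuative description $\{y:\lvert g(y)\rvert\cdot\lvert h(y)\rvert\ge\ldots\}$ together with the fact that a rank-$1$ point's open neighborhoods are governed by finitely many strict inequalities. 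Concretely, I expect to need the Noetherian structure of $\mathfrak X_{\mathrm{red}}$ near $\mathrm{sp}(x)$: if $\mathrm{sp}(x)$ is a generic point of an irreducible component of $W$ that is not contained in the closure of $\lvert\mathfrak X_{\mathrm{red}}\rvert\setminus W$ near it, then nearby points of $\mathfrak X_\eta$ specialize into $W$. I would first check this reduction on examples like the Legendre curve.

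\textbf{Closed part.} Here $\mathrm{sp}(x)\in F$ means $\lvert f_i(x)\rvert<1$ for all $i$. Since $x$ has rank $1$, there is a rational exponent $c>0$ with $\lvert f_i(x)\rvert\le\lvert\varpi(x)\rvert^{c}$ for all $i$; possibly after a finite extension of $\mathcal O$, we may take $\varpi^{c}\in\mathcal O$. Then
\[
U=\{\lvert f_i\rvert\le\lvert\varpi^{c}\rvert\ \text{for all } i\}
\]
is a rational open containing $x$, and every point of $U$ satisfies $\lvert f_i\rvert<1$, so $U\subseteq\mathrm{sp}^{-1}(F)$. (If the $f_i$ themselves are not small enough, one can replace them by powers $f_i^N$, which cut out the same set $F$ up to radicals.) Finally, for the open part I would expect the paper's intended statement to interact with the closed part through a covering by such tubes, and I would reduce to that case first.
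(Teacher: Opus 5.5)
\textbf{There is a genuine gap in the open half of your argument.}

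Your closed half is correct and matches the paper's argument. No extension of $\mathcal O$ is needed there: since $x$ has rank $1$ you can pick $N_i\ge1$ with $\lvert f_i(x)\rvert^{N_i}\le\lvert\varpi(x)\rvert$ and use the rational open $\{\lvert f_i\rvert^{N_i}\le\lvert\varpi\rvert\}$, as your parenthetical remark suggests.

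The open half, however, contains no argument, and the obstacle you describe there does not exist. It comes from having the topology of $\Spa(R[1/\varpi],R)$ backwards. The set $\{y:\lvert g(y)\rvert\ge1\}=\{y:\lvert 1(y)\rvert\le\lvert g(y)\rvert\neq0\}$ is a rational subset (the ideal generated by $1$ is open), hence open. Because $g\in R$ is power-bounded, it equals $\{\lvert g\rvert=1\}=\mathrm{sp}_{\mathfrak X}^{-1}(D(\bar g))$. Thus $\mathrm{sp}_{\mathfrak X}^{-1}(W)$ is open for every open $W$, with no condition on the rank of $x$. This is just the continuity of $\mathrm{sp}_{\mathfrak X}$ asserted in Definition~\ref{defn: specialization in affine} and Construction~\ref{const: specialization map for formal scheme of certain type}. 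Your assertion that $\{\lvert g\rvert\ge1\}$ is closed and not open contradicts that continuity. You tested $\{\lvert\varpi\rvert\le\lvert g\rvert\}$, but $\{1\le\lvert g\rvert\}$ is itself a rational open and is exactly what is needed. The proposed detour through the geometry of $\mathfrak X_{\mathrm{red}}$ near $\mathrm{sp}(x)$, and through examples, should be replaced by this one line.

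It is the complement $\{\lvert g\rvert<1\}$ that is closed and in general not open. Already on the closed unit disc with $g=T$, the rank-$2$ point with $\lvert T\rvert$ infinitesimally below $1$ lies in $\{\lvert T\rvert<1\}$ but generalizes to the Gauss point, where $\lvert T\rvert=1$.

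The anticontinuity of Lemma~\ref{lem: anticontinuous} concerns $\mathrm{sp}_{\max,\mathfrak X}$, not $\mathrm{sp}_{\mathfrak X}$. There, $\mathrm{sp}_{\max,\mathfrak X}^{-1}(D(\bar g))=\{x:\lvert g(x_{\max})\rvert=1\}$ is the complement of the open set $\bigcup_N\{\lvert g\rvert^N\le\lvert\varpi\rvert\}$. For $\mathrm{sp}_{\mathfrak X}$ the roles are reversed: the open condition is automatic, and the rank-$1$ hypothesis is needed only for the closed condition $\lvert f_i\rvert<1$, exactly as in your closed half.

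With this correction your decomposition goes through, and it is the paper's proof in slightly expanded form. The paper chooses one $\bar g$ with $\mathrm{sp}_{\mathfrak X}(x)\in D(\bar g)\subseteq W$ and takes $\Omega_x=\{y:\lvert g(y)\rvert=1,\ \lvert f_i(y)\rvert^{N_i}\le\lvert\varpi(y)\rvert\text{ for all }i\}$.
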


\begin{proof}
The assertion is local on $\mathfrak X$, so we may assume $\mathfrak X=\Spf(R)$. Let $x\in]Z[_{\mathfrak X}$ be a rank-$1$ point. Write $Z=F\cap W$, with $F$ closed and $W$ open. $F=V(\overline f_1,\ldots,\overline f_r)$ since $\lvert \frakX_{\mathrm{red}}\rvert $ is Noetherian. Choose $\overline g\in R/\varpi R$ such that
\[
  \mathrm{sp}_{\mathfrak X}(x)\in D(\overline g)\subseteq W.
\]
Then
\[
  Z'=V(\overline f_1,\ldots,\overline f_r)\cap D(\overline g)
\]
contains $\mathrm{sp}_{\mathfrak X}(x)$ and is contained in $Z$. Choose lifts $f_1,\ldots,f_r,g\in R$. Definition~\ref{defn: specialization in affine} gives
\[
  \lvert g(x)\rvert=1,
  \qquad
  \lvert f_i(x)\rvert<1\quad(1\leq i\leq r).
\]
Because $x$ has rank $1$, for every $i$ there is an integer $N_i\geq1$ such that $\lvert f_i(x)\rvert^{N_i}\leq\lvert\varpi(x)\rvert$. Hence $x$ belongs to the rational open subset
\[
  \Omega_x
  =
  \bigl\{y:
    \lvert g(y)\rvert=1,
    \ \lvert f_i(y)\rvert^{N_i}\leq\lvert\varpi(y)\rvert
    \text{ for all }i
  \bigr\}.
\]
Since $g,f_i\in R$, the displayed inequalities imply that $\mathrm{sp}_{\mathfrak X}(y)\in D(\overline g)\cap V(\overline f_1,\ldots,\overline f_r)$ for every $y\in\Omega_x$. Thus
\[
  \Omega_x\subseteq]Z'[_{\mathfrak X}\subseteq]Z[_{\mathfrak X},
\]
as required.
\end{proof}

\begin{defn}\label{defn: spmax}
For $\mathfrak X$ as in Construction~\ref{const: specialization map for formal scheme of certain type}, define the \emph{maximal specialization map}
\[
  \mathrm{sp}_{\max,\mathfrak X}\colon
  \lvert\mathfrak X_\eta\rvert
  \longrightarrow
  \lvert\mathfrak X_{\mathrm{red}}\rvert,
  \qquad
  \mathrm{sp}_{\max,\mathfrak X}(x)
  \coloneqq
  \mathrm{sp}_{\mathfrak X}(x_{\max}).
\]
It agrees with $\mathrm{sp}_{\mathfrak X}$ on rank-$1$ points.
\end{defn}

\begin{lem}\label{lem: anticontinuous}
Assume in addition that $\mathfrak X=\Spf(R)$ is affine and that $\mathfrak X_{\mathrm{red}}$ is Noetherian. Then $\mathrm{sp}_{\max,\mathfrak X}$ is anticontinuous: inverse images of closed subsets are open, and inverse images of open subsets are closed.
\end{lem}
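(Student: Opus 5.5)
It suffices to prove a single claim: for every closed $F\subseteq\lvert\mathfrak X_{\mathrm{red}}\rvert$, the preimage $\mathrm{sp}_{\max,\mathfrak X}^{-1}(F)$ is open. The statement for open subsets then follows by taking complements, since $\mathrm{sp}_{\max,\mathfrak X}^{-1}(\lvert\mathfrak X_{\mathrm{red}}\rvert\setminus F)=\lvert\mathfrak X_\eta\rvert\setminus \mathrm{sp}_{\max,\mathfrak X}^{-1}(F)$.

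Because $\mathfrak X_{\mathrm{red}}$ is Noetherian, we can write $F=V(\overline f_1,\ldots,\overline f_r)$ and choose lifts $f_i\in R$. By Definition~\ref{defn: specialization in affine}, a point $x$ lies in $\mathrm{sp}_{\max,\mathfrak X}^{-1}(F)$ exactly when $\lvert f_i(x_{\max})\rvert<1$ for every $i$. Since $x_{\max}$ has rank $1$ and $\lvert\varpi(x_{\max})\rvert<1$, this is equivalent to the existence of an integer $N$ with $\lvert f_i(x_{\max})\rvert^{N}\le\lvert\varpi(x_{\max})\rvert$ for all $i$. This gives the decomposition
\[
\mathrm{sp}_{\max,\mathfrak X}^{-1}(F)=\bigcup_{N\ge1}\bigl\{x:\ \lvert f_i(x_{\max})\rvert^{N}\le\lvert\varpi(x_{\max})\rvert\ \text{for all } i\bigr\}.
\]

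The main step is to compare each set in this union with the rational subset
\[
U_N\coloneqq\{y:\ \lvert f_i(y)^{N}\rvert\le\lvert\varpi(y)\rvert \text{ for all } i\}.
\]
I expect this step to be the principal obstacle, because rational subsets are not in general stable under generalization, while the condition defining the union is imposed on $x_{\max}$. My plan is to avoid proving an equality for a fixed $N$ and instead show the nesting
\[
U_N\subseteq\{x:\ \lvert f_i(x_{\max})\rvert^{N}\le\lvert\varpi(x_{\max})\rvert\}\subseteq U_{N+1}.
\]

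For the first inclusion, take $y\in U_N$. In the analytic setting, $y_{\max}$ is determined by the same valuation as $y$ composed with the quotient to the rank-$1$ value group (Lemma~\ref{lem: Spa(K,K^+) is a chain}(4)). Weak inequalities of the form $\lvert a\rvert\le\lvert b\rvert$ with $b$ a unit are preserved when passing to a coarser valuation, so the inequality persists at $y_{\max}$.

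For the second inclusion, suppose $\lvert f_i(x_{\max})\rvert^{N}\le\lvert\varpi(x_{\max})\rvert<1$. Then $\lvert f_i(x_{\max})\rvert^{N+1}<\lvert\varpi(x_{\max})\rvert$. A strict inequality in the rank-$1$ quotient lifts to a strict inequality for $x$ itself, because elements of the convex subgroup killed by the quotient are infinitesimally close to $1$. Hence $x\in U_{N+1}$.

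Combining the nesting over all $N$ gives $\mathrm{sp}_{\max,\mathfrak X}^{-1}(F)=\bigcup_N U_N$. This is a union of rational open subsets, so it is open, which proves the lemma.
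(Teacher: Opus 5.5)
Your proof is correct and follows the paper's route. You write $\mathrm{sp}_{\max,\mathfrak X}^{-1}(F)$ as a union over $N$ of the rational opens $\{\lvert f(x)\rvert^N\le\lvert\varpi(x)\rvert\}$ and then pass to complements; you handle all the $f_i$ at once instead of intersecting preimages of principal closed sets. Your nesting $U_N\subseteq\{x:\lvert f_i(x_{\max})\rvert^N\le\lvert\varpi(x_{\max})\rvert\}\subseteq U_{N+1}$ also supplies the justification the paper omits for the equivalence $\lvert f(x_{\max})\rvert<1\iff\lvert f(x)\rvert^N\le\lvert\varpi(x)\rvert$ for some $N$: the quotient to the rank-$1$ value group preserves order, so weak inequalities descend and strict inequalities lift.
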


\begin{proof}
For $f\in R$, the rank-$1$ generalization $x_{\max}$ of $x$ satisfies
\[
  \lvert f(x_{\max})\rvert<1
  \quad\text{ if and only if }\quad
  \lvert f(x)\rvert^N\leq\lvert\varpi(x)\rvert
  \text{ for some }N\geq1.
\]
Consequently,
\[
  \mathrm{sp}_{\max,\mathfrak X}^{-1}\bigl(V(\overline f)\bigr)
  =
  \bigcup_{N\geq1}
  \bigl\{x\in\lvert\mathfrak X_\eta\rvert :
    \lvert f(x)\rvert^N\leq\lvert\varpi(x)\rvert
  \bigr\},
\]
which is open. Every closed subset of the Noetherian special fiber is a finite intersection of principal closed subsets, so its inverse image under $\mathrm{sp}_{\max,\mathfrak X}$ is open. Taking complements proves the assertion for open subsets of the special fiber.
\end{proof}

\section{Coverings, local systems, and the lattice functor}\label{app:local-systems}

This appendix recalls the covering categories needed in the statement of our main results. We also record a mild generalization of de Jong's lattice functor.

\subsection{Geometric coverings}

Following \cite[Defn.~3.1.1]{ALYVariants}, let $\tau\in\{\oc,\et\}$. A $\tau$-cover of a rigid analytic variety $Z$ is a surjection $U\to Z$ which is a disjoint union of overconvergent open immersions if $\tau=\oc$, and a disjoint union of \'etale maps if $\tau=\et$ respectively. Let $\Cov_Z^\tau$ be the full subcategory of all \'etale morphism to $Z$ consisting of maps $Y\to Z$ for which there is a $\tau$-cover $U\to Z$ such that $Y\times_Z U$ is a disjoint union of finite \'etale maps to $U$. Objects of $\Cov_Z^\oc$ are de Jong covering spaces. Let $\UCov_Z^\tau$ denote the category of arbitrary disjoint unions of objects of $\Cov_Z^\tau$.

Achinger, Lara, and Youcis also introduce the notion of a \emph{geometric covering}: an \'etale morphism that is partially proper and satisfies the unique lifting property for geometric arcs after pullback to every test curve, we refer to \cite[Defn.~5.2.2]{ALYArcs} for the precise definition. Denote the category of geometric coverings of $Z$ by $\GCov_Z$.\footnote{This category is denoted by $\Cov_Z$ in \cite{ALYArcs}.}

\begin{prop}\label{prop: geometric covering-input}
Let $Z$ be a rigid analytic variety over $K$.
\begin{enumerate}
\item Geometric coverings are stable under disjoint unions, compositions, and arbitrary pullbacks, and the image of a geometric covering is open and closed.
\item Every finite \'etale map to $Z$ belongs to $\UCov_Z^\oc$, and
\[
  \UCov_Z^\oc\subseteq\UCov_Z^\et\subseteq\GCov_Z.
\]
\item When $Z$ is connected, let $\overline z$ be a geometric point, the fiber functor at $\overline z$ induces an equivalence
\[
  \GCov_Z\xrightarrow{\sim}
  \pi_1^{\mathrm{ga}}(Z,\overline z)
  \text{-}\mathrm{Set}_{\mathrm{cont}},
  \label{eq:ALY-Galois}
\]
where $\pi_1^{\mathrm{ga}}(Z,\overline z)$ is the geometric arc fundamental group defined in \cite[Defn.~5.4.2]{ALYArcs}. Connected objects correspond to transitive sets. In particular, each connected component of a geometric covering of $Z$ is again a geometric covering of $Z$.
\end{enumerate}
\end{prop}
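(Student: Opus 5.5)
This proposition collects foundational facts about geometric coverings, so I will assemble it from Achinger--Lara--Youcis rather than prove it from scratch. The plan treats the three items in order. For (1), I will unwind \cite[Defn.~5.2.2]{ALYArcs}: a geometric covering is an étale, partially proper morphism with unique lifting of geometric arcs along test curves. Each of these properties is stable under base change and composition. Étaleness and partial properness are standard (Huber). For the lifting property, stability under pullback is immediate, because a test curve mapping to the pullback maps to the original base. Composition is handled by lifting successively. Disjoint unions are clear, since all three conditions are local on the source. For openness and closedness of the image, openness follows from étale morphisms being open (\cite[Prop.~1.7.8]{Huber-etale}). For closedness, I will argue that partial properness, i.e. the valuative criterion for specializations, combined with arc lifting makes the image stable under specialization. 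Since the image is already open, hence closed under generalization, it is then closed; I will cite the corresponding statement in \cite{ALYArcs} for this step.

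For (2), a finite étale map is trivially in $\Cov^{\oc}_Z$ by taking the identity cover. The inclusion $\UCov^\oc_Z\subseteq\UCov^\et_Z$ is formal, because overconvergent open immersions are étale. The main content is $\UCov^\et_Z\subseteq\GCov_Z$. Here I will check that an object of $\Cov^\et_Z$ is partially proper and satisfies arc lifting, since both properties can be checked after an étale cover $U\to Z$. There the object is a disjoint union of finite étale maps, which are proper and satisfy unique lifting. Closure under disjoint unions then gives the full inclusion. This descent of the geometric-covering condition along étale covers is the expected obstacle, and I will import it directly as \cite[Prop.~5.2.7]{ALYArcs} (the analogue of the $\UCov\subseteq\Cov$ comparison there), rather than reprove it.

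For (3), the Galois-category statement is the main theorem of \cite[\S5.4]{ALYArcs}, namely that $\GCov_Z$ with the fiber functor at $\overline z$ is a tame infinite Galois category, and I will simply cite it. The correspondence between connected objects and transitive sets is a formal consequence of that equivalence. For the final claim, I take a connected component $Y_0$ of a geometric covering $Y\to Z$. Under the equivalence, $Y$ corresponds to a continuous $\pi_1^{\mathrm{ga}}$-set, which decomposes into orbits. Each orbit is a transitive set and therefore corresponds to a connected geometric covering that is an open and closed subspace of $Y$. The connected components of $Y$ are then exactly these orbit pieces, so $Y_0$ is a geometric covering of $Z$. Alternatively, one can note that $Y_0\to Y$ is an open and closed immersion, which is a geometric covering, and then compose using (1).
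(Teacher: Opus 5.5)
Your overall strategy is the paper's: the proposition is proved there purely by citation, with (1) taken from \cite[Props.~5.2.6, 5.2.8, 5.2.11]{ALYArcs}, (2) from \cite[Rem.~3.1.2, Prop.~3.1.3]{ALYVariants}, and (3) from \cite[Thm.~5.4.1, Cors.~5.4.3, 5.4.5]{ALYArcs}. Your treatment of (2) is fine, with the \'etale-descent step imported. So is your treatment of (3), including both arguments for connected components.

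The one place where you supply your own mechanism is closedness of the image in (1), and that argument does not work as written. First, stability of the image under specialization already follows from partial properness alone; arc lifting plays no role there. Second, in an adic space an open subset that is stable under specialization need not be closed. For instance, take the inclusion of the open unit disc $\mathbb{D}^\circ=\bigcup_{n\ge 1}\{x : \lvert T(x)\rvert^n\le\lvert\varpi(x)\rvert\}$ into the closed unit disc $\mathbb{D}$.
\begin{itemize}
\item Membership in $\mathbb{D}^\circ$ depends only on whether $\lvert T(x_{\max})\rvert<1$, so the set is stable under specialization and generalization.
\item Hence the inclusion is \'etale and partially proper.
\item Yet its image is a nonempty proper open subset of the connected space $\mathbb{D}$, so it is not closed.
\end{itemize}
This map fails only the arc-lifting condition. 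So arc lifting must enter essentially in proving closedness, not merely to get specialization-stability.

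One correct route runs as follows. Rigid spaces are locally connected, and geometric coverings are stable under pullback to opens. It therefore suffices to show that over a connected base a nonempty geometric covering is surjective. There, use arc lifting along arcs joining rank-one points of the base to points of the image to see that the image contains every rank-one point. Only then invoke partial properness to reach the higher-rank specializations. Since you ultimately defer to \cite{ALYArcs} for this step, the proposition itself is unaffected. But your sketch should be replaced by an argument of this kind, or by the bare citation.
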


\begin{proof}
(1) follows from \cite[Props.~5.2.6, 5.2.8, and 5.2.11]{ALYArcs}. (2) follows from \cite[Rem.~3.1.2 and Prop.~3.1.3]{ALYVariants}. (3) is \cite[Thm.~5.4.1 and Cor.~5.4.3, 5.4.5]{ALYArcs}.
\end{proof}

\subsection{The lattice sheaf of de Jong and its generalization}

Let $\bbL$ be a $\Qp$-local system on a rigid analytic variety $Z$.

\begin{defn}\label{defn: lattice functor}
For an \'etale map $T\to Z$, let $\Lat_{\bbL/Z}(T)$ be the set of isomorphism classes of pairs $(\bbT,\iota)$, where $\bbT$ is a finite free $\Zp$-local system on $T$ and
\[
  \iota\colon
  \bbT\otimes_{\Zp}\Qp
  \xrightarrow{\sim}
  \bbL|_T.
\]
An isomorphism $(\bbT,\iota)\to(\bbT',\iota')$ is an isomorphism $a\colon\bbT\to\bbT'$ satisfying $\iota'\circ(a\otimes\Qp)=\iota$.
\end{defn}

This is the lattice functor introduced by de Jong in the proof of \cite[Thm.~4.2]{dJfundamentalgroup}. de Jong proves that it is represented by a de Jong covering. We need the following generalization.

\begin{thm}\label{thm: generalized de Jong lattice sheaf}
Let $Z$ be a rigid analytic variety, let $G$ be a locally profinite group, and let $\underline G$ be the pro-\'etale sheaf
\[
  U\longmapsto\operatorname{Cont}(\lvert U\rvert,G).
\]
Let $P$ be a right $\underline G$-torsor on $Z_{\proet}$. For every compact open subgroup $K\subseteq G$, let $P/\underline K$ denote the quotient in $\operatorname{Sh}(Z_{\proet})$. Then $P/\underline K$ is represented, uniquely up to unique isomorphism, by an \'etale surjection
\[
  Y_K\longrightarrow Z
\]
belonging to $\Cov_Z^\et$. In particular, $Y_K\to Z$ is a geometric covering. If $K'\subseteq K$ is compact open, then $Y_{K'}\to Y_K$ is finite \'etale.
\end{thm}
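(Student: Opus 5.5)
The plan is to reduce to de Jong's original theorem for lattices in $\Qp$-local systems by way of the associated-bundle construction, and then use descent to handle the quotients $P/\underline K$.

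\emph{Step 1 (local representability).} Since $K$ is compact open, $P/\underline K$ is a sheaf of sets on $Z_{\proet}$. Pro-\'etale locally on $Z$ the torsor is trivial, $P\simeq\underline G$, and then $P/\underline K\simeq\underline{G/K}$. This is the constant sheaf on the discrete set $G/K$, hence represented by $\coprod_{G/K}$ of copies of the base. To pass from pro-\'etale triviality to an \'etale-local description, I would show that $P/\underline K$ is in fact \'etale-locally constant, with fibers $G/K$. Pick an open profinite subgroup $K_0\subseteq K$. The reduction $P\times^{\underline G}\underline{G/K_0}$ is locally constant, and the pushout torsors under the finite quotients $K_0/N$ are finite \'etale torsors. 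I would organize this by choosing a faithful continuous representation-free argument: for every open normal $N\subseteq K_0$, the sheaf $P/\underline N$ is a $\underline{G/N}$-torsor over the discrete group $G/N$. A torsor under a discrete group on $Z_{\proet}$ is \'etale-locally trivial, because torsors under constant discrete groups descend from $Z_{\proet}$ to $Z_{\et}$ (Scholze, \cite[Prop.~8.2]{scholze-p-adic-hodge}, argued componentwise). Hence $P/\underline N$ is represented by an \'etale morphism that is \'etale-locally a disjoint union of copies of the base. Taking $N\subseteq K$, the quotient $P/\underline K=(P/\underline N)/\underline{K/N}$ is a quotient of this by a finite group acting freely, which is again \'etale.

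\emph{Step 2 (the $\Cov^\et_Z$ property).} This step needs care. Over an \'etale cover $U\to Z$ trivializing $P/\underline N$, the space $Y_K\times_Z U$ is $\coprod_{G/K}U$, which is a disjoint union of finite \'etale (identity) maps. So $Y_K\in\Cov^\et_Z$ by definition, and surjectivity is clear. Proposition~\ref{prop: geometric covering-input}(2) then gives that $Y_K$ is a geometric covering. Uniqueness follows from Yoneda, since $Z_{\et}$ embeds fully faithfully into $\operatorname{Sh}(Z_{\proet})$.

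\emph{Step 3 (finiteness of $Y_{K'}\to Y_K$).} The map $P/\underline{K'}\to P/\underline K$ is, locally on $Y_K$, the constant sheaf $\underline{K/K'}$, and $K/K'$ is finite. Hence the map is finite \'etale, which can be checked after the \'etale base change from Step 1.

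The main obstacle is the claim in Step 1 that torsors under the constant discrete group $G/N$ on $Z_{\proet}$ come from $Z_{\et}$ when $G/N$ is infinite. Scholze's statement covers finite, or profinite, coefficients. My first attempt would be to write $G/N=\coprod$ of $K_0$-orbits and reduce to the finite quotient $K_0/N$: the reduction of $P$ to a $\underline{G/K_0}$-set is locally constant with discrete fibers, which is handled by the finite case together with quasicompactness of pro-\'etale covers of affinoids.
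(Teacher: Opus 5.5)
\textbf{There is a genuine gap in Step~1, which is the heart of the statement.} You never show that $P/\underline K$ acquires sections \'etale-locally on $Z$, or equivalently that $P$ reduces \'etale-locally to a $\underline K$-torsor. The argument you offer for this does not work, for three reasons.

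First, for $N$ open normal in $K_0$, the sheaf $P/\underline N$ is not a torsor under any group $\underline{G/N}$. The quotient $G/N$ is a group only if $N$ is normal in $G$. In the cases that matter, no compact open subgroup is normal in $G$: for instance $\operatorname{GL}_r(\Qp)$ with $r\geq 2$ has none, as conjugating by $\operatorname{diag}(p^{-k},1,\dots,1)$ shows, and the same holds for $\operatorname{GL}_m(D)$.

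Second, as you suspect, even an honest torsor under an infinite discrete group is not handled by \cite[Prop.~8.2]{scholze-p-adic-hodge}. That argument uses that a descent isomorphism of a constant sheaf with \emph{finite} fibre over a quasi-compact $\varprojlim_i W_i$ is already defined at some finite stage. For an infinite set $S$, an automorphism of $\underline S$ over $\varprojlim_i W_i$ is an $S$-indexed family of locally constant functions with no uniform finite stage.

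Third, your fallback is circular. Splitting $G/K$ into finite $K_0$-orbits only helps once $P$ has already been reduced to $\underline{K_0}$ \'etale-locally. But $P/\underline{K_0}$ has infinite fibre $G/K_0$, so the finite case does not apply to it.

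\textbf{How the paper closes the gap, and a direct repair.} The paper observes that $P/\underline K$ is a pro-\'etale local system of sets with discrete fibre $G/K$. It then invokes \cite[Thm.~4.4.1]{ALYVariants} to represent this sheaf by an object of $\Cov^\et_Z$. Surjectivity, and finiteness of $Y_{K'}\to Y_K$, then go exactly as in your Steps~2--3, which are fine.

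A direct repair in the spirit of your quasi-compactness remark also works, but quasi-compactness must be applied to the descent cocycle, using that $K$ is open.
\begin{itemize}
\item After replacing $Z$ by an affinoid \'etale cover, $P$ has a section $s$ over a quasi-compact pro-finite-\'etale cover $\tilde U=\varprojlim_i U_i\to Z$, with each $U_i\to Z$ finite \'etale surjective.
\item The cocycle $g\in\operatorname{Cont}(\lvert\tilde U\times_Z\tilde U\rvert,G)$ is $1$ on the diagonal. The diagonal equals $\bigcap_i\pi_i^{-1}(\Delta_i)$, where $\pi_i\colon\lvert\tilde U\times_Z\tilde U\rvert\to\lvert U_i\times_Z U_i\rvert$ and $\Delta_i$ is the diagonal of $U_i$, which is clopen.
\item Since $g^{-1}(K)$ is open and the ambient space is quasi-compact, some $i$ satisfies $\lvert\tilde U\times_{U_i}\tilde U\rvert=\pi_i^{-1}(\Delta_i)\subseteq g^{-1}(K)$. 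Hence the image of $s$ in $P/\underline K$ descends to $U_i$.
\item Over $U_i$, $P$ therefore reduces to a $\underline K$-torsor $Q$. This gives $P/\underline K\simeq\coprod_O Q\times^{\underline K}\underline O$, where $O$ runs over the $K$-orbits in $G/K$, each of which is finite.
\item Each summand is finite \'etale by the finite-coefficient case. This yields the $\Cov^\et_Z$ property directly.
\end{itemize}
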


\begin{proof}
The space $G/K$ is discrete, and $P/\underline K$ is a locally constant sheaf with discrete fiber $G/K$. It is therefore a pro-\'etale local system of sets in the sense of \cite[p.~1435]{ALYVariants}, so representability by an object of $\Cov_Z^\et$ follows from \cite[Thm.~4.4.1]{ALYVariants}. Since $G/K$ is nonempty, the representing map is surjective. Over $Y_K$, the sheaf $Y_{K'}$ is the finite locally constant sheaf associated with the $K$-set $K/K'$. It is therefore represented by a finite \'etale cover. 
\end{proof}

\begin{rem}\label{rem: relation with lattice cover}
There is a canonical isomorphism of sheaves
\[
  P/\underline K \simeq \operatorname{Red}_K(P),
\]
where $\operatorname{Red}_K(P)$ denotes the sheaf whose sections over a test object $U$ are the isomorphism classes of pairs $(Q,f_Q)$ consisting of a right $\underline K$-torsor $Q$ over $U$ and an isomorphism
\[
  f_Q\colon
  Q\times^{\underline K}\underline G
  \xrightarrow{\sim} P|_U
\]
of right $\underline G$-torsors over $U$.

In particular, suppose that
\[
  G=\operatorname{GL}_r(\Qp),\qquad
  K=\operatorname{GL}_r(\Zp),\qquad
  P=\underline{\Isom}_{\Qp}(\Qp^r,\bbL).
\]
Then $P/\underline K$ is precisely the lattice sheaf introduced in Definition~\ref{defn: lattice functor}. Theorem~\ref{thm: generalized de Jong lattice sheaf} shows that this sheaf is represented by an object of $\Cov_Z^\et$. De Jong proved the stronger statement that it is represented by an object of $\Cov_Z^\oc$, cf. the proof of
\cite[Thm.~4.2]{dJfundamentalgroup}.
\end{rem}

We also record the following fact about framing sheaf, which should be well-known.

\begin{prop}\label{prop: frame tower}
Keep the hypotheses and notations as in the statement of Theorem~\ref{thm: generalized de Jong lattice sheaf}, fix a compact open subgroup $K\subseteq G$, and put $Y_H=P/\underline H$ for every compact open subgroup $H\subseteq G$. Regard
\[
  P\longrightarrow Y_K
\]
as a right $\underline K$-torsor. If $N$ is an open normal subgroup of $K$, then
\[
  Y_N\longrightarrow Y_K
\]
is a finite \'etale right $K/N$-torsor, and the canonical map
\begin{equation}
  P\xrightarrow{\sim}
  \varprojlim_{N\triangleleft K}Y_N
  \label{eq:general-compact-frame-limit}
\end{equation}
is an isomorphism of pro-\'etale sheaves over $Y_K$, where $N$ ranges over all open normal subgroups of $K$. In particular, \eqref{eq:general-compact-frame-limit} defines a pro-\'etale presentation of $P \in (Y_K)_{\proet}$, cf. \cite[Defn.~3.9]{scholze-p-adic-hodge}.
\end{prop}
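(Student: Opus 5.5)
The plan is to work over $Y_K$ and exploit that $P \to Y_K$ is a right $\underline K$-torsor; everything will be checked pro-\'etale locally on $Y_K$. First, the finite \'etale torsor claim. By Theorem~\ref{thm: generalized de Jong lattice sheaf}, $Y_N \to Y_K$ is finite \'etale for $N \subseteq K$ open. Moreover $Y_N = P/\underline N$ carries a right action of $K/N$ through the residual action of $\underline K$ on $P$. To identify it as a $K/N$-torsor, I will pick a pro-\'etale cover $U \to Y_K$ trivializing $P|_U \simeq \underline K_U$, which exists because $P \to Y_K$ is a $\underline K$-torsor. Then $Y_N \times_{Y_K} U \simeq (\underline K/\underline N)_U = \underline{K/N}_U$, and $K/N$ acts simply transitively. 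Since $K/N$ is finite, this locally trivial sheaf is a finite \'etale $K/N$-torsor; its representability is already given by the theorem.

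Second, the limit isomorphism \eqref{eq:general-compact-frame-limit}. The natural maps $P \to P/\underline N = Y_N$ are compatible, so they give a map $P \to \varprojlim_N Y_N$ of sheaves over $Y_K$. Whether this map is an isomorphism can be checked after pullback to the trivializing cover $U$. There it becomes $\underline K_U \to \varprojlim_N \underline{K/N}_U$. Evaluating on an object $V$ of $U_{\proet}$, this is $\operatorname{Cont}(\lvert V\rvert, K) \to \varprojlim_N \operatorname{Cont}(\lvert V\rvert, K/N)$. This map is bijective because $K = \varprojlim K/N$ as profinite groups and $\operatorname{Cont}(\lvert V\rvert,-)$ commutes with limits. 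Inverse limits of sheaves commute with pullback along $U \to Y_K$, since both are computed sectionwise on the slice site. Hence the map is an isomorphism.

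Third, the presentation statement. Each $Y_N \to Y_K$ is finite \'etale, hence \'etale, and the transition maps $Y_{N'} \to Y_N$ are finite \'etale surjective by the last clause of Theorem~\ref{thm: generalized de Jong lattice sheaf}. The index set of open normal subgroups is countable when $K$ is second countable, as it is in our application where $K$ is compact open in $GL_m(D)$. In the general case I will use cofiltered limits as permitted in \cite[Defn.~3.9]{scholze-p-adic-hodge}. In either case $\varprojlim Y_N$ is an object of $(Y_K)_{\proet}$ with the required pro-\'etale presentation.

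The main obstacle is the step that passes from a sheaf-theoretic inverse limit to an actual object of the pro-\'etale site. One must check that the formal pro-object $\varprojlim Y_N$ defines the same sheaf as the limit of the represented sheaves, using the sheaf property of $\operatorname{Cont}(\lvert\cdot\rvert,K)$ and the fact that the topology on a pro-object is the inverse limit topology. I would handle this by citing \cite[Lem.~3.10]{scholze-p-adic-hodge}, which states that the Yoneda functor on pro-\'etale objects commutes with cofiltered limits of this form.
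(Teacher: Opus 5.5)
Your proposal is correct and follows essentially the same route as the paper, which localizes in the pro-\'etale topology to assume the torsor is trivial and then reduces to the profinite identity $K=\varprojlim_{N} K/N$ (equivalently, to the quotient $G\to G/K$). You go further than the paper's proof by actually checking the finite \'etale $K/N$-torsor claim, and by spelling out that the sheaf limit $\varprojlim_N Y_N$ agrees with the sheaf represented by the pro-object; that last identification is formal from the definition of morphisms in pro-$(Y_K)_{\et}$.
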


\begin{proof}
We just prove that \eqref{eq:general-compact-frame-limit} defines a pro-\'etale presentation of $P \in (Y_K)_{\proet}$. This assertion is local for the pro-\'etale topology, so we may assume that $P$ is trivial. It then reduces to the statement about the quotient map $G\to G/K$. 
\end{proof}

\section{A purity result for strongly \'etale morphisms}\label{app:valuation}

This appendix recalls the notions of strongly \'etale morphisms and unramified pro-\'etale objects and proves a purity result for strongly \'etaleness. Throughout, $L$ is a $p$-adic field with ring of integers $\OL$ and uniformizer $\varpi_L$. By an admissible formal $\OL$-scheme we mean a $\varpi_L$-adic formal scheme locally topologically of finite type and flat over $\OL$. For a valuation field $F$, let $F^+$ be its valuation ring.

\begin{defn}\label{defn:strongly-unramified}
\begin{enumerate}
\item Let $E/F$ be a finite separable extension of valuation fields. Fix an algebraic closure containing $E$. Let $F^{\mathrm{sh}}$ and $E^{\mathrm{sh}}$ be the fraction fields of the corresponding strict henselizations of the valuation rings. The extension $E/F$ is called \emph{unramified} if
\[
  F^{\mathrm{sh}}=E^{\mathrm{sh}}.
\]
\item Let $f\colon Y\to X$ be a morphism of strongly noetherian adic spaces. It is \emph{strongly \'etale at $y\in\lvert Y\rvert$} if it is \'etale at $y$ and the extension of residue fields
\[
  k(y)/k(f(y))
\]
is unramified, where $k(y)$ is the (uncompleted) residue field at $y$ as in Appendix~\ref{sec: specialization}. The map $f$ is \emph{strongly \'etale} if it is strongly \'etale at every point.
\end{enumerate}
\end{defn}

This is defined in \cite{HubnerAdictame} and is the convention used in \cite[\S4.1]{HoweKlevdalIII}. For complete discrete valuation fields it agrees with the usual notion, cf. \cite[Lem.~3.4]{HubnerAdictame}. More generally, if $E^+$ and $F^+$ are the valuation subrings, then $E/F$ is unramified if and only if $E^+$ is a localization of an \'etale $F^+$-algebra, cf. \cite[Lem.~3.2]{HubnerAdictame}. Strong \'etaleness is stable under base change, cf. \cite[page.~885]{HubnerAdictame}. If $f\colon Y\to X$ and $g\colon Z\to Y$ are \'etale and $z\in\lvert Z\rvert$, then $f\circ g$ is strongly \'etale at $z$ if and only if $g$ is strongly \'etale at $z$ and $f$ is strongly \'etale at $g(z)$, cf. \cite[Lem.~4.1.2]{HoweKlevdalIII}.

\begin{defn}[{\cite[Defn.~4.1.3]{HoweKlevdalIII}}]\label{defn:unramified-proet}
Let $X$ be a rigid analytic variety over $L$, and let $\widetilde X\in X_{\proet}$.
\begin{enumerate}
\item The object $\widetilde X$ is \emph{unramified} if it admits a pro-\'etale presentation
\[
  \widetilde X= \varprojlim_{i\in I} X_i \to X
\]
such that every morphism $X_i\to X$ is strongly \'etale.
\item The object $\widetilde X$ is \emph{potentially unramified} if there is an \'etale cover $\{U_j\to X\}$ such that $\widetilde X\times_X U_j\to U_j$ is unramified for every $j$.
\end{enumerate}
\end{defn}

We found the following facts from \cite[Appendix~A]{ALYSpecialization} about $\eta$-normal formal models very convenient for stating our results in this appendix.

\begin{defn} An admissible formal $\OL$-scheme $\frakX$ is \emph{$\eta$-normal} if, for every affine open $\Spf(A)\subseteq\frakX$, the ring $A$ is $\varpi_L$-torsion-free and integrally closed in $A[1/\varpi_L]$. Moreover, one has
\[
  \bigl(\Spf(A)\bigr)_\eta
  =
  \Spa(A[1/\varpi_L],A).
\]
\end{defn}

See \cite[Defn.~A.1]{ALYSpecialization}. A regular admissible formal scheme is $\eta$-normal by \cite[Lem.~A.2(b)]{ALYSpecialization}. Moreover, $\eta$-normality is preserved under \'etale morphisms of formal schemes by \cite[Cor.~A.16]{ALYSpecialization}.

\begin{lem}\label{lem:eta-normalization}
Let $\frakX$ be an admissible formal $\OL$-scheme, put $X=\frakX_\eta$, and let $f\colon Y\to X$ be a finite morphism with $Y$ reduced.
\begin{enumerate}
\item There is a finite morphism
\[
  \mathfrak f\colon\frakY\longrightarrow\frakX
\]
such that $(\mathfrak f)_\eta\simeq f$ and $\frakY$ is $\eta$-normal. The pair $(\mathfrak f, \frakY)$ is unique up to a unique isomorphism. If $\frakX=\Spf(A)$ and $Y=\Spa(T,T^\circ)$, then $\frakY\simeq\Spf(T^\circ)$. This construction is functorial and compatible with restriction to open formal subschemes.
\item In the notation of (1), $T^\circ$ is the integral closure of the image of $A$ in $T$.
\end{enumerate}
\end{lem}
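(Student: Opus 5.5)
We first treat the affine case, then glue using uniqueness. Let $\frakX=\Spf(A)$, so $X=\Spa(R,R^\circ)$ with $R=A[1/\varpi_L]$, and let $Y=\Spa(T,T^\circ)$ with $T$ a reduced affinoid $L$-algebra, finite over $R$. Let $A'$ be the image of $A$ in $T$. Choose finitely many $R$-module generators of $T$. After scaling by powers of $\varpi_L$, they generate a finite $A$-subalgebra $A''\subseteq T$ with $A''[1/\varpi_L]=T$.

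\textbf{Step 1: $T^\circ$ is the integral closure $B$ of $A'$ in $T$.} This is part (2), and we prove it first.
\begin{itemize}
\item \emph{$B\subseteq T^\circ$.} Elements of $A'$ are power-bounded, and $T^\circ$ is integrally closed in $T$, so everything integral over $A'$ is power-bounded.
\item \emph{$T^\circ\subseteq B$.} Since $T$ is finite over $R$, every $t\in T^\circ$ satisfies a monic equation over $R$. By the standard norm argument for finite maps of affinoid algebras (BGR \S6.3, or \cite[\S3.1]{Bosch-Lectures}), the coefficients can be taken power-bounded in $R$, i.e.\ in $R^\circ$. Here $R^\circ$ is the integral closure of $A$ in $R$.
\item \emph{Conclusion.} Hence $t$ is integral over the image of $R^\circ$, which is integral over $A'$. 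So $T^\circ=B$.
\end{itemize}

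\textbf{Step 2: finiteness and $\eta$-normality.} Because $T$ is reduced, the Grauert--Remmert finiteness theorem (BGR 6.4.1/Thm.~1) shows that $T^\circ$ is finite over $A''$, hence finite over $A$. Therefore $\mathfrak f\colon\Spf(T^\circ)\to\Spf(A)$ is a finite morphism of admissible formal schemes. Moreover:
\begin{itemize}
\item $T^\circ\subseteq T$ is $\varpi_L$-torsion-free;
\item $T^\circ$ is integrally closed in $T^\circ[1/\varpi_L]=T$;
\item its generic fibre is $\Spa(T,T^\circ)=Y$.
\end{itemize}
So $\Spf(T^\circ)$ is $\eta$-normal and satisfies $(\mathfrak f)_\eta\simeq f$.

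\textbf{Step 3: uniqueness in the affine case.} Let $\frakY'\to\frakX$ be any finite $\eta$-normal model of $f$. It is affine, say $\Spf(B')$, with $B'$ finite over $A$ and $B'[1/\varpi_L]=T$. Then $B'$ is integral over $A'$, so $B'\subseteq T^\circ$ by Step 1. Conversely, $B'$ is integrally closed in $T$, and elements of $T^\circ$ are integral over $A'\subseteq B'$, so $T^\circ\subseteq B'$. Thus $B'=T^\circ$ as subrings of $T$, and the resulting isomorphism over $\frakX$ is unique. Functoriality follows because any $A$-map of finite $A$-algebras extends uniquely to the integral closures.

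\textbf{Step 4: compatibility with restriction, and gluing (the main obstacle).} For $\Spf(A\langle 1/g\rangle)\subseteq\Spf(A)$ we must show
\[
T^\circ\widehat{\otimes}_A A\langle 1/g\rangle\cong \bigl(T\langle 1/g\rangle\bigr)^\circ .
\]
By Step 3 applied over $A\langle 1/g\rangle$, it suffices to show that the left-hand side is $\varpi_L$-torsion-free and integrally closed in $T\langle 1/g\rangle$.
\begin{itemize}
\item \emph{Torsion-freeness.} This follows from flatness of $A\to A\langle 1/g\rangle$.
\item \emph{Integral closedness (first attempt).} Use the spectral-norm characterization: $h\in T\langle 1/g\rangle$ is power-bounded iff $|h(y)|\le 1$ on the tube $\mathrm{sp}^{-1}(D(\bar g))\cap Y$. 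Write $h=\sum_n a_n g^{-n}$ with $a_n\in T$ tending to $0$, and use $|g|=1$ on the tube to push the coefficients into $T^\circ$ up to an error lying in $\varpi_L T^\circ\langle 1/g\rangle$. Then iterate by $\varpi_L$-adic approximation.
\item \emph{Fallback.} If this becomes messy, invoke the results of \cite[Appendix~A]{ALYSpecialization} on $\eta$-normalization directly. There, compatibility with formal localization is proved, and $\eta$-normality is preserved under \'etale maps (\cite[Cor.~A.16]{ALYSpecialization}).
\end{itemize}
Once compatibility with localization holds, the affine models glue along the uniquely determined isomorphisms over overlaps to give $\mathfrak f\colon\frakY\to\frakX$ in general, with uniqueness and functoriality inherited from the affine case.
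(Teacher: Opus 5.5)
Your route differs from the paper's, and the affine part of it is sound.

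\textbf{Comparison with the paper.} The paper proves nothing in (1) by hand. Existence, uniqueness and $\frakY\simeq\Spf(T^\circ)$ are quoted from \cite[Lem.~4.1]{ALYSpecialization}, and compatibility with restriction from the proof of \cite[Cor.~4.2]{ALYSpecialization}. Part (2) is then \emph{deduced} from (1): finiteness of $T^\circ$ over $A$ gives integrality. Conversely, if $t$ is integral over $A$, then $A[t]$ is a finite, hence bounded, $A$-submodule of $T$, so $t$ is power-bounded.

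You go the other way. You prove (2) first, using integrality of power-bounded elements under finite maps of affinoid algebras, which does not use finiteness of $T^\circ$. You then get finiteness of $T^\circ$ from Grauert--Remmert, and uniqueness by comparing integral closures. This makes the affine case self-contained and shows where the hypotheses enter: reducedness of $Y$ and discreteness of the valuation on $L$ are used only through Grauert--Remmert. The paper's version is shorter but hides all of this in the citation.

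\textbf{The unproved step.} What you have not established is compatibility with formal localization, and it is needed earlier than Step~4. With the definition of $\eta$-normality used here (integral closedness on \emph{every} affine open), Step~2 only checks the single ring $T^\circ$. So you also need $T^\circ\langle 1/g\rangle$ to be integrally closed in $T\langle 1/g\rangle$ for $g\in T^\circ$, not just for $g\in A$.

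Your ``first attempt'' is circular as sketched. From $|h|\le 1$ on the tube you only learn $|a|\le 1$ on the tube, not on all of $Y$, so $a$ need not lie in $T^\circ$. Moreover, turning a sup-norm bound on the tube into membership in $T^\circ\langle 1/g\rangle$ or $\varpi_L T^\circ\langle 1/g\rangle$ is precisely the integral closedness you are trying to prove.

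Your fallback to ALY is legitimate, since it is exactly what the paper does. For a self-contained argument, argue as follows.
\begin{enumerate}
\item $A$ is excellent, so $A\to A\langle 1/g\rangle$ (localization followed by $\varpi_L$-adic completion) is a regular map of noetherian rings.
\item By Popescu's theorem, this map is a filtered colimit of smooth maps. Integral closure commutes with smooth base change and with filtered colimits.
\item Hence the integral closure of $A\langle 1/g\rangle$ in $T\otimes_A A\langle 1/g\rangle=T\langle 1/g\rangle$ is $T^\circ\otimes_A A\langle 1/g\rangle=T^\circ\langle 1/g\rangle$. This gives compatibility with restriction for $g\in A$.
\item Run the same argument with $A$ replaced by the excellent ring $T^\circ$ and $f=\mathrm{id}$. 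This shows $T^\circ\langle 1/g\rangle$ is integrally closed in its generic fibre for every $g\in T^\circ$. Hence $\Spf(T^\circ)$ is $\eta$-normal on all basic affine opens, and by gluing on all affine opens.
\end{enumerate}
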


\begin{proof}
(1) is \cite[Lem.~4.1]{ALYSpecialization}. Functoriality follows from uniqueness, and compatibility with restriction is used in the proof of \cite[Cor.~4.2]{ALYSpecialization}. For (2), $T^\circ$ is finite over $A$ by (1), hence integral over $A$. Conversely, if $t\in T$ is integral over $A$, then $A[t]$ is a finite $A$-submodule of $T$. Since the image of $A$ is bounded, every finite $A$-submodule is bounded. As $A[t]$ contains every power of $t$, the element $t$ is power-bounded and therefore belongs to $T^\circ$.
\end{proof}

Let $\frakU=\Spf(A)$ be an affine formal scheme over $\calO_L$ with $A$ noetherian and regular. For a height-one prime $\mathfrak q\subset A$ containing $\varpi_L$, put $F_{\mathfrak q}=\operatorname{Frac}(A_{\mathfrak q})$.  Since $A$ is regular and $\mathfrak q$ has height one, $A_{\mathfrak q}$ is a DVR. Let $\widehat A_{\mathfrak q}$ be the completion of $A_{\mathfrak q}$ and $\widehat F_{\mathfrak q}=\operatorname{Frac}(\widehat A_{\mathfrak q})$, so that $\widehat A_{\mathfrak q}$ is a CDVR. There is a morphism
\[
  \Spa(\widehat F_{\mathfrak q},\widehat A_{\mathfrak q})
  \longrightarrow
  \frakU_\eta.
\]
Let $x_{\mathfrak q}$ be the rank-$1$ defined by the above map in $\frakU_\eta$, which we will view $x_{\mathfrak q}$ as a rank-$1$ valuation. One has a natural isomorphism
\[
\operatorname{Frac}\left(A[1/\varpi_L]/\operatorname{supp}(x_{\mathfrak q})\right) \simeq F_{\mathfrak q},
\]
and there is a canonical embedding $
  F_{\mathfrak q}
  \hookrightarrow
  k(x_{\mathfrak q})$, which induces canonical identifications
\[
  K_{x_{\mathfrak q}}\simeq\widehat F_{\mathfrak q},
  \qquad
  K_{x_{\mathfrak q}}^{+}\simeq\widehat A_{\mathfrak q},
\]
Here recall that $k(x_{\mathfrak q})$ and $K_{x_{\mathfrak q}}$ are the uncompleted and completed residue valuation field as in Appendix~\ref{sec: specialization}. The above facts can be found in \cite[Defn.~III.5.1.1 and Prop.~III.6.3.1]{MorelAdicSpaces}. We note that in general, $F_{\mathfrak q}$ need not equal the uncompleted residue field $k(x_{\mathfrak q})$. 

We can now state the main result of this appendix.

\begin{thm}[A purity result for strong \'etaleness]\label{thm:strong-etale-purity}
Let $X$ be an affinoid rigid analytic variety over $L$ with an admissible formal $\OL$-model $\frakX =\Spf(A)$ with $A$ noetherian and regular. Let
\[
  f\colon Y\longrightarrow X
\]
be finite \'etale, and let
\[
  \mathfrak f\colon\frakY\longrightarrow\frakX
\]
be its finite $\eta$-normal formal model from Lemma~\ref{lem:eta-normalization}. For $A$ as above, write
\[
  Y=\Spa(T,T^\circ).
\]
For every height-one prime $\mathfrak q\subset A$ containing $\varpi_L$, put
\[
  \Omega_{\mathfrak q}
  \coloneqq
  \Hom_{\widehat F_{\mathfrak q}\text{-alg}}
  \left(
    T\otimes_{A[1/\varpi_L]}\widehat F_{\mathfrak q},
    \widehat F_{\mathfrak q}^{\mathrm{sep}}
  \right),
\]
where $\widehat F_{\mathfrak q}^{\mathrm{sep}}$ is a fixed separable closure. Let $I_{\widehat F_{\mathfrak q}}$ be the inertia subgroup of the absolute Galois group $G_{\widehat F_{\mathfrak q}}$. The following are equivalent:
\begin{enumerate}
\item $f\colon Y\to X$ is strongly \'etale;
\item for every height-one prime $\mathfrak q\subset A$ containing $\varpi_L$, the inertia group $I_{\widehat F_{\mathfrak q}}$ acts trivially on $\Omega_{\mathfrak q}$;
\item $\mathfrak f\colon\frakY\to\frakX$ is finite \'etale.
\end{enumerate}
\end{thm}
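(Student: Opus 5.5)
I will prove the cycle $(3)\Rightarrow(1)\Rightarrow(2)\Rightarrow(3)$, using throughout that $\frakY=\Spf(T^\circ)$ with $T^\circ$ the integral closure of $A$ in $T$ (Lemma~\ref{lem:eta-normalization}(2)).

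\textbf{$(3)\Rightarrow(1)$.} If $\mathfrak f$ is finite \'etale, then $T^\circ$ is a finite \'etale $A$-algebra. For $y\in Y$ over $x\in X$, the valuation ring $k(y)^+$ is then a localization of $k(x)^+\otimes_A T^\circ$, hence a localization of an \'etale $k(x)^+$-algebra. By \cite[Lem.~3.2]{HubnerAdictame}, $k(y)/k(x)$ is unramified, so $f$ is strongly \'etale at $y$.

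\textbf{$(1)\Rightarrow(2)$.} Apply strong \'etaleness at the points $y$ lying over the rank-$1$ point $x_{\mathfrak q}$. The fiber of $f$ over $x_{\mathfrak q}$ is $\Spa$ of the finite \'etale $K_{x_{\mathfrak q}}=\widehat F_{\mathfrak q}$-algebra $T\otimes_{A[1/\varpi_L]}\widehat F_{\mathfrak q}$, which is a product of finite separable field extensions $E_j/\widehat F_{\mathfrak q}$. The completed residue field of each $y$ is one of these $E_j$. Unramifiedness of $k(y)/k(x_{\mathfrak q})$ should pass to completions, so each $E_j/\widehat F_{\mathfrak q}$ is an unramified extension of complete discretely valued fields in the sense of \cite[Lem.~3.4]{HubnerAdictame}, i.e. its residue extension is separable with $e=1$. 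The field $\widehat F_{\mathfrak q}$ is a complete DVF with imperfect residue field, but $E_j\subset\widehat F_{\mathfrak q}^{\mathrm{sh}}$ still holds, and this says exactly that $I_{\widehat F_{\mathfrak q}}$ acts trivially on $\Hom(E_j,\widehat F_{\mathfrak q}^{\mathrm{sep}})$, giving (2). The subtle point is comparing $k(y)^{\mathrm{sh}}$ with the strict henselization of the completion. For this I would use that $k(x)^+$ is henselian and has the same value group and residue field as $K_x^+$.

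\textbf{$(2)\Rightarrow(3)$.} This is the main step, a Zariski--Nagata purity argument.
\begin{enumerate}
\item Let $B=T^\circ$. It is finite over $A$, normal, and $B[1/\varpi_L]=T$ is finite \'etale over $A[1/\varpi_L]$.
\item At a height-one prime $\mathfrak q\ni\varpi_L$, the completion $B\otimes_A\widehat A_{\mathfrak q}$ is the normalization of $\widehat A_{\mathfrak q}$ in $T\otimes\widehat F_{\mathfrak q}$. This uses that $A$ is excellent, so normalization commutes with completion.
\item Condition (2) says that $T\otimes\widehat F_{\mathfrak q}$ is a product of unramified extensions. Hence this normalization is finite \'etale over $\widehat A_{\mathfrak q}$, and by faithfully flat descent $B_{\mathfrak q}$ is finite \'etale over $A_{\mathfrak q}$.
\item Height-one primes not containing $\varpi_L$ are already covered by the generic fiber. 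So $A\to B$ is \'etale in codimension $\le 1$ on $\Spec A$.
\end{enumerate}
Since $A$ is regular and $B$ is normal and finite over $A$, Zariski--Nagata purity of the branch locus (e.g. SGA1 X.3.1) shows that $\Spec B\to\Spec A$ is \'etale everywhere. Passing to $\varpi_L$-adic formal completion, $\mathfrak f$ is finite \'etale.

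\textbf{Main obstacle.} The key inputs are the excellence of $A$ (a quotient of $\OL\langle T_i\rangle$, hence excellent) and the identification of $\widehat F_{\mathfrak q}=K_{x_{\mathfrak q}}$ recorded just before the theorem. I expect the main difficulty in step (1)$\Rightarrow$(2) to be matching strict henselizations of uncompleted residue fields with those of completions.
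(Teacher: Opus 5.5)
Your proposal is correct and follows the paper's proof essentially step for step. It uses the same cycle $(3)\Rightarrow(1)\Rightarrow(2)\Rightarrow(3)$: H\"ubner's Lem.~3.2 (via $k(y)^+$ being a localization of an \'etale $k(x)^+$-algebra) for the first implication, reduction to the complete discretely valued fields $\widehat F_{\mathfrak q}$ for the second, and descent from $\widehat A_{\mathfrak q}$ to $A_{\mathfrak q}$ followed by Zariski--Nagata purity of the branch locus for the third. The differences are only minor: the paper avoids your completion comparison in $(1)\Rightarrow(2)$ by base-changing $f$ along $\Spa(\widehat F_{\mathfrak q},\widehat A_{\mathfrak q})\to X$ and using that strong \'etaleness is stable under base change; it gets the localization statement in $(3)\Rightarrow(1)$ from a standard-\'etale presentation as in H\"ubner's Cor.~4.3; and it identifies $B\otimes_A\widehat A_{\mathfrak q}$ with the normalization because the $B_{\mathfrak n_i}$ are DVRs, not by excellence.
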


\begin{proof}
Write $B=T^\circ$. By Lemma~\ref{lem:eta-normalization}, $B[1/\varpi_L]=T$, $B$ is $\eta$-normal, $B$ is the integral closure of $A$ in $T$, and $B$ is finite over $A$. Since $T$ is finite \'etale over the regular ring $A[1/\varpi_L]$, it is normal. The rings $B$ and $T$ have the same field of fractions, so the normality of $T$ and the $\eta$-normality of $B$ imply that $B$ is also normal.

We will use the following fact: For any $\mathfrak q\subset A$ height-one prime containing $\varpi_L$, using \cite[Tags~00MC and 02VN]{stacks-project}, one has $B\otimes_A A_{\mathfrak q}\text{ is finite \'etale over }A_{\mathfrak q}$ if and only if $B\otimes_A \widehat A_{\mathfrak q}$ is finite \'etale over $\widehat A_{\mathfrak q}$. 

If $\mathfrak n_1,\ldots,\mathfrak n_s$ are the primes of $B$ over $\mathfrak q$, then
\[
B\otimes_A \widehat A_{\mathfrak q}\simeq\prod_{i=1}^s\widehat{B_{\mathfrak n_i}}
\]
by \cite[Tag~07N9]{stacks-project}, here the local rings $B_{\mathfrak n_i}$ are DVRs, so are their completions. Therefore, $B\otimes_A \widehat A_{\mathfrak q}$ is the integral closure of $\widehat A_{\mathfrak q}$ in $T\otimes_{A[1/\varpi_L]}\widehat F_{\mathfrak q}$. By \cite[Tags~04GK and 0BSW]{stacks-project}, triviality of $I_{\widehat F_{\mathfrak q}}$ on $\Omega_{\mathfrak q}$ is equivalent to this integral closure being finite \'etale over $\widehat A_{\mathfrak q}$, which is also equivalent to $B\otimes_A A_{\mathfrak q}$ is finite \'etale over $A_{\mathfrak q}$.

Now we prove the theorem by showing $(3)\Rightarrow (1) \Rightarrow (2)\Rightarrow (3)$.

\smallskip
\noindent
\emph{$(3) \Rightarrow (1)$.}
By \cite[Lem.~A.13]{ALYSpecialization}, condition~(3) implies $A\to B$ is finite \'etale. We follow the idea of proof of \cite[Cor.~4.3]{HubnerAdictame}.

Fix $y\in Y$ and put $x=f(y)$. Let
\[
\mathfrak q_y
=
\{b\in B:\lvert b(y)\rvert<1\}.
\]

Since $A\to B$ is \'etale, by
\cite[Tag~00UE]{stacks-project} there exists an element $s\in B\setminus\mathfrak q_y$ such that $A\to B_s$ is standard \'etale, i.e., there are $P,G\in A[Z]$ and an $A$-algebra isomorphism
\[
B_s
\simeq
\bigl(A[Z]/(P)\bigr)_G,
\]
where $P$ is monic and $P'$ is invertible in $\bigl(A[Z]/(P)\bigr)_G$.

Since $s\notin\mathfrak q_y$, and elements in $B$  are power bounded, one has $\lvert s(y)\rvert=1$. Consequently, the homomorphism $B\to k(y)^+$ extends to $B_s\to k(y)^+$. The image of $G$ is invertible in $B_s$, so its image in $k(y)^+$ is a unit. Therefore $\lvert G(y)\rvert=1$.

Let $P_x,G_x\in k(x)^+[Z]$ be the images of $P,G$ defined by the natural map $A\to k(x)^+$. The argument in the proof of \cite[Cor.~4.3]{HubnerAdictame} shows that $k(y)^+$ is a localization of
\[
\bigl(k(x)^+[Z]/(P_x)\bigr)_{G_x}.
\]
By \cite[Lem.~3.2]{HubnerAdictame}, this implies $k(y)/k(x)$ is unramified. Since this holds for every $y\in Y$, $f$ is strongly \'etale.

\smallskip
\noindent
\emph{$(1)\Rightarrow (2)$.}
Let $\mathfrak q$ be as above. Strong \'etaleness is stable under base change by \cite[\S3]{HubnerAdictame}. Hence the pullback to $\Spa(\widehat F_{\mathfrak q},\widehat A_{\mathfrak q})$ is strongly \'etale. But this agrees with $\widehat A_{\mathfrak q} \to B\otimes_A \widehat A_{\mathfrak q}$ being \'etale, cf. \cite[Lem.~3.4]{HubnerAdictame}. Using \cite[Lem.~A.13]{ALYSpecialization} again, we have every factor of $T\otimes_{A[1/\varpi_L]}\widehat F_{\mathfrak q}$ is unramified over $\widehat F_{\mathfrak q}$. Equivalently, $I_{\widehat F_{\mathfrak q}}$ acts trivially on $\Omega_{\mathfrak q}$ by \cite[Tag~0BSW]{stacks-project}.

\smallskip
\noindent
\emph{$(2) \Rightarrow (3)$.}
It is enough to show $A\to B$ is finite \'etale. Since is $A$ is noetherian and $A \to B$ is finite, using \cite[Tags~039L]{stacks-project}, it is enough to show that $A_{\mathfrak p} \to B_{\mathfrak n}$ is \'etale for every $\mathfrak n\in\Spec(B)$, with $\mathfrak p \in \Spec A$ the image of $\mathfrak n$.

As we have discussed, $(2)$ implies that $A\to B$ is \'etale on fibers over every height-one primes of $A$ containing $\varpi_L$. Since $\varpi_L$ is a non-zero-divisor, $A \to B$ is also \'etale on the fibers over all height-zero primes and height-one primes that do not contain $\varpi_L$, since $A[1/\varpi_L]\to B[1/\varpi_L]$ is finite \'etale.  

For the rest of prime ideals $\mathfrak n\in\Spec(B)$, after restricting to the connected components containing these points, we reduce to the situation that $A$ is a regular domain, $B$ is a normal domain, and $A\to B$ is finite and injective. Going down and incomparability give $\dim(B_{\mathfrak n})=\dim(A_{\mathfrak p})$ and show that every height-one prime of $B$ contracts to a height-one prime of $A$, cf. \cite[Tags~00H8 and 00GT]{stacks-project}. If $\dim(B_{\mathfrak n})=\dim(A_{\mathfrak p})\geq 2$, then we want to apply purity of the branch locus, cf. \cite[Tag~0BMB]{stacks-project}. We have already shown $B_{\mathfrak n}$ is normal, $A_{\mathfrak p}$ is regular, the morphism is quasi-finite, and it is \'etale at every $\mathfrak n'\subseteq\mathfrak n$ with $\dim(B_{\mathfrak n'})=1$. So  \cite[Tag~0BMB]{stacks-project} applies and $A\to B$ is \'etale at all $\mathfrak n$.  
\end{proof}

\medskip\noindent
\textbf{Acknowledgments.}
The author thanks Laurent Fargues and Yihang Zhu for many inspiring conversations. The author is especially grateful to Laurent Fargues for his encouragement and for suggesting the use of the language of diamonds to frame the results, and to Yihang Zhu for suggesting the example of the Legendre family discussed in the introduction. He also thanks Tongmu He for helpful discussions concerning \cite{OhkuboLie}, and Shizhang Li for discussions related to Remark~\ref{rem: nonCNP} and \cite{dJfundamentalgroup}, which led to the present form of Appendix~\ref{app:local-systems}. This paper is partially based on the author's talk at the 2025 ICCM, and he thanks the organizers for their invitation and hospitality. The author was partially supported by the National Key R\&D Program of China No.~2023YFA1009703, the National Natural Science Foundation of China No.~12275066, and the Beijing Natural Science Foundation under Grant No.~1254044.
\bibliographystyle{amsalpha}
\bibliography{library}
	
\end{document}